\theoremstyle{plain}
\newtheorem{conj1}{Conjecture}
\newtheorem{lem}{Lemma}[section]
\newtheorem{theor}[lem]{Theorem}
\newtheorem{prop}[lem]{Proposition}
\newtheorem{rem}[lem]{Remark}
\numberwithin{equation}{section}
\newcommand{\e}{\varepsilon}
\newcommand{\R}{\mathbb R}
\newcommand{\Z}{\mathbb Z}
\newcommand{\C}{\mathbb C}
\newcommand{\T}{\mathbb T}
\newcommand{\Pc}{\mathcal P}
\newcommand{\Dc}{\mathcal D}
\newcommand{\Cc}{\mathcal C}
\newcommand{\Lc}{\mathcal L}
\newcommand{\Hc}{\mathcal H}
\newcommand{\Sp}{\mathbb S}
\newcommand{\Id}{\operatorname{Id}}
\newcommand{\sym}{\operatorname{sym}}
\newcommand{\loc}{{\operatorname{loc}}}
\newcommand{\Div}{{\operatorname{div}}}
\newcommand{\3}{|\!|\!|}
\newcommand{\Ld}{\operatorname{L}}
\newcommand{\step}[1]{\noindent \textit{Step} #1.}
\newcommand{\substep}[1]{\noindent \textit{Substep} #1.}
\newcommand{\cvf}{\rightharpoonup}
\title[Dynamics of {point-vortex type} systems near equilibrium]{Dynamics of {point-vortex type} systems near thermal equilibrium: relaxation or not?}
\author[M. Duerinckx]{Mitia Duerinckx}
\address[Mitia Duerinckx]{Universit\'e Libre de Bruxelles, D\'epartement de Math\'ematique, 1050~Brussels, Belgium}
\email{mitia.duerinckx@ulb.be}
\author[P.-E. Jabin]{Pierre-Emmanuel Jabin}
\address[Pierre-Emmanuel Jabin]{Penn State University, Department of Mathematics, State College, PA 16802, USA}
\email{pejabin@psu.edu}
\begin{document}
\selectlanguage{english}

\begin{abstract}
This article is devoted to the long-time dynamics of point-vortex systems near thermal equilibrium and to the possible emergence of collisional relaxation. More precisely, we consider a tagged particle coupled to a large number of background particles that are initially at equilibrium, and we analyze its resulting slow dynamics. On the one hand, in the spirit of the Lenard--Balescu relaxation for plasmas, we establish in a generic setting the outset of the slow thermalization of the tagged particle. On the other hand, we show that a completely different phenomenology is also possible in some degenerate regime: the slow dynamics of the tagged particle then remains conservative and the thermalization no longer holds in a strict sense. We provide the first detailed description of this degenerate regime and of its mixing properties. Note that it is particularly delicate to handle due to statistical closure problems, which manifest themselves as a lack of self-adjointness of the effective Hamiltonian.
\end{abstract}

\maketitle
\setcounter{tocdepth}{1}

\tableofcontents

\section{Introduction}

\subsection{General overview}
This article is devoted to the rigorous analysis of the long-time dynamics of {point-vortex type systems} near equilibrium. More precisely, we consider a tagged particle coupled to a large number of background particles that are initially at thermal equilibrium, and we analyze the resulting slow dynamics of the tagged particle.
According to the physics literature, two types of behavior can occur:
\begin{enumerate}[---]
\item {\it Non-degenerate case: {thermalization.}}\\
Due to the slow correlation with the equilibrium background, the tagged particle is generically expected to thermalize on the slow timescale $t=O(N)$ --- proportional to the total number~$N$ of background particles. {Here, by thermalization, we mean relaxation to equilibrium in the strong sense of an H-theorem.} This was first described in the physics literature by Chavanis~\cite{Chavanis-98,Chavanis-01,Chavanis-12a,Chavanis-12b,Chavanis-23}, under the name ``point-vortex diffusion'', and it can be viewed as the equivalent for point-vortex type systems of the celebrated Lenard--Balescu collisional relaxation for plasmas~\cite{Balescu-60,Lenard-60,Guernsey-62,TGM-64}.
In the present contribution, following the line of a previous joint work of the first author with Saint--Raymond~\cite{DSR-21} on the Lenard--Balescu theory (see also~\cite{D-21a}), and in spite of new difficulties for point-vortex type systems, we establish a partial result in this direction, proving the outset of thermalization on some intermediate timescale.
\smallskip\item {\it Degenerate case: {no thermalization, but some ``resonant relaxation''.}}\\
In some situations, depending on the precise shape of the equilibrium, thermalization can fail. The slow dynamics of the tagged particle is then expected to be of a different nature and to take place instead on the shorter timescale $t=O(N^{1/2})$.
The simplest instance of this degenerate behavior occurs for uniformly distributed point-vortex type systems in a compact space. It seems to have been first discovered in~\cite{Taylor-McNamara-71} in the context of plasmas in a strong external magnetic field. Another application concerns the stellar dynamics in nuclear clusters dominated by a supermassive black hole: the slow motion of orbital planes in that setting is given by a similar degenerate dynamics, which is known as ``vector resonant relaxation''~\cite{Rauch-Tremaine-96,Kocsis-Tremaine-15,Fouvry-BarOr-Chavanis-19,Magnan-Fouvry-Pichon-Chavanis-22}.
Systematic theoretical studies are however lacking in the physics literature: in particular, due to statistical closure problems in link with turbulence~\cite{Krommes-02}, no effective equation is known to describe the degenerate dynamics of the tagged particle, and its properties are poorly understood.
In the present contribution, {at least in the special case of uniform equilibrium,} we clarify this situation by establishing for the first time a well-posed effective description on the critical timescale $t=O(N^{1/2})$ (although with an effective Hamiltonian lacking self-adjointness). {In particular, this confirms that the dynamics remains conservative and that thermalization must fail.
From our effective description, we also deduce a RAGE theorem, which describes the {\it weak} relaxation of the system towards equilibrium for~$t\gg N^{1/2}$, thus giving substance to the notion of ``resonant relaxation'' from the physics literature.}
\end{enumerate}
{As we explain in Section~\ref{sec:formal} below, a formal BBGKY analysis provides a systematic heuristics for this duality of behaviors. More precisely, we show that it
is, in fact, determined by the spectral nature of the linearized mean-field operator at thermal equilibrium: thermalization should be expected when this operator has purely continuous spectrum close to~$0$ orthogonally to its kernel, and resonant relaxation when it has instead eigenvalues accumulating at~$0$.}
In the case of plasmas without external magnetic field, the linearized mean-field operator at thermal equilibrium is always continuous orthogonally to its kernel by linear Landau damping, which is why Lenard--Balescu thermalization is indeed always expected to hold in that case, see e.g.~\cite{DSR-21}. {In contrast, point-vortex type systems have a richer phenomenology and we shall see that thermalization is expected to fail exactly in the case of Gaussian equilibrium.}

\subsection{Formal BBGKY analysis}\label{sec:formal}
In this section, we explain how the above duality of behaviors can be explained by a formal BBGKY analysis, depending on the spectral nature of linearized mean-field operators, and in all cases we formally derive effective equations for the slow dynamics of the tagged particle on the critical timescale. {We emphasize that this formal analysis is new to our knowledge in the degenerate case, as is the effective description of the dynamics that we obtain in that case.

Consider a tagged particle in a point-vortex type system, or more generally in a conservative long-range interacting particle system, at thermal equilibrium.}
The starting point is the BBGKY hierarchy of equations for the tagged particle density~$f_N^1$ coupled to the  multi-particle correlation functions describing fine correlations with background particles.
In terms of the mean-field equilibrium density~$\mu_\beta$, we consider the ratio $g_N^1:=f_N^1/\mu_\beta$, and we denote by~$\{g_N^m\}_{2\le m\le N}$ multi-particle correlations (see Section~\ref{sec:correlations} for precise definitions).
We shall view $g_N^1$ as belonging to the weighted space $\Ld^2(\mu_\beta)$, while for all $m\ge2$ the correlation~$g_N^m$ is defined in
\[\Hc^m\,:=\,\Ld^2(\mu_\beta)\otimes\Ld^2(\mu_\beta)^{\otimes_s (m-1)},\]
where the first factor stands for the tagged particle density and where $\otimes_s$ stands for the symmetrized tensor product for exchangeable background particles.
{In order to investigate the slow correction around mean field for the tagged particle, it is most convenient to consider a framework in which the mean-field evolution is trivial.}
The BBGKY hierarchy then typically takes on the following guise (see Lemma~\ref{lem:eqn-gNm-21} below for a more precise statement),\footnote{We emphasize the minor differences with the actual BBGKY hierarchy derived in Lemma~\ref{lem:eqn-gNm-21}: First, the actual equation for~$g_N^m$ may further involve $g_N^{m-2}$ on top of $g_N^{m\pm1}$. Second, the operators $\{S^{m,\pm}\}_m$ should further depend on $N$ and only satisfy the stated symmetry relation in the limit $N\uparrow\infty$. Finally, the skew-adjointness of $iL^m$ should only hold up to suitably deforming the underlying Hilbert structure. We neglect these issues here as they do not substantially affect the present formal discussion.}
\begin{equation}\label{eq:BBGKY-cor-gen}
\left\{\begin{array}{lll}
\partial_tg_N^1=iS^{1,+}g_N^{2},\\
(\partial_t+iL^m)g_N^m=iS^{m,+}g_N^{m+1}+\tfrac1NiS^{m,-}g_N^{m-1}&:&2\le m\le N,
\end{array}\right.
\end{equation}
where $iL^m:\Hc^m\to\Hc^m$ stands for the (skew-adjoint) $m$-particle linearized mean-field operator and where
operators $S^{m,+}:\Hc^{m+1}\to \Hc^{ m}$ and $S^{m,-}:\Hc^{m-1}\to \Hc^{ m}$ satisfy the symmetry relation
\begin{equation}\label{eq:S-sym-intro}
(S^{m,+})^*=S^{m+1,-},\qquad\text{for all $m\ge1$.}
\end{equation}
As the correlation function $g_N^2$ is expected to be small as $N\gg1$, we are led to a formal timescale separation in~\eqref{eq:BBGKY-cor-gen}: the tagged particle density $g_N^1$ has a slow dynamics, which is coupled to a fast linear subdynamics for correlations $\{g_N^m\}_{2\le m\le N}$.
Heuristically, we then expect that the latter can be relaxed on the slow timescale of the tagged particle. As the fast subdynamics is driven by linearized mean-field operators $\{iL^m\}_m$, its relaxation depends on the spectral properties of the latter. We distinguish between two cases.
\begin{enumerate}[---]
\medskip\item \emph{Non-degenerate case.}\\
Assume that the operators $\{iL^m\}_m$ have purely absolutely continuous spectrum in a neighborhood of $0$.\footnote{As we shall see, the operators $\{iL^m\}_m$ typically have a nontrivial kernel and have purely absolutely continuous spectrum only on the orthogonal complement. Provided that projections on orthogonal complements can be smuggled in the hierarchy~\eqref{eq:BBGKY-cor-gen}, this does not affect the present formal discussion.}
In that case, long-time propagators satisfy the following relaxation property:
\begin{equation}\label{eq:relax-ac}
(\e\partial_t+iL^m)h_\e=r,\quad h_\e|_{t=0}=0\qquad\implies\qquad h_\e\xrightarrow{\e\downarrow0}(0+iL^m)^{-1}r.
\end{equation}
Using this and assuming that correlations vanish initially, a formal analysis of the hierarchy~\eqref{eq:BBGKY-cor-gen} leads to expect $g_N^{m+1}=O(N^{-m})$ uniformly in time for all $m$. The equation for the tagged particle density then yields $\partial_tg_N^1=iS^{1,+}g_N^2=O(N^{-1})$, showing that the natural timescale for its evolution is $t=O(N)$. This leads us to considering the following critically-rescaled quantities,
\[\bar g_N^1(\tau)\,:=\,g_N^1(N\tau),\qquad\bar g_N^{m+1}(\tau)\,:=\,N^{m}g_N^{m+1}(N\tau),\qquad 1\le m< N.\]
In these terms, the rescaled hierarchy gets formally truncated into a closed system,
\[\left\{\begin{array}{lll}
\partial_\tau\bar g_N^1=iS^{1,+}\bar g_N^{2},\\
(\tfrac1N\partial_\tau+iL^2)\bar g_N^2=iS^{2,-}\bar g_N^{1}+O(\tfrac1N).
\end{array}\right.\]
From the relaxation property~\eqref{eq:relax-ac} for $iL^2$, the pair $(\bar g_N^1,\bar g_N^2)$ is then expected to converge to the solution $(\bar g^1,\bar g^2)$ of the limit system
\[\left\{\begin{array}{lll}
\partial_\tau\bar g^1=iS^{1,+}\bar g^{2},\\
\bar g^2=(0+iL^2)^{-1}iS^{2,-}\bar g^{1}.
\end{array}\right.\]
This can be written as a closed equation for $\bar g^1$,
\begin{equation}\label{eq:diff-gen}
\partial_\tau\bar g^1+S^{1,+}(0+iL^2)^{-1}(S^{1,+})^*\bar g^{1}=0,
\end{equation}
which should be thought of as a Fokker--Planck equation: {formally, it yields an H-theorem in form of
\[\partial_\tau\|\bar g^1\|^2_{\Ld^2(\mu_\beta)}\,=\,-2\pi\big\langle (S^{1,+})^*\bar g^1,\delta(L^2)(S^{1,+})^*\bar g^1\big\rangle_{\Ld^2(\mu_\beta)}\,\le\,0,\]
thus describing the thermalization of the tagged particle for $\tau\gg1$ (i.e.,~$t\gg N$).}

\medskip\item \emph{Degenerate case.}\\
Assume that the linearized mean-field operators $\{iL^m\}_m$ are compact: in that case, they have eigenvalues accumulating at $0$ and the resolvent $(0+iL^2)^{-1}$ in~\eqref{eq:diff-gen} no longer makes sense. Long-time propagators then have a completely different scaling and limiting behavior: instead of~\eqref{eq:relax-ac}, we have
\begin{equation}\label{eq:norelax-comp}
\e\partial_th_\e+iL^mh_\e=\e r,\quad h_\e|_{t=0}=0\qquad\implies\qquad \partial_th_\e\xrightarrow{\e\downarrow0}\pi_mr,
\end{equation}
where $\pi_m$ stands for the orthogonal projection onto the kernel of $iL^m$ (we also set \mbox{$\pi_1:=\Id$} for notational convenience below).
Using this, a formal analysis of the hierarchy~\eqref{eq:BBGKY-cor-gen} rather leads to expect $g_N^{m+1}=O(N^{-m/2})$ uniformly in time for all~$m$, so that the natural timescale for the evolution of the tagged particle density is instead~$t=O( N^{1/2})$. This leads us to consider the following critically-rescaled quantities,
\[\bar g_N^1(\tau):=g_N^1(N^\frac12\tau),\qquad\bar g_N^{m+1}(\tau):=N^{\frac m2}g_N^{m+1}(N^\frac12\tau).\]
In these terms, the rescaled hierarchy is no longer truncated into a finite closed system, and we find instead
\[\left\{\begin{array}{lll}
\partial_\tau\bar g_N^1=iS^{1,+}\bar g_N^{2},\\
(\partial_\tau+N^\frac12iL_m)\bar g_N^m=iS^{m,+}\bar g_N^{m+1}+iS^{m,-}\bar g_N^{m-1}&:&2\le m\le N.
\end{array}\right.\]
From~\eqref{eq:norelax-comp}, we deduce that the correlations $\{\bar g_N^m\}_{1\le m\le N}$ should converge to a weak solution~$\{\bar g^m\}_{m\ge1}$ of the following infinite hierarchy,
\begin{equation}\label{eq:compact-lim-hier}
\left\{\begin{array}{lll}
\partial_\tau\bar g^1=i\hat S^{1,+}\bar g^{2},\\
\partial_\tau\bar g^m=i\hat S^{m,+}\bar g^{m+1}+i\hat S^{m,-}\bar g^{m-1}&:&m\ge2,
\end{array}\right.
\end{equation}
where we have set $\hat S^{m,\pm}:=\pi_mS^{m,\pm}\pi_{m\pm1}$ for all $m$. Noting that $(\hat S^{m,+})^*=\hat S^{m+1,-}$, cf.~\eqref{eq:S-sym-intro}, this hierarchy constitutes a (formally) unitary evolution for the limiting tagged particle density~$\bar g^1$ coupled to the infinite collection of limiting correlations $\{\bar g^m\}_{m\ge2}$
on the critical timescale $\tau\sim1$ (i.e., $t\sim N^{1/2}$).
{Viewing correlations with the background as a quantized field, we consider $\bar g:=(\bar g^m)_{m\ge1}$ as an element of
\[\overline{\textstyle\bigoplus_{m\ge1}\Hc^m}=\Ld^2(\mu_\beta)\otimes\mathcal F_+(\Ld^2(\mu_\beta)),\]
where the symmetric Fock space $\mathcal F_+(\Ld^2(\mu_\beta))$ is the state space for background correlations,
and the hierarchy~\eqref{eq:compact-lim-hier} is then understood as a (formally) unitary dynamics,
\[\partial_\tau\bar g=i\hat S\bar g,\]
with effective Hamiltonian
\[\hat S\,:=\,\textstyle\bigoplus_{m\ge1}(\hat S^{m-1,+}+\hat S^{m+1,-})|_{\mathcal H^m}.\]
Given that this effective dynamics is (formally) unitary, hence conservative, thermalization cannot hold in the sense of an H-theorem on the critical timescale $\tau\sim1$ (i.e.,~$t\sim N^{1/2}$).
As we shall see in Section~\ref{sec:unif}, we actually expect the effective Hamiltonian $\hat S$ to lack self-adjointness in link with statistical closure problems, but the well-posedness of the dynamics can nevertheless be proven in some suitable sense.

For the effective dynamics~\eqref{eq:compact-lim-hier}, despite the lack of self-adjointness, we can still prove a version of a RAGE theorem: in the absence of periodic solutions, this describes the relaxation of the system to equilibrium in a weak sense as $\tau\gg1$ (i.e., $t\gg N^{1/2}$).
This weak relaxation can be identified with the so-called ``resonant relaxation'' in the physics literature~\cite{Rauch-Tremaine-96,Kocsis-Tremaine-15,Fouvry-BarOr-Chavanis-19,Magnan-Fouvry-Pichon-Chavanis-22}.}

Some intuition on the system can also be gained by noting that on {subcritical} times~\mbox{$\tau\ll1$} (i.e., $t\ll N^{1/2}$) the infinite hierarchy~\eqref{eq:compact-lim-hier} can be truncated and reduces to a linear wave-type equation for the tagged particle density,
\begin{equation}\label{eq:compact-wave}
\partial_\tau^2\bar g^1+\hat S^{1,+}(\hat S^{1,+})^*\bar g^1=O(\tau^2).
\end{equation}
\end{enumerate}

\smallskip\noindent
{Combining the different cases, we are led to formulate the following general conjecture for the dynamics of the tagged particle. We emphasize that we focus on a setting where the mean-field evolution of the tagged particle is trivial, so that the slow correction due to correlations with background particles is the leading dynamics: this crucially ensures a clean separation of timescales in the BBGKY hierarchy~\eqref{eq:BBGKY-cor-gen} and the above formal analysis does not seem easily extendable otherwise.
}

\begin{samepage}\begin{conj1}\label{conj}
Consider the slow dynamics of a tagged particle in a conservative long-range interacting particle system at thermal equilibrium, {in a setting where the mean-field evolution of the tagged particle density is trivial.}
\begin{enumerate}[(i)]
\item \emph{Non-degenerate case {--- thermalization}:}\\
If linearized mean-field operators at mean-field equilibrium have purely absolutely continuous spectrum close to $0$ (orthogonally to a possibly nontrivial kernel), then thermalization of the tagged particle occurs on the slow timescale $t=O(N)$ and is described by a Fokker--Planck type equation.
\smallskip\item \emph{Degenerate case {--- resonant relaxation}:}\\
If linearized mean-field operators at mean-field equilibrium have eigenvalues accumulating at $0$ (e.g., if these operators are compact), then the slow dynamics of the tagged particle rather takes place on the shorter timescale $t=O(N^{1/2})$ and thermalization fails. More precisely, the dynamics takes the form of a well-posed {conservative} hierarchical evolution for the tagged particle density coupled to the infinite collection of limiting correlation functions, {which implies a weak relaxation to equilibrium for~$t\gg N^{1/2}$.}
\end{enumerate}
\end{conj1}\end{samepage}

\subsection{{Point-vortex type} systems}
To illustrate the above conjecture, we focus on the example of 2D {point-vortex type systems}.
More precisely, in the $2$-dimensional plane~$\R^2$, we consider an interaction force kernel~$K$ and an external force field $F$ that satisfy the incompressibility conditions $\Div(K)=\Div(F)=0$ and the action-reaction condition $K(-x)=-K(x)$. The incompressibility allows to represent $K=-\nabla^\bot W$ and $F=-\nabla^\bot V$ for some potential fields~$V,W$, and we assume that they satisfy the following smoothness conditions,
\[W\in C^\infty_b(\R^2),\qquad V\in C^\infty_\loc(\R^2).\]
We consider the associated point-vortex dynamics
\[\partial_tx_i=F(x_i)+\tfrac1N\sum_{j=1}^NK(x_i-x_j),\qquad x_i|_{t=0}=x_i^\circ,\qquad1\le i\le N.\]
Equivalently, the Liouville equation for the $N$-point density $f_N$ reads
\begin{equation}\label{eq:Liouville-2}
\partial_tf_N+\sum_{i=1}^N\Big(F(x_i)+\tfrac1N\sum_{j=1}^NK(x_i-x_j)\Big)\cdot\nabla_if_N=0,\qquad f_N|_{t=0}=f_N^\circ.
\end{equation}
At inverse temperature $\beta>0$, the Gibbs thermal equilibrium measure for this dynamics is given by
\begin{equation}\label{eq:Gibbs}
M_{N,\beta}(x_1,\ldots,x_N)\,:=\,Z_{N,\beta}^{-1}\,\exp\bigg[-\beta\bigg(\sum_{i=1}^NV(x_i)+\tfrac1{2N}\sum_{i,j=1}^NW(x_i-x_j)\bigg)\bigg],
\end{equation}
where the constant $Z_{N,\beta}>0$ ensures $\int_{(\R^2)^N}M_{N,\beta}=1$.
{At high temperature, or more precisely} if $\beta\|W\|_{\Ld^\infty(\R^2)}$ is small enough, there is a unique associated mean-field invariant measure $\mu_\beta$ defined as the solution of the fixed-point equation
\begin{equation}\label{eq:mubeta}
\mu_\beta\,=\,Z_\beta^{-1}\,\exp\big(-\beta (V+ W\ast\mu_\beta)\big),
\end{equation}
where the constant $Z_{\beta}>0$ ensures $\int_{\R^2}\mu_{\beta}=1$.
The first marginal of the Gibbs measure~$M_{N,\beta}$ is then known to converge precisely to $\mu_\beta$ as $N\uparrow\infty$, see e.g.~\cite{Benarous-Brunaud-90}. Alternatively, recall that~$\mu_\beta$ can be constructed as the unique minimizer of the free energy functional
\[\rho~\mapsto~\int_{\R^2} \rho \log \rho +\beta \int_{\R^2} \big(V+\tfrac12 W\ast \rho\big)\rho.\]
Now consider a tagged particle (labeled `1') in a Gibbs equilibrium background: in other words, we assume that initially the $N$-point density $f_N|_{t=0}=f_N^\circ$ takes on the following guise,
\begin{equation}\label{eq:fN0-2}
f_N^\circ(x_1,\ldots,x_N)\,=\,f^{\circ}(x_1)\,\tilde M_{N,\beta}(x_2,\ldots,x_N),
\end{equation}
for some $f^{\circ}\in\Pc\cap C_c^\infty(\R^2)$,
where $\tilde M_{N,\beta}$ is the restricted Gibbs measure for the background particles,
\[\tilde M_{N,\beta}(x_2,\ldots,x_N)\,:=\,\tilde Z_{N,\beta}^{-1}\exp\bigg[-\beta\bigg(\sum_{i=2}^NV(x_i)+\tfrac1{2N}\sum_{i,j=2}^NW(x_i-x_j)\bigg)\bigg],\]
where the constant $\tilde Z_{N,\beta}>0$ ensures $\int_{(\R^2)^{N-1}}\tilde M_{N,\beta}=1$.
At later times, the tagged particle density is given by the first marginal
\[f_N^1(t,x_1)\,:=\,\int_{(\R^2)^{N-1}}f_N(t,x_1,x_2,\ldots,x_N)\,dx_2\ldots dx_N.\]

{In order to investigate the slow correction around mean field, just as in Conjecture~\ref{conj}, it is important to focus on a framework in which the mean-field evolution of the tagged particle is trivial. For that purpose, we shall focus on the {\it axisymmetric setting}, that is, we assume that the potentials $V,W$ and the initial density $f^{\circ}$ of the tagged particle are radial functions. The mean-field description of the tagged particle is then indeed trivial, $f_N^1(t)\approx f^\circ$ for $t=O(1)$, and the slow correction around mean field becomes the leading nontrivial effect.}
In this axisymmetric setting, it is most natural to focus on the radial density of the tagged particle,
\[\langle f_N^1\rangle(t,r)\,:=\,\fint_{\Sp^1}f_N^1(t,re)\,d\sigma(e),\]
where we use polar coordinates~$x=re$.
For a radial function $h$, we shall use the notation $h'(r)=\partial_rh(r)=\tfrac{x}{|x|}\cdot\nabla h(x)$ for the radial derivative.

{Note that Conjecture~\ref{conj} is formulated for any conservative long-range particle system and is in particular expected to hold for point vortices for 2D Euler, that is, for the system~\eqref{eq:Liouville-2} with the Biot--Savart law $K\sim\nabla^\bot \log |x|$. Unfortunately, the analysis conducted in the sequel only applies to sufficiently smooth interaction kernels, and it does not appear feasible at the moment to extend it anyhow to singular~$K$.}

\subsection{Main results}
{According to Conjecture~\ref{conj}, the behavior of the particle system~\eqref{eq:Liouville-2}--\eqref{eq:fN0-2} depends on the spectral nature of linearized mean-field operators at mean-field equilibrium.
As we shall see, the $m$-particle linearized operator takes the form
\[iL^m\,=\,\sum_{j=1}^m\Id^{\otimes j-1}\otimes iL_\beta\otimes\Id^{\otimes m-j}\,+\,\text{compact perturbations},\]
in terms of the single-particle operator
\[iL_\beta\,:=\,(F+K\ast\mu_\beta)\cdot\nabla\,=\,-\nabla^\bot(V+W\ast\mu_\beta)\cdot\nabla,\]
where $\mu_\beta$ stands for the mean-field equilibrium measure. As $V,W$ are radial, this reads in radial coordinates $iL_\beta=\Omega_\beta(r)\partial_\theta$ with angular velocity $\Omega_\beta(r)=-\frac1r\partial_r(V+W\ast\mu_\beta)$, which obviously has pure point spectrum if and only if the angular velocity is constant, that is, if $(V+W\ast\mu_\beta)(x)\equiv\alpha |x|^2$ for some $\alpha>0$. This condition is equivalent to~$\mu_\beta$ being Gaussian. 
For a non-Gaussian equilibrium measure, the linearized operators therefore do not have pure point spectrum, but one could in principle construct partly degenerate settings with mixed spectrum. This can be avoided under a suitable non-degeneracy condition on the angular velocity, and we focus on the following two cases:}
\begin{enumerate}[---]
\item If the mean-field equilibrium measure $\mu_\beta$ is not Gaussian, under some suitable non-degeneracy assumption, the linearized mean-field operators have purely absolutely continuous spectrum orthogonally to their kernel, cf.~Lemma~\ref{lem:nonGaus}. {By Conjecture~\ref{conj},} we then expect the tagged particle to display thermalization on the slow timescale $t=O(N)$, in line with the generic prediction of point-vortex diffusion.
\smallskip\item If the mean-field equilibrium measure $\mu_\beta$ is precisely Gaussian, that is, if it takes the form $\mu_\beta(x)\propto\exp(-C|x|^2)$ for some $C>0$,
the linearized mean-field operators are compact, cf.~Lemma~\ref{lem:Gaus}. {By Conjecture~\ref{conj}, we then expect thermalization to fail: the tagged particle evolution should take the form of a conservative hierarchical system on the different timescale $t=O(N^{1/2})$, which only describes a weak relaxation to equilibrium.}
\end{enumerate}
Below, we state our main results in both cases separately.

\subsubsection{Non-degenerate (non-Gaussian) setting}
The following result provides a description of the outset of thermalization on relatively short times~$t\ll N$ (for technical reasons, as in~\cite{DSR-21}, we further restrict to~$t\ll N^{1/20}$, which could be slightly improved by optimizing our analysis). Our current analysis does not allow to describe the tagged particle density on the thermalization timescale $t=O(N)$ itself, which is left as an open problem similarly as in~\cite{DSR-21} due to the possibility of uncontrolled echoes.
Yet, {for subcritical times $t\ll N^{1/20}$,} we justify the relevant form of the Fokker--Planck operator describing thermalization, cf.~\eqref{eq:FP-noGauss-0} below.
In contrast with~\cite{DSR-21}, the present situation is substantially more delicate since we do not have a closed formula for the resolvent of linearized mean-field operators. 
The proof is postponed to Section~\ref{sec:non-Gauss}.

\begin{theor}[Non-Gaussian setting]\label{th:main-nonGauss}
Assume that the external potential $V$ further satisfies $\nabla(V'/r)\in C^\infty_b(\R^2)$, {and that $\beta\ll1$ is small enough so that the mean-field equilibrium~$\mu_\beta$ is well-defined, cf.~\eqref{eq:mubeta}.}
Let the angular velocity $\Omega_\beta$ be the smooth radial function given by
\begin{equation}\label{eq:angular-velo-intro}
(\log\mu_\beta)'=\beta r\Omega_\beta.
\end{equation}
Consider the non-Gaussian setting when $\Omega_\beta$ is nowhere constant:
more precisely, we assume for simplicity that $\Omega_\beta$ is monotone and satisfies the following non-degeneracy condition, for some $R\in(0,\infty)$,
\begin{equation}\label{eq:nondegenerate-0}
|\Omega_\beta'(r)|\,\ge\,\tfrac1R(r\wedge1),\qquad|\Omega_\beta''(0)|\,\ge\,\tfrac1R,
\qquad\text{for all $r\ge0$},
\end{equation}
and we also assume that $\beta\ll_R1$ is small enough depending on~$V,W$ and on this constant~$R$.
Then, for any~\mbox{$\sigma\in(0,\frac1{20})$}, the subcritically-rescaled tagged particle density
\[\bar f_N^1(\tau)\,:=\,{f_N^1(N^\sigma\tau)}\]
satisfies in the radial distributional sense on $\R^+\times\R^2$,
\begin{equation}\label{eq:FP-noGauss-0}
{N^{1-\sigma}\partial_\tau\langle\bar f_N^1\rangle}~\xrightarrow{N\uparrow\infty}~\tfrac1r\partial_r \Big(ra_\beta(r)\big(\partial_r-(\log\mu_\beta)'(r) \big)f^\circ\Big),
\end{equation}
for some explicit positive scalar coefficient field $a_\beta$ (see Theorem~\ref{th:main-nonGauss-re}).
\end{theor}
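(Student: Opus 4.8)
The plan is to follow the overall scheme of~\cite{DSR-21} for the Lenard--Balescu problem, the new difficulty being the absence of any closed formula for the resolvent of the linearized mean-field operators, which forces a direct spectral analysis and is where the non-degeneracy condition~\eqref{eq:nondegenerate-0} enters. \emph{First}, starting from the BBGKY hierarchy of Lemma~\ref{lem:eqn-gNm-21} for the rescaled correlations $\{g_N^m\}$ in $\Hc^m$ (with $g_N^1=f_N^1/\mu_\beta$), one truncates it at a level $n=n(N)$ that is allowed to grow slowly with $N$ and establishes a priori estimates $\|g_N^m(t)\|_{\Hc^m}\lesssim(C/N)^{m-1}$, uniformly for $t\lesssim N^\sigma$. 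The structural inputs are the (deformed) skew-adjointness of the $iL^m$, so that they drop out of the natural energy, and the adjointness relation~\eqref{eq:S-sym-intro}, $(S^{m,+})^*=S^{m+1,-}$, which produces telescoping cancellations between successive levels; the remainder comes only from the coupling $g_N^n\leftrightarrow g_N^{n+1}$ at the top and is $O((C/N)^n)$, hence negligible. Some care is needed because the operators and the Hilbert structure depend mildly on $N$ and because the true hierarchy couples $g_N^m$ also to $g_N^{m-2}$; both are absorbed as in~\cite{DSR-21}.

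\emph{Next}, the leading effect on the tagged particle comes through the pair correlation $g_N^2$, which one expresses in terms of $g_N^1$ via the fast subdynamics. Duhamel's formula together with the bound $g_N^3=O(N^{-2})$ from the first step gives
\[
g_N^2(t)\,=\,\tfrac1N\int_0^te^{-(t-s)iL^2}\,iS^{2,-}g_N^1(s)\,ds\,+\,O(N^{-2}),
\]
and, since $g_N^1$ only varies on the scale $t\sim N$, it may be frozen inside the integral up to an $O(N^{\sigma-1})$ error. It then remains to show that the memory kernel $\int_0^te^{-siL^2}\,ds$ converges, as $t\to\infty$, to the regularized resolvent $(0+iL^2)^{-1}$ restricted to $(\ker L^2)^\perp$, with a quantitative rate --- the instance of the relaxation property~\eqref{eq:relax-ac} relevant here. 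This is precisely where Lemma~\ref{lem:nonGaus} and the condition~\eqref{eq:nondegenerate-0} are used: in the axisymmetric setting $iL^2$ decomposes over the joint angular Fourier modes of the two particles, and on each nonzero mode the transport frequency is a nonconstant function of the radii because $\Omega_\beta$ is nonconstant; since the mean-field response is a small ($\beta\ll1$) relatively-smoothing perturbation, this yields purely absolutely continuous spectrum orthogonally to $\ker L^2$ together with a decay rate for $s\mapsto\langle\phi,e^{-siL^2}\psi\rangle$ on weighted spaces, which controls the convergence of the memory kernel.

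\emph{The crux} is to make this rigorous via a limiting absorption principle for $iL^2$: one needs $(z+iL^2)^{-1}$ to extend continuously to the threshold $z=0$ as a bounded operator between weighted versions of $\Hc^2$. Lacking an explicit resolvent, I would derive this from a Mourre-type commutator estimate with conjugate operator built from the radial variable conjugate to $\Omega_\beta$, the positivity of the commutator being exactly $|\Omega_\beta'|>0$ away from the origin, while the second part of~\eqref{eq:nondegenerate-0}, $|\Omega_\beta''(0)|\ge\tfrac1R$, handles the degeneracy of $\Omega_\beta'$ at $r=0$. One must also carry along the $N$-dependent deformation of the inner product on $\Hc^2$ that restores skew-adjointness of $iL^2$ and enters the adjoint in~\eqref{eq:S-sym-intro}. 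The effective generator is then $\mathcal L_\beta:=S^{1,+}(0+iL^2)^{-1}(S^{1,+})^*$ on $\Ld^2(\mu_\beta)$, as predicted by~\eqref{eq:diff-gen}; its Hermitian part $\pi\,S^{1,+}\delta(L^2)(S^{1,+})^*$ is nonnegative, and, using the axisymmetry and the fluctuation--dissipation structure of the coefficients, $-\mu_\beta\,\mathcal L_\beta(\cdot/\mu_\beta)$ reduces, on radial functions, to the Fokker--Planck operator $f\mapsto\tfrac1r\partial_r\big(ra_\beta(\partial_r-(\log\mu_\beta)')f\big)$, which is self-adjoint and nonpositive in $\Ld^2(1/\mu_\beta)$ and manifestly annihilates $\mu_\beta$; positivity $a_\beta>0$ again follows from the non-degeneracy, and the explicit expression for $a_\beta$ is the content of Theorem~\ref{th:main-nonGauss-re}.

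\emph{Collecting these pieces}, on the timescale $t=N^\sigma\tau$ the equation $\partial_tg_N^1=iS^{1,+}g_N^2$ becomes $\partial_tg_N^1=-\tfrac1N\mathcal L_\beta g_N^1+o(N^{-1})$, and since $g_N^1$ stays within $O(N^{\sigma-1})$ of its datum $f^\circ/\mu_\beta$ one obtains $\partial_\tau\bar f_N^1=N\,\partial_tf_N^1(N^\sigma\tau)\to-\mu_\beta\,\mathcal L_\beta(f^\circ/\mu_\beta)$ in the radial distributional sense, which is~\eqref{eq:FP-noGauss-0}. As in~\cite{DSR-21}, the restriction $\sigma<\tfrac1{20}$ is imposed only to control the cumulative effect of uncontrolled echoes in the higher correlations over $[0,N^\sigma]$, and reaching the genuine thermalization timescale $\sigma=1$ remains open. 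The main obstacle is thus the limiting-absorption analysis of the two-particle operator $iL^2$ at the threshold $0$ and uniformly down to the degenerate region $r\to0$, performed without any explicit resolvent and while tracking the $N$-deformation of the Hilbert structure; this is what forces the quantitative hypotheses~\eqref{eq:nondegenerate-0} and the smallness of $\beta$, and these bounds must moreover be propagated through the hierarchy to close the correlation estimates of the first step.
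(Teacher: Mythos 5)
Your proposal captures the broad strategy --- BBGKY truncation, Duhamel, limiting absorption for $L^2_\beta$ at the threshold, and the resulting Fokker--Planck structure --- but it departs from the paper's argument in ways that create genuine gaps.

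\textbf{The a priori cumulant bound.} You assert $\|g_N^m(t)\|\lesssim (C/N)^{m-1}$ uniformly on $t\lesssim N^\sigma$, attributing it to the deformed skew-adjointness of the $iL^m$ and the telescoping relations~\eqref{eq:S-sym-intro}. But the conserved (deformed) $\Ld^2$ energy across the hierarchy gives, via the cluster expansion of $\|f_N\|^2_{\Ld^2}$ plus large-deviation comparison of $\mu_\beta^{\otimes N}$ with $M_{N,\beta}$, exactly the weaker bound $g_N^{m+1}=O(N^{-m/2})$ \emph{uniformly in all time} (Lemma~\ref{lem:est-cum-2}), and the paper even emphasizes that $N^{-1/2}$ is optimal on long timescales. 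The stronger $(C/N)^{m-1}$ scaling is the formal expectation coming from the relaxation property~\eqref{eq:relax-ac}, but proving it uniformly on $[0,N^\sigma]$ would require the limiting absorption principle for $L^2_\beta$ to be in place first --- which you introduce only later, so the logic is circular. The paper instead works with the weaker $N^{-m/2}$ bound and compensates by iterating Duhamel inside the hierarchy (Lemma~\ref{lem:exp-gn1-NG}), accumulating polynomial-in-$t$ factors from Lemma~\ref{lem:prelest-LS-re}(ii); this is precisely what forces $\sigma<\tfrac1{20}$ (not echoes, as you suggest: the restriction is $N^{(10+\delta)\sigma-1/2}\to 0$).

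\textbf{The $g_N^{m-2}$ coupling.} You remark that the coupling to $g_N^{m-2}$ is ``absorbed as in~\cite{DSR-21}.'' This is not correct here: in the point-vortex hierarchy the operator $S^{3,=}_{N,\beta}$ contributes at leading order. In the closed description (Lemma~\ref{lem:exp-gn1-NG}) the term $iS^{1,+}e^{-i(t-s)L^2}iS^{2,+}e^{-i(s-s')L^3}iS^{3,=}(\tfrac1{\mu_\beta}f^\circ)$ is $O(1)$, not a remainder, and it is precisely this term, after the explicit resolvent computation of $(iL^3_{N,\beta}+\omega)^{-1}$ on $H_\beta^{3;\circ}$, that produces the ``renormalized'' interaction $W_\beta=\sum_n(-\beta)^nW^{\ast_{\mu_\beta}(n+1)}$ in the coefficient field. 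Your candidate generator $\mathcal L_\beta=S^{1,+}(0+iL^2)^{-1}(S^{1,+})^*$ therefore does not match~\eqref{eq:def-abeta}, which pairs $H_0$ (built from $W$) against $H_\beta$ (built from $W_\beta$), not $H_0$ against $H_0$.

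\textbf{The limiting absorption principle.} You propose a Mourre-type commutator estimate for $L^2_\beta$, with positivity coming from $|\Omega_\beta'|>0$ and the quadratic control at $r=0$ from $|\Omega_\beta''(0)|\ge\tfrac1R$. That is a legitimate alternative and the non-degeneracy condition is indeed a Mourre positivity statement in spirit. The paper takes a more elementary route: in angular Fourier modes $L_\beta$ reduces to multiplication by $k\Omega_\beta(r)$, so the resolvent is an explicit principal-value integral in $r$ that is estimated via Sobolev-embedding/Hilbert-transform bounds~\eqref{eq:model-crit-est}; the perturbation $\gamma T_\beta$ (with $|\gamma|\ll_R\beta^{2/3}$) is then handled by a Neumann series~\eqref{eq:Neumann/Lresolv} using a regularizing bound for $T_\beta$. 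A Mourre argument would buy robustness but at the cost of constructing and controlling a conjugate operator compatible with the $N$-deformed inner product; the direct Fourier computation is simpler here and also makes the kernel identification and the explicit form of $a_\beta$ accessible.

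Finally, the paper routes the long-time Duhamel analysis through a Laplace transform (Lemma~\ref{lem:laplace-NG}) rather than the memory-kernel freezing you describe; that is a technical choice, but it is how the resolvent $(iL^2_\beta+0)^{-1}$ actually enters and how the $S^{3,=}$ contribution is packaged. Your positivity argument for $a_\beta$ via the deformed Hilbert structure does align with the paper's Step~4 of the proof of Theorem~\ref{th:main-nonGauss-re}.
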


The non-degeneracy assumption~\eqref{eq:nondegenerate-0} implies the radial monotonicity of the angular velocity~$\Omega_\beta$. According to~\cite{Chavanis-Lemou-07}, this is predicted to lead to a kinetic blocking, hence to the validity of (nonlinear) point-vortex diffusion only on the even slower timescale~\mbox{$t=O(N^2)$} for initially chaotic systems.
However, this blocking does not affect the validity of thermalization of a tagged particle on the timescale $t=O(N)$ as studied here.

Instead of the non-degeneracy assumption~\eqref{eq:nondegenerate-0}, we believe that our analysis should essentially hold true more generally whenever~$\Omega_\beta$ is a Morse function, but the analysis would become quite delicate close to critical points (see in particular the needed adaptation of Lemma~\ref{lem:nonGaus} in that case), and we skip it for brevity.

This kind of fast angular scale is also somewhat reminiscent of gyrokinetic approximations, especially in the case of finite Larmor radius as in~\cite{FrenodSonnen}; see also for example~\cite{Bostan}. 

{At low temperature, the mean-field equilibrium density $\mu_\beta$ might not be unique, hence the whole phenomenology of the system becomes completely different. For this reason, we do not expect the above result to hold at low temperature, that is, if $\beta$ is too large.}

\subsubsection{Degenerate (Gaussian) setting}
We turn to the special case when the equilibrium measure $\mu_\beta$ is Gaussian, that is, when potentials $V,W$ satisfy for some $R\in(0,\infty)$,
\begin{equation}\label{eq:Gaussian0}
(V+W\ast\mu_\beta)(x)=\tfrac12R|x|^2,\qquad\mu_\beta(x)=\tfrac{\beta R}{2\pi}e^{-\frac12\beta R|x|^2}.
\end{equation}
Note that for any given interaction potential $W$ we can always construct some external potential $V$ that leads to this special case.
In this setting, the angular velocity $\Omega_\beta$ defined in~\eqref{eq:angular-velo-intro} is constant and linearized mean-field operators reduce to compact operators, cf.~Lemma~\ref{lem:Gaus} below. By Conjecture~\ref{conj}, the thermalization of the tagged particle is thus expected to fail and to be replaced by a nontrivial conservative hierarchical dynamics on the shorter critical timescale $t=O(N^{1/2})$.
In accordance with~\eqref{eq:compact-wave}, we first show that {for subcritical times $t\ll N^{1/2}$ the tagged particle density can be approximated to leading order by the solution of a linear wave equation, which can be viewed as the first iterate of the expected conservative hierarchy and is in agreement with the expected lack of thermalization.}
The proof is postponed to Section~\ref{sec:Gauss}.

\begin{theor}[Gaussian case]\label{th:main-Gauss}
Assume that the interaction potential $W$ is nonzero and belongs to $\Ld^1(\R^2)$, {that $\beta\ll1$ is small enough so that the mean-field equilibrium $\mu_\beta$ is well-defined, cf.~\eqref{eq:mubeta},
and that the latter} is Gaussian in the sense of~\eqref{eq:Gaussian0} for some \mbox{$R\in(0,\infty)$}.
Then, for any~\mbox{$\sigma\in(0,\frac12)$}, the subcritically-rescaled tagged particle density
\[\bar f_N^1(\tau)\,:=\,{f_N^1(N^\sigma\tau)}\]
satisfies in the distributional sense on $\R^+\times\R^2$,
\begin{equation*}
{N^{1-2\sigma}\partial_\tau^2\bar f_N^{1}}
\,\xrightarrow{N\uparrow\infty}\,\Div( A\nabla f^\circ),
\end{equation*}
where the diffusion coefficient field $A$ is explicitly given by
\begin{equation}\label{eq:def-coeff-A-proj}
A(x)\,:=\,2\pi\int_{0}^\infty \Big(\fint_{\Sp^{1}} K(x-re)\,d\sigma(e)\Big)^{\otimes2}\,\mu_\beta(r)\,r\,dr.
\end{equation}
Note that the latter satisfies $A(0)=0$ and $A(Ox)=OA(x)O'$ for all $O\in O(2)$.
\end{theor}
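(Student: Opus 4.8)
Below is a plan for proving Theorem~\ref{th:main-Gauss}.

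The plan is to exploit that $\sigma<\tfrac12$ keeps us strictly below the critical timescale $t\sim N^{1/2}$, so that the infinite hierarchy~\eqref{eq:compact-lim-hier} effectively truncates and only the leading wave regime~\eqref{eq:compact-wave} contributes. I would work from the BBGKY hierarchy of Lemma~\ref{lem:eqn-gNm-21}: writing $g_N^1=f_N^1/\mu_\beta$ and $g^\circ:=f^\circ/\mu_\beta$, and suppressing in this sketch the closure subtleties flagged in the footnote to~\eqref{eq:BBGKY-cor-gen} (the genuine $N$-dependence of the $S^{m,\pm}$, the extra $g_N^{m-2}$ couplings, the deformation of the Hilbert structure on $\Hc^m$), its first two levels read
\[\partial_tg_N^1=iS^{1,+}g_N^2,\qquad (\partial_t+iL^2)g_N^2=iS^{2,+}g_N^3+\tfrac1NiS^{2,-}g_N^1,\qquad g_N^2|_{t=0}=0,\]
the last because $f_N^\circ$ is a pure tensor~\eqref{eq:fN0-2}. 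By Lemma~\ref{lem:Gaus}, in the Gaussian case~\eqref{eq:Gaussian0} the skew-adjoint operator $iL^2$ is compact, so $e^{-s\,iL^2}=\pi_2+\sum_{k\ge1}e^{-is\lambda_k}P_k$ with $\pi_2$ the orthogonal projection onto $\ker(iL^2)$ and real $\lambda_k\ne0$ accumulating only at $0$. The first step, which I expect to be the main difficulty, is a set of quantitative a priori estimates propagating along the hierarchy, on $t\le CN^\sigma$, not only the smallness $\|g_N^m(t)\|_{\Hc^m}\lesssim (Ct/N)^{m-1}$ but the sharper information --- visible already in the reduced hierarchy~\eqref{eq:compact-lim-hier} --- that the time-growing part of $g_N^m$ lies in $\ker(iL^m)$, so that the off-kernel components $(\Id-\pi_m)g_N^m$, and hence $iL^mg_N^m$, grow more slowly; quantifying this requires exploiting the compactness of the $iL^m$ and the smoothness of $K$ to control the spectral density of $iL^m$ near $0$, and it is exactly this that pins the threshold at $\sigma<\tfrac12$. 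These estimates are the point-vortex counterparts of the cumulant bounds of~\cite{DSR-21} and rest on skew-adjointness of the $iL^m$ and the symmetry~\eqref{eq:S-sym-intro}.

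Granting these estimates, I would reduce to the wave regime as follows. Differentiating the first equation and inserting the second gives $\partial_t^2g_N^1=S^{1,+}L^2g_N^2-\tfrac1NS^{1,+}S^{2,-}g_N^1-S^{1,+}S^{2,+}g_N^3$. Solving the $g_N^2$-equation by Duhamel with $g_N^2|_{t=0}=0$, then applying $L^2$ and integrating by parts in $s$ via $L^2e^{-(t-s)iL^2}=-i\,\partial_se^{-(t-s)iL^2}$ (using also $\partial_sg_N^1=iS^{1,+}g_N^2$), one obtains
\[L^2g_N^2(t)=\tfrac1N\big(S^{2,-}g_N^1(t)-e^{-t\,iL^2}S^{2,-}g^\circ\big)+\mathcal{E}_N(t),\]
where $g^\circ=g_N^1(0)$ appears \emph{exactly} and the error $\mathcal{E}_N$ --- collecting the $g_N^3$-feedback and a term $\propto\int_0^te^{-(t-s)iL^2}S^{2,-}S^{1,+}g_N^2(s)\,ds$ --- is controlled by the refined a priori bounds. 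The $g_N^1(t)$-dependent terms then cancel, leaving
\[\partial_t^2g_N^1(t)=-\tfrac1N(iS^{1,+})\,e^{-t\,iL^2}\,(iS^{1,+})^*g^\circ+\widetilde{\mathcal{E}}_N(t).\]
Under the rescaling $\partial_\tau^2\bar f_N^1(\tau)=N\mu_\beta\,(\partial_t^2g_N^1)(N^\sigma\tau)$, the a priori bounds make $\widetilde{\mathcal{E}}_N$ small enough that $N\,\widetilde{\mathcal{E}}_N(N^\sigma\tau)\to0$ in $\Ld^1_\loc$ exactly when $\sigma<\tfrac12$, so that
\[\partial_\tau^2\bar f_N^1(\tau)=-\mu_\beta\,(iS^{1,+})\,e^{-N^\sigma\tau\,iL^2}\,(iS^{1,+})^*g^\circ+o(1).\]

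Then I would pass to the limit via the spectral decomposition $e^{-N^\sigma\tau\,iL^2}=\pi_2+\sum_{k\ge1}e^{-iN^\sigma\tau\lambda_k}P_k$. The $\pi_2$-part is $\tau$-independent and survives; the remainder vanishes in the distributional sense in $\tau$ as $N\uparrow\infty$: testing against $\chi(\tau)\phi(x)$ produces $\sum_{k\ge1}\langle P_k(iS^{1,+})^*g^\circ,(iS^{1,+})^*\phi\rangle_{\Hc^2}\,\widehat\chi(N^\sigma\lambda_k)$, and splitting the sum at a small threshold $\delta>0$ --- using $|\widehat\chi(N^\sigma\lambda_k)|\lesssim(N^\sigma\delta)^{-1}$ for $|\lambda_k|>\delta$, Cauchy--Schwarz, and $\sum_{0<|\lambda_k|\le\delta}\|P_k(iS^{1,+})^*g^\circ\|^2\to0$ as $\delta\downarrow0$ --- shows it tends to $0$. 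Hence, distributionally on $\R^+\times\R^2$,
\[\partial_\tau^2\bar f_N^1~\xrightarrow{N\uparrow\infty}~-\mu_\beta\,(iS^{1,+})\,\pi_2\,(iS^{1,+})^*g^\circ=-\mu_\beta\,\hat S^{1,+}(\hat S^{1,+})^*g^\circ,\]
consistently with the short-time wave regime~\eqref{eq:compact-wave}.

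It then remains to identify this limit with $\Div(A\nabla f^\circ)$. From the form of $S^{1,+}$ read off the hierarchy, namely $(iS^{1,+})\Psi(x_1)=-\mu_\beta(x_1)^{-1}\Div_{x_1}\!\big(\mu_\beta(x_1)\int_{\R^2}K(x_1-x_2)\Psi(x_1,x_2)\,\mu_\beta(x_2)\,dx_2\big)$, one gets $(iS^{1,+})^*g^\circ(x_1,x_2)=K(x_1-x_2)\cdot\nabla(f^\circ/\mu_\beta)(x_1)$; by Lemma~\ref{lem:Gaus} the projection $\pi_2$ averages over the background-particle angle, so $\pi_2(iS^{1,+})^*g^\circ(x_1,x_2)=\big(\fint_{\Sp^1}K(x_1-|x_2|e)\,d\sigma(e)\big)\cdot\nabla(f^\circ/\mu_\beta)(x_1)$; and applying $iS^{1,+}$ while collapsing the $x_2$-integral in polar coordinates reproduces exactly the coefficient~\eqref{eq:def-coeff-A-proj}, giving $(iS^{1,+})\pi_2(iS^{1,+})^*g^\circ=-\mu_\beta^{-1}\Div\!\big(\mu_\beta\,A\,\nabla(f^\circ/\mu_\beta)\big)$. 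Writing $K=-\nabla^\bot W$ with $W$ radial, a reflection-symmetry argument shows $\fint_{\Sp^1}K(x-re)\,d\sigma(e)\perp x$ for every $r$, hence $A(x)x=0$; since $\nabla\log\mu_\beta\parallel x$ this gives $\mu_\beta\,A\,\nabla(f^\circ/\mu_\beta)=A\big(\nabla f^\circ-f^\circ\nabla\log\mu_\beta\big)=A\nabla f^\circ$, so the limit equals $\Div(A\nabla f^\circ)$ as claimed. The same symmetry yields $\fint_{\Sp^1}K(-re)\,d\sigma(e)=0$ whence $A(0)=0$, while $K(Oy)=\det(O)\,OK(y)$ for $O\in O(2)$ (from radiality of $W$), together with radiality of $\mu_\beta$, gives $A(Ox)=OA(x)O'$. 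As stressed above, the delicate part of this programme is the first one --- securing the refined hierarchy estimates despite the closure subtleties of~\eqref{eq:BBGKY-cor-gen}; the truncation to~\eqref{eq:compact-wave} and the computation of $A$ are then comparatively routine.
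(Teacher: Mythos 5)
Your Steps 2--4 --- the Duhamel iteration extracting the leading term $\tfrac1N\,iS^{1,+}e^{-itL^2}iS^{2,-}g^\circ$, the passage to the weak limit $e^{-iN^\sigma\tau T_\beta}\to\pi_0$ by the RAGE-type splitting of the spectral sum, and the identification of the coefficient $A$ via $(\Id\otimes\pi_0)(iS^{1,+})^*g^\circ$ together with the crucial orthogonality $A(x)x=0$ (which is what lets $\mu_\beta A\nabla(f^\circ/\mu_\beta)$ collapse to $A\nabla f^\circ$) --- line up well with the paper's Lemma~\ref{lem:approx-dt2g1-gaus} and the concluding computation. The gap is in your Step 1: you claim the threshold $\sigma<\tfrac12$ is ``pinned'' by propagating the refined property that the time-growing part of $g_N^m$ lies in $\ker(iL^m)$ and by controlling the \emph{spectral density of $iL^m$ near $0$}. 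Neither of these plays any role in the actual proof, and the spectral-density route looks genuinely hard: for a generic smooth $W$, the rate of accumulation of eigenvalues of the compact operator $T_\beta$ at $0$ is not controllable.

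What the proof actually uses in place of your Step 1 is the pair (a)+(b): (a) the \emph{uniform-in-time} a priori cumulant estimate $\|g_N^{m+1}(t)\|_{\Ld^2_\beta}\lesssim N^{-m/2}$ of Lemma~\ref{lem:est-cum-2}, obtained by the $\Ld^2$-symmetry argument with large-deviation corrections --- not by a Gronwall propagation of a $(Ct/N)^{m-1}$-type bound, which is in any case only consistent for $t=O(1)$; and (b) the \emph{almost-uniform-in-time} growth bound $\|e^{iL_{N,\beta}^m t}h\|_{H^{-s}_\beta}\lesssim_\delta\langle t\rangle^\delta\|h\|_{H^{-s}_\beta}$ for every $\delta>0$ (Lemma~\ref{lem:prelest-LS}(ii)), which rests on the regularizing property of $T_\beta$ and the weighted interpolation inequality of Lemma~\ref{lem:interpol-SobG}(ii), with no spectral information at all. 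Iterating the hierarchy $k$ times (Lemma~\ref{lem:approx-dt2g1-gaus}) then produces an error of size $(N^{-1/2}\langle T\rangle^{1+\delta})^k$, which is small for $T=N^\sigma$ precisely when $\sigma<\tfrac1{2(1+\delta)}$; sending $\delta\downarrow0$ and $k\uparrow\infty$ recovers the whole range $\sigma<\tfrac12$. Compactness of $T_\beta$ enters only at the final limiting step, as the purely qualitative weak-$*$ convergence $e^{-iN^\sigma\tau T_\beta}\to\pi_0$. So you should replace your Step 1 by Lemma~\ref{lem:est-cum-2} plus Lemma~\ref{lem:prelest-LS}(ii) and the hierarchy iteration; the off-kernel structure of $g_N^m$ never needs to be tracked.
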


{On the critical timescale $t=O(N^{1/2})$, we do not expect a close equation on $f_N^1$, but a hierarchy of equations coupling $f_N^1$ with all (rescaled) correlations. The wave equation above is just the first iterate and can only be a good approximation for $t\ll N^{1/2}$. The justification of the} expected hierarchical description~\eqref{eq:compact-lim-hier} on the critical timescale is not achievable at the moment by our techniques and is left as an open problem in general.
To get further in this direction, we focus on the extreme case $\beta=0$, which corresponds to the simplest setting of a tagged particle in a uniform equilibrium background \mbox{$\mu_0=$ cst,} say on the torus~$\T^2$. More precisely, instead of~\eqref{eq:Liouville-2}--\eqref{eq:fN0-2}, let us now consider a translation-invariant point-vortex type system on $\T^2$ with vanishing external force $F\equiv0$ and with initial condition $f_N^\circ(x_1,\ldots,x_N)=f^\circ(x_1)$ for some $f^\circ\in\Pc\cap C^\infty_b(\T^2)$.
In this uniform setting, linearized mean-field operators happen to vanish identically, which allows to push our rigorous analysis further and fully describe the slow dynamics of the tagged particle on the critical timescale $t=O(N^{1/2})$.
We refer to Section~\ref{sec:unif} for a more detailed statement and the proof.

\begin{theor}\label{th:main-unif}
Consider the above uniform setting, describing a tagged particle coupled to an initially uniformly distributed background on the torus $\T^2$ (see more precisely~\eqref{eq:Liouville}--\eqref{eq:initial-unif} below).
Then, the critically-rescaled tagged particle density
\[\bar f^1_N(\tau)\,:=\,f_N^1(N^\frac12\tau)\]
converges weakly-* in $\Ld^\infty(\R^+;\Ld^2(\T^2))$ to the unique solution $\bar f^1$ of an effective equation coupled to the set $\bar g=\{\bar g^m\}_{m\ge2}$ of limiting rescaled background correlations: more precisely, on the Hilbert space $\Hc:=\Ld^2(\T^2)\otimes\mathcal F_+(\Ld^2(\T^2))$, where $\mathcal F_+(\Ld^2(\T^2))$ is the bosonic Fock space for background correlations,
we have in the strong sense
\begin{equation}\label{eq:lim-hier-0}
\partial_\tau({\bar f^1},{\bar g})\,=\,iS^*({\bar f^1},{\bar g}),\qquad ({\bar f^1},{\bar g})|_{\tau=0}=({f^\circ},0),
\end{equation}
where $S^*$ is the adjoint of some explicit densely-defined symmetric operator $S$ on $\Hc$ (see Theorem~\ref{th:unif-wave-2}).
Although the limiting Hamiltonian $S^*$ is expected to lack self-adjointness and might not even generate a semigroup, equation~\eqref{eq:lim-hier-0} is well-posed in $C^2_b(\R^+;\Hc)$ (see Proposition~\ref{prop:unique}) and its solution satisfies the following RAGE theorem (see Proposition~\ref{th:RAGE}): denoting by $\{\lambda_k\}_k$ the set of real eigenvalues of~$S^*$, there exists a family of positive contractions $\{Q_k\}_k$ on~$\Ld^2(\T^2)$ such that we can decompose
\[\bar f^1(\tau)\,=\,\sum_ke^{i\tau\lambda_k}Q_kf^\circ+\bar R(\tau),\]
where the series converges in the weak operator topology and where the remainder satisfies for all $h\in\Ld^2(\T^2)$,
\[\lim_{T\uparrow\infty}\frac1T\int_0^T|\langle h,\bar R(\tau)\rangle_{\Ld^2(\T^2)}|^2\,d\tau\,=\,0.\]
{In particular, if the only real eigenvalue of $S^*$ is $0$ and if the kernel reduces to constants, then this RAGE theorem implies the weak relaxation to equilibrium: $\bar f^1(\tau)\cvf1$ weakly in~$\Ld^2(\T^2)$ as $\tau\uparrow\infty$ in Ces\`aro mean.}
\end{theor}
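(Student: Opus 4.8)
The plan is to derive the limiting hierarchy from the finite BBGKY system by a compactness-uniqueness argument, exploiting the simplification that in the uniform setting the linearized mean-field operators $iL^m$ vanish identically, so the fast oscillatory factor $e^{iN^{1/2}\tau L^m}$ disappears entirely and \eqref{eq:norelax-comp} becomes trivial. First I would set up the rescaled hierarchy for $\bar g_N=\{\bar g_N^m\}_{m\ge1}$ with $\bar g_N^m(\tau)=N^{(m-1)/2}g_N^m(N^{1/2}\tau)$, truncate at level $m=N$, and record the precise operators $S^{m,\pm}$ (together with the possible $g_N^{m-2}$ term flagged in the footnote, which in the uniform case turns out to vanish or be lower order) so that the hierarchy reads $\partial_\tau\bar g_N^m=iS^{m,+}\bar g_N^{m+1}+iS^{m,-}\bar g_N^{m-1}+(\text{error})$. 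The key a priori estimate is a uniform-in-$N$ bound on $\sum_m\|\bar g_N^m(\tau)\|_{\Hc^m}^2$, i.e. on the Fock-space norm; this should follow from an energy/entropy-type identity for the BBGKY hierarchy combined with the equilibrium structure of the background, giving control of correlations at the natural rate $g_N^m=O(N^{-(m-1)/2})$. From this bound one extracts, by Banach–Alaoglu, a weak-$*$ limit $({\bar f^1},{\bar g})$ along a subsequence and passes to the limit in each equation of the (now $N$-independent in its coefficients) truncated hierarchy, obtaining that the limit solves \eqref{eq:lim-hier-0} with $S$ the operator assembling the $S^{m,\pm}$ on $\Hc$.

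The next block of work is the well-posedness and regularity statement $({\bar f^1},{\bar g})\in C^2_b(\R^+;\Hc)$ of Proposition~\ref{prop:unique}. Since $S$ is only symmetric and not assumed self-adjoint, one cannot invoke Stone's theorem; instead I would prove uniqueness directly by a Gr\"onwall argument at the level of the hierarchy. The point is that although $S$ may fail to be essentially self-adjoint, the equation $\partial_\tau u=iS^*u$ with $u(0)$ in a suitable domain still has at most one solution in $C^1_b(\R^+;\Hc)$: differentiating $\|u(\tau)\|_\Hc^2$ and using the symmetry $(S^{m,+})^*=S^{m+1,-}$ one gets a telescoping cancellation up to boundary terms at infinite $m$, which are killed by the decay encoded in membership in the domain; the $C^2$ regularity follows by differentiating the equation once more and noting $S^*$ maps the relevant domain into $\Hc$. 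Existence of the solution is already supplied by the limit constructed in the first part, and uniqueness then also upgrades the subsequential convergence to full convergence as $N\uparrow\infty$.

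Finally, for the RAGE theorem (Proposition~\ref{th:RAGE}) I would work on the full space $\Hc$ with the (non-self-adjoint but symmetric, hence with real spectrum on the real axis and a reasonable spectral theory) generator $iS^*$. The strategy is the classical one: decompose $\Hc$ into the closed span of real eigenvectors of $S^*$ and its complement; on the eigenvector part the evolution is quasi-periodic, $e^{i\tau\lambda_k}$ times the spectral projection, and projecting back onto the tagged-particle factor $\Ld^2(\T^2)$ defines the positive contractions $Q_k$ (positivity coming from the fact that these are compressions of orthogonal-type projections); on the complementary part one shows the time-averaged matrix coefficients $\frac1T\int_0^T|\langle h,\bar R(\tau)\rangle|^2\,d\tau$ vanish using the spectral measure having no atoms on the real axis together with Wiener's lemma / the mean ergodic theorem adapted to the possibly non-unitary but norm-bounded flow. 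The main obstacle throughout is precisely the lack of self-adjointness of $S$: one must either establish that $S^*$ is still well enough behaved (a scalar spectral measure, no residual spectrum on $\R$, bounded evolution) to run these arguments, or else find substitutes — I expect the cleanest route is to show a priori that the constructed solution stays in a fixed bounded ball of $\Hc$ (from the energy bound, which survives the limit), so the flow it generates is automatically norm-bounded, and then the Jacobi-matrix structure of $S$ (tridiagonal in the Fock grading, with the symmetry \eqref{eq:S-sym-intro}) gives enough self-adjoint-like structure on the "bulk" to carry the RAGE decomposition, the failure of self-adjointness manifesting only through the possible extra real eigenvalues $\{\lambda_k\}$ that are explicitly carved out.
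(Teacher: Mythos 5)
Your sketch of the first block (extracting the limit hierarchy) is essentially the paper's route: the a priori $N^{-(m-1)/2}$ estimate comes from the $\Ld^2$-conservation of $f_N$ combined with the cluster-expansion identity $\int|f_N|^2=\sum_m\binom{N-1}{m-1}\int|g_N^m|^2$ (Lemma~\ref{lem:est-cum}), and the $g_N^{m-2}$ term indeed vanishes in the uniform setting (Lemma~\ref{lem:eqn-gNm}). The remaining two blocks, however, have genuine gaps.

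For uniqueness, the Gr\"onwall/telescoping argument you propose cannot close. Differentiating $\|\pi_{\le M}u(\tau)\|_\Hc^2$ and using $(S^{m,+})^*=S^{m+1,-}$ does telescope to the single boundary term $2\Re\langle u^M,iS^{M,+}u^{M+1}\rangle$, but there is no way to absorb it from membership in $\Dc(S^*)$ alone: that only controls $\sum_M\|(S^*u)^M\|^2=\sum_M\|S^{M,+}u^{M+1}+S^{M,-}u^{M-1}\|^2$, which does not isolate $\|S^{M,+}u^{M+1}\|$ because of possible cancellations. Indeed, if this argument worked it would prove that $S$ generates a unitary group on $\overline{\Cc}$, hence is essentially self-adjoint --- which the paper explicitly expects to be false (cubic expressions in creation/annihilation operators are the canonical counterexamples). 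The paper sidesteps this by noticing that $S\Cc\subset\Cc$, so $S^2$ is a \emph{positive} symmetric operator on $\Cc$ and therefore has a canonical Friedrichs self-adjoint extension $\Lc_0$ regardless of whether $S$ does. A $C^2_b$ solution of $\partial_\tau u=iS^*u$ is shown to lie in $\Dc(\Lc_0)$ and to solve the self-adjoint wave equation $\partial_\tau^2 u+\Lc_0 u=0$, which is uniquely solvable --- this is Lemma~\ref{lem:unique-S}, and the $C^2$ hypothesis (rather than $C^1$) is essential, exactly because several distinct self-adjoint extensions of $S$ can produce distinct $C^1$ solutions but at most one is $C^2$ in this sense.

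For the RAGE theorem, you correctly identify the obstacle (no spectral measure for $S^*$) but do not supply the mechanism to overcome it. The paper's key idea is missing from your outline: instead of passing to the limit in the BBGKY hierarchy, Proposition~\ref{prop:unique} constructs the evolution as the weak-operator limit of unitary groups $U_N=e^{i\tau S_N}$ generated by the self-adjoint truncations $S_N:=\pi_{\le N}S\pi_{\le N}$. Their spectral measures $E_N$ converge weakly to a positive operator-valued measure $E$ which, by Naimark's dilation theorem, is the compression $E=V\hat E V^*$ of a genuine spectral measure $\hat E$ on an enlarged Hilbert space $\hat\Hc$. The classical RAGE theorem is then applied to the self-adjoint dilation $\hat S=\int\lambda\,d\hat E$, and the contractions $P_k=V\hat P_kV^*$ are read off; one still has to verify that $\operatorname{ran}(P_k)$ sits in the eigenspace of $S^*$ and that the partial orthogonality for $|\lambda_k|\ne|\lambda_l|$ holds (again via the Friedrichs extension of $S^2$). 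Your ``decompose $\Hc$ into the span of eigenvectors and its complement plus Wiener's lemma'' has no foundation without this dilation, since $S^*$ provides neither an orthogonal decomposition nor a scalar spectral measure on $\Hc$ itself.
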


{\subsection{Comparison to Lenard--Balescu thermalization for plasmas}
Let us briefly emphasize the differences of the present work for point-vortex type systems compared to Lenard--Balescu thermalization for plasmas as studied in~\cite{DSR-21}. For plasma type systems, we consider point particles evolving according to Newton's equations with pairwise interactions deriving from a potential, say on the torus~$\T^d$. The corresponding mean-field equilibrium~$\mu_\beta$ is Maxwellian and the single-particle linearized mean-field operator reduces to the transport operator $iL_\beta=v\cdot\nabla_x$, which obviously has absolutely continuous spectrum orthogonally to its kernel. For many-particle linearized operators, compact corrections are added to transport, but linear Landau damping ensures that at Maxwellian equilibrium the spectrum remains absolutely continuous, see e.g.~\cite[Lemma~4.2]{DSR-21}. Therefore, plasma type systems always fit in the non-degenerate setting of Conjecture~\ref{conj}(i) and thermalization is always expected to hold, as was indeed first predicted in 1960 by Balescu and Lenard~\cite{Balescu-60,Lenard-60,Guernsey-62,TGM-64}.
In this plasma setting, we have obtained in~\cite{DSR-21} a result similar to Theorem~\ref{th:main-nonGauss}, justifying the outset of thermalization on subcritical timescales, while the rigorous understanding of the critical timescale was left open as here due to the possibility of uncontrolled echoes.

For point-vortex type systems in the non-degenerate setting, although our approach is broadly the same as in~\cite{DSR-21}, our analysis is made quite more delicate technically due to the lack of a closed formula for the resolvent of linearized mean-field operators: compare indeed Lemma~\ref{lem:nonGaus} below with \cite[Lemma~4.2]{DSR-21}.

As the degenerate case of Conjecture~\ref{conj}(ii) never occurs for plasmas, there is no equivalent of Theorems~\ref{th:main-Gauss} and~\ref{th:main-unif} in~\cite{DSR-21}, and those are indeed based on completely different scaling arguments. However note that, for plasma systems in the presence of a strong external magnetic field, a gyrokinetic approximation would reduce the problem to the same as for point-vortex type systems, thus making degeneracy possible: this was studied in~\cite{Taylor-McNamara-71} in the physics literature and we believe that one could easily adapt Theorems~\ref{th:main-Gauss} and~\ref{th:main-unif} to that case.
}

\section{Rigorous BBGKY analysis}

The starting point of our analysis is the BBGKY hierarchy of equations for correlation functions, combined with rigorous a priori estimates on the latter.

\subsection{BBGKY hierarchy}
We denote by~$\{f_N^m\}_{1\le m\le N}$ the marginals of the $N$-point density $f_N$, that is,
\begin{equation}\label{eq:def-fNm}
f_N^m(t,x_1,\ldots,x_m)\,:=\,\int_{(\R^2)^{N-m}}f_N(t,x_1,\ldots,x_m,x_{m+1},\ldots,x_N)\,dx_{m+1}\ldots dx_N.
\end{equation}
As background particles with labels $2,\ldots,N$ are exchangeable initially, cf.~\eqref{eq:fN0-2}, they remain so over time, hence the marginal $f_N^m$ is symmetric in its last $m-1$ variables.
Upon partial integration, the Liouville equation~\eqref{eq:Liouville-2} yields the following BBGKY hierarchy of equations for marginals,
\begin{multline}\label{eq:BBGKY-2}
\partial_tf_N^m+\sum_{i=1}^m\Big(F(x_i)+\tfrac1N\sum_{j=1}^mK(x_i-x_j)\Big)\cdot\nabla_if_N^m\\
+\tfrac{N-m}N\sum_{i=1}^m\int_{\R^2}K(x_i-x_*)\cdot\nabla_if_N^{m+1}(x_1,\ldots,x_m,x_*)\,dx_*=0.
\end{multline}
We recall that the mean-field approximation is obtained formally by assuming that the tagged particle remains approximately independent of the background particles and that the latter remain approximately at equilibrium. As the first marginal of the Gibbs ensemble converges to~$\mu_\beta$ as $N\uparrow\infty$, this means that we expect to approximate
\begin{equation}\label{eq:fN2-MF}
f_N^2\,\approx\, f_N^1\otimes\mu_\beta.
\end{equation}
Inserting this in the above BBGKY equation for $f_N^1$, we find that $f_N^1$ should stay close to the solution $f^1$ of the linearized mean-field equation
\begin{equation}\label{eq:lin-MFeqn}
\partial_tf^1+(F+K\ast\mu_\beta)\cdot\nabla f^1=0,\qquad f^1|_{t=0}=f^\circ.
\end{equation}
In the present axisymmetric setting, as $f^\circ,\mu_\beta$ are radial and as $F,K$ are orthogonal gradients of radial functions, this mean-field evolution is trivial: $f^1(t)=f^\circ$ for all $t\ge0$.

\subsection{Correlation functions or cumulants}\label{sec:correlations}
For notational convenience, we denote by
\[g_N^1\,:=\,\tfrac1{\mu_\beta}f_N^1\]
the ratio of the tagged particle density by the mean-field equilibrium.
As the linearized mean-field evolution~\eqref{eq:lin-MFeqn} is trivial in the axisymmetric setting,
we aim to characterize the next-order correction, which amounts to the defect in the approximation~\eqref{eq:fN2-MF}.
This brings us to define
\[g_N^2\,:=\,\tfrac1{\mu_\beta^{\otimes2}}(f_N^2-f_N^1\otimes\mu_\beta),\]
which captures the correlation of the tagged particle with a typical background particle. Note that we take the convention to define correlation functions as divided by the mean-field equilibrium.
More generally, we introduce all higher-order correlation functions $\{g_N^m\}_{1\le m\le N}$ for the tagged particle with respect to the mean-field background equilibrium $\mu_\beta^{\otimes N-1}$:
these correlation functions are defined so as to satisfy the following cluster expansions for marginals,
\begin{equation}\label{eq:correl-def-clust-2}
f_N^m(t,x_1,\ldots,x_m)\,=\,\mu_\beta^{\otimes m}(x_1,\ldots,x_m)\sum_{n=1}^m\sum_{\sigma\in P_{n-1}^{m-1}}g_N^n(t,x_1,x_\sigma),\qquad1\le m\le N,
\end{equation}
where $P_{n-1}^{m-1}$ stands for the collection of subsets of $\{2,\ldots,m\}$ with cardinality $n-1$ and where for an index subset $\sigma:=\{i_1,\ldots,i_{n-1}\}$ we have set $x_\sigma:=(x_{i_1},\ldots,x_{i_{n-1}})$.
For all $m$, the correlation function $g_N^m$ is uniquely chosen to be symmetric in its last $m-1$ variables and to satisfy $\int_{\R^2}g_N^m(t,x_1,\ldots,x_m)\,\mu_\beta(x_j)\,dx_j=0$ for all $2\le j\le m$.
More explicitly, the above relations can be inverted and the correlation functions are given by
\begin{equation}\label{eq:correl-def-2}
g_N^m(t,x_1,\ldots,x_m):=\sum_{n=1}^m(-1)^{m-n}\sum_{\sigma\in P_{n-1}^{m-1}}\tfrac{f_N^{n}}{\mu_\beta^{\otimes n}}(t,x_1,x_\sigma).
\end{equation}
For instance,
\begin{equation*}
g_N^3(t,x_1,x_2,x_3)\,=\,\tfrac{f_N^3}{\mu_\beta^{\otimes3}}(t,x_1,x_2,x_3)-\tfrac{f_N^2}{\mu_\beta^{\otimes2}}(t,x_1,x_2)-\tfrac{f_N^2}{\mu_\beta^{\otimes2}}(t,x_1,x_3)+\tfrac{f_N^1}{\mu_\beta}(t,x_1).
\end{equation*}
We may then reformulate the BBGKY hierarchy~\eqref{eq:BBGKY-2} as a hierarchy of equations on correlation functions.
We shall see in Lemma~\ref{lem:eqn-gNm-21-G} that these equations get drastically simplified in the specific case when the mean-field equilibrium is Gaussian.

\begingroup\allowdisplaybreaks
\begin{lem}[BBGKY hierarchy for correlations]\label{lem:eqn-gNm-21}
For all $1\le m\le N$,
\begin{equation}\label{eq:eqn-gNm-21}
\partial_t g_N^m+iL_{N,\beta}^mg_N^m
\,=\,iS_{N,\beta}^{m,+}g_N^{m+1}+\tfrac1N\Big(iS_{N,\beta}^{m,\circ}g_N^{m}+iS_{N,\beta}^{m,-}g_N^{m-1}+iS_{N,\beta}^{m,=}g_N^{m-2}\Big),
\end{equation}
where we have set for notational  convenience $g^r_N=0$ for $r<1$ or $r>N$, and where we have defined the operators
\begin{eqnarray*}
iL_{N,\beta}^mh^m&:=&\sum_{j=1}^m(F+K\ast\mu_\beta)(x_j)\cdot\nabla_{j}h^{m}\\
&+&\tfrac{N-m}N\sum_{j=2}^m(\nabla\log\mu_\beta)(x_j)\cdot\int_{\R^2} K(x_j-x_*)\,h^{m}(x_{[m]\setminus\{j\}},x_*)\,\mu_\beta(x_*)\,dx_*,\\
iS_{N,\beta}^{m,+}h^{m+1}&:=&-\,\tfrac{N-m}N\sum_{j=1}^m\int_{\R^2} K(x_j-x_*)\cdot \nabla_{j;\beta}h^{m+1}(x_{[m]},x_*)\,\mu_\beta(x_*)\,dx_*,\\
iS_{N,\beta}^{m,\circ}h^{m}&:=&-\,\sum_{i,j=1}^m\Big(K(x_i-x_j)-K\ast\mu_\beta(x_i)\Big)\cdot\nabla_{i;\beta}h^m\\
&+&\sum_{i=1}^m\sum_{2\le j\le m\atop i\ne j}\int_{\R^2}K(x_i-x_*)\cdot\nabla_{i;\beta}h^{m}(x_{[m]\setminus\{j\}},x_*)\,\mu_\beta(x_*)\,dx_*,\\
iS_{N,\beta}^{m,-}h^{m-1}&:=&-\,\sum_{i=1}^m\sum_{2\le j\le m\atop i\ne j}\Big(K(x_i-x_j)-(K\ast\mu_\beta)(x_i)\Big)\cdot\nabla_{i;\beta}h^{m-1}(x_{[m]\setminus\{j\}})\\
&-&\sum_{i=2}^m\sum_{j=1}^mK(x_i-x_j)\cdot(\nabla\log\mu_\beta)(x_i)\,h^{m-1}(x_{[m]\setminus\{i\}})\\
&+&\sum_{2\le i,j\le m}^{\ne}\int_{\R^2} K(x_i-x_*)\cdot (\nabla\log\mu_\beta)(x_i)\,h^{m-1}(x_{[m]\setminus\{i,j\}},x_*)\,\mu_\beta(x_*)\,dx_*,\\
iS_{N,\beta}^{m,=}h^{m-2}&:=&-\,\sum_{2\le i,j\le m}^{\ne}K(x_i-x_j)\cdot(\nabla\log\mu_\beta)(x_i)h^{m-2}(x_{[m]\setminus\{i,j\}}),
\end{eqnarray*}
with the short-hand notations $[m]:=\{1,\ldots,m\}$ and $\nabla_{i;\beta}:=\nabla_i+(\nabla\log\mu_\beta)(x_i)$.
\end{lem}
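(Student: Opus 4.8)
The plan is to derive the equation for $g_N^m$ by brute-force substitution of the cluster expansion~\eqref{eq:correl-def-2} into the BBGKY hierarchy~\eqref{eq:BBGKY-2} for the marginals, and then to reorganize the resulting terms according to the number of arguments they involve. The guiding principle is that every term in~\eqref{eq:BBGKY-2} carries a factor of $K(x_i-x_j)$ or $F(x_i)+K\ast\mu_\beta$; when rewritten in terms of the $g_N^n$'s via~\eqref{eq:correl-def-2}, these factors will either (a) stay attached to two particle labels already present in the correlation, or (b) act through $\mu_\beta$ and collapse a label by the normalization constraint $\int g_N^n\,\mu_\beta(x_j)\,dx_j=0$. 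Case~(a) produces terms at the same level $m$ or shifted by one, while case~(b) is where the $\nabla\log\mu_\beta$ factors and the level-lowering operators $S^{m,-}$, $S^{m,=}$ come from. The $\tfrac{N-m}{N}$ prefactors are simply the combinatorial weights of choosing the ``extra'' particle $x_*$ among the $N-m$ background particles not appearing in the $m$-particle correlation.

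First I would rewrite the marginal evolution. Start from~\eqref{eq:BBGKY-2} for $f_N^m$, substitute $f_N^n=\mu_\beta^{\otimes n}\sum_{\sigma}g_N^{|\sigma|+1}(\cdot)$ from~\eqref{eq:correl-def-clust-2} into each occurrence, and divide through by $\mu_\beta^{\otimes m}$. The transport term $\sum_i(F(x_i)+\tfrac1N\sum_j K(x_i-x_j))\cdot\nabla_i f_N^m$ becomes, after dividing by $\mu_\beta^{\otimes m}$, a sum of a transport part acting on $g_N^m$ and a zeroth-order part picking up $\nabla\log\mu_\beta$ from differentiating the weight; this is precisely the mechanism converting $\nabla_i$ into $\nabla_{i;\beta}=\nabla_i+(\nabla\log\mu_\beta)(x_i)$. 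The collision term $\tfrac{N-m}N\sum_i\int K(x_i-x_*)\cdot\nabla_i f_N^{m+1}\,dx_*$ contributes the main level-raising operator: writing $f_N^{m+1}$ via the cluster expansion, the ``connected'' pieces where $x_*$ is correlated to the block give $S_{N,\beta}^{m,+}g_N^{m+1}$, while the ``disconnected'' pieces where $x_*$ factors out as $\mu_\beta(x_*)$ times a lower correlation feed back into lower levels. The hard bookkeeping is to verify that, after all cancellations, the net effect on $g_N^m$ itself is exactly $-iL_{N,\beta}^m g_N^m$ plus the $\tfrac1N$-corrections.

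The key steps in order: \textbf{(1)} establish the ``derivation rule'', i.e. how $\tfrac1{\mu_\beta^{\otimes m}}(F(x_i)+K\ast\mu_\beta(x_i))\cdot\nabla_i(\mu_\beta^{\otimes m}h)$ simplifies — here one uses that $(F+K\ast\mu_\beta)\cdot\nabla\mu_\beta=-\nabla^\bot(V+W\ast\mu_\beta)\cdot\nabla\mu_\beta=0$ by radiality and incompressibility, so the genuine transport part of $L$ drops out in the axisymmetric setting and only the $K\ast\mu_\beta$ convolution-type piece of $iL_{N,\beta}^m$ survives (this is what makes the second line of $iL^m$ appear); \textbf{(2)} expand $f_N^{m+1}$ and $f_N^{m-1}$ in the collision term of~\eqref{eq:BBGKY-2} via~\eqref{eq:correl-def-clust-2}, separating, for each pair $(i,*)$, the subset $\sigma$ into those containing the collapsing index and those not; \textbf{(3)} use $\int g_N^n\,\mu_\beta(x_j)\,dx_j=0$ to kill all terms where an integrated variable appears in a $g_N^n$ with $n\ge2$, which is what forces the surviving terms to carry $\nabla\log\mu_\beta$ or to reduce the level; \textbf{(4)} collect terms by target level ($m+1$, $m$, $m-1$, $m-2$) and match them against the stated formulas for $S^{m,+},S^{m,\circ},S^{m,-},S^{m,=}$; \textbf{(5)} check the edge cases $m=1$, $m=2$, $m=N-1$, $m=N$, where some sums are empty and the convention $g_N^r=0$ for $r<1$ or $r>N$ is invoked, and confirm that $iL_{N,\beta}^1=0$ so that $\partial_t g_N^1=iS_{N,\beta}^{1,+}g_N^2$ as expected.

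I expect the \emph{main obstacle} to be step~(4): the combinatorial reorganization, specifically ensuring that every term generated by substitution lands with the correct sign and the correct symmetrization among the last $m-1$ variables, and that no spurious terms survive — in particular that the pieces with two $\nabla\log\mu_\beta$ factors or with $K\ast\mu_\beta$ acting at the wrong level exactly cancel. A secondary subtlety is that~\eqref{eq:correl-def-2} is a signed inclusion–exclusion sum, so one must be careful that the alternating signs $(-1)^{m-n}$ interact correctly with the sign of $K$ in the collision term; the cleanest way to control this is to work instead with the forward cluster expansion~\eqref{eq:correl-def-clust-2}, plug it into~\eqref{eq:BBGKY-2}, and then uniqueness of the decomposition (symmetry in the last $m-1$ variables together with the zero-mean constraints) pins down each $iS_{N,\beta}^{m,\bullet}$ without ever inverting the combinatorics by hand.
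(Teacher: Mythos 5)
Your overall route — substitute the cluster expansion into the BBGKY equation for marginals, then reorganize by ``level'' — is exactly what the paper does, and your step~(5) on edge cases and your correct identification of the combinatorial reorganization as the main obstacle are on target. However, your step~(1) contains a genuine error that a correct write-up would have to fix, and a secondary claim is also wrong.

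First, you assert that ``the genuine transport part of~$L$ drops out in the axisymmetric setting and only the $K\ast\mu_\beta$ convolution-type piece of $iL_{N,\beta}^m$ survives.'' This is backwards. The transport term $(F+K\ast\mu_\beta)(x_j)\cdot\nabla_j$ \emph{survives} and is exactly the first line of $iL_{N,\beta}^m$ in the statement. What the cancellation $(F+K\ast\mu_\beta)\cdot\nabla\log\mu_\beta=0$ (from radiality of $V$ and $W$) eliminates is the would-be zeroth-order correction that would turn $\nabla_j$ into $\nabla_{j;\beta}$ inside that transport term; this is precisely the role of the ``various cancellations'' the paper invokes at the very end of its proof. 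The second (convolution-type) line of $iL_{N,\beta}^m$ is not a consequence of that cancellation at all: it arises from the disconnected pieces of the collision term $\tfrac{N-m}{N}\sum_i\int K(x_i-x_*)\cdot\nabla_i f_N^{m+1}$, where $\nabla_i$ hits $\mu_\beta(x_*)$ and the surviving factor of $\nabla\log\mu_\beta(x_i)$ reattaches to the $g_N^m$-level after the telescoping.

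Second, the claim ``$iL_{N,\beta}^1=0$'' is false at the operator level: from the displayed formula, $iL_{N,\beta}^1h^1=(F+K\ast\mu_\beta)(x_1)\cdot\nabla_1 h^1$, which vanishes only upon taking the angular average against radial $g_N^1$ (as the paper uses later, e.g.\ in Lemma~\ref{lem:exp-gn1-NG}), or in the Gaussian case where a separate $O(2)$-invariance argument applies (Lemma~\ref{lem:eqn-gNm-21-G}). Finally, your closing suggestion to sidestep the inversion combinatorics via uniqueness of the cumulant decomposition is a reasonable alternative in principle, but the paper instead carries out the telescoping explicitly using the binomial identity $\sum_{j=0}^p(-1)^{p-j}\binom{p}{j}=\delta_{p=0}$; your shortcut is not developed far enough to confirm it reproduces all the $\tfrac1N$-level operators $S^{m,\circ},S^{m,-},S^{m,=}$ with the correct coefficients, which is exactly where the bookkeeping is delicate.
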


\begin{proof}
By the definition~\eqref{eq:correl-def-2} of correlation functions, the BBGKY equations~\eqref{eq:BBGKY-2} yield
\begin{multline}\label{eq:eqn-gNm-0-2}
\partial_t g_N^m
+F(x_1)\cdot\nabla_{1;\beta}g_N^m
\,=\,-\sum_{j=2}^mF(x_j)\cdot\nabla_{j;\beta}\sum_{n=1}^m(-1)^{m-n}\sum_{\sigma\in P_{n-1}^{m-1}}\mathds1_{j\in\sigma}\tfrac{f_N^n}{\mu_\beta^{\otimes n}}(x_1,x_\sigma)\\
-\tfrac1N\sum_{j=2}^mK(x_1-x_j)\cdot(\nabla_{1;\beta}-\nabla_{j;\beta})\sum_{n=1}^m(-1)^{m-n}\sum_{\sigma\in P_{n-1}^{m-1}}\mathds1_{j\in\sigma}\tfrac{f_N^n}{\mu_\beta^{\otimes n}}(x_1,x_\sigma)\\
-\tfrac1N\sum_{i,j=2}^mK(x_i-x_j)\cdot\nabla_{i;\beta}\sum_{n=1}^m(-1)^{m-n}\sum_{\sigma\in P_{n-1}^{m-1}}\mathds1_{i,j\in\sigma}\tfrac{f_N^n}{\mu_\beta^{\otimes n}}(x_1,x_\sigma)\\
-\sum_{n=1}^m(-1)^{m-n}\tfrac{N-n}{N}\sum_{\sigma\in P_{n-1}^{m-1}}\int_{\R^2}K(x_1-x_*)\cdot\Big(\nabla_{1;\beta}\tfrac{f_N^{n+1}}{\mu_\beta^{\otimes n+1}}\Big)(x_1,x_\sigma,x_*)\,\mu_\beta(x_*)\,dx_*\\
-\sum_{i=2}^m\sum_{n=1}^m(-1)^{m-n}\tfrac{N-n}{N}\sum_{\sigma\in P_{n-1}^{m-1}}\mathds1_{i\in\sigma}\int_{\R^2}K(x_i-x_*)\cdot\Big(\nabla_{i;\beta}\tfrac{f_N^{n+1}}{\mu_\beta^{\otimes n+1}}\Big)(x_1,x_\sigma,x_*)\,\mu_\beta(x_*)\,dx_*.
\end{multline}
Replacing the marginals in terms of cumulants, cf.~\eqref{eq:correl-def-clust-2}, we get for the first two right-hand side terms, for all~$j\in[m]\setminus\{1\}$,
\begin{eqnarray}
\lefteqn{\sum_{n=1}^m(-1)^{m-n}\sum_{\sigma\in P_{n-1}^{m-1}}\mathds1_{j\in\sigma}\tfrac{f_N^n}{\mu_\beta^{\otimes n}}(x_1,x_{\sigma})}\nonumber\\
&=&\sum_{n=1}^m(-1)^{m-n}\sum_{\sigma\in P_{n-1}^{m-1}}\mathds1_{j\in\sigma}\sum_{r=1}^{n}\sum_{\tau\in P_{r-1}^{\sigma}}g_N^r(x_1,x_{\tau})\nonumber\\
&=&\sum_{r=1}^{m}\sum_{\tau\in P_{r-1}^{m-1}} g_N^r(x_1,x_{\tau})\sum_{n=r}^m(-1)^{m-n}
\underbrace{\sharp\{\sigma\in P_{n-1}^{m-1}:j\in\sigma,\tau\subset\sigma\}}_{=\mathds1_{j\in\tau}\binom{m-r}{n-r}+\mathds1_{j\notin\tau}\binom{m-1-r}{n-1-r}}\nonumber\\
&=&g_N^m(x_{[m]})+g_N^{m-1}(x_{[m]\setminus\{j\}}),\label{eq:marg-cumul-combi}
\end{eqnarray}
where we used the combinatorial identity
\begin{equation}\label{eq:combin-ide}
\sum_{j=0}^p(-1)^{p-j}\binom{p}{j}=\delta_{p=0}.
\end{equation}
Similarly, for the third right-hand side term in~\eqref{eq:eqn-gNm-0-2}, we find for all $i,j\in[m]\setminus\{1\}$,
\begin{multline*}
\sum_{n=1}^m(-1)^{m-n}\sum_{\sigma\in P_{n-1}^{m-1}}\mathds1_{i,j\in\sigma}\tfrac{f_N^n}{\mu_\beta^{\otimes n}}(x_1,x_\sigma)\\
\,=\,g_N^m(x_{[m]})
+g_N^{m-1}(x_{[m]\setminus\{i\}})+g_N^{m-1}(x_{[m]\setminus\{j\}})
+g_N^{m-2}(x_{[m]\setminus\{i,j\}}).
\end{multline*}
For the fourth right-hand side term in~\eqref{eq:eqn-gNm-0-2}, replacing again marginals in terms of cumulants, we find
\begin{multline}\label{eq:decomp-4thtrermpl}
\sum_{n=1}^m(-1)^{m-n}\tfrac{N-n}N\sum_{\sigma\in P_{n-1}^{m-1}}\int_{\R^2}K(x_1-x_*)\cdot\Big(\nabla_{1;\beta}\tfrac{f_N^{n+1}}{\mu_\beta^{\otimes n+1}}\Big)(x_1,x_\sigma,x_*)\,\mu_\beta(x_*)\,dx_*\\
\,=\,\sum_{n=1}^m(-1)^{m-n}\tfrac{N-n}N\sum_{\sigma\in P_{n-1}^{m-1}}\sum_{r=1}^n\sum_{\tau\in P^\sigma_{r-1}}\int_{\R^2}K(x_1-x_*)\cdot\nabla_{1;\beta}g_N^{r+1}(x_1,x_\tau,x_*)\,\mu_\beta(x_*)\,dx_*\\
+\sum_{n=1}^m(-1)^{m-n}\tfrac{N-n}N\sum_{\sigma\in P_{n-1}^{m-1}}\sum_{r=1}^n\sum_{\tau\in P^\sigma_{r-1}}(K\ast\mu_\beta)(x_1)\cdot\nabla_{1;\beta}g_N^{r}(x_1,x_\tau).
\end{multline}
Note that
\begin{eqnarray}
\lefteqn{\sum_{n=1}^m(-1)^{m-n}\tfrac{N-n}N\sum_{\sigma\in P_{n-1}^{m-1}}\sum_{r=1}^n\sum_{\tau\in P_{r-1}^{\sigma}}\int_{\R^2}K(x_1-x_*)\cdot\nabla_{1;\beta}g_N^{r+1}(x_1,x_\tau,x_*)\,\mu_\beta(x_*)\,dx_*}\nonumber\\
&=&\sum_{r=1}^m\sum_{\tau\in P_{r-1}^{m-1}}\bigg(\sum_{n=r}^{m}(-1)^{m-n}\tfrac{N-n}N\underbrace{\sharp\{\sigma\in  P^{m-1}_{n-1}:\tau\subset\sigma\}}_{=\binom{m-r}{n-r}}\bigg)\nonumber\\
&&\hspace{3cm}\times\int_{\T^d}K(x_1-x_*)\cdot\nabla_{1;\beta}g_N^{r+1}(x_1,x_\tau,x_*)\,\mu_\beta(x_*)\,dx_*\nonumber\\
&=&\tfrac{N-m}N\int_{\T^d}K(x_1-x_*)\cdot\nabla_{1;\beta}g_N^{m+1}(x_{[m]},x_*)\,\mu_\beta(x_*)\,dx_*\nonumber\\
&&\hspace{2cm}-\tfrac{1}N\sum_{j=2}^{m}\int_{\T^d}K(x_1-x_*)\cdot\nabla_{1;\beta}g_N^{m}(x_{[m]\setminus\{j\}},x_*)\,\mu_\beta(x_*)\,dx_*,
\label{eq:integr-marg-cum-ag}
\end{eqnarray}
where the last identity follows from the following computation, based on~\eqref{eq:combin-ide},
\begin{eqnarray*}
\lefteqn{\sum_{n=r}^m(-1)^{m-n}\tfrac{N-n}N\binom{m-r}{n-r}}\\
&=&\sum_{n=0}^{m-r}(-1)^{m-n-r}\tfrac{N-n-r}N\binom{m-r}{n}\\
&=&\tfrac{N-r}N\sum_{n=0}^{m-r}(-1)^{m-n-r}\binom{m-r}{n}-\tfrac{m-r}N\sum_{n=1}^{m-r}(-1)^{m-n-r}\binom{m-r-1}{n-1}\\
&=&\tfrac{N-m}N\delta_{r=m}-\tfrac{1}N\delta_{r=m-1}.
\end{eqnarray*}
Inserting~\eqref{eq:integr-marg-cum-ag} into~\eqref{eq:decomp-4thtrermpl}, and arguing similarly for the last right-hand side term in~\eqref{eq:decomp-4thtrermpl}, we get
\begin{multline*}
\sum_{n=1}^m(-1)^{m-n}\tfrac{N-n}N\sum_{\sigma\in P_{n-1}^{m-1}}\int_{\R^2}K(x_1-x_*)\cdot\Big(\nabla_{1;\beta}\tfrac{f_N^{n+1}}{\mu_\beta^{\otimes n+1}}\Big)(x_1,x_\sigma,x_*)\,\mu_\beta(x_*)\,dx_*\\
\,=\,\tfrac{N-m}N\int_{\R^2}K(x_1-x_*)\cdot\nabla_{1;\beta}g_N^{m+1}(x_{[m]},x_*)\,\mu_\beta(x_*)\,dx_*\\
-\tfrac{1}N\sum_{j=2}^m\int_{\R^2}K(x_1-x_*)\cdot\nabla_{1;\beta}g_N^{m}(x_{[m]\setminus\{j\}},x_*)\,\mu_\beta(x_*)\,dx_*\\
+\tfrac{N-m}N(K\ast\mu_\beta)(x_1)\cdot\nabla_{1;\beta}g_N^{m}(x_{[m]})
-\tfrac{1}N\sum_{j=2}^m(K\ast\mu_\beta)(x_1)\cdot\nabla_{1;\beta}g_N^{m-1}(x_{[m]\setminus\{j\}}),
\end{multline*}
Similarly, for the last right-hand side term in~\eqref{eq:eqn-gNm-0-2}, we find
\begin{multline*}
{\sum_{i=2}^m\sum_{n=1}^m(-1)^{m-n}\tfrac{N-n}N\sum_{\sigma\in P_{n-1}^{m-1}}\mathds1_{i\in\sigma}\int_{\R^2} K(x_i-x_*)\cdot\Big(\nabla_{i;\beta}\tfrac{f_N^{n+1}}{\mu_\beta^{\otimes n+1}}\Big)(x_1,x_\sigma,x_*)\,\mu_\beta(x_*)\,dx_*}\\
\,=\,\tfrac{N-m}N\sum_{i=2}^m\int_{\R^2} K(x_i-x_*)\cdot \nabla_{i;\beta}g_N^{m+1}(x_{[m]},x_*)\,\mu_\beta(x_*)\,dx_*\\
-\tfrac1N\sum_{2\le i,j\le m}^{\ne}\int_{\R^2} K(x_i-x_*)\cdot \nabla_{i;\beta}g_N^{m}(x_{[m]\setminus\{j\}},x_*)\,\mu_\beta(x_*)\,dx_*\\
+\tfrac{N-m}N\sum_{i=2}^m\int_{\R^2} K(x_i-x_*)\cdot (\nabla\log\mu_\beta)(x_i)\,g_N^{m}(x_{[m]\setminus\{i\}},x_*)\,\mu_\beta(x_*)\,dx_*\\
-\tfrac1N\sum_{2\le i,j\le m}^{\ne}\int_{\R^2} K(x_i-x_*)\cdot (\nabla\log\mu_\beta)(x_i)\,g_N^{m-1}(x_{[m]\setminus\{i,j\}},x_*)\,\mu_\beta(x_*)\,dx_*\\
+\tfrac{N-m}N\sum_{i=2}^m(K\ast\mu_\beta)(x_i)\cdot \nabla_{i;\beta}g_N^{m}(x_{[m]})
-\tfrac{1}N\sum_{2\le i,j\le m}^{\ne}(K\ast\mu_\beta)(x_i)\cdot \nabla_{i;\beta}g_N^{m-1}(x_{[m]\setminus\{j\}})\\
+\tfrac{N-m}N\sum_{i=2}^m(K\ast\mu_\beta)(x_i)\cdot (\nabla\log\mu_\beta)(x_i)\,g_N^{m-1}(x_{[m]\setminus\{i\}})\\
-\tfrac{1}N\sum_{2\le i,j\le m}^{\ne}(K\ast\mu_\beta)(x_i)\cdot (\nabla\log\mu_\beta)(x_i)\,g_N^{m-2}(x_{[m]\setminus\{i,j\}}).
\end{multline*}
Combining all the above identities into~\eqref{eq:eqn-gNm-0-2}, and using various cancellations such as
\[F\cdot\nabla\log\mu_\beta\,=\,(K\ast\mu_\beta)\cdot\nabla\log\mu_\beta\,=\,0,\]
which follow from the radial nature of $V,W$,
the conclusion follows.
\end{proof}
\endgroup

\subsection{A priori correlation estimates}
We prove uniform-in-time propagation-of-chaos estimates for the particle system in form of a priori bounds on correlation functions.
This is deduced from a symmetry argument inspired by the work of Bodineau, Gallagher, and Saint-Raymond~\cite{BGSR-16}, which is combined as in~\cite{DSR-21} with some classical large deviation estimates to accommodate the fact that correlations are defined with respect to the mean-field equilibrium $\mu_\beta^{\otimes N}$ instead of the exact Gibbs measure $M_{N,\beta}$.
As formal BBGKY analysis leads to expect $g_N^2=O(N^{-1})$, the present estimates may a priori seem quite suboptimal: however, we will see that in some degenerate cases the present estimate $g_N^2=O(N^{-1/2})$ is in fact optimal on long timescales (see in particular Theorem~\ref{th:unif-wave-2}).

\begin{lem}\label{lem:est-cum-2}
Provided that $\beta\|W\|_{\Ld^\infty(\R^2)}\ll1$ is small enough, we have for all~$0\le m<N$, uniformly in time,
\[\|g_N^{m+1}\|_{\Ld^2_\beta((\R^2)^{m+1})}\,:=\,\Big(\int_{(\R^2)^{m+1}}|g_N^{m+1}|^2\,\mu_\beta^{\otimes m+1}\Big)^\frac12\,\lesssim_{\beta,m,f^\circ}\,N^{-\frac m2}.\]
\end{lem}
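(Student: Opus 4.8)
The plan is to combine the conservation of an $\Ld^2$ functional under the Liouville flow with a symmetry argument in the spirit of~\cite{BGSR-16} and classical large-deviation estimates, following the strategy of~\cite{DSR-21}. \emph{Step 1: a conserved quantity.} The velocity field driving~\eqref{eq:Liouville-2} is divergence-free and, thanks to the representation $F=-\nabla^\bot V$, $K=-\nabla^\bot W$, it satisfies $U_N\cdot\nabla\log M_{N,\beta}=0$; hence $M_{N,\beta}$ is a stationary solution of~\eqref{eq:Liouville-2} and $h_N:=f_N/M_{N,\beta}$ solves a pure transport equation, so that $t\mapsto\|h_N(t)\|_{\Ld^2(M_{N,\beta})}$ is constant. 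The initial ratio is $h_N^\circ=f^\circ(x_1)\,\tilde M_{N,\beta}/M_{N,\beta}$, and since $M_{N,\beta}/\tilde M_{N,\beta}=\tfrac{\tilde Z_{N,\beta}}{Z_{N,\beta}}\exp[-\beta(V(x_1)+\tfrac1{2N}W(0)+\tfrac1N\sum_{j\ge2}W(x_1-x_j))]$ with $W$ bounded and $f^\circ$ compactly supported --- and since $Z_{N,\beta}/\tilde Z_{N,\beta}$ is bounded uniformly in $N$ --- one gets $\sup_t\|h_N(t)\|_{\Ld^2(M_{N,\beta})}=\|h_N^\circ\|_{\Ld^2(M_{N,\beta})}\lesssim_{\beta,f^\circ}1$.

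\emph{Step 2: from the $\Ld^2$ bound to correlation estimates.} Introduce the correlation functions $G_N^m$ of $f_N$ defined relative to the exact Gibbs measure (the same cluster expansion as~\eqref{eq:correl-def-clust-2}, with the centering adapted to the conditional Gibbs measures rather than to $\mu_\beta$). Exploiting the exchangeability of the $N-1$ background particles together with the almost-product structure of $M_{N,\beta}$ at high temperature, the duality/symmetry argument of~\cite{BGSR-16} produces a near-orthogonality of the form
\[\sum_{m=1}^N\binom{N-1}{m-1}\,\|G_N^m\|_{\Hc^m}^2\,\lesssim\,\|h_N\|_{\Ld^2(M_{N,\beta})}^2,\]
so that, by Step~1, $\|G_N^{m+1}\|_{\Hc^{m+1}}^2\lesssim\binom{N-1}{m}^{-1}\lesssim_m N^{-m}$, i.e. $\|G_N^{m+1}\|_{\Hc^{m+1}}\lesssim_m N^{-m/2}$. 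This is the robust ``central-limit'' rate, not the optimal $N^{-m}$ one would expect from the BBGKY scaling, but it suffices here.

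\emph{Step 3: transfer to the mean-field reference measure.} The marginals $\bar M_{N,\beta}^\ell$ of $M_{N,\beta}$ converge to $\mu_\beta^{\otimes\ell}$, and quantitatively --- this is the classical high-temperature decay-of-correlations / equivalence-of-ensembles input, valid precisely when $\beta\|W\|_{\Ld^\infty(\R^2)}\ll1$, cf.~\cite{DSR-21} and the references therein --- the $\ell$-point cumulants of $M_{N,\beta}$ (relative to a product) are of size $O(N^{-(\ell-1)})$ and the ratios $\bar M_{N,\beta}^\ell/\mu_\beta^{\otimes\ell}$ are bounded in every $\Ld^p$ uniformly in $N$. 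Expanding $g_N^m$, the cumulant of $f_N$ relative to $\mu_\beta^{\otimes m}$, in terms of the $G_N^n$'s and of these Gibbs cumulants and ratios via the combinatorial relations of Section~\ref{sec:correlations}, and carefully bookkeeping the powers of $N$, one checks that the leading contribution is still $O(N^{-m/2})$, which gives the claim.

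The main obstacle is Step~3: one needs large-deviation bounds on the discrepancy between $M_{N,\beta}$ and $\mu_\beta^{\otimes N}$ that are uniform in $N$, summable over the (exponentially many) cluster-expansion terms, and sharp enough not to degrade the exponent $m/2$; this is also precisely where the smallness of $\beta\|W\|_{\Ld^\infty(\R^2)}$ is used, as it guarantees uniqueness of $\mu_\beta$, exponential decay of Gibbs correlations, and convergence of the relevant expansions. A secondary technical point is adapting the~\cite{BGSR-16} symmetry mechanism to the present situation, where only the $N-1$ background particles are exchangeable and the reference structure singles out the tagged variable, so that the natural centering conditions differ from the fully symmetric case.
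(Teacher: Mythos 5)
The paper never defines correlation functions relative to the Gibbs measure. It applies the Bodineau--Gallagher--Saint-Raymond orthogonality \emph{exactly}, to the cumulants $g_N^m$ defined with respect to the product reference $\mu_\beta^{\otimes N}$: because the reference is a product and each $g_N^m$ satisfies $\int g_N^m\,\mu_\beta(x_j)\,dx_j=0$ for $j\ge2$, Fubini kills all cross terms and yields the identity
\[\int_{(\R^2)^N}\tfrac1{\mu_\beta^{\otimes N}}|f_N|^2=\sum_{m=1}^N\binom{N-1}{m-1}\int_{(\R^2)^{m}}|g_N^{m}|^2\mu_\beta^{\otimes m},\]
hence~\eqref{eq:apriori-sym-est}, in one line. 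The price to pay is that $\int|f_N|^2/\mu_\beta^{\otimes N}$ is not conserved; the paper controls it via a single H\"older inequality with $q>1$ against the conserved Gibbs functional $\int|f_N|^{2q}/M_{N,\beta}^{2q-1}$, and then bounds the two resulting partition-function ratios $A_{N,\beta,q}$ and $B_{N,\beta}$ via Ben~Arous--Brunaud large deviations.

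Your proposal reverses the order: define cumulants $G_N^m$ relative to $M_{N,\beta}$ so that the conserved $\Ld^2(M_{N,\beta})$ quantity can be used directly, then transfer to $\mu_\beta^{\otimes m}$-cumulants. Two steps in this route are asserted rather than established, and both are genuine gaps. First, the ``near-orthogonality'' in your Step~2 does not follow from exchangeability: the exact orthogonality used in~\cite{BGSR-16} and here is a structural consequence of the reference measure being a product, so that the per-variable centering conditions annihilate cross terms under Fubini. For $M_{N,\beta}$ those cross terms do not vanish, and defining the centering with respect to the conditional Gibbs marginals makes each $G_N^m$ implicitly depend on all $N$ variables, destroying the cluster-expansion structure you then need in Step~3. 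You would have to \emph{quantify} the failure of orthogonality, which is not outlined and is not a small perturbation. Second, even granting Step~2, the ``bookkeeping'' transfer between the two cumulant families in Step~3 involves sums over exponentially many cluster terms, each weighted by the Gibbs--vs--product discrepancy; you identify this as the main obstacle but offer no mechanism for why the $N^{-m/2}$ exponent survives the summation. (Separately, note that your $q=1$ conservation law would not suffice in the paper's scheme either: H\"older with $q=1$ would demand $\sup M_{N,\beta}/\mu_\beta^{\otimes N}$, which grows exponentially in $N$; the paper crucially takes $q>1$.)

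In short: the paper's ordering --- exact orthogonality in the product frame first, then one H\"older step plus partition-function asymptotics --- is what makes the proof go through cleanly, and your alternative ordering pushes the hard equivalence-of-ensembles content into a spot where it is harder, not easier, to handle.
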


\begin{proof}
Recall that correlations satisfy $\int_{\R^2}g_N^m(x_{[m]})\,\mu_\beta(x_j)\,dx_j=0$ for all \mbox{$2\le j\le m$.} Computing the $\Ld^2$ norm of the $N$-point density $f_N$ and inserting the cluster expansion~\eqref{eq:correl-def-clust-2} in terms of correlation functions, we then get
\[\int_{(\R^2)^N}\tfrac1{\mu_\beta^{\otimes N}}|f_N|^2\,=\,\sum_{m=1}^N\binom{N-1}{m-1}\int_{(\R^2)^{m}}|g_N^{m}|^2\mu_\beta^{\otimes m},\]
and thus, for all $0\le m<N$,
\begin{equation}\label{eq:apriori-sym-est}
\int_{(\R^2)^{m+1}}|g_N^{m+1}|^2\mu_\beta^{\otimes m+1}\,\lesssim_m\, N^{-m}\int_{(\R^2)^N}\tfrac1{\mu_\beta^{\otimes N}}|f_N|^2.
\end{equation}
It remains to estimate the norm of $f_N$ in the right-hand side. In order to obtain a uniform-in-time estimate, we shall relate it to the conserved quantity
\[\int_{(\R^2)^N}\tfrac1{M_{N,\beta}}|f_N|^2.\]
Replacing $\mu_\beta^{\otimes N}$ by $M_{N,\beta}$ requires to appeal to some classical large deviation estimates and we split the proof into three steps.

\medskip
\step1 Proof that we have uniformly in time, for any $1<q<\infty$,
\begin{equation}\label{eq:apriori-almost-conserv}
\int_{(\R^2)^N}\tfrac1{\mu_\beta^{\otimes N}}|f_N|^2\,\le\,\|\tfrac1{\mu_\beta}f^\circ\|_{\Ld^{2q}_\beta(\R^2)}^2e^{4\beta \|W\|_{\Ld^\infty(\R^2)}}(Z_\beta)^{\frac1q-2}(A_{N,\beta,q})^{1-\frac1{q}}(B_{N,\beta})^{2-\frac1q},
\end{equation}
where we have set
\begin{eqnarray*}
A_{N,\beta,q}&:=&\Big(\tfrac{(Z_\beta)^{q'}Z_{\beta,-q'}}{(Z_{\beta,0})^{q'+1}}\Big)^{N}\,\tfrac{\int_{(\R^2)^N}\exp(-\frac{\beta(q'+1)}{2N}\sum_{i,j=1}^NW(x_i-x_j))\,d\nu_{\beta,-q'}^{\otimes N}(x_1,\ldots,x_N)}{\big(\int_{(\R^2)^N}\exp(-\frac{\beta}{2N}\sum_{i,j=1}^NW(x_i-x_j))\,d\nu_{\beta,0}^{\otimes N}(x_1,\ldots,x_N)\big)^{q'+1}},\\
B_{N,\beta}&:=&\tfrac{Z_{N,\beta}}{\tilde Z_{N,\beta}}\,=\,\tfrac{\int_{(\R^2)^N}\exp(-\frac\beta{2N}\sum_{i,j=1}^NW(x_i-x_j))\,d\nu_{\beta,0}^{\otimes N}(x_1,\ldots,x_N)}{\int_{(\R^2)^{N-1}}\exp(-\frac\beta{2N}\sum_{i,j=2}^NW(x_i-x_j))\,d\nu_{\beta,0}^{\otimes N-1}(x_2,\ldots,x_N)},
\end{eqnarray*}
and for any $\kappa\in\R$,
\[d\nu_{\beta,\kappa}(x)\,:=\,Z_{\beta,\kappa}^{-1}e^{-\beta(V+\kappa W\ast\mu_\beta)(x)}dx,\qquad Z_{\beta,\kappa}\,:=\,\int_{\R^2}e^{-\beta(V+\kappa W\ast\mu_\beta)}.\]
These $N$-dependent factors $A_{N,\beta,q},B_{N,\beta}$ take the form of ratios of partition functions, which we  estimate in the next steps by means of large deviation theory.

\medskip\noindent
We turn to the proof of~\eqref{eq:apriori-almost-conserv}.
H\"older's inequality yields
\[\int_{(\R^2)^N}\tfrac1{\mu_\beta^{\otimes N}}|f_N|^2\,\le\,\Big(\int_{(\R^2)^N}\tfrac{|M_{N,\beta}|^{q'+1}}{|\mu_\beta^{\otimes N}|^{q'}}\Big)^\frac1{q'}\Big(\int_{(\R^2)^N}\tfrac{|f_N|^{2q}}{|M_{N,\beta}|^{2q-1}}\Big)^\frac1q.\]
As $M_{N,\beta}$ is a global equilibrium for the Liouville equation~\eqref{eq:Liouville-2}, we note that the last factor is a conserved quantity,
\[\partial_t\int_{(\R^2)^N}\tfrac{|f_N|^{2q}}{M_{N,\beta}^{2q-1}}\,=\,0.\]
Recalling the initial condition~\eqref{eq:fN0-2}, we deduce
\begin{multline*}
\int_{(\R^2)^N}\tfrac1{\mu_\beta^{\otimes N}}|f_N|^2\,\le\,\|\tfrac1{\mu_\beta}f^\circ\|_{\Ld^{2q}_\beta(\R^2)}^2\Big(\int_{(\R^2)^N}\tfrac{|M_{N,\beta}|^{q'+1}}{|\mu_\beta^{\otimes N}|^{q'}}\Big)^\frac1{q'}\\
\times\sup_{x_1}\bigg(\mu_\beta(x_1)^{2q-1}\int_{(\R^2)^{N-1}}\tfrac{|\tilde M_{N,\beta}(x_2,\ldots,x_N)|^{2q}}{|M_{N,\beta}(x_1,\ldots,x_N)|^{2q-1}}dx_2\ldots dx_N\bigg)^\frac1q.
\end{multline*}
Inserting the definitions of $\mu_\beta,M_{N,\beta},\tilde M_{N,\beta}$, we find that the last two factors in the right-hand side satisfy
\[\int_{(\R^2)^N}\tfrac{|M_{N,\beta}|^{q'+1}}{|\mu_\beta^{\otimes N}|^{q'}}\,=\,A_{N,\beta,q},\]
and
\begin{equation*}
\sup_{x_1}\bigg(\mu_\beta(x_1)^{2q-1}\int_{(\R^2)^{N-1}}\tfrac{|\tilde M_{N,\beta}(x_2,\ldots,x_N)|^{2q}}{|M_{N,\beta}(x_1,\ldots,x_N)|^{2q-1}}dx_2\ldots dx_N\bigg)
\,\le\,e^{4q\beta \|W\|_{\Ld^\infty(\R^2)}}\,\big(\tfrac{Z_{N,\beta}}{Z_\beta \tilde Z_{N,\beta}}\big)^{2q-1},
\end{equation*}
where $A_{N,\beta,q}$ is defined above. The claim~\eqref{eq:apriori-almost-conserv} follows.

\medskip
\step2 Asymptotics of partition functions:
given a probability measure $\nu_0$ on $\R^d$, $d\ge1$, and given a bounded, continuous, even interaction potential $W_0$ with \mbox{$\|W_0\|_{\Ld^\infty(\R^d)}$} small enough,
there is a unique solution $\mu_0$ of the fixed-point equation
\begin{equation}\label{eq:fixed-point-mu0gen}
\mu_0\,=\,Z_0^{-1}e^{-2W_0\ast\mu_0}\nu_0,\qquad Z_0\,=\,\int_{\R^d}e^{-2W_0\ast\mu_0}\nu_0,
\end{equation}
and the limit
\[\lim_{N\uparrow\infty}e^{Nm_0}\int_{(\R^2)^N}\exp\bigg(-\tfrac1{N}\sum_{i,j=1}^NW_0(x_i-x_j)\bigg)\,d\nu_0^{\otimes N}(x_1,\ldots,x_N)\]
then exists and belongs to $(0,\infty)$,
where $m_0$ is defined as
\begin{equation}\label{eq:comput-m0-smallW}
m_0\,:=\,-\log Z_0-\iint_{\R^d\times\R^d}W_0(x-y)\,d\mu_0(x)d\mu_0(y).
\end{equation}
We briefly show how this can be deduced from the large deviation results summarized in~\cite{Benarous-Brunaud-90}.
Appealing to~\cite[Theorem~B(ii)]{Benarous-Brunaud-90},
in terms of
\begin{equation}\label{eq:m0-min}
n_0\,:=\,\inf_{\mu\in\Pc(\R^d)}\bigg(\int_{\R^d}\mu\log\Big(\frac\mu{\nu_0}\Big)+\iint_{\R^d\times\R^d}W_0(x-y)\,d\mu(x)d\mu(y)\bigg),
\end{equation}
we find that the limit
\[\lim_{N\uparrow\infty}e^{Nn_0}\int_{(\R^2)^N}\exp\bigg(-\tfrac1{N}\sum_{i,j=1}^NW_0(x_i-x_j)\bigg)\,d\nu_0^{\otimes N}(x_1,\ldots,x_N)\]
exists and belongs to $(0,\infty)$
provided that this minimization problem~\eqref{eq:m0-min} admits a unique minimizer $\mu_0$ and {as long as} the value~$1$ does not belong to the spectrum of the operator $\Sigma_{\mu_0}f:=-2 W_0\ast(f\mu_0)$ on~$\Ld^2(\R^d,\mu_0)$.
{Since the} interactions are weak in the sense of $\|W_0\|_{\Ld^\infty(\R^d)}<\frac12$, we first note that for any probability measure~$\mu$ the operator $\Sigma_{\mu}$ on $\Ld^2(\R^d,\mu)$ has operator norm $<1$, so that the value $1$ is indeed always regular.
In addition, for $\|W_0\|_{\Ld^\infty(\R^d)}$ small enough, we can easily check that the minimization problem~\eqref{eq:m0-min} has a unique solution $\mu_0$, which is precisely given by the fixed-point equation~\eqref{eq:fixed-point-mu0gen}. The infimum value~\eqref{eq:m0-min} is then equal to~\eqref{eq:comput-m0-smallW}, $n_0=m_0$, and the claim follows.

\medskip
\step3 Conclusion.\\
We turn to the asymptotic analysis of the two factors $A_{N,\beta,q}$ and $B_{N,\beta}$ in~\eqref{eq:apriori-almost-conserv} as~$N\uparrow\infty$.
On the one hand, appealing to the result of Step~2, provided that $\beta\|W\|_{\Ld^\infty(\R^2)}$ is small enough,
we find after straightforward computations
\begin{equation*}
\lim_{N\uparrow\infty}A_{N,\beta,q}\,\simeq_{\beta,q}\,\lim_{N\uparrow\infty}\Big(\tfrac{(Z_\beta)^{q'}Z_{\beta,-q'}}{(Z_{\beta,0})^{q'+1}}\Big)^{N}\,\tfrac{\exp(-N\gamma_{\beta,-q'})}{\exp(-N(q'+1)\gamma_{\beta,0})},
\end{equation*}
where for any $\kappa\in\R$ we have set
\[\gamma_{\beta,\kappa}\,:=\,-\log\big(\tfrac{Z_\beta}{Z_{\beta,\kappa}}\big)- \tfrac12\beta(1-\kappa)\iint_{\R^2\times\R^2} W(x-y)\,d\mu_\beta(x)d\mu_\beta(y).\]
After simplifications, this entails
\begin{equation*}
\lim_{N\uparrow\infty}A_{N,\beta,q}\,\simeq_{\beta,q}\,1.
\end{equation*}
A similar computation shows
$\lim_{N\uparrow\infty}B_{N,\beta}\,\simeq_\beta\,1$.
Combining this with~\eqref{eq:apriori-sym-est} and~\eqref{eq:apriori-almost-conserv}, the conclusion follows.
\end{proof}

\subsection{Weighted Sobolev spaces}\label{sec:def-weight-Sob}
As shown in Lemma~\ref{lem:est-cum-2} above, for $1\le m\le N$, the correlation function $g_N^{m}$ is an element of the following Hilbert space,
\begin{multline*}
\Ld^2_\beta((\R^2)^{m})\,:=\,\Big\{h^{m}\in\Ld^2_\loc((\R^2)^{m})~:~\int_{(\R^2)^{m}}|h^{m}|^2\,\mu_\beta^{\otimes m}<\infty,\\
\text{and $h^m$ is symmetric in its last $m-1$ entries}\Big\},
\end{multline*}
or equivalently, recalling the notation $\otimes_s$ for symmetrized tensor product,
\[\Ld^2_\beta((\R^2)^{m})\,=\,\Ld^2_\beta(\R^2)\otimes\Ld^2_\beta(\R^2)^{\otimes_s(m-1)},\]
endowed with the norm
\begin{eqnarray*}
\|h^m\|_{\Ld^2_\beta((\R^2)^m)}^2&:=&\langle h^m,h^m\rangle_{\Ld^2_\beta((\R^2)^m)},\\
\langle h^m,g^m\rangle_{\Ld^2_\beta((\R^2)^m)}&:=&\int_{(\R^2)^m}\overline{h^m}\,g^m\,\mu_\beta^{\otimes m}.
\end{eqnarray*}
We further define a sequence of Sobolev spaces with respect to $\mu_\beta$: for $s\in \mathbb{N}$, we define the Hilbert space $H^s_\beta((\R^2)^m)$ as the subset of $\Ld^2_\beta((\R^2)^m)$ that is the domain of the norm
\begin{equation}\label{eq:Hsbeta}
\|h^m\|_{H^s_\beta((\R^2)^m)}^2\,:=\,\sum_{j=0}^s\int_{(\R^2)^m}|(\nabla_{{[m]}})^jh^m|^2\,\mu_\beta^{\otimes m},
\end{equation}
and we denote by $H^{-s}_\beta((\R^2)^m)$ the dual of $H^s_\beta((\R^2)^m)$ with respect to the scalar product of~$\Ld^2_\beta((\R^2)^m)$.
Note that we only consider integer regularity $s$ for simplicity.
We shall frequently use the following embeddings, for $s\ge0$,
\begin{eqnarray}
\|h^m\|_{\Ld^2_\beta((\R^2)^m)}&\le&\|h^m\|_{\Ld^\infty((\R^2)^m)},\nonumber\\
\|h^m\|_{H^{s}_\beta((\R^2)^m)}&\le&\|\mu_\beta\|_{\Ld^\infty((\R^2)^m)}^{\frac m2}\|h^m\|_{H^{s}((\R^2)^m)},\nonumber\\
\|h^m\|_{H^{-s}((\R^2)^m)}&\le&\|\mu_\beta\|_{\Ld^\infty((\R^2)^m)}^{\frac m2}\big\|\tfrac1{\mu_\beta^{\otimes m}}h^m\big\|_{H^{-s}_\beta((\R^2)^m)}.\label{eq:embeddingHs}
\end{eqnarray}
For the sake of completeness, we also give the proof of the following useful interpolation result.

\begin{lem}[Interpolation]\label{lem:interpol-SobG}$ $
The following holds for all $0\le s\le r$ and $h\in C^\infty_c(\R^2)$,
\begin{equation}\label{eq:interpol-SobG}
\|h\|_{H^s_\beta(\R^2)}\,\lesssim_{r,s}\,\|h\|_{\Ld^2_\beta(\R^2)}^{1-\frac sr}\|h\|_{H^r_\beta(\R^2)}^{\frac sr},
\end{equation}
in each of the following two cases:
\begin{enumerate}[(i)]
\item in the non-Gaussian setting with non-degenerate monotone angular velocity $\Omega_\beta$ in the sense of~\eqref{eq:nondegenerate-0} (in which case the multiplicative constant in~\eqref{eq:interpol-SobG} may further depend on the constant $R$ in~\eqref{eq:nondegenerate-0} and on an upper bound on $\beta$ and $\|\nabla\Omega_\beta\|_{W^{r-1,\infty}(\R^2)}$);
\smallskip\item in the Gaussian setting~\eqref{eq:Gaussian0} (in which case the multiplicative constant in~\eqref{eq:interpol-SobG} is independent of $\beta,R$).
\end{enumerate}
\end{lem}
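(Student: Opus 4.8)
The interpolation inequality~\eqref{eq:interpol-SobG} is, by a standard tensor-power argument, equivalent to the one-parameter family of inequalities $\|h\|_{H^s_\beta(\R^2)}\lesssim\|h\|_{\Ld^2_\beta(\R^2)}^{1-s/r}\|h\|_{H^r_\beta(\R^2)}^{s/r}$ for integer $0\le s\le r$ on $\R^2$, so it suffices to treat the scalar case $m=1$. The natural route is to realize $H^s_\beta$ as a weighted Sobolev space associated with a self-adjoint operator and then invoke the classical interpolation inequality $\|A^su\|\lesssim\|u\|^{1-s/r}\|A^ru\|^{s/r}$ for a positive self-adjoint operator $A$ (which is itself just the spectral-theorem/Cauchy–Schwarz statement $\int\lambda^{2s}\,d\mu_u\le(\int d\mu_u)^{1-s/r}(\int\lambda^{2r}\,d\mu_u)^{s/r}$, valid for integer exponents). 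The point is that $\|h\|_{H^s_\beta}^2$ as defined in~\eqref{eq:Hsbeta} is comparable, uniformly, to $\|(1+L_\beta)^{s/2}h\|_{\Ld^2_\beta}^2$ for a suitable nonnegative self-adjoint operator $L_\beta$ on $\Ld^2_\beta(\R^2)$ — the natural candidate being (a multiple of) the Ornstein–Uhlenbeck type generator $L_\beta h:=-\tfrac1{\mu_\beta}\Div(\mu_\beta\nabla h)=-\Delta h-\nabla\log\mu_\beta\cdot\nabla h$, which is essentially self-adjoint on $C^\infty_c$ and satisfies $\langle L_\beta h,h\rangle_{\Ld^2_\beta}=\int|\nabla h|^2\mu_\beta$.

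\medskip

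The key steps, in order. \emph{Step 1: elliptic regularity / norm equivalence.} Show that for each integer $k\ge0$ one has $\|h\|_{H^k_\beta}^2\simeq_k\|(1+L_\beta)^{k/2}h\|_{\Ld^2_\beta}^2$ on $C^\infty_c(\R^2)$, with constants depending only on the allowed data. For the Gaussian case~\eqref{eq:Gaussian0} this is classical Hermite/Ornstein–Uhlenbeck theory and the constants are universal; in the non-Gaussian case one integrates by parts repeatedly and controls the commutators of $\nabla$ with $\nabla\log\mu_\beta=\beta r\Omega_\beta(r)\,e_r$, using that $\|\nabla\Omega_\beta\|_{W^{r-1,\infty}}$ and $\beta$ are bounded, plus the non-degeneracy~\eqref{eq:nondegenerate-0} to handle the behavior near $r=0$ and at infinity (this controls, e.g., $\int|\Delta h|^2\mu_\beta$ versus $\int|\nabla^2h|^2\mu_\beta$ plus lower-order terms via a weighted Bochner identity). \emph{Step 2: spectral interpolation.} With $A:=(1+L_\beta)^{1/2}\ge1$ self-adjoint on $\Ld^2_\beta$, write $A^s=A^{r\cdot(s/r)}$ and apply the spectral theorem: for $u=h$, $\|A^sh\|^2=\int_1^\infty\lambda^{2s}\,d\langle E_\lambda h,h\rangle\le\big(\int d\langle E_\lambda h,h\rangle\big)^{1-s/r}\big(\int\lambda^{2r}\,d\langle E_\lambda h,h\rangle\big)^{s/r}=\|h\|^{2(1-s/r)}\|A^rh\|^{2s/r}$ by Hölder with exponents $r/(r-s)$ and $r/s$. \emph{Step 3: combine.} Chain Steps 1–2: $\|h\|_{H^s_\beta}\lesssim\|A^sh\|_{\Ld^2_\beta}\le\|h\|_{\Ld^2_\beta}^{1-s/r}\|A^rh\|_{\Ld^2_\beta}^{s/r}\lesssim\|h\|_{\Ld^2_\beta}^{1-s/r}\|h\|_{H^r_\beta}^{s/r}$, tracking that all constants depend only on $r,s$ (and, in case (i), on $R$, on an upper bound for $\beta$, and on $\|\nabla\Omega_\beta\|_{W^{r-1,\infty}}$ as asserted), being careful that $(1+L_\beta)^{r/2}$ is controlled by $\|\cdot\|_{H^r_\beta}$ only for integer $r$ — which is exactly why the statement restricts to integer regularity.

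\medskip

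The main obstacle is \emph{Step 1 in the non-Gaussian case}: establishing the two-sided equivalence $\|h\|_{H^k_\beta}\simeq\|(1+L_\beta)^{k/2}h\|_{\Ld^2_\beta}$ with constants uniform in the stated data. The delicate point is the weighted elliptic estimate near the origin, where $\nabla\log\mu_\beta$ vanishes linearly and $\Omega_\beta'$ is only lower-bounded like $r$ by~\eqref{eq:nondegenerate-0}: one must show that the curvature terms produced by commuting derivatives with the first-order drift do not destroy the a priori coercivity, and for $k\ge3$ one needs $W^{k-1,\infty}$ control on $\nabla\Omega_\beta$ to close the induction. An alternative, possibly cleaner, route avoiding fractional powers of $L_\beta$ altogether is to prove~\eqref{eq:interpol-SobG} directly by the classical Gagliardo–Nirenberg interpolation argument adapted to the weight $\mu_\beta$: integrate by parts $\int|\nabla^sh|^2\mu_\beta\le\int|\nabla^{s-1}h||\nabla^{s+1}h|\,\mu_\beta+(\text{error from }\nabla\mu_\beta)$, bound the error terms by interpolation using $|\nabla\log\mu_\beta|\lesssim_\beta(1+r)$ and absorbing, then iterate and optimize by the standard multiplicative trick; the weight-dependence of the constants is then transparent and matches the claimed dependence on $R$, $\beta$, and $\|\nabla\Omega_\beta\|_{W^{r-1,\infty}}$, while in the Gaussian case the explicit form $\nabla\log\mu_\beta=-\beta R\,x$ makes every error term explicitly $\beta$-independent after rescaling $x\mapsto(\beta R)^{-1/2}x$.
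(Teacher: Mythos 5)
Your primary route (Ornstein--Uhlenbeck operator $L_\beta$, two-sided norm equivalence $\|h\|_{H^k_\beta}\simeq\|(1+L_\beta)^{k/2}h\|_{\Ld^2_\beta}$, then spectral interpolation via H\"older on the spectral measure) is a genuinely different argument from the one in the paper. The paper instead conjugates by $\sqrt{\mu_\beta}$: it writes $h=\tfrac1{\sqrt{\mu_\beta}}(h\sqrt{\mu_\beta})$, bounds $\|h\|_{H^s_\beta}$ by mixed weighted/unweighted Sobolev norms of $h\sqrt{\mu_\beta}$ in flat $\Ld^2(\R^2)$, invokes an interpolation inequality of Lin to separate polynomial-weight terms from derivative terms, applies standard $\R^2$-interpolation, and finally closes the loop with an integration-by-parts inequality bounding $\|\langle\beta Rx\rangle^{k}h\sqrt{\mu_\beta}\|_{\Ld^2}$ by $\sum_{j\le k}\|(\nabla^jh)\sqrt{\mu_\beta}\|_{\Ld^2}$. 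Your ``alternative, possibly cleaner, route'' (direct weighted Gagliardo--Nirenberg by integration by parts against $\mu_\beta$) is closer in spirit to the paper's method, though the paper still prefers to first strip the weight off. Your primary approach buys a very short argument once Step~1 is in hand, but you are right that Step~1 is the whole substance: the two-sided equivalence is a nontrivial weighted elliptic estimate (Bochner identity plus commutator control of the drift $\nabla\log\mu_\beta$), and in the non-Gaussian case its uniformity in the stated data is exactly as delicate as the commutator control you describe; the paper sidesteps this by never introducing $L_\beta$ at all. Two small remarks: the lemma is already stated on $\R^2$, so the tensor-power reduction in your first sentence is unnecessary; and you should be careful about the direction of the claimed $\beta,R$-independence of the constants in Step~1 --- the Bochner term $\beta R\int|\nabla h|^2\mu_\beta$ in $\|L_\beta h\|^2_{\Ld^2_\beta}$ shows that the equivalence $\|(1+L_\beta)^{k/2}h\|\simeq\|h\|_{H^k_\beta}$ does not automatically have $\beta R$-independent constants in both directions, so the claim needs the rescaling $x\mapsto(\beta R)^{-1/2}x$ that you mention, carried out carefully.
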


\begin{proof}
We focus on the Gaussian setting~(ii), while the proof in the non-Gaussian setting~(i) follows along the same lines using the specific properties of $\Omega_\beta$ in~\eqref{eq:nondegenerate-0}.
Using that in the Gaussian setting
\[\nabla\tfrac1{\sqrt{\mu_\beta}}\,=\,\tfrac12\beta Rx\tfrac1{\sqrt{\mu_\beta}},\]
and decomposing $h=\tfrac1{\sqrt{\mu_\beta}}(h\sqrt{\mu_\beta})$,
we first note that
\[\|h\|_{H^s_\beta(\R^2)}\,\lesssim_s\,\sum_{j=0}^s\|\langle\beta Rx\rangle^{s-j}\nabla^j(h\sqrt{\mu_\beta})\|_{\Ld^2(\R^2)},\]
and thus, by an interpolation inequality due to Lin~\cite{Lin-86},
\[\|h\|_{H^s_\beta(\R^2)}\,\lesssim_s\,\|\langle\beta Rx\rangle^{s}h\sqrt{\mu_\beta}\|_{\Ld^2(\R^2)}+\|h\sqrt{\mu_\beta}\|_{H^s(\R^2)}.\]
Standard interpolation then yields for $r\ge s$,
\[\|h\|_{H^s_\beta(\R^2)}\,\lesssim_s\,\|h\sqrt{\mu_\beta}\|_{\Ld^2(\R^2)}^{1-\frac sr}\Big(\|\langle\beta Rx\rangle^{r}h\sqrt{\mu_\beta}\|_{\Ld^2(\R^2)}+\|h\sqrt{\mu_\beta}\|_{H^r(\R^2)}\Big)^{\frac sr}.\]
Further using $\nabla\sqrt{\mu_\beta}=-\tfrac12\beta Rx\sqrt{\mu_\beta}$ to estimate the $H^r$-norm of $h\sqrt{\mu_\beta}$, we deduce
\begin{equation}\label{eq:pre-interpol-SobG}
\|h\|_{H^s_\beta(\R^2)}\,\lesssim_{s,r}\,\sum_{j=0}^r\|h\sqrt{\mu_\beta}\|_{\Ld^2(\R^2)}^{1-\frac sr}\|\langle\beta Rx\rangle^{r-j}(\nabla^jh)\sqrt{\mu_\beta}\|_{\Ld^2(\R^2)}^{\frac sr}.
\end{equation}
As $\nabla\mu_\beta=-\beta Rx\mu_\beta$, an integration by parts yields for any $k\ge1$,
\begin{eqnarray*}
\||\beta Rx|^kh\sqrt{\mu_\beta}\|_{\Ld^2(\R^2)}^2&=&\int_{\R^2}|\beta Rx|^{2k}|h|^2\mu_\beta\\
&=&\int_{\R^2}\beta Rx|\beta Rx|^{2k-2}|h|^2\cdot(- \nabla\mu_\beta)\\
&\lesssim_k&\int_{\R^2}|\beta Rx|^{2k-1}|h||\nabla h| \mu_\beta+\int_{\R^2}|\beta Rx|^{2k-2}|h|^2\mu_\beta.
\end{eqnarray*}
Hence, by the Cauchy--Schwarz inequality,
\begin{equation*}
\||\beta Rx|^kh\sqrt{\mu_\beta}\|_{\Ld^2(\R^2)}\,\lesssim_k\,\||\beta Rx|^{k-1}(\nabla h)\sqrt{\mu_\beta}\|_{\Ld^2(\R^2)}+\||\beta Rx|^{k-1}h\sqrt{\mu_\beta}\|_{\Ld^2(\R^2)},
\end{equation*}
which gives by induction,
\begin{equation*}
\|\langle\beta Rx\rangle^kh\sqrt{\mu_\beta}\|_{\Ld^2(\R^2)}\,\lesssim_k\,\sum_{j=0}^k\|(\nabla^jh)\sqrt{\mu_\beta}\|_{\Ld^2(\R^2)}.
\end{equation*}
Using this to post-process~\eqref{eq:pre-interpol-SobG}, the conclusion~(ii) follows.
\end{proof}

\section{Non-degenerate case: non-Gaussian equilibrium}\label{sec:non-Gauss}

This section is devoted to the proof of Theorem~\ref{th:main-nonGauss}. More precisely, we establish the following more detailed result. Note that the positivity of the expression~\eqref{eq:def-abeta} is not obvious and is part of the proof.

\begin{theor}[Non-Gaussian setting]\label{th:main-nonGauss-re}
Assume that the external potential $V$ further satisfies $\nabla(V'/r)\in C^\infty_b(\R^2)$.
In terms of the mean-field equilibrium $\mu_\beta$, we define the angular velocity $\Omega_\beta$ as the smooth radial function given by
\[(\log\mu_\beta)'\,=\,\beta r\Omega_\beta.\]
Consider the non-Gaussian setting when $\Omega_\beta$ is nowhere constant:
more precisely, we assume for simplicity that $\Omega_\beta$ is monotone and satisfies the following non-degeneracy condition, for some~{$R\in(0,\infty)$},
\begin{equation}\label{eq:nondegenerate}
|\Omega_\beta'(r)|\,\ge\,\tfrac1R(r\wedge1),\qquad|\Omega_\beta''(0)|\,\ge\,\tfrac1R,
\qquad\text{for all $r\ge0$},
\end{equation}
and we also assume that $\beta$ is small enough depending on~$V,W$ and on this constant $R$.
Then, for any~\mbox{$\sigma\in(0,\frac1{20})$}, the subcritically-rescaled tagged particle density
\[\bar f_N^1(\tau)\,:=\,f_N^1(N^\sigma\tau)\]
satisfies in the radial distributional sense on $\R^+\times\R^2$,
\begin{equation}\label{eq:FP-noGauss}
N^{1-\sigma}\partial_\tau\langle\bar f_N^1\rangle~\xrightarrow{N\uparrow\infty}~\tfrac1r\partial_r \Big(ra_\beta(r)\big(\partial_r-(\log\mu_\beta)'(r) \big)f^\circ\Big),
\end{equation}
where the coefficient field $a_\beta$ is a positive scalar radial function that can be expressed as
\begin{equation}\label{eq:def-abeta}
a_\beta(r)\,:=\,\int_{\Sp^1}\bigg(\int_{\R^2}H_0(re,y)\big[\Re(iL_\beta^2+0)^{-1}H_\beta\big]\!(re,y)\,\mu_\beta(y)\,dy\bigg)d\sigma(e),
\end{equation}
in terms of:
\begin{enumerate}[$\bullet$]
\item the operator $L_\beta^2$ given by
\[L_\beta^2\,:=\,L_\beta\otimes\Id+\Id\otimes(L_\beta+\beta T_\beta),\]
with
\begin{eqnarray*}
iL_\beta h(x)&:=&-\beta^{-1}\nabla\log\mu_\beta(x)\cdot\nabla^\bot h(x),\\
iT_\beta h(x)&:=&\beta^{-1}\nabla\log\mu_\beta(x)\cdot\int_{\R^2}K(x-x_*)h(x_*)\,\mu_\beta(x_*)\,dx_*;
\end{eqnarray*}
\item the functions $H_0,H_\beta$ given by
\begin{eqnarray}
H_0(x,y)&:=&\tfrac{x}{|x|}\cdot K(x-y),\label{eq:def-HbH0}\\
H_\beta(x,y)&:=&\tfrac{x}{|x|}\cdot(-\nabla_1^\bot W_\beta)(x,y),\nonumber
\end{eqnarray}
where the `renormalized' interaction potential $W_\beta=W+O(\beta)$ is defined as a $\beta$-expansion,
\begin{equation}\label{eq:def-Wbeta}
W_\beta(x,y)\,:=\,\sum_{n=0}^\infty(-\beta)^nW^{\ast_{\mu_\beta}(n+1)}(x,y),
\end{equation}
and where so-called $\mu_\beta$-convolution powers are defined for all $n\ge1$ by
\begin{multline}\label{eq:def-Wconvsp}
\qquad W^{\ast_{\mu_\beta}n}(x_0,x_n)\,:=\,\int_{(\R^d)^{n-1}}\Big(\prod_{k=1}^nW(x_{k-1}-x_k)\Big)\\
\times\mu_\beta(x_1)\ldots \mu_\beta(x_{n-1})\,dx_1\ldots dx_{n-1}.
\end{multline}
\end{enumerate}
\end{theor}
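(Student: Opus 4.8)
We outline the strategy, following and extending the approach of~\cite{DSR-21} for the Lenard--Balescu limit of plasmas; the two essential new inputs are a spectral analysis of the two-particle linearized mean-field operator (whose resolvent is no longer explicit) and an explicit resummation of the collective screening into the renormalized potential $W_\beta$. On the timescale $t=N^\sigma\tau$ the equation for $g_N^1$ from Lemma~\ref{lem:eqn-gNm-21} reduces, in the axisymmetric setting, to $\partial_\tau g_N^1=N^\sigma iS_{N,\beta}^{1,+}g_N^2$, since $iL_{N,\beta}^1$ annihilates radial data and the $\tfrac1N$-coupling is absent for $m=1$; likewise the rescaled equation for $g_N^2$ reads $\partial_\tau g_N^2+N^\sigma iL_{N,\beta}^2g_N^2=N^{\sigma-1}iS_{N,\beta}^{2,-}g_N^1+N^\sigma iS_{N,\beta}^{2,+}g_N^3+N^{\sigma-1}iS_{N,\beta}^{2,\circ}g_N^2$. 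Using the a priori bounds $\|g_N^{m+1}\|_{\Ld^2_\beta}\lesssim N^{-m/2}$ of Lemma~\ref{lem:est-cum-2} together with the weighted-Sobolev and interpolation estimates of Section~\ref{sec:def-weight-Sob} to bound the unbounded operators $iS_{N,\beta}^{m,\pm}$ and $iS_{N,\beta}^{m,\circ}$ in negative-order norms, the terms involving $g_N^3$ and the quadratic term are subleading, so that $g_N^2$ solves to leading order the forced fast equation with source $N^{\sigma-1}iS_{N,\beta}^{2,-}(f^\circ/\mu_\beta)$ and zero initial data. The first step is then to iterate Duhamel's formula for this equation, trading time-decay of the propagator of $N^\sigma L_{N,\beta}^2$ for regularity à la~\cite{DSR-21}; the a priori bounds control the hierarchy tail, and the accumulation of echoes limits the admissible window to $\sigma<\tfrac1{20}$ exactly as in~\cite{DSR-21}. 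This isolates the leading contribution $\partial_\tau\langle\bar f_N^1\rangle\approx\langle\mu_\beta\,iS_{N,\beta}^{1,+}\int_0^\tau e^{-iN^\sigma(\tau-s)L_{N,\beta}^2}iS_{N,\beta}^{2,-}(f^\circ/\mu_\beta)\,ds\rangle$.

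The second and main step is to relax the fast subdynamics, i.e.\ to justify $\int_0^\tau e^{-iN^\sigma(\tau-s)L_{N,\beta}^2}\phi\,ds\to(0+iL_\beta^2)^{-1}\phi$ in a suitable weak sense, cf.~\eqref{eq:relax-ac}. This requires on the one hand the resolvent convergence $L_{N,\beta}^2\to L_\beta^2$, elementary from the explicit finite-$N$ corrections in Lemma~\ref{lem:eqn-gNm-21}, and on the other hand a limiting-absorption principle for $iL_\beta^2$ near $0$. In polar coordinates $iL_\beta$ is the position-dependent rotation $\Omega_\beta(r)\,\partial_\theta$ with $\Omega_\beta$ the angular velocity ($(\log\mu_\beta)'=\beta r\Omega_\beta$); the strict monotonicity and the non-degeneracy~\eqref{eq:nondegenerate} make $\Omega_\beta$ a non-resonant symbol, and since $T_\beta$ annihilates radial functions the operator $iL_\beta^2=iL_\beta\otimes\Id+\Id\otimes(iL_\beta+i\beta T_\beta)$ has the same kernel (the radial-in-both-variables functions) as its $\beta=0$ counterpart. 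Treating $\beta T_\beta$ as a relatively compact perturbation for $\beta\ll_R1$ and invoking Lemma~\ref{lem:nonGaus}, one gets that $iL_\beta^2$ has purely absolutely continuous spectrum near $0$ on the orthogonal complement of its kernel, whence the limiting-absorption principle $(\e+iL_\beta^2)^{-1}\to(0+iL_\beta^2)^{-1}$ as $\e\downarrow0$ there. Crucially, the kernel component of the source $iS_{N,\beta}^{2,-}(f^\circ/\mu_\beta)$ vanishes — a structural consequence of the incompressibility $\Div K=0$, which forces the radial part of the angular average $\fint_{\Sp^1}K(x-re)\,d\sigma(e)$ to vanish — so no secular growth occurs and $g_N^2$ genuinely relaxes to $N^{-1}(0+iL_\beta^2)^{-1}iS_{N,\beta}^{2,-}(f^\circ/\mu_\beta)$. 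Finally, the principal-value (imaginary) part of $(0+iL_\beta^2)^{-1}$ contributes only an angular transport correction which drops out of the angular average $\langle\bar f_N^1\rangle$, so that only $\Re(iL_\beta^2+0)^{-1}$ survives, consistently with~\eqref{eq:def-abeta}.

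The third step identifies the limit explicitly. The collective self-interaction of the background — encoded by the nonlocal term $\beta T_\beta$ in $iL_\beta^2$ and by the structure of $iS_{N,\beta}^{1,+},iS_{N,\beta}^{2,-}$ — resums into the $\mu_\beta$-convolution series~\eqref{eq:def-Wconvsp}, replacing the bare kernel $K=-\nabla^\bot W$ by $-\nabla^\bot W_\beta$ in one of the two interaction slots (giving $H_\beta$) while the other keeps the bare $H_0$; this is the point-vortex analogue of the dielectric screening of Balescu--Lenard theory, here without a closed formula. Inserting the explicit expressions for $iS_{N,\beta}^{1,+}$ and $iS_{N,\beta}^{2,-}$ from Lemma~\ref{lem:eqn-gNm-21}, carrying out the angular integrations (using axisymmetry), and rewriting the resulting second-order radial operator in detailed-balance form so that $\mu_\beta$ is stationary, one arrives at the right-hand side of~\eqref{eq:FP-noGauss} with $a_\beta$ given by~\eqref{eq:def-abeta}. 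It remains to prove the positivity $a_\beta\ge0$, which is not apparent from~\eqref{eq:def-abeta}: for the self-adjoint realization one has $\Re(\e+iL_\beta^2)^{-1}=\e(\e^2+(L_\beta^2)^2)^{-1}\ge0$, which passes to the limit $\e\downarrow0$ as the non-negative spectral density $\pi\,\delta(L_\beta^2)$ on the orthogonal complement of the kernel; the renormalization described above is precisely what makes the cross term $\langle H_0,\Re(iL_\beta^2+0)^{-1}H_\beta\rangle$ reduce to a manifestly non-negative quadratic form in the dressed kernel.

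I expect the limiting-absorption principle of the second step to be the main obstacle: unlike in the plasma setting of~\cite{DSR-21}, where the resolvent of the linearized operator is explicit through the dielectric function, here one must control $(\e+iL_\beta^2)^{-1}$ at the bottom of the spectrum and away from the nontrivial kernel, with the nonlocal perturbation $\beta T_\beta$ included, and uniformly enough in $N$ to survive the Duhamel iteration of the first step — this is where the full strength of the non-degeneracy assumption~\eqref{eq:nondegenerate} on $\Omega_\beta$ is used, and where a general Morse $\Omega_\beta$ would force a delicate refinement near critical points. A close second difficulty is tracking the exact renormalization bookkeeping relating $H_0$, $H_\beta$ and the $T_\beta$-dressed resolvent, on which the positivity of $a_\beta$ hinges.
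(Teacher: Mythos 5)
Your outline correctly captures the broad architecture (Duhamel iteration, relaxation of the fast subdynamics via a limiting-absorption principle for $iL_\beta^2$, resummation into $W_\beta$, and a self-adjointness argument for positivity), but there are two genuine gaps that would prevent the argument from closing as written.

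The first and more serious gap is your claim that ``the terms involving $g_N^3$ \ldots are subleading,'' so that $g_N^2$ solves to leading order a forced fast equation with source $N^{\sigma-1}iS_{N,\beta}^{2,-}(f^\circ/\mu_\beta)$ alone. This is false in the non-Gaussian setting. The BBGKY hierarchy of Lemma~\ref{lem:eqn-gNm-21} contains the extra coupling $iS_{N,\beta}^{m,=}g_N^{m-2}$, so the equation for $g_N^3$ has a source $\tfrac1N iS_{N,\beta}^{3,=}g_N^1=O(N^{-1})$ that produces a component of $g_N^3$ of order $N^{-1}$; feeding this back through $N^\sigma iS_{N,\beta}^{2,+}g_N^3$ gives a contribution of order $N^{\sigma-1}$, i.e.\ exactly the same order as your retained source. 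Lemma~\ref{lem:exp-gn1-NG} accordingly keeps \emph{both} the $iS_{N,\beta}^{2,-}(\tfrac1{\mu_\beta}f^\circ)$ term and the double-Duhamel term through $iS_{N,\beta}^{3,=}(\tfrac1{\mu_\beta}f^\circ)$, and Lemma~\ref{lem:laplace-NG} rewrites the latter by solving $iL_{N,\beta}^3 H^{3;\circ}_\beta=iS_{N,\beta}^{3,=}(\tfrac1{\mu_\beta}f^\circ)$. It is precisely this exact solution that introduces $W_{N,\beta}$; the subsequent combination with the $S^{2,-}$ source in Step~1 of the proof is what replaces $K$ by the dressed kernel $-\nabla^\perp_1 W_{N,\beta}$ in $G_{N,\beta}^{2;\circ}$. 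In your account you attribute the $W_\beta$ resummation solely to the $\beta T_\beta$ dressing in the resolvent of $iL_\beta^2$, but that mechanism alone does not produce~\eqref{eq:def-Gbetainfty}: dropping the $g_N^3$/$S^{3,=}$ branch yields a different, incorrect expression for $a_\beta$. You also then bypass the most delicate estimate of the paper (Step~2, showing the vanishing of the extra $(i\alpha+1)N^{-\sigma}$ correction from the $iL^3$ resolvent), which is where the nondegeneracy~\eqref{eq:nondegenerate} is used in its full strength.

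The second gap is in the positivity argument. You write $\Re(\e+iL_\beta^2)^{-1}=\e(\e^2+(L_\beta^2)^2)^{-1}\ge0$ ``for the self-adjoint realization.'' But $L_\beta^2$ is \emph{not} self-adjoint on $\Ld^2_\beta((\R^2)^2)$; as in Lemma~\ref{lem:nonGaus}(i) it is only self-adjoint after deforming the inner product by $\beta\,W\ast_{\mu_\beta}$. The positivity of $a_\beta$ then hinges on the algebraic identity $H_\beta+\beta(H_\beta\ast_{\mu_\beta}W)=H_0$, which converts the a priori non-symmetric pairing $\langle H_0,\Re(iL_\beta^2+0)^{-1}H_\beta\rangle_{\Ld^2_\beta}$ into a genuine quadratic form $\langle\sqrt hH_\beta,\Re(iL_\beta^2+0)^{-1}\sqrt hH_\beta\rangle_{\widetilde\Ld^2_\beta}$ for the deformed scalar product, as in Step~4 of the paper's proof. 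Without that identity, invoking spectral positivity is not justified. A further minor inaccuracy: the cancellation of kernel contributions does not come from $\fint_{\Sp^1}K(x-re)\,d\sigma(e)$ having a vanishing radial part (that average is generically nonzero); the correct mechanism is that the test function $(x_1,x_2)\mapsto\nabla h(x_1)\cdot K(x_1-x_2)$ already lies in $E_\beta^2$, which lets one smuggle in $\pi_\beta^2$, and that the remaining piece $\pi_\beta^2 S_\beta^{2;\circ}$ vanishes identically because $x\mapsto W^{\ast_{\mu_\beta}n}(x,x)$ is radial.
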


\begin{rem}
In terms of the renormalized interaction potential $W_\beta$, we can define the corresponding renormalized force kernel
\begin{equation*}
K_\beta(x,y)\,:=\,
\tfrac1{|x|^2}\Big(x+\tfrac{|x|^2-y\cdot x}{y\cdot x^\bot}x^\bot \Big)
\Big(x\cdot(-\nabla_1^\bot W_\beta)(x,y)\Big).
\end{equation*}
Noting that we have by symmetry $x\cdot(\nabla_1^\bot W_\beta)(x,y)=-y\cdot(\nabla_2^\bot W_\beta)(x,y)$, we find that this choice of $K_\beta$ satisfies
\[K_\beta(x,y)=-K_\beta(y,x),\qquad (x-y)\cdot K_\beta(x,y)=0.\]
The definition of~$H_\beta$ in the above statement is then equivalent to
\[H_\beta(x,y)=\tfrac{x}{|x|}\cdot K_\beta(x,y).\]
\end{rem}

\subsection{Preliminary BBGKY analysis}
As~$V$ and~$W$ are radial, the mean-field force can be written as
\begin{equation}\label{eq:non-Gauss-rad}
(F+K\ast\mu_\beta)(x)=x^\bot\Omega_\beta(x),\qquad\nabla\log\mu_\beta(x)=\beta x\Omega_\beta(x),
\end{equation}
where $\Omega_\beta$ is the smooth radial function
\[\Omega_\beta(x)\,:=\,\tfrac1{\beta r}(\log\mu_\beta)'\,=\,-\tfrac1r(V+W\ast\mu_\beta)'(r).\]
In these terms, the linearized mean-field operators~$\{L_{N,\beta}^m\}_{1\le m\le N}$ in Lemma~\ref{lem:eqn-gNm-21} can be expressed as Kronecker sums
\begin{multline}\label{eq:L-Kron-NG}
L_{N,\beta}^m\,=\,
L_\beta\otimes(\Id_{\Ld^2_\beta(\R^2)})^{\otimes(m-1)}\\
+\sum_{j=2}^m(\Id_{\Ld^2_\beta(\R^2)})^{\otimes(j-1)}\otimes \big(L_\beta+\beta\tfrac{N-m}NT_\beta\big)\otimes (\Id_{\Ld^2_\beta(\R^2)})^{\otimes(m-j)},
\end{multline}
where we have defined the following single-particle operators on $\Ld^2_\beta(\R^2)$,
\begin{eqnarray}
(iL_\beta h)(x)&:=&-x\Omega_\beta(x)\cdot\nabla^\bot h(x),\nonumber\\
(iT_\beta h)(x)&:=&x\Omega_\beta(x)\cdot\int_{\R^2}K(x-x_*)\,h(x_*)\,\mu_\beta(x_*)\,dx_*.\label{eq:Lbeta0}
\end{eqnarray}
Note that in polar coordinates $x=(r,\theta)$ the operator $L_\beta$ takes the form
\begin{equation}\label{eq:Lbeta0-polar}
iL_\beta \,=\,\Omega_\beta(r) \partial_\theta,
\end{equation}
thus showing that $\Omega_\beta$ indeed plays the role of an angular velocity.
In the limit $N\uparrow\infty$, the linearized mean-field operators~\eqref{eq:L-Kron-NG} are replaced by
\begin{equation*}
L_{\beta}^m\,:=\,
L_\beta\otimes(\Id_{\Ld^2_\beta(\R^2)})^{\otimes(m-1)}\\
+\sum_{j=2}^m(\Id_{\Ld^2_\beta(\R^2)})^{\otimes(j-1)}\otimes (L_\beta+\beta T_\beta) \otimes (\Id_{\Ld^2_\beta(\R^2)})^{\otimes(m-j)}.
\end{equation*}
We start by studying the spectral properties of these operators.
For that purpose, we can focus on the following single-particle operators on $\Ld^2_\beta(\R^2)$,
\[L_\beta(\gamma)\,:=\,L_\beta+\gamma T_\beta,\qquad\text{for $\gamma\in\R$}.\]
The main difficulty is that we do not have a closed formula for the resolvent of $L_\beta(\gamma)$ for $\gamma\ne0$, in contrast with the situation for the linearized Vlasov operators in~\cite{DSR-21}: for that reason, the spectral analysis requires more care.\footnote{See however explicit resolvent computations in~\cite{Chavanis-12a,Chavanis-12b} in the specific Coulomb setting.}
Note that in the proof of item~(i) below we further show that $L_\beta(\gamma)$ is actually self-adjoint when viewed as acting on a suitably deformed Lipschitz-equivalent Hilbert space. In that deformed self-adjoint setting, item~(iii) then entails that the restriction of $L_\beta(\gamma)$ to the orthogonal complement of its kernel has purely absolutely continuous spectrum.

\begin{lem}[Properties of linearized mean-field operators]\label{lem:nonGaus}
{Let $\beta>0$ and let $V,W,\Omega_\beta$ be as in the statement of Theorem~\ref{th:main-nonGauss-re}.}
Then, for all $\gamma\ll_R\beta^{-2/3}$ small enough (hence in particular for all $\gamma\le\beta$ provided that $\beta\ll_R1$ is itself small enough), the following properties hold:
\begin{enumerate}[(i)]
\item The operator $L_\beta(\gamma)$ generates a $C_0$-group $\{e^{itL_\beta(\gamma)}\}_{t\in\R}$ that is uniformly bounded,
\[\qquad\sup_{t\in\R}\|e^{itL_\beta(\gamma)}h\|_{\Ld^2_\beta(\R^2)}\,\lesssim\,\|h\|_{\Ld^2_\beta(\R^2)},\qquad\text{for all $h\in\Ld^2_\beta(\R^2)$}.\]
\item The kernel of $L_\beta(\gamma)$ coincides with the set of radial functions.
\smallskip\item On the orthogonal complement
\begin{multline}\label{eq:def-Ebeta}
\hspace{1cm}\textstyle E_\beta\,:=\,\ker(L_\beta(\gamma))^\bot
\,=\,\ker(L_\beta)^\bot\\
\,=\,\bigg\{h\in\Ld^2_\beta(\R^2):\int_{\Sp^1}h(re)\,d\sigma(e)=0~\text{for almost all $r$}\bigg\},
\end{multline}
the restriction $L_\beta(\gamma)|_{E_\beta}$
satisfies the following limiting absorption principle: for all $g,h\in C^\infty_c(\R^2)\cap E_\beta$,
\begin{equation}\label{eq:LAP-Lbeta}
\sup_{\omega\in\C,\Im\omega\ne0}\big|\big\langle g,\big(L_\beta(\gamma)-\omega\big)^{-1}h\big\rangle_{\Ld^2_\beta(\R^2)}\big|
\,\lesssim_R\,\3g\3_\beta\3h\3_\beta,
\end{equation}
where we have set for abbreviation
\begin{equation}\label{eq:def-N3}
\3g\3_\beta\,:=\,\|\nabla g\|_{\Ld^2_\beta(\R^2)}
+\|\langle\tfrac1{x}\rangle g\|_{\Ld^2_\beta(\R^2)}
+\|(\nabla\log\mu_\beta)g\|_{\Ld^2_\beta(\R^2)}.
\end{equation}
\end{enumerate}
\end{lem}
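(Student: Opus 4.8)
The plan rests on the algebraic observation that, since $K=-\nabla^\bot W$ and hence $K\ast(h\mu_\beta)=-\nabla^\bot\big(W\ast(h\mu_\beta)\big)$, the two pieces in~\eqref{eq:Lbeta0} combine into
\begin{equation*}
iL_\beta(\gamma)h\,=\,-x\,\Omega_\beta(x)\cdot\nabla^\bot\big(h+\gamma\,W\ast(h\mu_\beta)\big)\,=\,iL_\beta A_\gamma h,\qquad A_\gamma h\,:=\,h+\gamma\,W\ast(h\mu_\beta).
\end{equation*}
The operator $A_\gamma$ is self-adjoint on $\Ld^2_\beta(\R^2)$ ($W$ being real and even), it commutes with the decomposition into angular Fourier modes, and for $\gamma$ in the stated range it is bounded, positive, and uniformly close to the identity — the threshold $\gamma\ll_R\beta^{-2/3}$ matching $\|\mu_\beta\|_{\Ld^\infty}\simeq\beta^{2/3}$, which controls the operator norm of $h\mapsto W\ast(h\mu_\beta)$, while in the regime $\gamma\le\beta$ one simply uses $\gamma\|W\|_{\Ld^\infty}\le\beta\|W\|_{\Ld^\infty}\ll1$. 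Moreover $L_\beta$ is itself self-adjoint on $\Ld^2_\beta(\R^2)$: in polar coordinates $iL_\beta=\Omega_\beta(r)\partial_\theta$ acts on the $\ell$-th angular mode as multiplication by the real function $\ell\,\Omega_\beta(r)$.

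Granting this, items~(i)--(ii) follow quickly. Denote by $\Ld^2_{\beta,\gamma}$ the space $\Ld^2_\beta(\R^2)$ renormed by the Lipschitz-equivalent inner product $\langle A_\gamma\,\cdot,\cdot\rangle_{\Ld^2_\beta(\R^2)}$; then $A_\gamma^{1/2}$ is a unitary map $\Ld^2_{\beta,\gamma}\to\Ld^2_\beta(\R^2)$ conjugating $L_\beta(\gamma)=L_\beta A_\gamma$ to $A_\gamma^{1/2}L_\beta A_\gamma^{1/2}$, which is self-adjoint on $\Ld^2_\beta(\R^2)$ as the conjugate of the self-adjoint $L_\beta$ by a bounded positive invertible operator. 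Hence $L_\beta(\gamma)$ is self-adjoint on the deformed space $\Ld^2_{\beta,\gamma}$, so it generates a unitary group there, which gives the uniform bound~(i) back on $\Ld^2_\beta(\R^2)$. For~(ii), since $A_\gamma$ is invertible we have $\ker L_\beta(\gamma)=A_\gamma^{-1}\ker L_\beta$; as $A_\gamma$ preserves radial functions and $\Omega_\beta$ is strictly monotone (so $\ell\,\Omega_\beta$ is injective as a multiplication operator for every $\ell\ne0$), $\ker L_\beta$ is exactly the set of radial functions, and the same then holds for $\ker L_\beta(\gamma)$; formula~\eqref{eq:def-Ebeta} for $E_\beta$ follows.

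The limiting absorption principle~(iii) is the heart of the matter. I would first prove it for the \emph{free} operator $L_\beta$ on $E_\beta$, working mode by mode: on the $\ell$-th mode ($\ell\ne0$) the resolvent $(L_\beta-\omega)^{-1}$ is multiplication by $(\ell\,\Omega_\beta(r)-\omega)^{-1}$, whose boundary values as $\Im\omega\to0^\pm$ are given by the Sokhotski--Plemelj formula; thus $\langle g_\ell,(L_\beta-\lambda\mp i0)^{-1}h_\ell\rangle_{\Ld^2_\beta(\R^2)}$ decomposes into a principal-value integral over the level set $\{\ell\,\Omega_\beta=\lambda\}$ and a $\delta$-type term proportional to $\frac{1}{|\ell\,\Omega_\beta'|}\,\overline{g_\ell}\,h_\ell\,\mu_\beta\,r$ evaluated there. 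Here the strict monotonicity~\eqref{eq:nondegenerate} is decisive: away from the origin $|\Omega_\beta'|\ge\tfrac1R$ bounds the $\delta$-term, and the degeneracy $|\Omega_\beta'(r)|\gtrsim r/R$ as $r\to0$ is precisely what the weight $\|\langle\tfrac1x\rangle g\|_{\Ld^2_\beta(\R^2)}$ in $\3g\3_\beta$ absorbs; the principal-value term is estimated by a Hilbert-transform bound, costing the derivative that appears as $\|\nabla g\|_{\Ld^2_\beta(\R^2)}$, and the angular part of $\nabla$ furnishes the decay in $\ell$ needed to sum the modes. Then I would upgrade to $L_\beta(\gamma)$ through the factorization: from $L_\beta A_\gamma-\omega=(L_\beta-\omega A_\gamma^{-1})A_\gamma$ and $A_\gamma^{-1}=\Id-\gamma B_\gamma$ with $B_\gamma h=W\ast\big((A_\gamma^{-1}h)\mu_\beta\big)$ bounded, one obtains
\begin{equation*}
(L_\beta(\gamma)-\omega)^{-1}\,=\,A_\gamma^{-1}\big(\Id+\gamma\omega\,(L_\beta-\omega)^{-1}B_\gamma\big)^{-1}(L_\beta-\omega)^{-1},
\end{equation*}
so it suffices to invert $\Id+\gamma\omega\,(L_\beta-\omega)^{-1}B_\gamma$ uniformly. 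The key point is that, $W$ being smooth, bounded and radial, $B_\gamma$ maps $\Ld^2_\beta(\R^2)\cap E_\beta$ boundedly into the space normed by $\3\cdot\3_\beta$ — in particular $B_\gamma h$ vanishes at the origin on $E_\beta$, which is exactly what makes the $\langle\tfrac1x\rangle$-weight finite, while the super-polynomial decay of $\mu_\beta$ handles the $(\nabla\log\mu_\beta)$-weight — so that the free LAP turns $\gamma\omega\,(L_\beta-\omega)^{-1}B_\gamma$ into an operator bounded on that space with norm $O(\gamma)$ uniformly over the relevant bounded range of $\omega$ (large $|\omega|$ being handled by a direct Neumann expansion of $(L_\beta(\gamma)-\omega)^{-1}$). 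Smallness of $\gamma$ then closes the perturbation, and the self-adjointness established in~(i) excludes embedded eigenvalues on $E_\beta$, so that no Fredholm obstruction survives as $\Im\omega\to0$. I expect the bulk of the work, and the main obstacle, to be exactly this resolvent analysis kept uniform simultaneously as $\omega$ approaches the real axis and as $r\to0$ where $\Omega_\beta'$ degenerates — which is also where the precise shape of $\3\cdot\3_\beta$ and the $\beta$-dependence of $\|\mu_\beta\|_{\Ld^\infty}$ and of the weights must be carefully tracked.
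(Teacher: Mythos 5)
Your multiplicative factorization $L_\beta(\gamma)=L_\beta A_\gamma$ with $A_\gamma h=h+\gamma\,W\ast(h\mu_\beta)$ is a nice structural observation that the paper does not make explicit, and it is equivalent to the paper's deformed Hilbert structure: one checks directly that $\langle A_\gamma g,h\rangle_{\Ld^2_\beta(\R^2)}=\langle g,h\rangle_{\widetilde\Ld{}^2_{\beta,\gamma}(\R^2)}$, so your renormed space coincides with the one in~\eqref{eq:deformed-Hilbert}. For~(i) the paper also argues via symmetry on the deformed space plus Stone, but phrases it as an energy computation rather than your explicit unitary conjugation by $A_\gamma^{1/2}$; yours is arguably cleaner. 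For~(ii) your direct argument ($\ker L_\beta(\gamma)=A_\gamma^{-1}\ker L_\beta$, and $A_\gamma$ preserves angular modes, hence radial functions) is genuinely simpler than the paper's, which deduces~(ii) from~(iii) by a density argument.

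For~(iii) both routes share the same free limiting absorption principle for $L_\beta|_{E_\beta}$ (polar Fourier modes, Sokhotski--Plemelj boundary values, a Hilbert-transform bound paying one derivative, and a separate critical-point estimate near $r=0$ relying on $|\Omega_\beta''(0)|\ge1/R$ and the $\langle1/x\rangle$ weight); that piece of your sketch matches the paper. The perturbative upgrade is where you truly diverge. The paper expands $(L_\beta(\gamma)-\omega)^{-1}$ as a Neumann series in $\gamma T_\beta(L_\beta-\omega)^{-1}$ and uses the smoothing integral structure of $T_\beta$ (namely $T_\beta u(x)=-ix\Omega_\beta(x)\cdot\langle K(x-\cdot),u\rangle_{\Ld^2_\beta}$) to show $\3 T_\beta(L_\beta-\omega)^{-1}h\3_\beta\lesssim_R\beta^{-2/3}\3 h\3_\beta$ uniformly in $\omega$, so no $\omega$ factor ever appears. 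Your factorization puts the resolvent on the \emph{outside} of the perturbation and therefore forces the $\omega$ factor in $\gamma\omega(L_\beta-\omega)^{-1}B_\gamma$, which is a real nuisance: $\omega(L_\beta-\omega)^{-1}=-\Id+L_\beta(L_\beta-\omega)^{-1}$ is not uniformly bounded as $\Im\omega\to0$, so the $\omega$ cannot be absorbed by naive operator-norm bounds. You flag this and propose handling large $|\omega|$ separately; that can be made to work (in fact $L_\beta B_\gamma=T_\beta A_\gamma^{-1}$, so after writing $\gamma\omega(L_\beta-\omega)^{-1}B_\gamma=-\gamma B_\gamma+\gamma(L_\beta-\omega)^{-1}T_\beta A_\gamma^{-1}$ you essentially recover a variant of the paper's $T_\beta$-based perturbation), but it is more delicate, and the paper's additive decomposition sidesteps the issue from the start.

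One factual slip: you attribute the threshold $\gamma\ll_R\beta^{-2/3}$ to $\|\mu_\beta\|_{\Ld^\infty}$ controlling the operator norm of $h\mapsto W\ast(h\mu_\beta)$. In fact $\|W\ast(h\mu_\beta)\|_{\Ld^2_\beta}\le\|W\|_{\Ld^\infty}\|h\|_{\Ld^2_\beta}$ by Cauchy--Schwarz and $\int\mu_\beta=1$, with no $\beta$-dependence at all. The $\beta$-powers in the paper enter instead through polynomial moments against $\mu_\beta$ — concretely $\|x\Omega_\beta\|_{\Ld^2_\beta}$ in Step~1, and $\|\langle\cdot\rangle^j\|_{\Ld^2_\beta}$ in the bound $\3 T_\beta(L_\beta-\omega)^{-1}h\3_\beta\lesssim_R\beta^{-2/3}\3 h\3_\beta$ in Step~3 — i.e., from the growth of the angular velocity weighed against the sub-Gaussian decay of $\mu_\beta$.
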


\begin{rem}\label{rem:nonGaus}
By definition, the operators $L_{N,\beta}^m$ and $L_\beta^m$ are Kronecker sums of $L_\beta(\gamma)$ for different values of $0\le\gamma\le\beta$.
As a direct consequence of the above result, provided that $\beta\ll_R1$ is small enough, we deduce that for all $m\ge1$ the kernels of~$L_{N,\beta}^m$ and of~$L_\beta^m$ on~$\Ld^2_\beta((\R^2)^m)$ are both given by the set of functions that are radial in each of their $m$ entries. Hence, we have
\begin{multline*}
E_\beta^m\,:=\,\ker(L_{N,\beta}^m)^\bot\,=\,\ker(L_{\beta}^m)^\bot\\
\,=\,\bigg\{h\in\Ld^2_\beta((\R^2)^m)~\,:\,~\int_{(\Sp^1)^m}h(r_1e_1,\ldots,r_me_m)\,d\sigma(e_1)\ldots d\sigma(e_m)=0\\[-4mm]
\text{for almost all $r_1,\ldots,r_m$}\bigg\}.
\end{multline*}
Moreover, the proof of item~(iii) is easily repeated in this multi-particle setting: provided that $\beta\ll_{R,m}1$ is small enough (further depending on $m$), the restriction of $L_{N,\beta}^m$ (resp. $L_\beta^m$) to~$E_\beta^m$ satisfies the following limiting absorption principle, for all $g,h\in C^\infty_c((\R^2)^m)\cap E_\beta^m$,
\[\sup_{\omega\in\C,\Im\omega\ne0}\big|\big\langle g,(L_{N,\beta}^m-\omega)^{-1}h\big\rangle_{\Ld^2_\beta((\R^2)^m)}\big|
\,\lesssim_{R,m}\,\3g\3_{\beta;m}\3h\3_{\beta;m},\]
where we have set for abbreviation
\begin{equation*}
\3g\3_{\beta;m}\,:=\,\sum_{j=1}^m\Big(\|\nabla_j g\|_{\Ld^2_\beta((\R^2)^m)}
+\|\langle\tfrac1{x_j}\rangle g\|_{\Ld^2_\beta((\R^2)^m)}
+\|(\nabla\log\mu_\beta)(x_j)g\|_{\Ld^2_\beta((\R^2)^m)}\Big).
\end{equation*}
\end{rem}

\begin{proof}[Proof of Lemma~\ref{lem:nonGaus}]
We split the proof into three steps: we start with the proof of item~(i), then we establish fine spectral properties of the unperturbed operator $L_\beta$, proving items~(ii) and~(iii) with $\gamma=0$, before concluding perturbatively for $|\gamma|$ small enough.

\medskip
\step1 Proof of~(i).\\
The operator~$L_\beta$ in~\eqref{eq:Lbeta0-polar} is clearly self-adjoint on its natural domain in~$\Ld^2_\beta(\R^2)$. {Next, by definition of $T_\beta$, cf.~\eqref{eq:Lbeta0}, we can bound
\begin{equation*}
\|T_\beta h\|_{\Ld^2_\beta(\R^2)}\,\le\,\|x\Omega_\beta\|_{\Ld^2_\beta(\R^2)}\|K\|_{\Ld^\infty(\R^2)}\|h\|_{\Ld^2_\beta(\R^2)}.
\end{equation*}
The non-degeneracy condition~\eqref{eq:nondegenerate} implies that~$\mu_\beta$ has a decay faster than Gaussian: for~$r\ge1$,
\begin{equation}\label{eq:non-Gauss-mu-est}
\mu_\beta(0)\exp\Big(\tfrac12\Omega_\beta(0)\beta r^2-C\beta r^3\Big)\,\le\,\mu_\beta(r)\,\le\,\mu_\beta(0)\exp\Big(\tfrac12(\tfrac1R+\Omega_\beta(0))\beta r^2-\tfrac1{CR}\beta r^3\Big).
\end{equation}
As $|\Omega_\beta(r)|\lesssim 1+r$,
this decay ensures that $\|x\Omega_\beta\|_{\Ld^2_\beta(\R^2)}$ is finite, so that $T_\beta$ defines a bounded operator on $\Ld^2_\beta(\R^2)$. More precisely, we find $\|x\Omega_\beta\|_{\Ld^2_\beta(\R^2)}\lesssim_R\beta^{-\frac23}$, thus leading to the following estimate on the operator norm of $T_\beta$,}
\[\|T_\beta\|_{\Ld^2_\beta(\R^2)\to\Ld^2_\beta(\R^2)}\,\lesssim_R\,\beta^{-\frac23}.\]
For all $\gamma\in\R$, standard perturbation theory (e.g.~\cite[Theorem~IX.2.1]{Kato}) then ensures that the perturbed operator $L_\beta(\gamma)=L_\beta+\gamma T_\beta$ generates a $C_0$-group on $\Ld^2_\beta(\R^2)$, and it remains to show that this $C_0$-group is uniformly bounded.
For that purpose, we appeal to an energy conservation argument. Consider the following deformed scalar product on $\Ld^2_\beta(\R^2)$,
\begin{equation}\label{eq:deformed-Hilbert}
\langle g,h\rangle_{\widetilde \Ld{}^2_{\beta,\gamma}(\R^2)}\,:=\,\langle g,h\rangle_{\Ld^2_\beta(\R^2)}+\gamma\iint_{\R^2\times\R^2} W(x-y)\,\overline{g(x)}h(y)\,\mu_\beta(x)\mu_\beta(y)\,dxdy,
\end{equation}
as well as the associated norm
\[\|h\|_{\widetilde\Ld{}^2_{\beta,\gamma}(\R^2)}^2:=\langle h,h\rangle_{\widetilde\Ld{}^2_{\beta,\gamma}(\R^2)}.\]
For $|\gamma|\|W\|_{\Ld^\infty(\R^2)}<1$, this norm
is Lipschitz-equivalent to the standard norm on~$\Ld^2_\beta(\R^2)$,
\begin{equation}\label{eq:lip-norms-beta}
\big(1-\gamma\|W\|_{\Ld^\infty(\R^2)}\big)\|h\|_{\Ld^2_\beta(\R^2)}^2\,\le\,\|h\|_{\widetilde\Ld{}^2_{\beta,\gamma}(\R^2)}^2\,\le\,\big(1+\gamma\|W\|_{\Ld^\infty(\R^2)}\big)\|h\|_{\Ld^2_\beta(\R^2)}^2,
\end{equation}
and we denote by $\widetilde\Ld{}^2_{\beta,\gamma}(\R^2)$ the Hilbert space $\Ld^2_\beta(\R^2)$ endowed with this new structure.
From definition~\eqref{eq:Lbeta0}, a straightforward computation yields
\begin{multline*}
\langle g,iL_\beta(\gamma) h\rangle_{\widetilde{\Ld}{}^2_{\beta,\gamma}(\R^2)}
\,=\,
\beta^{-1}\int_{\R^2}(\overline{\nabla g})\cdot(\nabla^\bot h)\,\mu_\beta\\
+\gamma\iint_{\R^2\times\R^2} \overline{g(x)}h(y)\,K(x-y)\cdot\big(x\Omega_\beta(|x|)+y\Omega_\beta(|y|)\big)\,\mu_\beta(x)\mu_\beta(y)\,dxdy\\
-\gamma^2\beta^{-1}\iint_{\R^2\times\R^2}\overline{g(x)}h(y)\,\bigg(\int_{\R^2}K^\bot(x-z)\cdot K(y-z)\,\mu_\beta(z)\,dz\bigg)\,\mu_\beta(x)\mu_\beta(y)\,dxdy,
\end{multline*}
which shows that $L_\beta(\gamma)$ is symmetric on the deformed space $\widetilde\Ld{}^2_{\beta,\gamma}(\R^2)$.
As the generator of a $C_0$-group, it is therefore self-adjoint on this space.
Combining Stone's theorem with the Lipschitz property~\eqref{eq:lip-norms-beta}, we deduce for $|\gamma|\|W\|_{\Ld^\infty(\R^2)}\le\frac12$,
\[\|e^{itL_\beta(\gamma)}h\|_{\Ld^2_\beta(\R^2)}\,\lesssim\,\|e^{itL_\beta(\gamma)}h\|_{\widetilde\Ld{}^2_{\beta,\gamma}(\R^2)}\,=\,\|h\|_{\widetilde\Ld{}^2_{\beta,\gamma}(\R^2)}\,\lesssim\,\|h\|_{\Ld^2_\beta(\R^2)},\]
which proves that the generated $C_0$-group is indeed uniformly bounded on $\Ld^2_\beta(\R^2)$.

\medskip
\step2 Proof of~(ii) and~(iii) for $\gamma=0$.\\
As by assumption the function $\Omega_\beta$ is absolutely continuous and nowhere constant, we deduce from~\eqref{eq:Lbeta0-polar} that the kernel of~$L_\beta$ coincides with the set of radial functions and that its restriction to the orthogonal complement $E_\beta:=\ker(L_\beta)^\bot$ has purely absolutely continuous spectrum,
\begin{gather*}
\sigma_{\operatorname{ess}}(L_\beta|_{E_\beta})\,=\,\sigma_{\operatorname{ac}}(L_\beta|_{E_\beta})\,=\,\big\{k\lambda:k\in\Z\setminus\{0\},~\lambda\in\operatorname{ess.im}(\Omega_\beta)\big\},\\
\sigma_{\operatorname{sc}}(L_\beta|_{E_\beta})\,=\,\sigma_{\operatorname{pp}}(L_\beta|_{E_\beta})\,=\,\varnothing.
\end{gather*}
Moreover, we shall show that the restriction $L_\beta|_{E_\beta}$ satisfies the following limiting absorption principle: for all $g,h\in C^\infty_c(\R^2)\cap E_\beta$,
\begin{equation}\label{eq:LAP-0}
\sup_{\omega\in\C,\Im\omega\ne0}\big|\big\langle g,(L_\beta-\omega)^{-1}h\big\rangle_{\Ld^2_\beta(\R^2)}\big|\\
\,\lesssim_R\,
\3g\3_\beta\3h\3_\beta,
\end{equation}
where we recall that the norm $\3\cdot\3_\beta$ is defined in~\eqref{eq:def-N3}.

\medskip\noindent
Let $g,h\in C^\infty_c(\R^2)\cap E_\beta$ be fixed. In order to prove~\eqref{eq:LAP-0}, we start by using polar coordinates $x=(r,\theta)$ and Fourier series to write, for~$\Im\omega\ne0$,
\begin{equation}\label{eq:represent-Fourier-resolv}
\big\langle g,(L_\beta-\omega)^{-1}h\big\rangle_{\Ld^2_\beta(\R^2)}\,=\,\tfrac1{2\pi}\sum_{k\in\Z\setminus\{0\}}\int_0^\infty\frac{\overline{\hat g(r,k)}\hat h(r,k)}{k\Omega_\beta(r)-\omega}\,r\mu_\beta(r)\,dr,
\end{equation}
where we use the notation $\hat g(r,k):=\int_{\Sp^1}e^{-i\theta k}g(r,\theta)\,d\theta$ for Fourier coefficients with respect to the angle in polar coordinates,
and where we noticed that the condition $g,h\in E_\beta$ is equivalent to $\hat g(r,0)=\hat h(r,0)=0$.
To estimate this expression~\eqref{eq:represent-Fourier-resolv}, we proceed to a local analysis of the integral close to singularities $k\Omega_\beta(r)-\omega\approx0$. We start with the following two general estimates:
\begin{enumerate}[---]
\item for all $\phi\in C^\infty_c(\R)$ and $1<p<\infty$,
\begin{equation}\label{eq:model-crit-est}
\sup_{\e\ne0}\bigg|\int_{-1}^1 \tfrac{\phi(t)}{t+ i\e}\,dt\bigg|\,\lesssim_p\,\|\phi\|_{W^{\frac1p,p}(-1,1)},
\end{equation}
\item for all $\phi\in C^\infty_c(\R)$,
\begin{equation}\label{eq:model-crit-est-2}
\sup_{\e\ne0}\int_{0}^1 \Big|\tfrac{t\phi(t)}{t^2+ i\e}\Big|\,dt\,\lesssim\,
\|\tfrac1t\phi\|_{\Ld^1(0,1)}.
\end{equation}
\end{enumerate}
The first estimate follows from the Sobolev embedding $W^{\frac1p,p}(-1,1)\subset \Ld^\infty(-1,1)$, combined with the $\Ld^p$~theory for the Hilbert transform. The second estimate is obvious.

\medskip\noindent
{Using these bounds~\eqref{eq:model-crit-est}--\eqref{eq:model-crit-est-2} to estimate the integral in~\eqref{eq:represent-Fourier-resolv} close to singularities, using local deformations to reduce to these model situations, and recalling the non-degeneracy assumption~\eqref{eq:nondegenerate} for~$\Omega_\beta$, we are led to
\begin{multline}\label{eq:pre-est-L-om-res}
\sup_{\omega\in\C,\,\Im\omega\ne0}\big|\big\langle g,(L_\beta-\omega)^{-1}h\big\rangle_{\Ld^2_\beta(\R^2)}\big|\\
\,\lesssim_{R}\,\sum_{k\in\Z\setminus\{0\}}|k|^{-1}
\Big(\|\partial_r(\hat g(\cdot,k)\sqrt{\mu_\beta})\sqrt{r}\|_{\Ld^2(\R^+)}
+\| \langle\tfrac1r\rangle\hat g(\cdot,k)\sqrt{r\mu_\beta}\|_{\Ld^2(\R^+)}\Big)\\
\times\Big(\|\partial_r(\hat h(\cdot,k)\sqrt{\mu_\beta})\sqrt{r}\|_{\Ld^2(\R^+)}
+\| \langle\tfrac1r\rangle\hat h(\cdot,k)\sqrt{r\mu_\beta}\|_{\Ld^2(\R^+)}\Big).
\end{multline}
From this, the claim~\eqref{eq:LAP-0} follows by Plancherel's theorem.
For completeness, we include a detailed proof of the above estimate~\eqref{eq:pre-est-L-om-res}, which we split into three further substeps, distinguishing different types of contributions in the integral~\eqref{eq:represent-Fourier-resolv}.

\medskip
\substep{2.1} Reduction to singularities.\\
For $k\in\Z\setminus\{0\}$, we set $\omega/k=\Omega_\beta(0)-\alpha_k+i\e_k$ with $\alpha_k\in\R$ and $\e_k\ne0$.
In these terms, the integral in~\eqref{eq:represent-Fourier-resolv} reads
\begin{equation*}
\sum_{k\in\Z\setminus\{0\}}\bigg|\int_0^\infty\frac{\overline{\hat g(r,k)}\hat h(r,k)}{k\Omega_\beta(r)-\omega}\,r\mu_\beta(r)\,dr\bigg|
\,=\,\sum_{k\in\Z\setminus\{0\}}|k|^{-1}\bigg|\int_0^\infty\frac{\overline{\hat g(r,k)}\hat h(r,k)}{\tilde\Omega_\beta(r)-\alpha_k+i\e_k}\,r\mu_\beta(r)\,dr\bigg|,
\end{equation*}
with the short-hand notation $\tilde\Omega_\beta(r):=\Omega_\beta(0)-\Omega_\beta(r)$.
Given a constant $c_0>0$, the contribution of the integrals for $|\tilde\Omega_\beta(r)-\alpha_k|\ge c_0(r^2\wedge 1)$ is easily controlled: by the Cauchy--Schwarz inequality, we find
\begin{eqnarray*}
\lefteqn{\sum_{k\in\Z\setminus\{0\}}|k|^{-1}\int_0^\infty\mathds1_{|\tilde\Omega_\beta(r)-\alpha_k|\ge c_0(r^2\wedge 1)}\bigg|\frac{\overline{\hat g(r,k)}\hat h(r,k)}{\tilde\Omega_\beta(r)-\alpha_k+i\e_k}\bigg|\,r\mu_\beta(r)\,dr}\\
&\le&c_0^{-1}\sum_{k\in\Z\setminus\{0\}}|k|^{-1}\int_0^\infty\langle\tfrac1r\rangle^2|\hat g(r,k)||\hat h(r,k)|r\mu_\beta(r)\,dr\\
&\le&c_0^{-1}\sum_{k\in\Z\setminus\{0\}}|k|^{-1}\|\langle\tfrac1r\rangle\hat g(r,k)\sqrt{r\mu_\beta}\|_{\Ld^2(\R^+)}\|\langle\tfrac1r\rangle\hat h(r,k)\sqrt{r\mu_\beta}\|_{\Ld^2(\R^+)},
\end{eqnarray*}
and it thus remains the estimate the contribution of the integral for $|\tilde\Omega_\beta(r)-\alpha_k|\le c_0(r^2\wedge1)$, that is, close to singularities.

\medskip
\substep{2.2} Contribution of singularities with $r\gtrsim1$.\\
Given $r_0>0$, if $r\ge r_0$, the assumptions on $\Omega_\beta$ entail $\tilde\Omega_\beta(r)\gtrsim r$.
Given $c_0>0$, by Step~2.1, it suffices to consider the contribution of $|\tilde\Omega_\beta(r)-\alpha_k|\le c_0$. Choosing $c_0$ small enough (independently of $\alpha_k$), we note that these restrictions actually ensure $|\alpha_k|\gtrsim1$.
As $\tilde\Omega_\beta:\R^+\to\R^+$ is a homeomorphism, changing variables in the restricted integral, we find
\begin{multline*}
\sum_{k\in\Z\setminus\{0\}}|k|^{-1}\bigg|\int_0^\infty\mathds1_{|\tilde\Omega_\beta(r)-\alpha_k|\le c_0}\frac{\overline{\hat g(r,k)}\hat h(r,k)}{\tilde\Omega_\beta(r)-\alpha_k+i\e_k}\,r\mu_\beta(r)\,dr\bigg|\\
\,=\,\sum_{k\in\Z\setminus\{0\}}|k|^{-1}\bigg|\int_{-c_0}^{c_0}\frac{\overline{\hat g(\tilde\Omega_\beta^{-1}(s+\alpha_k),k)}\hat h(\tilde\Omega_\beta^{-1}(s+\alpha_k),k)}{s+i\e_k}\\[-2mm]
\times\frac{\tilde\Omega_\beta^{-1}(s+\alpha_k)\mu_\beta(\tilde\Omega_\beta^{-1}(s+\alpha_k))}{\tilde\Omega_\beta'(\tilde\Omega_\beta^{-1}(s+\alpha_k))}\,ds\bigg|.
\end{multline*}
We now appeal to~\eqref{eq:model-crit-est} to estimate the right-hand side.
To avoid fractional Sobolev norms, we shall actually combine this estimate with H\"older's inequality and the Sobolev embedding $H^1(\R^+)\subset W^{\frac1p,q}(\R^+)$ for $1<p,q<\infty$ with~$\frac1p=\frac1q+\frac12$, in form of
\begin{eqnarray}
\|ab\|_{W^{\frac1p,p}(\R^+)}&\lesssim_{p,q}&\|a\|_{W^{\frac1p,q}(\R^+)}\|b\|_{\Ld^2(\R^+)}+\|a\|_{\Ld^2(\R^+)}\|b\|_{W^{\frac1p,q}(\R^+)}\nonumber\\
&\lesssim_{p,q}&\|a\|_{H^1(\R^+)}\|b\|_{\Ld^2(\R^+)}+\|a\|_{\Ld^2(\R^+)}\|b\|_{H^1(\R^+)}.\label{eq:estim-prod-sob}
\end{eqnarray}
Using this together with~\eqref{eq:model-crit-est}, the above integral is controlled as follows,
\begin{multline*}
\sum_{k\in\Z\setminus\{0\}}|k|^{-1}\bigg|\int_0^\infty\mathds1_{|\tilde\Omega_\beta(r)-\alpha_k|\le c_0}\frac{\overline{\hat g(r,k)}\hat h(r,k)}{\tilde\Omega_\beta(r)-\alpha_k+i\e_k}\,r\mu_\beta(r)\,dr\bigg|\\
\,\lesssim\,\sum_{k\in\Z\setminus\{0\}}|k|^{-1}\bigg\|\hat g(\tilde\Omega_\beta^{-1},k)\Big(\frac{\tilde\Omega_\beta^{-1}\mu_\beta(\tilde\Omega_\beta^{-1})}{\tilde\Omega_\beta'(\tilde\Omega_\beta^{-1})}\Big)^\frac12\bigg\|_{H^1(\alpha_k-c_0,\alpha_k+c_0)}\\
\times\bigg\|\hat h(\tilde\Omega_\beta^{-1},k)\Big(\frac{\tilde\Omega_\beta^{-1}\mu_\beta(\tilde\Omega_\beta^{-1})}{\tilde\Omega_\beta'(\tilde\Omega_\beta^{-1})}\Big)^\frac12\bigg\|_{\Ld^2(\alpha_k-c_0,\alpha_k+c_0)}+\sym,
\end{multline*}
where the symbol `$\sym$' stands for the expression preceding it with $g$ and $h$ swapped. Computing the derivative in the $H^1$ norm, changing variables back to the original ones, and noting that the assumptions on $\Omega_\beta$ ensure $\tilde\Omega_\beta'\gtrsim1$ and $|\tilde\Omega_\beta''|\lesssim1$ on the integration domain, we deduce after straightforward calculations,
\begin{multline*}
\sum_{k\in\Z\setminus\{0\}}|k|^{-1}\bigg|\int_0^\infty\mathds1_{|\tilde\Omega_\beta(r)-\alpha_k|\le c_0}\frac{\overline{\hat g(r,k)}\hat h(r,k)}{\tilde\Omega_\beta(r)-\alpha_k+i\e_k}\,r\mu_\beta(r)\,dr\bigg|\\
\,\lesssim\,\sum_{k\in\Z\setminus\{0\}}|k|^{-1}\Big(\|\hat g(\cdot,k)\sqrt{r\mu_\beta}\|_{\Ld^2(\R^+)}+\|\partial_r(\hat g(\cdot,k)\sqrt{r\mu_\beta})\|_{\Ld^2(\R^+)}\Big)\\[-3mm]
\times\|\hat h(\cdot,k)\sqrt{r\mu_\beta}\|_{\Ld^2(\R^+)}
+\sym,
\end{multline*}
which gives the desired estimate~\eqref{eq:pre-est-L-om-res} for the restricted integral.

\medskip
\substep{2.3} Contribution of singularities with $r\ll1$.\\
For $r\le1$, by Step~2.1, given $c_0>0$, it suffices to consider the contribution of $|\tilde\Omega_\beta(r)-\alpha_k|\le c_0r^2$. As the assumptions on $\Omega_\beta$ ensure $\tilde\Omega_\beta(r)\simeq r^2$ for $r\le1$, we deduce that it suffices to consider the contribution of $|\tilde\Omega_\beta(r)-\alpha_k|\le c_0\tilde\Omega_\beta(r)$.
Choosing $c_0<\frac12$, this condition implies in particular $\alpha_k\simeq\tilde\Omega_\beta(r)$, and thus $r\simeq\sqrt\alpha_k$.
As by Step~2.2 we can restrict to the case $r\ll1$, this means $0<\alpha_k\ll1$, and in this setting it remains to consider the contribution of $|\tilde\Omega_\beta(r)-\alpha_k|\le c_0\alpha_k$.
In terms of the homeomorphism $\Psi_k(r):=\alpha_k(\tilde\Omega_\beta(r)-\alpha_k)$, changing variables in the restricted integral, we find
\begin{multline*}
\sum_{k\in\Z\setminus\{0\}}|k|^{-1}\bigg|\int_{0}^{\infty}\mathds1_{|\tilde\Omega_\beta(r)-\alpha_k|\le c_0\alpha_k}\frac{\overline{\hat g(r,k)}\hat h(r,k)}{\tilde\Omega_\beta(r)-\alpha_k+i\e_k}\,r\mu_\beta(r)\,dr\bigg|\\
\,=\,\sum_{k\in\Z\setminus\{0\}}|k|^{-1}\alpha_k\bigg|\int_{-c_0}^{c_0}\frac{\overline{\hat g(\Psi_k^{-1}(s),k)}\hat h(\Psi_k^{-1}(s),k)}{s+i\e_k\alpha_k}\,\frac{\Psi_k^{-1}(s)\mu_\beta(\Psi_k^{-1}(s))}{\Psi_k'(\Psi_k^{-1}(s))}\,ds\bigg|.
\end{multline*}
From here, using~\eqref{eq:model-crit-est}, \eqref{eq:estim-prod-sob}, and the assumptions on $\Omega_\beta$ similarly as in Step~2.2, the desired estimate~\eqref{eq:pre-est-L-om-res} follows for the restricted integral. Combining with Steps~2.1 and~2.2, this concludes the proof of~\eqref{eq:pre-est-L-om-res}, hence of the limiting absorption principle~\eqref{eq:LAP-0} by Plancherel's theorem.
}

\medskip
\step3 Proof of~(ii) and~(iii).\\
With the above spectral properties of $L_\beta$ at hand, we now turn to the corresponding properties of the perturbed operator $L_\beta(\gamma)=L_\beta+\gamma T_\beta$ and we conclude the proof of items~(ii) and~(iii).
First note that by definition~\eqref{eq:Lbeta0} the kernel of $L_\beta(\gamma)$ clearly contains the set of radial functions.
By density, we then deduce that item~(ii) would follow from item~(iii), and it thus remains to prove the latter.
For that purpose, we shall argue perturbatively for~$|\gamma|$ small enough, combining the above limiting absorption principle~\eqref{eq:LAP-0} for $L_\beta$ together with regularizing properties of $T_\beta$. By definition~\eqref{eq:Lbeta0}, we can write for $\Im\omega\ne0$,
\begin{equation*}
\big(T_\beta(L_\beta-\omega)^{-1}h\big)(x)\,=\,-i x\Omega_\beta(x)\cdot\big\langle K(x-\cdot),(L_\beta-\omega)^{-1}h\big\rangle_{\Ld^2_\beta(\R^2)}.
\end{equation*}
Taking the norm $\3\cdot\3_\beta$ defined in~\eqref{eq:def-N3},
appealing to the result~\eqref{eq:LAP-0} of Step~2,
{using that the assumptions on $V$ and $\Omega_\beta$ ensure $|\Omega_\beta(r)|\lesssim1+r$ and $|\Omega'_\beta(r)|\lesssim1$, and recalling~\eqref{eq:non-Gauss-mu-est},}
we deduce
\begin{eqnarray*}
\sup_{\omega\in\C,\Im\omega\ne0}\3 T_\beta(L_\beta-\omega)^{-1}h\3_\beta
&\lesssim_R&\big(1+\|\langle \cdot\rangle^2\|_{\Ld^2_\beta(\R^2)}+\beta\|\langle \cdot\rangle^4\|_{\Ld^2_\beta(\R^2)}\big)\3h\3_\beta\\
&\lesssim_R&\beta^{-\frac23}\3h\3_\beta.
\end{eqnarray*}
Hence, for $\gamma\ll_R\beta^{\frac23}$ small enough,
\[\sup_{\omega\in\C,\Im \omega\ne0}\3\gamma T_\beta(L_\beta-\omega)^{-1}h\3_\beta\,\le\,\tfrac12\3h\3_\beta.\]
This allows us to construct the Neumann series
\begin{equation}\label{eq:Neumann/Lresolv}
\big\langle g,(L_\beta(\gamma)-\omega)^{-1}h\big\rangle_{\Ld^2_\beta(\R^2)}\,=\,\sum_{n=0}^\infty\big\langle g,(L_\beta-\omega)^{-1}\big[\gamma T_\beta (L_\beta-\omega)^{-1}\big]^nh\big\rangle_{\Ld^2_\beta(\R^2)},
\end{equation}
and the bound~\eqref{eq:LAP-Lbeta} follows.
This ends the proof of item~(iii).
\end{proof}

Next, we establish the following estimates on BBGKY operators in the weighted negative Sobolev spaces defined in Section~\ref{sec:def-weight-Sob}.

\begingroup\allowdisplaybreaks
\begin{lem}\label{lem:prelest-LS-re}$ $
\begin{enumerate}[(i)]
\item \emph{Weak bounds on BBGKY operators:}\\
For all $1\le m\le N$, $s\ge0$, and $h^{m+r}\in C^\infty_c((\R^2)^{m+r})$ for $r\in\{-2,-1,0,1\}$, we have
\begin{eqnarray*}
\|L^{m}_{N,\beta}h^{m}\|_{H^{-s-1}_\beta((\R^2)^m)}&\lesssim_{m,s}&\|h^{m}\|_{H^{-s}_\beta((\R^2)^{m})},\\
\|S^{m,\circ}_{N,\beta}h^{m}\|_{H^{-s-1}_\beta((\R^2)^m)}&\lesssim_{m,s}&\|h^{m}\|_{H^{-s}_\beta((\R^2)^{m})},\\
\|S^{m,+}_{N,\beta}h^{m+1}\|_{H^{-s-1}_\beta((\R^2)^m)}&\lesssim_{m,s}&\|h^{m+1}\|_{H^{-s}_\beta((\R^2)^{m+1})},\\
\|S^{m,-}_{N,\beta}h^{m-1}\|_{H^{-s-1}_\beta((\R^2)^m)}&\lesssim_{m,s}&\|h^{m-1}\|_{H^{-s}_\beta((\R^2)^{m-1})},\\
\|S^{m,=}_{N,\beta}h^{m-2}\|_{H^{-s}_\beta((\R^2)^m)}&\lesssim_{m,s}&\|h^{m-2}\|_{H^{-s}_\beta((\R^2)^{m-2})}.
\end{eqnarray*}
\item \emph{Weak bounds on linearized mean-field evolutions:}\\
For all $1\le m\le N$, $s\ge0$, $\delta>0$, and $h^m\in C^\infty_c((\R^2)^{m})$, we have
\[\|e^{iL_{N,\beta}^mt}h^m\|_{H^{-\lceil3s/2\rceil}_\beta((\R^2)^m)}\,\lesssim_{R,\beta,m,s,\delta}\,\langle t\rangle^{s+\delta}\|h^m\|_{H^{-s}_\beta((\R^2)^m)}.\]
\end{enumerate}
\end{lem}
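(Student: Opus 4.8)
The plan is to establish the two parts in order, part~(i) by a direct (if lengthy) weighted-space calculus and part~(ii) by bootstrapping the uniform and dispersive estimates already available for the linearized mean-field group.

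\smallskip\noindent\emph{Part~(i).} Since $H^{-s}_\beta$ is by definition the dual of $H^s_\beta$ with respect to the $\Ld^2_\beta$-pairing, each of the five bounds is equivalent by duality to a bound on the $\Ld^2_\beta$-adjoint of the corresponding operator, namely from $H^{s+1}_\beta$ (resp.\ from $H^s_\beta$ for $S_{N,\beta}^{m,=}$) into $H^s_\beta$. I would first record, by integration by parts, the explicit form of each adjoint: inspecting the formulas of Lemma~\ref{lem:eqn-gNm-21} one sees that each of $L_{N,\beta}^m,S_{N,\beta}^{m,\circ},S_{N,\beta}^{m,\pm},S_{N,\beta}^{m,=}$ and each of their $\Ld^2_\beta$-adjoints is a finite sum of $O_m(1)$ terms, every term being a composition of (a) a variable relabeling or a partial integration $h\mapsto\int_{\R^2}K(x_a-x_\ast)h(\ldots,x_\ast)\,\mu_\beta(x_\ast)\,dx_\ast$ against the bounded kernel $K$ and the probability weight $\mu_\beta$, (b) multiplication by one of the smooth coefficients $x_j^\bot\Omega_\beta$, $\beta x_j\Omega_\beta=(\nabla\log\mu_\beta)(x_j)$, $(K\ast\mu_\beta)(x_i)$, $K(x_i-x_j)$, and (c) at most one first-order derivative $\nabla_j$ or $\nabla_{j;\beta}$. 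The structural identities $(F+K\ast\mu_\beta)=x^\bot\Omega_\beta$ and $\nabla\log\mu_\beta=\beta x\Omega_\beta$ are used repeatedly here, both to exhibit the adjoints in this form and because the a~priori most dangerous contributions cancel (for instance $x^\bot\Omega_\beta\cdot\nabla\log\mu_\beta=0$ and $x^\bot\cdot\nabla\Omega_\beta=0$ by radiality, so the skew-symmetric transport part contributes exactly one derivative and no extra weight). The convolution factors~(a) are bounded on every $H^t_\beta((\R^2)^m)$ by Minkowski's integral inequality together with $\|K\|_{\Ld^\infty}<\infty$ and $\int\mu_\beta=1$; the multiplications~(b) are controlled on the $\mu_\beta$-weighted Sobolev scale by Leibniz' rule together with the $\mu_\beta$-adapted weighted inequalities in the spirit of Lemma~\ref{lem:interpol-SobG} (the hypothesis $\nabla(V'/r)\in C^\infty_b$ and $W\in C^\infty_b$ providing the needed control on the derivatives of the coefficients); and each factor~(c) costs precisely one derivative. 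Since $L_{N,\beta}^m,S_{N,\beta}^{m,\circ},S_{N,\beta}^{m,\pm}$ contain exactly one factor of type~(c) and $S_{N,\beta}^{m,=}$ none, the claimed bounds follow.

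\smallskip\noindent\emph{Part~(ii).} This is the delicate point and the main obstacle. The naive approach---iterating the weak bounds of part~(i) through Duhamel's identity $e^{itL_{N,\beta}^m}h=h+\int_0^t iL_{N,\beta}^m e^{i\tau L_{N,\beta}^m}h\,d\tau$---loses one derivative per power of $\langle t\rangle$; but because $L_{N,\beta}^m$ is not skew-adjoint on $\Ld^2_\beta$ (only on the deformed Hilbert space of Lemma~\ref{lem:nonGaus}), a naive closing of the resulting integral inequality by Gronwall would produce only an exponential-in-time bound. The plan is to break this loop by injecting the two pieces of genuine spectral information available. First, by Lemma~\ref{lem:nonGaus}(i) and the Kronecker-sum structure~\eqref{eq:L-Kron-NG}, the group factorizes as $e^{itL_{N,\beta}^m}=\bigotimes_j e^{itL_\beta(\gamma_j)}$ with $\gamma_j\in[0,\beta]$, hence is bounded on $\Ld^2_\beta((\R^2)^m)$ uniformly in $t$. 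Second, by the limiting absorption principle of Remark~\ref{rem:nonGaus} combined with Stone's formula---so that the bounded boundary values of the resolvent on $E_\beta^m=\ker(L_{N,\beta}^m)^\bot$ control the spectral density, hence the propagator---the restriction of $e^{itL_{N,\beta}^m}$ to $E_\beta^m$ maps the one-derivative-plus-weights space with norm $\3\cdot\3_{\beta;m}$ boundedly into its $\Ld^2_\beta$-dual, uniformly in $t$, and moreover with a mild decay after sacrificing an arbitrarily small amount of regularity; on the complementary kernel, $e^{itL_{N,\beta}^m}$ is the identity. Interpolating these two $t$-uniform estimates (and identifying the $\3\cdot\3_{\beta;m}$-spaces with $H^{\pm1}_\beta((\R^2)^m)$ up to the harmless polynomial weights of~\eqref{eq:def-N3}) yields a non-growing ``base level'' $H^\theta_\beta\to H^{-\theta}_\beta$, $\theta\in[0,1]$, from which the Duhamel iteration of part~(i) now descends one derivative at only a bounded cost in $\langle t\rangle$; optimizing the number of descents against the interpolation parameter produces the bound into $H^{-\lceil3s/2\rceil}_\beta$ with growth $\langle t\rangle^{s+\delta}$, the loss $\delta>0$ being exactly the regularity given away to make the mixing decay summable.

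\smallskip\noindent The crux throughout is that the statistical-closure defect---the non-self-adjointness of $L_{N,\beta}^m$, in particular the presence of the non-local operator $T_\beta$---must never be handled by a perturbative expansion in $T_\beta$, which would reinstate exponential-in-time growth, but only through the deformed self-adjoint structure and the absolutely continuous spectrum of Lemma~\ref{lem:nonGaus}, which are precisely the inputs that keep the time growth polynomial.
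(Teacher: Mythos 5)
Your treatment of part~(i) matches the paper's: the paper argues by duality and states the adjoint bounds without detail, which your sketch fills in correctly.

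For part~(ii) there is a genuine gap, and it stems from a misconception you state explicitly at the end: that handling the non-local part $T_\beta$ ``by a perturbative expansion in $T_\beta$'' would ``reinstate exponential-in-time growth.'' In fact the paper does exactly that, and the reason it works is the smoothing property of $T_\beta$ that you do not invoke. Concretely, the paper's proof has two steps that you omit. First, it uses the \emph{explicit} flow formula
\[
e^{iL_\beta t}h(x)\,=\,h\big(x\cos(t\Omega_\beta(x))+x^\bot\sin(t\Omega_\beta(x))\big),
\]
which, upon differentiating $s$ times, gives $\langle t\rangle^s$ growth at the cost of $s$ extra powers of $\langle x\rangle$; the non-degeneracy hypothesis~\eqref{eq:nondegenerate} in the form $\langle x\rangle^2\mu_\beta\lesssim_R\mu_\beta-\beta^{-1}\tfrac{x}{|x|}\cdot\nabla\mu_\beta$ then lets one trade, by integration by parts, two powers of the weight for one extra derivative, which is precisely the source of the $\lceil 3s/2\rceil$ in the statement. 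Second, the perturbation is absorbed by Duhamel,
\[
e^{iL_\beta(\gamma)^*t}h\,=\,e^{iL_\beta t}h+\gamma\int_0^te^{iL_\beta(t-t')}\,iT_\beta^*\,e^{iL_\beta(\gamma)^*t'}h\,dt',
\]
where the decisive observation is that $T_\beta^*$ (and $T_\beta$) maps $\Ld^2_\beta(\R^2)$ into $H^r_\beta(\R^2)$ for any $r\ge0$ with a constant depending only on $r$. The integrand therefore costs only the $t$-uniform $\Ld^2_\beta$-norm of $e^{iL_\beta(\gamma)^*t'}h$, which is controlled by the deformed self-adjoint structure of Lemma~\ref{lem:nonGaus}(i); the Duhamel integral grows at most linearly in $t$ in $H^r_\beta$, and an interpolation via Lemma~\ref{lem:interpol-SobG}(i) between $\Ld^2_\beta$ and $H^r_\beta$ with $r\ge s/\delta$ produces the $\langle t\rangle^{s+\delta}$ bound. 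No Gronwall closure on a fixed norm occurs, hence no exponential growth. Your alternative route is not viable as stated: the limiting absorption principle of Remark~\ref{rem:nonGaus} yields a uniform bound on the sesquilinear resolvent form $\langle g,(L_{N,\beta}^m-\omega)^{-1}h\rangle_{\Ld^2_\beta}$, i.e.~a pointwise bound on the spectral density, but this does \emph{not} upgrade to a $t$-uniform propagator bound from the $\3\cdot\3_{\beta;m}$-space into its dual --- Stone's formula together with a pointwise density bound only recovers the trivial $\Ld^2_\beta\to\Ld^2_\beta$ estimate, and getting a genuine regularity-gaining propagator bound would require Kato-smoothing or local-decay estimates, which produce time-integrated rather than pointwise-in-$t$ control and are nowhere established. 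The ``non-growing base level $H^\theta_\beta\to H^{-\theta}_\beta$'' you interpolate from is therefore unproven, and the subsequent descent scheme has nothing to bootstrap from. The ingredients you do need --- the explicit rotation flow of $L_\beta$, the weight-to-derivative trade, and the smoothing of $T_\beta$ --- are all already in the paper, and the first two do not appear in your sketch at all.
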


\begin{proof}
By duality, item~(i) follows from the following corresponding estimates on the adjoint operators, for all $s\ge0$ and $h^m\in C^\infty_c((\R^2)^m)$,
\begin{eqnarray*}
\|(L^{m}_{N,\beta})^*h^{m}\|_{H^{s}_\beta((\R^2)^m)}&\lesssim_{m,s}&\|h^{m}\|_{H^{s+1}_\beta((\R^2)^{m})},\\
\|(S^{m,\circ}_N)^*h^{m}\|_{H^{s}_\beta((\R^2)^m)}&\lesssim_{m,s}&\|h^{m}\|_{H^{s+1}_\beta((\R^2)^{m})},\\
\|(S^{m,+}_N)^*h^m\|_{H^{s}_\beta((\R^2)^{m+1})}&\lesssim_{m,s}&\|h^{m}\|_{H^{s+1}_\beta((\R^2)^{m})},\\
\|(S^{m,-}_N)^*h^{m}\|_{H^{s}_\beta((\R^2)^{m-1})}&\lesssim_{m,s}&\|h^{m}\|_{H^{s+1}_\beta((\R^2)^{m})},\\
\|(S^{m,=}_N)^*h^{m}\|_{H^{s}_\beta((\R^2)^{m-2})}&\lesssim_{m,s}&\|h^{m}\|_{H^{s}_\beta((\R^2)^{m})}.
\end{eqnarray*}
Adjoints can be computed explicitly and these estimates easily follow; we omit the details.
We turn to the proof of~(ii).
By duality, it suffices to prove that, for all $s\ge0$ and $\delta>0$,
\[\|e^{i(L_{N,\beta}^m)^*t}h^m\|_{H^{s}_\beta((\R^2)^m)}\,\lesssim_{R,\beta,m,s,\delta}\,\langle t\rangle^{s+\delta}\|h^m\|_{H^{3s/2}_\beta((\R^2)^m)}.\]
As by definition $L_{N,\beta}^m$ is a Kronecker sum of the perturbed operator $L_\beta(\gamma)=L_\beta+\gamma T_\beta$ for different values of $0\le\gamma\le\beta$, cf.~\eqref{eq:L-Kron-NG}, it suffices to show that for all $s\ge0$, $\delta>0$, $\gamma\le\beta$, and $h\in C^\infty_c(\R^2)$,
\begin{eqnarray}
\|e^{iL_\beta t}h\|_{H^{s}_\beta(\R^2)}&\lesssim_{R,\beta,s}&\langle t\rangle^{s}\|h\|_{H^{\lceil3s/2\rceil}_\beta(\R^2)},\label{eq:estimHs-Lbeta-1}\\
\|e^{iL_\beta(\gamma)^*t}h\|_{H^{s}_\beta(\R^2)}&\lesssim_{R,\beta,s,\delta}&\langle t\rangle^{s+\delta}\|h\|_{H^{\lceil3s/2\rceil}_\beta(\R^2)}.\label{eq:estimHs-Lbeta-2}
\end{eqnarray}
We split the proof into two steps, separately proving these two estimates.

\medskip\noindent
\step1 Proof of~\eqref{eq:estimHs-Lbeta-1}.\\
Starting from the explicit expression for the flow
\[e^{iL_\beta t}h(x)\,=\,h\big(x\cos(t\Omega_\beta(x))+x^\bot\sin(t\Omega_\beta(x))\big),\]
we find for all $s\ge0$,
\begin{equation}\label{eq:estimHs-iLt-pr}
\int_{\R^2} |\nabla^s e^{iL_\beta t}h|^2\mu_\beta\,\lesssim_{s}\,\langle t\rangle^{2s}\int_{\R^2}\langle\cdot\rangle^{2s}|\langle\nabla\rangle^s h|^2\mu_\beta,
\end{equation}
where the multiplicative constant depends on $\|\nabla\Omega_\beta\|_{W^{s,\infty}(\R^2)}$.
Using assumption~\eqref{eq:nondegenerate} in form of
\[\langle x\rangle^2\mu_\beta(x)\,\lesssim_R\,\mu_\beta(x)-\beta^{-1}\tfrac{x}{|x|}\cdot\nabla\mu_\beta(x),\]
an integration by parts gives for any $s\ge0$,
\[\int_{\R^2}\langle\cdot\rangle^{2s}|\nabla^s h|^2\mu_\beta\,\lesssim_{R,\beta,s}\,\int_{\R^2}\langle\cdot\rangle^{2s-2}|\nabla^s h|^2\mu_\beta
+\int_{\R^2}\langle\cdot\rangle^{2s-2}|\nabla^s h||\nabla^{s+1} h|\mu_\beta.\]
Hence, by the Cauchy--Schwarz inequality,
\[\int_{\R^2}\langle\cdot\rangle^{2s}|\nabla^s h|^2\mu_\beta\,\lesssim_{R,\beta,s}\,\int_{\R^2}\langle\cdot\rangle^{2s-2}|\nabla^s h|^2\mu_\beta
+\int_{\R^2}\langle\cdot\rangle^{2s-4}|\nabla^{s+1} h|^2\mu_\beta,\]
which yields by induction for all $s\ge0$,
\[\int_{\R^2}\langle\cdot\rangle^{2s}|\nabla^s h|^2\mu_\beta\,\lesssim_{R,\beta,s}\,\|h\|_{H^{\lceil3s/2\rceil}_\beta(\R^2)}^2.\]
Combined with~\eqref{eq:estimHs-iLt-pr}, we obtain the claim~\eqref{eq:estimHs-Lbeta-1}.

\medskip\noindent
\step2 Proof of~\eqref{eq:estimHs-Lbeta-2}.\\
Let $0\le\gamma\le\beta$. Decomposing $L_\beta(\gamma)^*=L_\beta+\gamma T_\beta^*$, we start with Duhamel's formula in the form
\[e^{iL_\beta(\gamma)^*t}h\,=\,e^{iL_\beta t}h+\gamma\int_0^t e^{iL_\beta(t-t')}iT_\beta^*e^{iL_\beta(\gamma)^*t'}h\,dt'.\]
Taking the Sobolev norm and applying~\eqref{eq:estimHs-Lbeta-1}, we deduce for all $r\ge0$,
\[\big\|e^{iL_\beta(\gamma)^*t}h-e^{iL_\beta t}h\big\|_{H^r_\beta(\R^2)}\,\lesssim_{R,\beta,r}\,\langle t\rangle^r\int_0^t \|\gamma T_\beta^*e^{iL_\beta(\gamma)^*t'}h\|_{H^{\lceil 3r/2\rceil}_\beta(\R^2)}\,dt'.\]
As the operator $T_\beta^*$ satisfies the following regularizing property, for all $r\ge0$,
\[\|\gamma T_\beta^* h\|_{H^r_\beta(\R^2)}\,\lesssim_r\,\|h\|_{\Ld^2_\beta(\R^2)},\]
we deduce
\begin{equation*}
\big\|e^{iL_\beta(\gamma)^*t}h-e^{iL_\beta t}h\big\|_{H^r_\beta(\R^2)}
\,\lesssim_{R,\beta,r}\,\langle t\rangle^r\int_0^t \|e^{iL_\beta(\gamma)^*t'}h\|_{\Ld^2_\beta(\R^2)}\,dt'.
\end{equation*}
Now we combine this with the interpolation inequality of Lemma~\ref{lem:interpol-SobG}(i): for all $r\ge s\ge0$, we get
\begin{eqnarray*}
\lefteqn{\big\|e^{iL_\beta(\gamma)^*t}h-e^{iL_\beta t}h\big\|_{H^s_\beta(\R^2)}}\\
&\lesssim_{R,\beta,r,s}&\big\|e^{iL_\beta(\gamma)^*t}h-e^{iL_\beta t}h\big\|_{\Ld^2_\beta(\R^2)}^{1-\frac sr}\big\|e^{iL_\beta(\gamma)^*t}h-e^{iL_\beta t}h\big\|_{H^r_\beta(\R^2)}^{\frac sr}\\
&\lesssim_{R,\beta,r}&\langle t\rangle^s\big\|e^{iL_\beta(\gamma)^*t}h-e^{iL_\beta t}h\big\|_{\Ld^2_\beta(\R^2)}^{1-\frac sr}\Big(\int_0^t\|e^{iL_\beta(\gamma)^*t'}h\|_{\Ld^2_\beta(\R^2)}\,dt'\Big)^{\frac sr}.
\end{eqnarray*}
By the triangle inequality and by the uniform boundedness of Lemma~\ref{lem:nonGaus}(i), we finally arrive at
\begin{eqnarray*}
\|e^{iL_\beta(\gamma)^*t}h\|_{H^s_\beta(\R^2)}
\,\lesssim_{R,\beta,r,s}\,\|e^{iL_\beta t}h\|_{H^s_\beta(\R^2)}
+\langle t\rangle^{s(1+\frac1r)}\|h\|_{\Ld^2_\beta(\R^2)}.
\end{eqnarray*}
Combined with~\eqref{eq:estimHs-Lbeta-1}, this yields the claim~\eqref{eq:estimHs-Lbeta-2} after choosing $r\ge \frac s\delta$.
\end{proof}
\endgroup

\subsection{Proof of Theorem~\ref{th:main-nonGauss-re}}
We start by using the cumulant estimates of Lemma~\ref{lem:est-cum-2} to truncate the BBGKY hierarchy and get a closed description of the tagged particle density.

\begin{lem}\label{lem:exp-gn1-NG}
For all $t\ge0$ and $\delta>0$, we have
\begin{align*}
&\Big\|N\partial_t\langle g_N^1\rangle-\int_0^t\big\langle iS_{N,\beta}^{1,+}e^{-i(t-s)L_{N,\beta}^2} iS_{N,\beta}^{2,-}(\tfrac1{\mu_\beta}f^\circ)\big\rangle\,ds\\
&\hspace{1cm}-\int_0^t\!\!\int_0^s\big\langle iS_{N,\beta}^{1,+}e^{-i(t-s)L_{N,\beta}^2}iS_{N,\beta}^{2,+}e^{-i(s-s')L_{N,\beta}^3}iS_{N,\beta}^{3,=}(\tfrac1{\mu_\beta}f^\circ)\big\rangle\,ds'\,ds\Big\|_{H^{-7}_\beta(\R^2)}\\
&\hspace{9cm}\,\lesssim_{R,\beta,\delta}\,\langle t\rangle^{10+\delta} N^{-\frac12}.
\end{align*}
\end{lem}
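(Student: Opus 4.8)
The plan is to unroll the BBGKY hierarchy of Lemma~\ref{lem:eqn-gNm-21} by two Duhamel iterations, isolate the two ``leading'' terms, and control everything else by combining the a priori correlation bounds of Lemma~\ref{lem:est-cum-2} with the mapping estimates of Lemma~\ref{lem:prelest-LS-re}. The first observation is that, since $V,W$ and $f^\circ$ are radial, the $N$-particle system is invariant under simultaneous rotations, so $f_N^1(t)$ --- hence $g_N^1(t)=\mu_\beta^{-1}f_N^1(t)$ --- stays radial and $\langle g_N^1\rangle=g_N^1$. For a radial function, both $iL^1_{N,\beta}g_N^1$ and $iS^{1,\circ}_{N,\beta}g_N^1$ vanish identically (each reduces to pairing a radial gradient against the tangential field $F+K\ast\mu_\beta$ or against $\nabla\log\mu_\beta$), so the $m=1$ equation collapses to $\partial_tg_N^1=iS^{1,+}_{N,\beta}g_N^2$, i.e.\ $N\partial_t\langle g_N^1\rangle=NiS^{1,+}_{N,\beta}g_N^2$.

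Next I would apply Duhamel to the $m=2$ equation of Lemma~\ref{lem:eqn-gNm-21} (writing $L^m,S^{m,\star}$ for the operators there), which gives
\[
N\partial_tg_N^1 = NiS^{1,+}e^{-itL^2}g_N^2(0) + \int_0^t iS^{1,+}e^{-i(t-s)L^2}\big(NiS^{2,+}g_N^3+iS^{2,\circ}g_N^2+iS^{2,-}g_N^1\big)(s)\,ds,
\]
and then Duhamel once more to rewrite $NiS^{2,+}g_N^3(s)$ in terms of $g_N^4,g_N^3,g_N^2,g_N^1$ and the datum $g_N^3(0)$. After this expansion, the only terms of size $O(1)$ carrying an explicit $\tfrac1N$ that survives against an $N$-prefactor are $\int_0^t iS^{1,+}e^{-i(t-s)L^2}iS^{2,-}g_N^1(s)\,ds$ and $\int_0^t\!\int_0^s iS^{1,+}e^{-i(t-s)L^2}iS^{2,+}e^{-i(s-s')L^3}iS^{3,=}g_N^1(s')\,ds'\,ds$; since $\partial_sg_N^1=iS^{1,+}g_N^2$ and $\|g_N^2\|_{\Ld^2_\beta}\lesssim N^{-1/2}$ one has $\|g_N^1(s)-\mu_\beta^{-1}f^\circ\|_{H^{-1}_\beta}\lesssim sN^{-1/2}$, so replacing $g_N^1$ by $\mu_\beta^{-1}f^\circ$ in both produces exactly the two integrals in the statement up to an admissible error.

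It then remains to bound the other terms in $H^{-7}_\beta(\R^2)$, which are of three types. (a) Terms retaining an explicit $\tfrac1N$ (or none) against only a bounded number of $N$-prefactors and carrying a correlation $g_N^k$, $k\ge2$ --- namely $iS^{2,\circ}g_N^2$, $iS^{3,-}g_N^2$, $iS^{3,\circ}g_N^3$ and $NiS^{3,+}g_N^4$; by Lemma~\ref{lem:est-cum-2} (which gives $\|g_N^k\|_{\Ld^2_\beta}\lesssim N^{-(k-1)/2}$) these are $O(N^{-1/2})$, and the two-step truncation closes precisely because $N\|g_N^4\|_{\Ld^2_\beta}=O(N^{-1/2})$. (b) The substitution errors $iS^{2,-}(g_N^1-\mu_\beta^{-1}f^\circ)$ and $iS^{3,=}(g_N^1-\mu_\beta^{-1}f^\circ)$, handled as above. (c) The initial-correlation contributions: by axisymmetry $g_N^2(0)=(\mu_\beta^{-1}f^\circ)\otimes h_N$ with $h_N=\mu_\beta^{-1}\tilde M_{N,\beta}^1-1$ radial, so $e^{-itL^2}g_N^2(0)=g_N^2(0)$ and $iS^{1,+}g_N^2(0)=0$ identically; the $g_N^3(0)$-contribution is treated similarly, using in addition that $\tilde M_{N,\beta}$ is exactly stationary for the background-only dynamics, so that its $\mu_\beta$-relative cumulants are near-annihilated by the background linearized mean-field group, bringing this contribution down to $O(N^{-1/2})$.

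Finally, each remainder is estimated by composing the bounds of Lemma~\ref{lem:prelest-LS-re}: every operator $S^{m,\pm}$ or $S^{m,\circ}$ costs one negative derivative, $S^{m,=}$ is a bounded multiplier, every propagator $e^{\pm i(\cdot)L^m_{N,\beta}}$ sends $H^{-s}_\beta$ into $H^{-\lceil3s/2\rceil}_\beta$ with growth $\langle t\rangle^{s+\delta}$, and each time integration contributes one power of $\langle t\rangle$; starting from $g_N^m\in\Ld^2_\beta$ (so $s=0$) and propagating the loss through the at most two nested Duhamel integrals, everything lands in $H^{-7}_\beta(\R^2)$ with prefactor $\lesssim\langle t\rangle^{10+\delta}$. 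I expect the genuinely delicate step to be (c): after multiplying by $N$, the initial-correlation contributions are a priori only $O(1)$, so a real cancellation --- rooted in axisymmetry for $m=2$ and in the equilibrium structure of $\tilde M_{N,\beta}$ for $m=3$ --- is what rescues the $N^{-1/2}$ bound; the rest is a long but essentially mechanical accounting of derivative losses and powers of time.
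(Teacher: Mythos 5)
Your overall strategy matches the paper's: unroll the BBGKY hierarchy of Lemma~\ref{lem:eqn-gNm-21} by two Duhamel iterations around $e^{-itL^m_{N,\beta}}$, isolate the $iS^{2,-}_{N,\beta}$ and $iS^{3,=}_{N,\beta}$ branches applied to $\tfrac1{\mu_\beta}f^\circ$ as the two displayed leading terms, and dominate the remainder by combining the a priori cumulant bounds of Lemma~\ref{lem:est-cum-2} with the negative-Sobolev operator and propagator estimates of Lemma~\ref{lem:prelest-LS-re}. The derivative/time bookkeeping you sketch (one negative derivative per $S^{m,\pm}$ or $S^{m,\circ}$, the $H^{-s}_\beta\to H^{-\lceil 3s/2\rceil}_\beta$ loss with growth $\langle t\rangle^{s+\delta}$ per propagator, one power of $\langle t\rangle$ per time integration) is correct and does land the worst chain in $H^{-7}_\beta$ with $\langle t\rangle^{10+\delta}$ growth. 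Your reformulation of the $m=1$ simplification via ``$g_N^1(t)$ stays radial'' is equivalent to the paper's angular-averaging argument; the paper also relies on radiality of $\tfrac1{\mu_\beta}f^\circ$ in the same way in~\eqref{eq:Duhamel-g1}.

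Where you genuinely depart from the paper --- and flag a real subtlety --- is in item~(c). You correctly observe that $g_N^2(0)$ and $g_N^3(0)$ are \emph{not} zero, since the initial background is the exact Gibbs measure $\tilde M_{N,\beta}$ while cumulants are taken relative to $\mu_\beta^{\otimes N}$; the paper's combined Duhamel formula for $Ng_N^2(t)$ is written with no $e^{-itL^2_{N,\beta}}Ng_N^2(0)$ and no $\int_0^te^{-i(t-s)L^2_{N,\beta}}iS^{2,+}_{N,\beta}e^{-isL^3_{N,\beta}}Ng_N^3(0)\,ds$, which tacitly assumes these vanish (true only in the uniform case of Section~6). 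Your treatment of $g_N^2(0)$ is correct: $g_N^2(0)=(\tfrac1{\mu_\beta}f^\circ)\otimes h_N$ with $h_N$ radial lies in $\ker L^2_{N,\beta}$, and $iS^{1,+}_{N,\beta}$ of it vanishes identically because $\nabla_{1;\beta}(\tfrac1{\mu_\beta}f^\circ)$ is radial while $\int K(\cdot-x_*)h_N(x_*)\mu_\beta(x_*)\,dx_*$ is tangential. But your treatment of $g_N^3(0)$ does not hold up as stated, and this is a genuine gap. One has $g_N^3(0)=(\tfrac1{\mu_\beta}f^\circ)(x_1)\,k_N(x_2,x_3)$ with $k_N$ the two-body Gibbs cumulant of the background relative to $\mu_\beta^{\otimes(N-1)}$: this is rotation-invariant but depends on the relative angle $\theta_2-\theta_3$, so it is \emph{not} radial in each variable and does not lie in $\ker L^3_{N,\beta}$; its $\Ld^2_\beta$ norm is of order $N^{-1}$, so $Ng_N^3(0)=O(1)$, and the operator bounds of Lemma~\ref{lem:prelest-LS-re} alone give the contribution $\int_0^t\langle iS^{1,+}_{N,\beta}e^{-i(t-s)L^2_{N,\beta}}iS^{2,+}_{N,\beta}e^{-isL^3_{N,\beta}}Ng_N^3(0)\rangle\,ds$ only as $O(\langle t\rangle^{\cdot})$ with no $N^{-1/2}$ gain. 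Your claim that stationarity of $\tilde M_{N,\beta}$ for the background $N$-body dynamics forces its cumulants to be ``near-annihilated by the background \emph{linearized mean-field} group'' is unsubstantiated --- the full $N$-body evolution and the linearized mean-field propagator $e^{-isL^3_{N,\beta}}$ are different objects, and stationarity under the former does not place $k_N$ near $\ker L^3_{N,\beta}$. Closing this would require something genuinely new: either a sharper structural analysis of the leading $O(1/N)$ Ursell function of the mean-field Gibbs state (e.g.\ showing its $E^3_\beta$ component, the part surviving $iS^{2,+}_{N,\beta}$, is $o(N^{-1})$), or a direct cancellation you have not exhibited.
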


\begin{proof}
First, for the tagged particle density, the BBGKY hierarchy~\eqref{eq:eqn-gNm-21} in Lemma~\ref{lem:eqn-gNm-21} yields
\[\partial_tg_N^1+iL_{N,\beta}^1g_N^1\,=\,iS_{N,\beta}^{1,+}g_N^2+\tfrac1NiS_{N,\beta}^{1,\circ}g_N^1,\]
and thus, for the radial density, noting that the contributions of $L_{N,\beta}^1$ and $S_{N,\beta}^{1,\circ}$ disappear when taking angular averages,
\begin{equation}\label{eq:gN1-1-NG}
N\partial_t\langle g_N^1\rangle\,=\,\langle iS_{N,\beta}^{1,+}Ng_N^2\rangle.
\end{equation}
The Duhamel formula for the above equation also yields
\[g_N^{1}(t)\,=\,e^{-itL_{N,\beta}^1}(\tfrac1{\mu_\beta}f^\circ)+\int_0^te^{-i(t-s)L_{N,\beta}^1}\Big(iS_{N,\beta}^{1,+}g_N^{2}(s)+\tfrac1NiS_{N,\beta}^{1,\circ}g_N^{1}(s)\Big)\,ds,\]
or equivalently, since $\tfrac1{\mu_\beta}f^\circ$ is radial and thus belongs to the kernel of~$L_{N,\beta}^1$,
\begin{equation}\label{eq:Duhamel-g1}
g_N^{1}(t)\,=\,\tfrac1{\mu_\beta}f^\circ+\int_0^te^{-i(t-s)L_{N,\beta}^1}\Big(iS_{N,\beta}^{1,+}g_N^{2}(s)+\tfrac1NiS_{N,\beta}^{1,\circ}g_N^{1}(s)\Big)\,ds.
\end{equation}
Next, the BBGKY hierarchy~\eqref{eq:eqn-gNm-21} for $g_N^2$ and $g_N^3$ yields
\begin{eqnarray*}
\partial_tNg_N^2+iL_{N,\beta}^2Ng_N^2&=&iS_{N,\beta}^{2,+}Ng_N^3+iS_{N,\beta}^{2,\circ}g_N^2+iS_{N,\beta}^{2,-}g_N^1,\\
\partial_tNg_N^3+iL_{N,\beta}^3Ng_N^3&=&iS_{N,\beta}^{3,+}Ng_N^4+iS_{N,\beta}^{3,\circ}g_N^3+iS_{N,\beta}^{3,-}g_N^2+iS_{N,\beta}^{3,=}g_N^1,
\end{eqnarray*}
and thus, combining the corresponding Duhamel formulas,
\begin{multline*}
Ng_N^{2}(t)\,=\,\int_0^te^{-i(t-s)L_{N,\beta}^2}\Big(iS_{N,\beta}^{2,\circ}g_N^{2}(s)+iS_{N,\beta}^{2,-}g_N^{1}(s)\Big)\,ds\\
+\int_0^t\int_0^se^{-i(t-s)L_{N,\beta}^2}iS_{N,\beta}^{2,+}e^{-i(s-s')L_{N,\beta}^3}\\
\times\Big(iS_{N,\beta}^{3,+}Ng_N^{4}(s')+iS_{N,\beta}^{3,\circ}g_N^{3}(s')+iS_{N,\beta}^{3,-}g_N^{2}(s')+iS_{N,\beta}^{3,=}g_N^{1}(s')\Big)\,ds'\,ds.
\end{multline*}
Replacing $g_N^{1}$ in this expression by~\eqref{eq:Duhamel-g1}, and reorganizing the terms, we deduce
\begingroup\allowdisplaybreaks
\begin{multline*}
Ng_N^{2}(t)\,=\,\int_0^te^{-i(t-s)L_{N,\beta}^2} iS_{N,\beta}^{2,-}(\tfrac1{\mu_\beta}f^\circ)\,ds\\
+\int_0^t\int_0^se^{-i(t-s)L_{N,\beta}^2}iS_{N,\beta}^{2,+}e^{-i(s-s')L_{N,\beta}^3}iS_{N,\beta}^{3,=}(\tfrac1{\mu_\beta}f^\circ)\,ds'\,ds\\
+\int_0^te^{-i(t-s)L_{N,\beta}^2}iS_{N,\beta}^{2,\circ}g_N^{2}(s)\,ds\\
+\int_0^t\int_0^se^{-i(t-s)L_{N,\beta}^2}iS_{N,\beta}^{2,+}e^{-i(s-s')L_{N,\beta}^3}\Big(iS_{N,\beta}^{3,+}Ng_N^{4}(s')+iS_{N,\beta}^{3,\circ}g_N^{3}(s')+iS_{N,\beta}^{3,-}g_N^{2}(s')\Big)\,ds'\,ds\\
+\int_0^t\int_0^se^{-i(t-s)L_{N,\beta}^2} iS_{N,\beta}^{2,-}e^{-i(s-s')L_{N,\beta}^1}\Big(iS_{N,\beta}^{1,+}g_N^{2}(s')+\tfrac1NiS_{N,\beta}^{1,\circ}g_N^{1}(s')\Big)\,ds'\,ds\\
+\int_0^t\int_0^s\int_0^{s'}e^{-i(t-s)L_{N,\beta}^2}iS_{N,\beta}^{2,+}e^{-i(s-s')L_{N,\beta}^3}iS_{N,\beta}^{3,=}e^{-i(s'-s'')L_{N,\beta}^1}\\
\times\Big(iS_{N,\beta}^{1,+}g_N^{2}(s'')+\tfrac1NiS_{N,\beta}^{1,\circ}g_N^{1}(s'')\Big)\,ds''\,ds'\,ds.
\end{multline*}
\endgroup
Inserting this into~\eqref{eq:gN1-1-NG} and appealing to Lemma~\ref{lem:prelest-LS-re} to estimate the different terms, the conclusion follows.
\end{proof}

We appeal to Laplace transform to express long-time linear evolutions more conveniently in terms of associated resolvents. The representation is further simplified by noting that the resolvent of $iL_{N,\beta}^3$ can be explicitly computed on some specific test functions.
Before stating the result, we introduce some notation:
given $R_1,R_2\in C^\infty_b((\R^2)^2)$, we define the $\mu_\beta$-convolution product
\[R_1\ast_{\mu_\beta}R_2(x,y)\,:=\,\int_{\R^2}R_1(x,z)R_2(z,y)\,\mu_\beta(z)\,dz.\]
Note that this product is in general not commutative, but is always associative. We define corresponding $\mu_\beta$-convolution powers of an element $R\in C^\infty_b((\R^2)^2)$ iteratively by
\[R^{\ast_{\mu_\beta}(n+1)}\,:=\,R\ast_{\mu_\beta}R^{\ast_{\mu_\beta}n},\qquad R^{\ast_{\mu_\beta}1}=R,\qquad \text{for all $n\ge1$}.\]
For a function $S\in C^\infty(\R^2)$, identifying it with $\tilde S(x,y):=S(x-y)$, we similarly define with some mild abuse of notation,
\[S^{\ast_{\mu_\beta}n}(x,y)\,:=\,(\tilde S)^{\ast_{\mu_\beta}n}(x,y),\qquad \text{for all $n\ge1$},\]
which coincides with the more explicit definition given in~\eqref{eq:def-Wconvsp}.
In these terms, the following result holds.

\begin{lem}\label{lem:laplace-NG}
For all $\sigma\ge0$ and $\phi\in C^\infty_c(\R^+)$, considering the subcritically time-rescaled tagged particle density
\[\bar g_N^{1}(\tau)\,:=\, g_N^1(N^\sigma\tau),\]
we have for all $\delta>0$,
\begin{multline*}
\Big\|N^{1-\sigma}\int_0^\infty\phi\,\partial_\tau \langle\bar g_N^1\rangle\,d\tau-\int_\R g_\phi(\alpha)\,\big\langle iS_{N,\beta}^{1,+}(iL_{N,\beta}^2+\tfrac{i\alpha+1}{N^\sigma})^{-1} iS_{N,\beta}^{2,-}(\tfrac1{\mu_\beta}f^\circ)\big\rangle\,d\alpha\\
-\int_\R g_\phi(\alpha)\,\big\langle iS_{N,\beta}^{1,+}(iL_{N,\beta}^2+\tfrac{i\alpha+1}{N^\sigma})^{-1}iS_{N,\beta}^{2,+}H_\beta^{3;\circ}\big\rangle\,d\alpha\\
+\int_\R\tfrac{i\alpha+1}{N^\sigma} g_\phi(\alpha)\,\big\langle iS_{N,\beta}^{1,+}(iL_{N,\beta}^2+\tfrac{i\alpha+1}{N^\sigma})^{-1}iS_{N,\beta}^{2,+}(iL_{N,\beta}^3+\tfrac{i\alpha+1}{N^\sigma})^{-1}H_\beta^{3;\circ}\big\rangle\,d\alpha\Big\|_{H^{-7}_\beta(\R^2)}\\
\,\lesssim_{R,\beta,\delta}\,N^{(10+\delta)\sigma-\frac12}\|\langle\cdot\rangle^{10+\delta}\phi\|_{\Ld^1(\R^+)},
\end{multline*}
where
\begin{eqnarray}
H^{3;\circ}_\beta(x_1,x_2,x_3)&:=&-\beta(\tfrac1{\mu_\beta}f^\circ)(x_1)W_{N,\beta}(x_2,x_3),\label{eq:def-Hbeta30-00}\\
W_{N,\beta}(x_2,x_3)&:=&\sum_{n=0}^\infty(-\beta)^n(\tfrac{N-3}N)^n\,W^{\ast_{\mu_\beta}(n+1)}(x_2,x_3),\nonumber
\end{eqnarray}
while $g_\phi(\alpha):=\frac1{2\pi}\int_0^\infty\frac{e^{(i\alpha+1)\tau}}{i\alpha+1}\phi(\tau)\,d\tau$ belongs to $C_b^\infty(\R)$ and satisfies
\[|g_\phi(\alpha)|\lesssim_\phi\langle\alpha\rangle^{-2}\qquad\text{and}\qquad\int_\R g_\phi=\int_0^\infty\phi.\]
\end{lem}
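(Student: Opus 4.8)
The plan is to pass from the time-domain expansion of Lemma~\ref{lem:exp-gn1-NG} to the asserted resolvent representation by rescaling time, testing against $\phi$, and converting the iterated integrals of the linearized mean-field flows $e^{-itL_{N,\beta}^m}$ into resolvents by means of a Laplace transform. More precisely, after the rescaling $t=N^\sigma\tau$ and after testing against $\phi$, Lemma~\ref{lem:exp-gn1-NG} reduces the left-hand side of the statement to the $\phi$-tested, time-rescaled forms of the two iterated integrals appearing there, up to an error which --- since the remainder in Lemma~\ref{lem:exp-gn1-NG} is $O(\langle t\rangle^{10+\delta}N^{-1/2})$ in $H^{-7}_\beta(\R^2)$ --- is bounded by $\lesssim_{R,\beta,\delta}N^{(10+\delta)\sigma-\frac12}\|\langle\cdot\rangle^{10+\delta}\phi\|_{\Ld^1(\R^+)}$, precisely the claimed remainder. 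It then suffices to transform the two main terms exactly.

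First, one records the elementary facts about $g_\phi$: integrating by parts twice in its defining integral (all boundary terms vanishing since $\phi\in C^\infty_c(\R^+)$) yields $g_\phi\in C^\infty_b(\R)$ with $|g_\phi(\alpha)|\lesssim_\phi\langle\alpha\rangle^{-2}$, while the identity
\[
\Phi(s)\,:=\,\int_s^\infty\phi\,=\,\int_\R g_\phi(\alpha)\,e^{-(i\alpha+1)s}\,d\alpha,\qquad s\ge0,
\]
follows from the Fourier inversion formula for $t\mapsto e^{-t}\mathds1_{t>0}$, whose Fourier transform is $(1+i\alpha)^{-1}$; evaluating this at $s=0$ gives $\int_\R g_\phi=\int_0^\infty\phi$. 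For the single time integral in Lemma~\ref{lem:exp-gn1-NG}, the change of variables $u=t-s$ rewrites it at time $t=N^\sigma\tau$ as $\int_0^{N^\sigma\tau}\langle iS_{N,\beta}^{1,+}e^{-iuL_{N,\beta}^2}iS_{N,\beta}^{2,-}(\tfrac1{\mu_\beta}f^\circ)\rangle\,du$; testing against $\phi$ and exchanging the order of integration turns the $\tau$-integration into multiplication by $\Phi(u/N^\sigma)$, which by the identity above equals $\int_\R g_\phi(\alpha)e^{-(i\alpha+1)u/N^\sigma}d\alpha$. Since $L_{N,\beta}^2$ generates a uniformly bounded $C_0$-group (being a Kronecker sum of the operators $L_\beta(\gamma)$, $0\le\gamma\le\beta$, of Lemma~\ref{lem:nonGaus}(i)) and $\Re\tfrac{i\alpha+1}{N^\sigma}>0$, we have $\int_0^\infty e^{-iuL_{N,\beta}^2}e^{-(i\alpha+1)u/N^\sigma}\,du=(iL_{N,\beta}^2+\tfrac{i\alpha+1}{N^\sigma})^{-1}$, and a further exchange of integrals --- legitimate for fixed $N$ since $g_\phi\in\Ld^1(\R)$, the $u$-support is compact, and the BBGKY operators are bounded between weighted negative Sobolev spaces by Lemma~\ref{lem:prelest-LS-re} --- produces the first resolvent term of the statement.

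The double time integral is handled in the same way: the change of variables $u=t-s$, $u'=s-s'$ rewrites it as an integral over $\{u,u'\ge0,\ u+u'\le N^\sigma\tau\}$ of $\langle iS_{N,\beta}^{1,+}e^{-iuL_{N,\beta}^2}iS_{N,\beta}^{2,+}e^{-iu'L_{N,\beta}^3}iS_{N,\beta}^{3,=}(\tfrac1{\mu_\beta}f^\circ)\rangle$; after testing against $\phi$ the $\tau$-integration becomes $\Phi((u+u')/N^\sigma)$, and factoring $e^{-(i\alpha+1)(u+u')/N^\sigma}$ decouples the $u$- and $u'$-integrations into two resolvents, so the two-loop contribution becomes $\int_\R g_\phi(\alpha)\langle iS_{N,\beta}^{1,+}(iL_{N,\beta}^2+\tfrac{i\alpha+1}{N^\sigma})^{-1}iS_{N,\beta}^{2,+}(iL_{N,\beta}^3+\tfrac{i\alpha+1}{N^\sigma})^{-1}iS_{N,\beta}^{3,=}(\tfrac1{\mu_\beta}f^\circ)\rangle\,d\alpha$. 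To reach the last two terms of the statement I would then prove the closed-form identity $iL_{N,\beta}^3H_\beta^{3;\circ}=iS_{N,\beta}^{3,=}(\tfrac1{\mu_\beta}f^\circ)$, with $H_\beta^{3;\circ}$ as in~\eqref{eq:def-Hbeta30-00}: because $\tfrac1{\mu_\beta}f^\circ$ is radial, $L_\beta$ annihilates it in the first variable and, in view of~\eqref{eq:L-Kron-NG}, the claim reduces to the two-variable identity
\begin{multline*}
\beta\Big[(iL_\beta)_{x_2}+(iL_\beta)_{x_3}+\beta\tfrac{N-3}{N}\big((iT_\beta)_{x_2}+(iT_\beta)_{x_3}\big)\Big]W_{N,\beta}(x_2,x_3)\\
\,=\,\big(\nabla\log\mu_\beta(x_2)-\nabla\log\mu_\beta(x_3)\big)\cdot K(x_2-x_3),
\end{multline*}
which one verifies using $K=-\nabla^\bot W$, $\nabla\log\mu_\beta(x)=\beta x\Omega_\beta(x)$, the commutation relation $(iL_\beta)_xW^{\ast_{\mu_\beta}(n+1)}=(iT_\beta)_xW^{\ast_{\mu_\beta}n}$ for $n\ge1$, and the ensuing telescoping of the Neumann series defining $W_{N,\beta}$. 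Granted this, the resolvent identity $(iL_{N,\beta}^3+\tfrac{i\alpha+1}{N^\sigma})^{-1}iS_{N,\beta}^{3,=}(\tfrac1{\mu_\beta}f^\circ)=H_\beta^{3;\circ}-\tfrac{i\alpha+1}{N^\sigma}(iL_{N,\beta}^3+\tfrac{i\alpha+1}{N^\sigma})^{-1}H_\beta^{3;\circ}$ splits the two-loop contribution into exactly the remaining two terms of the statement.

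The hard part is the algebraic identity of the last step --- correctly disentangling the interplay between $L_\beta$, $T_\beta$ and the $\mu_\beta$-convolution powers, and checking that $W_{N,\beta}$ is precisely the series whose telescoping regenerates the source $iS_{N,\beta}^{3,=}(\tfrac1{\mu_\beta}f^\circ)$, which hinges on the factors $\tfrac{N-3}{N}$ inherited from $iL_{N,\beta}^3$ matching those built into the definition of $W_{N,\beta}$. The rest is routine: justifying the exchanges of integration and the resolvent integrals for fixed $N$ (immediate since $\Phi$ is compactly supported, $g_\phi\in\Ld^1(\R)$, and the operators are controlled in the weighted negative Sobolev spaces of Lemma~\ref{lem:prelest-LS-re}), and transporting the $H^{-7}_\beta$ error from Lemma~\ref{lem:exp-gn1-NG} through the time rescaling.
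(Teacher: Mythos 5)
Your proof is correct and follows essentially the same path as the paper's: pass from the time-domain expansion of Lemma~\ref{lem:exp-gn1-NG} to resolvents via a Laplace-transform/Fubini argument (the paper compresses this step by citing~\cite[Lemma~5.2]{DSR-21}, which you instead reconstruct explicitly), establish the algebraic identity $iL_{N,\beta}^3H_\beta^{3;\circ}=iS_{N,\beta}^{3,=}(\tfrac1{\mu_\beta}f^\circ)$, and then apply the resolvent identity to split off the $\tfrac{i\alpha+1}{N^\sigma}$-correction. Your verification of the key identity by telescoping the Neumann series through the commutation relation $(iL_\beta)_xW^{\ast_{\mu_\beta}(n+1)}=(iT_\beta)_xW^{\ast_{\mu_\beta}n}$ is a clean and equivalent reformulation of the paper's computation, which instead resums via $W_{N,\beta}+\beta\tfrac{N-3}N\,W\ast_{\mu_\beta}W_{N,\beta}=W$.
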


\begin{proof}
Appealing to the product formula for the Laplace transform, as e.g.~\cite[Lemma~5.2]{DSR-21}, we can readily deduce from Lemma~\ref{lem:exp-gn1-NG} that
\begin{multline}\label{eq:pre/lem:laplace-NG}
\Big\|N^{1-\sigma}\int_0^\infty\phi\,\partial_\tau \langle\bar g_N^1\rangle-\int_\R g_\phi(\alpha)\,\big\langle iS_{N,\beta}^{1,+}(iL_{N,\beta}^2+\tfrac{i\alpha+1}{N^\sigma})^{-1} iS_{N,\beta}^{2,-}(\tfrac1{\mu_\beta}f^\circ)\big\rangle\,d\alpha\\
-\int_\R g_\phi(\alpha)\,\big\langle iS_{N,\beta}^{1,+}(iL_{N,\beta}^2+\tfrac{i\alpha+1}{N^\sigma})^{-1}iS_{N,\beta}^{2,+}(iL_{N,\beta}^3+\tfrac{i\alpha+1}{N^\sigma})^{-1}iS_{N,\beta}^{3,=}(\tfrac1{\mu_\beta}f^\circ)\big\rangle\,d\alpha\Big\|_{H^{-7}_\beta(\R^2)}\\
\,\lesssim_\delta\,N^{(10+\delta)\sigma-\frac12}\|\langle\cdot\rangle^{10+\delta}\phi\|_{\Ld^1(\R^+)},
\end{multline}
where the transformation $g_\phi$ is as in the statement.
By definition of $S_{N,\beta}^{3,=}$ in Lemma~\ref{lem:eqn-gNm-21}, we have
\begin{eqnarray*}
iS_{N,\beta}^{3,=}(\tfrac1{\mu_\beta}f^\circ)(x_1,x_2,x_3)&=&-\beta (\tfrac1{\mu_\beta}f^\circ)(x_1) \,K(x_2-x_3)\cdot \big(x_2\Omega_\beta(x_2)-x_3\Omega_\beta(x_3)\big)\\
&=&-\beta (\tfrac1{\mu_\beta}f^\circ)(x_1) \,\nabla W(x_2-x_3)\cdot \big(x_2^\bot\Omega_\beta(x_2)-x_3^\bot\Omega_\beta(x_3)\big).
\end{eqnarray*}
Now compare this expression with the definition of $L_{N,\beta}^3$: for any radial $f\in C^\infty_c(\R^2)$ and any $h\in C^\infty_c((\R^2)^2)$,
\begin{multline*}
iL_{N,\beta}^3\big[(x_1,x_2,x_3)\mapsto f(x_1)h(x_2,x_3)\big](x_1,x_2,x_3)\\
\,=\,f(x_1)x_2^\bot\Omega_\beta(x_2)\cdot\nabla_2\Big(h(x_2,x_3)+\beta\tfrac{N-3}N\int_{\R^2}W(x_2-x_*)h(x_*,x_3)\,\mu_\beta(x_*)dx_*\Big)\\
+f(x_1)x_3^\bot\Omega_\beta(x_3)\cdot\nabla_3\Big(h(x_2,x_3)+\beta\tfrac{N-3}N\int_{\R^2} h(x_2,x_*)W(x_*-x_3)\,\mu_\beta(x_*)dx_*\Big).
\end{multline*}
We deduce
\begin{equation*}
iL_{N,\beta}^3H^{3;\circ}_\beta
\,=\,iS_{N,\beta}^{3,=}(\tfrac1{\mu_\beta}f^\circ),
\end{equation*}
where $H^{3;\circ}_\beta\in C^\infty_b((\R^2)^3)$ is the smooth function defined in~\eqref{eq:def-Hbeta30-00}.
Using this identity in form of
\[(iL_{N,\beta}^3+\tfrac{i\alpha+1}{N^\sigma})^{-1}iS_{N,\beta}^{3,=}(\tfrac1{\mu_\beta}f^\circ)\,=\,H^{3;\circ}_\beta-\tfrac{i\alpha+1}{N^\sigma}(iL_{N,\beta}^3+\tfrac{i\alpha+1}{N^\sigma})^{-1}H^{3;\circ}_\beta,\]
and inserting it into~\eqref{eq:pre/lem:laplace-NG}, the conclusion follows.
\end{proof}

It remains to pass to the limit in the different terms of the above representation of~$\partial_\tau\langle\bar g_N^1\rangle$. For that purpose, we use the fact that the resolvents of $iL_{N,\beta}^2$ and $iL_{N,\beta}^3$ can be computed in form of explicit Neumann series, cf.~\eqref{eq:Neumann/Lresolv}.

\begin{proof}[Proof of Theorem~\ref{th:main-nonGauss-re}]
We split the proof into four steps, separately evaluating the limit of the different terms in the representation of $\partial_\tau\langle\bar g_N^1\rangle$ given in Lemma~\ref{lem:laplace-NG}. The last step is devoted to the proof of the positivity of the limiting coefficient field $a_\beta$.

\medskip
\step1 Proof that, provided $\beta\ll1$ is small enough, we have for all $\phi\in C^\infty_c(\R^+)$, all radial $h\in C^\infty_c(\R^2)$, and $0<\sigma<\frac12$,
\begin{multline}\label{eq:lapl-lim-1st}
\lim_{N\uparrow\infty}\int_\R g_\phi(\alpha)\Big\langle h,\Big(iS_{N,\beta}^{1,+}(iL_{N,\beta}^2+\tfrac{i\alpha+1}{N^\sigma})^{-1}iS_{N,\beta}^{2,-}(\tfrac1{\mu_\beta}f^\circ)\\
+iS_{N,\beta}^{1,+}(iL_{N,\beta}^2+\tfrac{i\alpha+1}{N^\sigma})^{-1}iS_{N,\beta}^{2,+}H_{N,\beta}^{3;\circ}\Big)\Big\rangle_{\Ld^2_\beta(\R^2)}\,d\alpha\\
\,=\,-\Big(\int_0^\infty\phi\Big)\iint_{(\R^2)^2}\overline{\nabla h(x_1)}\cdot K(x_1-x_2)\,\big((iL^2_\beta+0)^{-1}\pi_\beta^2G_{\beta}^{2;\circ}\big)(x_1,x_2)\\
\times\mu_\beta(x_1)\mu_\beta(x_2)\, dx_1dx_2,
\end{multline}
where we define
\begin{multline}\label{eq:def-Gbetainfty}
G_{\beta}^{2;\circ}(x_1,x_2):=
(\tfrac1{\mu_\beta}\nabla f^\circ)(x_1)\cdot(-\nabla^\bot_1W_\beta)(x_1,x_2)\\
-(\tfrac{1}{\mu_\beta}f^\circ)(x_1)\nabla\log\mu_\beta(x_2)\cdot K(x_1-x_2),
\end{multline}
and where the orthogonal projection $\pi_\beta^2:\Ld^2_\beta((\R^2)^2)\to E_\beta^2$ is given by
\[\pi_\beta^2 h(x_1,x_2)\,:=\,h(x_1,x_2)-\fint\!\!\fint_{(\Sp^1)^2}h(|x_1|e_1,|x_2|e_2)\,d\sigma(e_1)d\sigma(e_2).\]

\medskip\noindent
By definition of $S_{N,\beta}^{1,+},S_{N,\beta}^{2,-}$ in Lemma~\ref{lem:eqn-gNm-21}, using that $V,W,f^\circ$ are radial, we can compute
\begin{multline*}
\Big\langle h,\Big(iS_{N,\beta}^{1,+}(iL_{N,\beta}^2+\tfrac{i\alpha+1}{N^\sigma})^{-1}iS_{N,\beta}^{2,-}(\tfrac1{\mu_\beta}f^\circ) +iS_{N,\beta}^{1,+}(iL_{N,\beta}^2+\tfrac{i\alpha+1}{N^\sigma})^{-1}iS_{N,\beta}^{2,+}H_{N,\beta}^{3;\circ}\Big)\Big\rangle_{\Ld^2_\beta(\R^2)}\\
\,=\,-\tfrac{N-1}N\iint_{(\R^2)^2}\overline{\nabla h(x_1)}\cdot K(x_1-x_2)\,\big((iL_{N,\beta}^2+\tfrac{i\alpha+1}{N^\sigma})^{-1}G_{N,\beta}^{2;\circ}\big)(x_1,x_2)\\
\times\mu_\beta(x_1)\mu_\beta(x_2)\, dx_1dx_2,
\end{multline*}
where
\begin{multline*}
G_{N,\beta}^{2;\circ}(x_1,x_2)\,:=\,\tfrac{1}{\mu_\beta(x_1)\mu_\beta(x_2)}K(x_1-x_2)\cdot(\nabla_1-\nabla_2)(f^\circ(x_1)\mu_\beta(x_2))\\
+\tfrac{N-2}N\sum_{j=1}^2\int_{\R^2}K(x_j-x_*)\cdot(\nabla_j+\nabla\log\mu_\beta(x_j))H_{N,\beta}^{3;\circ}(x_1,x_2,x_*)\,\mu_\beta(x_*)\,dx_*.
\end{multline*}
As $h$ and $W$ are radial, we note that the map
\[(x_1,x_2)\,\mapsto\, \overline{\nabla h(x_1)}\cdot K(x_1-x_2)\,=\,- \overline{\nabla h(x_1)}\cdot \nabla^\bot W(x_1-x_2)\]
belongs to $E_\beta^2$. {As the map $L_{N,\beta}^2$ leaves the subspace $E_\beta^2=\ker(L_{N,\beta}^2)^\bot$ invariant, the orthogonal projection $\pi_\beta^2$ can thus be smuggled in as follows,
\begin{eqnarray*}
\lefteqn{\iint_{(\R^2)^2}\overline{\nabla h(x_1)}\cdot K(x_1-x_2)\,\big((iL_{N,\beta}^2+\tfrac{i\alpha+1}{N^\sigma})^{-1}G_{N,\beta}^{2;\circ}\big)(x_1,x_2)\,\mu_\beta(x_1)\mu_\beta(x_2)\, dx_1dx_2}\\
&=&\iint_{(\R^2)^2}\overline{\nabla h(x_1)}\cdot K(x_1-x_2)\,\big(\pi^2_\beta(iL_{N,\beta}^2+\tfrac{i\alpha+1}{N^\sigma})^{-1}G_{N,\beta}^{2;\circ}\big)(x_1,x_2)\,\mu_\beta(x_1)\mu_\beta(x_2)\, dx_1dx_2\\
&=&\iint_{(\R^2)^2}\overline{\nabla h(x_1)}\cdot K(x_1-x_2)\,\big((iL_{N,\beta}^2+\tfrac{i\alpha+1}{N^\sigma})^{-1}\pi^2_\beta G_{N,\beta}^{2;\circ}\big)(x_1,x_2)\,\mu_\beta(x_1)\mu_\beta(x_2)\, dx_1dx_2,
\end{eqnarray*}
so that the above identity becomes}
\begin{multline}\label{eq:pre-lim-mainterms-noGauss}
\Big\langle h,\Big(iS_{N,\beta}^{1,+}(iL_{N,\beta}^2+\tfrac{i\alpha+1}{N^\sigma})^{-1}iS_{N,\beta}^{2,-}(\tfrac1{\mu_\beta}f^\circ) +iS_{N,\beta}^{1,+}(iL_{N,\beta}^2+\tfrac{i\alpha+1}{N^\sigma})^{-1}iS_{N,\beta}^{2,+}H_{N,\beta}^{3;\circ}\Big)\Big\rangle_{\Ld^2_\beta(\R^2)}\\
\,=\,-\tfrac{N-1}N\iint_{(\R^2)^2}\overline{\nabla h(x_1)}\cdot K(x_1-x_2)\,\big((iL_{N,\beta}^2+\tfrac{i\alpha+1}{N^\sigma})^{-1}\pi_\beta^2 G_{N,\beta}^{2;\circ}\big)(x_1,x_2)\\
\times\mu_\beta(x_1)\mu_\beta(x_2)\, dx_1dx_2.
\end{multline}
Next, we proceed to a suitable reformulation of $G_{N,\beta}^{2;\circ}$.
Recalling the definition of $H^{3;\circ}_{N,\beta}$ in Lemma~\ref{lem:laplace-NG},
as well as $K=-\nabla^\bot W$,
we find that
\begin{multline*}
G_{N,\beta}^{2;\circ}(x_1,x_2)\,=\,\tfrac{1}{\mu_\beta(x_1)\mu_\beta(x_2)}K(x_1-x_2)\cdot(\nabla_1-\nabla_2)(f^\circ(x_1)\mu_\beta(x_2))\\
+\beta\tfrac{N-2}N(\tfrac1{\mu_\beta}\nabla f^\circ)(x_1)\cdot\nabla^\bot_1(W\ast_{\mu_\beta}W_{N,\beta})(x_1,x_2)\\
+\beta\tfrac{N-2}N(\tfrac1{\mu_\beta}f^\circ)(x_1)\int_{\R^2}\nabla^\bot W(x_2-x_*)\cdot(\nabla_2W_{N,\beta})(x_*,x_2)\,\mu_\beta(x_*)\,dx_*\\
+\beta\tfrac{N-2}N(\tfrac1{\mu_\beta}f^\circ)(x_1)\nabla\log\mu_\beta(x_2)\cdot \int_{\R^2}\nabla^\bot W(x_2-x_*)W_{N,\beta}(x_*,x_2)\,\mu_\beta(x_*)\,dx_*.
\end{multline*}
From the definition of~$W_{N,\beta}$ in Lemma~\ref{lem:laplace-NG}, we note that
\[W(x_1-x_2)-W_{N,\beta}(x_1,x_2)\,=\,\beta\tfrac{N-3}N(W\ast_{\mu_\beta}W_{N,\beta})(x_1,x_2).\]
This allows to reorganize the first two right-hand side terms above,
\begin{multline}\label{eq:GNbeta-rewr}
G_{N,\beta}^{2;\circ}(x_1,x_2)\,=\,
(\tfrac1{\mu_\beta}\nabla f^\circ)(x_1)\cdot(-\nabla^\bot_1W_{N,\beta})(x_1,x_2)\\
-(\tfrac{1}{\mu_\beta}f^\circ)(x_1)\nabla\log\mu_\beta(x_2)\cdot K(x_1-x_2)\\
-\tfrac{\beta}{N}(\tfrac{1}{\mu_\beta}\nabla f^\circ)(x_1)\cdot\int_{\R^2}K(x_1-x_*)W_{N,\beta}(x_*,x_2)\,\mu_\beta(x_*)\,dx_*\\
+\beta\tfrac{N-2}N(\tfrac{1}{\mu_\beta}f^\circ)(x_1)\int_{\R^2}\nabla^\bot W(x_2-x_*)\cdot(\nabla_2W_{N,\beta})(x_*,x_2)\,\mu_\beta(x_*)\,dx_*\\
+\beta\tfrac{N-2}N(\tfrac{1}{\mu_\beta}f^\circ)(x_1)\nabla\log\mu_\beta(x_2)\cdot \int_{\R^2}\nabla^\bot W(x_2-x_*)W_{N,\beta}(x_*,x_2)\,\mu_\beta(x_*)\,dx_*.
\end{multline}
In order to evaluate the last two terms in the right-hand side, we note that the very definition of~$W_{N,\beta}$ further lets us compute
\begin{multline}\label{eq:decompnabbot-Wbet}
\int_{\R^2}\nabla^\bot W(x_2-x_*)W_{N,\beta}(x_*,x_2)\,\mu_\beta(x_*)\,dx_*\\
\,=\,\tfrac12\nabla^\bot_{x_2}\Big(\sum_{n=0}^\infty(-\beta\tfrac{N-3}N)^n W^{\ast_{\mu_\beta}(n+2)}(x_2,x_2)\Big),
\end{multline}
together with
\begin{multline*}
\int_{\R^2}\nabla^\bot W(x_2-x_*)\cdot(\nabla_2W_{N,\beta})(x_*,x_2)\,\mu_\beta(x_*)\,dx_*\\
\,=\,-\sum_{\alpha=1}^d\sum_{n=0}^\infty(-\beta\tfrac{N-3}N)^n \big(\nabla^\bot_\alpha W\ast_{\mu_\beta}W^{\ast_{\mu_\beta}n}\ast_{\mu_\beta}\nabla_\alpha W\big)(x_2,x_2)\,=\,0.
\end{multline*}
As by symmetry the function $x\mapsto W^{\ast_{\mu_\beta}n}(x,x)$ is radial for all $n\ge1$, these two identities ensure that the last two right-hand side terms in~\eqref{eq:GNbeta-rewr} actually vanish identically. We are thus left with
\begin{multline*}
G_{N,\beta}^{2;\circ}(x_1,x_2)\,=\,
(\tfrac1{\mu_\beta}\nabla f^\circ)(x_1)\cdot(-\nabla^\bot_1W_{N,\beta})(x_1,x_2)\\
-(\tfrac{1}{\mu_\beta}f^\circ)(x_1)\nabla\log\mu_\beta(x_2)\cdot K(x_1-x_2)\\
-\tfrac{\beta}{N}(\tfrac{1}{\mu_\beta}\nabla f^\circ)(x_1)\cdot\int_{\R^2}K(x_1-x_*)W_{N,\beta}(x_*,x_2)\,\mu_\beta(x_*)\,dx_*.
\end{multline*}
Comparing this with the definition of $G_\beta^{2;\circ}$ in~\eqref{eq:def-Gbetainfty}, and in particular comparing $W_{N,\beta}$ to~$W_\beta$, we easily find that for $\beta\le\frac12\|W\|_{\Ld^\infty(\R^2)}^{-1}$,
\[\|G_{N,\beta}^{2;\circ}-G_\beta^{2;\circ}\|_{\Ld^2_\beta((\R^2)^2)}\,\lesssim\,\tfrac1N\|\tfrac1{\mu_\beta}\nabla f^\circ\|_{\Ld^2_\beta(\R^2)}.\]
Further noting that
\[(iL_{N,\beta}^2h-iL_\beta^2h)(x_1,x_2)\,=\,-\tfrac{2\beta}Nx_2\Omega_\beta(x_2)\cdot\int_{\R^2}K(x_2-x_*)h(x_1,x_*)\,\mu_\beta(x_*)\,dx_*,\]
we can estimate for all $\Re\omega\ne0$,
\[\big\|(iL_{N,\beta}^2+\omega)^{-1}-(iL_{\beta}^2+\omega)^{-1}\big\|_{\Ld^2_\beta((\R^2)^2)\to\Ld^2_\beta((\R^2)^2)}\,\lesssim\,\tfrac{1}{N}|\Re\omega|^{-2}\|\nabla\log\mu_\beta\|_{\Ld^2_\beta(\R^2)}.\]
These error estimates allow to replace the representation formula~\eqref{eq:pre-lim-mainterms-noGauss} by the following approximation, for $\beta\le\frac12\|W\|_{\Ld^\infty(\R^2)}^{-1}$ and $0<\sigma<\frac12$,
\begin{multline*}
\lim_{N\uparrow\infty}\sup_{\alpha\in\R}\bigg|\Big\langle h,\Big(iS_{N,\beta}^{1,+}(iL_{N,\beta}^2+\tfrac{i\alpha+1}{N^\sigma})^{-1}iS_{N,\beta}^{2,-}(\tfrac1{\mu_\beta}f^\circ)\\
+iS_{N,\beta}^{1,+}(iL_{N,\beta}^2+\tfrac{i\alpha+1}{N^\sigma})^{-1}iS_{N,\beta}^{2,+}H_{N,\beta}^{3;\circ}\Big)\Big\rangle_{\Ld^2_\beta(\R^2)}\\
+\iint_{(\R^2)^2}\overline{\nabla h(x_1)}\cdot K(x_1-x_2)\,\big((iL_{\beta}^2+\tfrac{i\alpha+1}{N^\sigma})^{-1}\pi_\beta^2 G_{\beta}^{2;\circ}\big)(x_1,x_2)\\
\times\mu_\beta(x_1)\mu_\beta(x_2)\, dx_1dx_2\bigg|
\,=\,0.
\end{multline*}
{Finally, the limiting absorption principle of Remark~\ref{rem:nonGaus} allows to pass to the limit in the second left-hand side term. As $\int_\R g_\phi=\int_0^\infty \phi$ and $|g_\phi(\alpha)|\lesssim_\phi\langle\alpha\rangle^{-2}$, the claim~\eqref{eq:lapl-lim-1st} follows.}

\medskip
\step2 Proof that for all $\phi\in C^\infty_c(\R^+)$, all radial $h\in C^\infty_c(\R^2)$, and $\sigma>0$,
\begin{multline}\label{eq:cancel-term}
\lim_{N\uparrow\infty}\int_\R g_\phi(\alpha)\\
\times\tfrac{i\alpha+1}{N^\sigma}\Big\langle h,iS_{N,\beta}^{1,+}(iL_{N,\beta}^2+\tfrac{i\alpha+1}{N^\sigma})^{-1}iS_{N,\beta}^{2,+}(iL_{N,\beta}^3+\tfrac{i\alpha+1}{N^\sigma})^{-1}H_\beta^{3;\circ}\Big\rangle_{\Ld^2_\beta(\R^2)}\,d\alpha
=0.
\end{multline}
Let the radial test function $h\in C^\infty_c(\R^2)$ be fixed.
By the definition of $S_{N,\beta}^{1,+},S_{N,\beta}^{2,+}$ in Lemma~\ref{lem:eqn-gNm-21}, we can compute for $\Re\omega\ne0$,
\begin{multline}\label{eq:decomp-omexpr-0}
\omega \Big\langle h,iS_{N,\beta}^{1,+}(iL_{N,\beta}^2+\omega)^{-1}iS_{N,\beta}^{2,+}(iL_{N,\beta}^3+\omega)^{-1}H_\beta^{3;\circ}\Big\rangle_{\Ld^2_\beta(\R^2)}\\
\,=\,-\tfrac{N-1}N\tfrac{N-2}N\sum_{j=1}^2\omega\iiint_{(\R^2)^3} K(x_j-x_3)\cdot\big[\nabla_{j;\beta}(iL_{N,\beta}^3+\omega)^{-1}H_\beta^{3;\circ}\big](x_1,x_2,x_3)\\
\times \overline{\big[((iL_{N,\beta}^2)^*+\overline\omega)^{-1}H^{2;h}\big](x_1,x_2)}\,\mu_\beta(x_1)\mu_\beta(x_2)\mu_\beta(x_3)\,dx_1dx_2dx_3,
\end{multline}
where we define $H^{2;h}(x_1,x_2):= K(x_1-x_2)\cdot \nabla_1h(x_1)$.
Alternatively, by further integrating by parts, this reads
\begin{multline*}
\omega \Big\langle h,iS_{N,\beta}^{1,+}(iL_{N,\beta}^2+\omega)^{-1}iS_{N,\beta}^{2,+}(iL_{N,\beta}^3+\omega)^{-1}H_\beta^{3;\circ}\Big\rangle_{\Ld^2_\beta(\R^2)}\\
\,=\,\tfrac{N-1}N\tfrac{N-2}N\sum_{j=1}^2\omega\iiint_{(\R^2)^3} \big[(iL_{N,\beta}^3+\omega)^{-1}H_\beta^{3;\circ}\big](x_1,x_2,x_3)\\
\times \overline{K(x_j-x_3)\cdot\nabla_{j}\big[((iL_{N,\beta}^2)^*+\overline\omega)^{-1}H^{2;h}\big](x_1,x_2)}\,\mu_\beta(x_1)\mu_\beta(x_2)\mu_\beta(x_3)\,dx_1dx_2dx_3.
\end{multline*}
As $h$ is radial, we note that $H^{2;h}$ belongs to $E^2_\beta$.
Recalling that $E^2_\beta=\ker(L_{N,\beta}^2)^\bot$, cf.~Remark~\ref{rem:nonGaus}, we deduce that
\[((iL_{N,\beta}^2)^*+\overline\omega)^{-1}H^{2;h}~\in~E^2_\beta.\]
Now, for any $g\in E^2_\beta$, we note that for $j=1,2$ the function
\[(x_1,x_2,x_3)\mapsto K(x_j-x_3)\cdot\nabla_jg(x_1,x_2)\]
also belongs to $E^3_\beta$. As $E^3_\beta=\ker(L_{N,\beta}^3)^\bot$, cf.~Remark~\ref{rem:nonGaus}, this allows to replace~$H^{3;\circ}_\beta$ by~$\pi_\beta^3H^{3;\circ}_\beta$ in~\eqref{eq:decomp-omexpr-0},
\begin{multline*}
\omega \Big\langle h,iS_{N,\beta}^{1,+}(iL_{N,\beta}^2+\omega)^{-1}iS_{N,\beta}^{2,+}(iL_{N,\beta}^3+\omega)^{-1}H_\beta^{3;\circ}\Big\rangle_{\Ld^2_\beta(\R^2)}\\
\,=\,-\tfrac{N-1}N\tfrac{N-2}N\sum_{j=1}^2\omega\iiint_{(\R^2)^3} K(x_j-x_3)\cdot\big[\nabla_{j;\beta}(iL_{N,\beta}^3+\omega)^{-1}\pi_\beta^3H_\beta^{3;\circ}\big](x_1,x_2,x_3)\\
\times \overline{\big[((iL_{N,\beta}^2)^*+\overline\omega)^{-1}H^{2;h}\big](x_1,x_2)}\,\mu_\beta(x_1)\mu_\beta(x_2)\mu_\beta(x_3)\,dx_1dx_2dx_3.
\end{multline*}
Now decomposing
\[L_{N,\beta}^2\,=\,A_{\beta}^{2}+\beta \tfrac{N-2}NT_\beta^{2},\qquad L_{N,\beta}^3\,=\,A_{\beta}^{3}+\beta \tfrac{N-3}NT_\beta^{3},\]
in terms of
\begin{equation*}\begin{array}{rlrl}
A_{\beta}^{2}\,&:=\,L_\beta\otimes\Id+\Id\otimes L_\beta,\quad &T_\beta^2\,&:=\,\Id\otimes T_\beta,\\
A_{\beta}^{3}\,&:=\,L_\beta\otimes\Id^{\otimes2}+\Id\otimes L_\beta\otimes\Id+\Id^{\otimes2}\otimes L_\beta,\quad &T_\beta^3\,&:=\,\Id\otimes T_\beta\otimes\Id+\Id^{\otimes2}\otimes T_\beta,
\end{array}
\end{equation*}
and further arguing as in the proof of Lemma~\ref{lem:nonGaus}(iii) to express the resolvents of~$L^2_{N,\beta}$ and~$L^3_{N,\beta}$ as power series, we are led to
\begin{multline*}
\omega \Big\langle h,iS_{N,\beta}^{1,+}(iL_{N,\beta}^2+\omega)^{-1}iS_{N,\beta}^{2,+}(iL_{N,\beta}^3+\omega)^{-1}H_\beta^{3;\circ}\Big\rangle_{\Ld^2_\beta(\R^2)}\\
\,=\,-\tfrac{N-1}N\tfrac{N-2}N\sum_{j=1}^2\omega\iiint_{(\R^2)^3} {K(x_j-x_3)\cdot\big[\nabla_{j;\beta}}(iA_{\beta}^3+\omega)^{-1}G^{3;\circ}_{\beta,\omega}\big](x_1,x_2,x_3)\\
\times \overline{\big[((iA_{\beta}^2)^*+\overline\omega)^{-1}G^{2;h}_{\beta,\omega}\big](x_1,x_2)}\,\mu_\beta(x_1)\mu_\beta(x_2)\mu_\beta(x_3)\,dx_1dx_2dx_3,
\end{multline*}
where we define the modified test functions
\begin{eqnarray*}
G^{2;h}_{N,\beta,\omega}&:=&\sum_{n=0}^\infty(-\beta\tfrac{N-2}N)^n{[(iT_\beta^2)^*((iA_\beta^2)^*+\overline\omega)^{-1}]^nH^{2;h}},\\
G^{3;\circ}_{N,\beta,\omega}&:=&\sum_{n=0}^\infty(-\beta\tfrac{N-3}N)^n[iT_\beta^3(iA_\beta^3+\omega)^{-1}]^n\pi_\beta^3H^{3;\circ}_\beta.
\end{eqnarray*}
Using polar coordinates $x=(r,\theta)$, noting that resolvents of $A^2_\beta,A^3_\beta$ are explicit using Fourier series, and writing $K(x)=-W'(|x|)\tfrac{x^\bot}{|x|}$, the above can be reformulated into
\begin{multline*}
\omega \Big\langle h,iS_{N,\beta}^{1,+}(iL_{N,\beta}^2+\omega)^{-1}iS_{N,\beta}^{2,+}(iL_{N,\beta}^3+\omega)^{-1}H_\beta^{3;\circ}\Big\rangle_{\Ld^2_\beta(\R^2)}\\
\,=\,\tfrac{N-1}N\tfrac{N-2}N\sum_{j=1}^2\sum_{k_1,k_2,k_3,k_1',k_2'}\omega\iiint_{(\R^+)^3}\Big(\iint_{(0,2\pi)^2}e^{i(k_1-k_1')\theta_1+i(k_2-k_2')\theta_2+ik_3\theta_j}\,d\theta_1d\theta_2\Big)\\
\times\Big(\big[-r_3S^{k_3}(r_j,r_3)\partial_{r_j}+( ik_j+\beta r_j^2\Omega_\beta(r_j))R^{k_3}(r_j,r_3)\big]\tfrac{\widehat G^{3;\circ}_{N,\beta,\omega}(r_1,k_1;r_2,k_2;r_3,k_3)}{ik_1\Omega_\beta(r_1)+ik_2\Omega_\beta(r_2)+ik_3\Omega_\beta(r_3)+\omega}\Big)\\
\times \tfrac{\overline{\widehat G^{2;h}_{N,\beta,\omega}(r_1,k_1';r_2,k_2')}}{ik_1'\Omega_\beta(r_1)+ik_2'\Omega_\beta(r_2)+\omega}\,\mu_\beta(r_1)\mu_\beta(r_2)\mu_\beta(r_3)\,r_1r_2r_3\,dr_1dr_2dr_3,
\end{multline*}
where we denote
\begin{eqnarray*}
R^k(r_1,r_3)&:=&\int_0^{2\pi}\tfrac{W'((r_1^2+r_3^2-2r_1r_3\cos(\theta))^{1/2})}{(r_1^2+r_3^2-2r_1r_3\cos(\theta))^{1/2}}\big(1-\tfrac{r_3}{r_1}\cos(\theta)\big)\,e^{-ik\theta}\,d\theta,\\
S^k(r_1,r_3)&:=&\int_0^{2\pi}\tfrac{W'((r_1^2+r_3^2-2r_1r_3\cos(\theta))^{1/2})}{(r_1^2+r_3^2-2r_1r_3\cos(\theta))^{1/2}}\sin(\theta)\,e^{-ik\theta}\,d\theta.
\end{eqnarray*}
Performing integrals over angles, this leads  to
\begin{multline*}
\Big|\omega \Big\langle h,iS_{N,\beta}^{1,+}(iL_{N,\beta}^2+\omega)^{-1}iS_{N,\beta}^{2,+}(iL_{N,\beta}^3+\omega)^{-1}H_\beta^{3;\circ}\Big\rangle_{\Ld^2_\beta(\R^2)}\Big|\\
\,\lesssim\,\sum_{k_1,k_2,k_3}\bigg|\omega\iiint_{(\R^+)^3}\tfrac{\overline{\widehat G^{2;h}_{N,\beta,\omega}(r_1,k_1+k_3;r_2,k_2)}}{i(k_1+k_3)\Omega_\beta(r_1)+ik_2\Omega_\beta(r_2)+\omega}\\
\times\Big(\big[\!-r_3S^{k_3}(r_1,r_3)\partial_{r_1}+( ik_1+\beta r_1^2\Omega_\beta(r_1))R^{k_3}(r_1,r_3)\big]\tfrac{\widehat G^{3;\circ}_{N,\beta,\omega}(r_1,k_1;r_2,k_2;r_3,k_3)}{ik_1\Omega_\beta(r_1)+ik_2\Omega_\beta(r_2)+ik_3\Omega_\beta(r_3)+\omega}\Big)\\
\times\mu_\beta(r_1)\mu_\beta(r_2)\mu_\beta(r_3)\,r_1r_2r_3\,dr_1dr_2dr_3\bigg|\qquad+\text{sym},
\end{multline*}
where the symbol `$\text{sym}$' stands for the expression preceding it up to exchanging the variables $(r_1,k_1)$ and $(r_2,k_2)$ in the integrand.

We turn to the evaluation of the derivative $\partial_{r_1}$ in the integrand.
For that purpose, we separately consider the cases $k_1=0$ and $k_3=0$. Noting that $S^{k_3=0}(r_1,r_3)=0$, using the following identities,
\begin{eqnarray*}
\big(\tfrac{1}{ik_1\Omega_\beta(r_1)+ik_2\Omega_\beta(r_2)+\omega}\big)^2&=&-\tfrac1{ik_1\Omega_\beta'(r_1)}\partial_{r_1}\big(\tfrac{1}{ik_1\Omega_\beta(r_1)+ik_2\Omega_\beta(r_2)+\omega}\big),\\
\partial_{r_1}\big(\tfrac{1}{ik_1\Omega_\beta(r_1)+ik_2\Omega_\beta(r_2)+ik_3\Omega_\beta(r_3)+\omega}\big)&=&\tfrac{k_1\Omega_\beta'(r_1)}{k_3\Omega_\beta'(r_3)}\partial_{r_3}\big(\tfrac{1}{ik_1\Omega_\beta(r_1)+ik_2\Omega_\beta(r_2)+ik_3\Omega_\beta(r_3)+\omega}\big),
\end{eqnarray*}
and performing several integrations by parts,
we are led to
\begin{align}\label{eq:bigreformulation}
&\Big|\omega \Big\langle h,iS_{N,\beta}^{1,+}(iL_{N,\beta}^2+\omega)^{-1}iS_{N,\beta}^{2,+}(iL_{N,\beta}^3+\omega)^{-1}H_\beta^{3;\circ}\Big\rangle_{\Ld^2_\beta(\R^2)}\Big|\\
&\,\lesssim\,
\sum_{k_1,k_2}\bigg|\omega\iint_{(\R^+)^2}
\big(\tfrac{1}{ik_1\Omega_\beta(r_1)+ik_2\Omega_\beta(r_2)+\omega}\big)
A_{N,\beta,\omega}^{k_1,k_2}(r_1,r_2)
\mu_\beta(r_1)\mu_\beta(r_2)\,r_1r_2\,dr_1dr_2\bigg|\nonumber\\
&+\sum_{k_1,k_2,k_3}\mathds1_{k_3\ne0}\bigg|\omega\iiint_{(\R^+)^3}
\big(\tfrac{1}{ik_1\Omega_\beta(r_1)+ik_2\Omega_\beta(r_2)+ik_3\Omega_\beta(r_3)+\omega}\big)\big(\tfrac{1}{i(k_1+k_3)\Omega_\beta(r_1)+ik_2\Omega_\beta(r_2)+\omega}\big)\nonumber\\
&\hspace{4.5cm}\times B_{N,\beta,\omega}^{k_1,k_2,k_3}(r_1,r_2,r_3)
\mu_\beta(r_1)\mu_\beta(r_2)\mu_\beta(r_3)\,r_1r_2r_3\,dr_1dr_2dr_3\bigg|\nonumber\\
&+\sum_{k_1,k_2,k_3}\mathds1_{k_3\ne0}\big|\tfrac{k_1}{k_3}\big|\bigg|\omega\iiint_{(\R^+)^3}
\big(\tfrac{1}{ik_1\Omega_\beta(r_1)+ik_2\Omega_\beta(r_2)+ik_3\Omega_\beta(r_3)+\omega}\big)\big(\tfrac1{i(k_1+k_3)\Omega_\beta(r_1)+ik_2\Omega_\beta(r_2)+\omega}\big)\nonumber\\
&\hspace{3cm}\times C_{N,\beta,\omega}^{k_1,k_2,k_3}(r_1,r_2,r_3)\mu_\beta(r_1)\mu_\beta(r_2)\mu_\beta(r_3)\,r_1r_2r_3\,dr_1dr_2dr_3\bigg|\qquad+\text{sym},\nonumber
\end{align}
where we use the notations
\begin{eqnarray*}
A_{N,\beta,\omega}^{k_1,k_2}(r_1,r_2)&\!\!:=\!\!&\big(\partial_{r_1}+\beta r_1\Omega_\beta(r_1)+\tfrac1{r_1}\big)\\
&&\hspace{0.2cm}\times\bigg(\tfrac1{\Omega_\beta'(r_1)}\big( 1+\tfrac\beta{ik_1} r_1^2\Omega_\beta(r_1)\big)\overline{\widehat G^{2;h}_{\beta,\omega}(r_1,k_1;r_2,k_2)}\\
&&\hspace{1.6cm}\times\int_{\R^+}R^0(r_1,r_3)\,\widehat G^{3;\circ}_{\beta,\omega}(r_1,k_1;r_2,k_2;r_3,0)\,\mu_\beta(r_3)\,r_3\,dr_3\bigg),\\
B_{N,\beta,\omega}^{k_1,k_2,k_3}(r_1,r_2,r_3)&\!\!:=\!\!&\overline{\widehat G^{2;h}_{\beta,\omega}(r_1,k_1+k_3;r_2,k_2)}\\
&&\hspace{0.3cm}\times\big(r_3S^{k_3}(r_1,r_3)\partial_{r_1}-( ik_1+\beta r_1^2\Omega_\beta(r_1))R^{k_3}(r_1,r_3)\big)\\
&&\hspace{5cm}\times\widehat G^{3;\circ}_{\beta,\omega}(r_1,k_1;r_2,k_2;r_3,k_3),\\
C_{N,\beta,\omega}^{k_1,k_2,k_3}(r_1,r_2,r_3)&\!\!:=\!\!&\Omega_\beta'(r_1)\overline{\widehat G^{2;h}_{\beta,\omega}(r_1,k_1+k_3;r_2,k_2)}\\
&&\hspace{0.3cm}\times(\partial_{r_3}+\beta r_3\Omega_\beta(r_3)+\tfrac{1}{r_3})
\Big(\tfrac{r_3S^{k_3}(r_1,r_3)}{\Omega_\beta'(r_3)}\widehat G^{3;\circ}_{\beta,\omega}(r_1,k_1;r_2,k_2;r_3,k_3)\Big).
\end{eqnarray*}
{With this reformulation~\eqref{eq:bigreformulation}, we are now in position to perform direct estimates similarly as in Step~2 of the proof of Lemma~\ref{lem:nonGaus}.}
More precisely, instead of~\eqref{eq:model-crit-est}, we can use here the following rougher model estimate: for any $\phi\in C^\infty_b(-1,1)$ and $\e\ne0$,
\[\bigg|\int_{-1}^1\tfrac{\phi(t)}{t+i\e}\,dt\bigg|+\bigg|\int_0^1\tfrac{t\phi(t)}{t^2+i\e}\,dt\bigg|\,\lesssim\,\log(2+\tfrac1\e)\|\phi\|_{\Ld^\infty(-1,1)}.\]
Using this bound to estimate~\eqref{eq:bigreformulation} close to singularities, using again local deformations to reduce to these model situations, recalling the non-degeneracy assumption~\eqref{eq:nondegenerate} for~$\Omega_\beta$, and recalling $|g_\phi(\alpha)|\lesssim_\phi\langle\alpha\rangle^{-2}$, the claim~\eqref{eq:cancel-term} easily follows.
The key is the transversality of singularities in~\eqref{eq:bigreformulation} (cf.~$k_3\ne0$ in the last two terms). {For brevity, we skip the details, which are analogous to the computations in Step~2 of the proof of Lemma~\ref{lem:nonGaus}.}

\medskip
\step3 Proof of~\eqref{eq:FP-noGauss}.\\
Combined with Lemma~\ref{lem:laplace-NG}, for $0<\sigma<\frac1{20}$, the results of the first two steps yield in the distributional sense, for all radial $h\in C^\infty_c(\R^2)$, as $N\uparrow\infty$,
\begin{equation*}
\langle h,\partial_\tau\bar g_N^1\rangle_{\Ld^2_\beta(\R^2)}\,\to\,T_\beta(h),
\end{equation*}
where we denote
\begin{multline}\label{eq:preconcl-noGauss}
T_\beta(h)\,:=\,-\iint_{(\R^2)^2}\overline{\nabla h(x_1)}\cdot K(x_1-x_2)\,\big((iL^2_\beta+0)^{-1}\pi_\beta^2G_{\beta}^{2;\circ}\big)(x_1,x_2)\\
\times\mu_\beta(x_1)\mu_\beta(x_2)\, dx_1dx_2.
\end{multline}
It remains to proceed to a slight reformulation of this limiting expression.
For that purpose, we note that by definition of $iL_\beta^{2}$,
\begin{multline*}
iL_\beta^2\big[(x_1,x_2)\mapsto \beta(\tfrac1{\mu_\beta}f^\circ)(x_1)W_\beta(x_1,x_2)\big]
\,=\,
-(\tfrac1{\mu_\beta}f^\circ)(x_1)\nabla\log\mu_\beta(x_1)\cdot\nabla_1^\bot W_\beta(x_1,x_2)\\
-(\tfrac1{\mu_\beta}f^\circ)(x_1)\nabla\log\mu_\beta(x_2)\cdot\nabla_2^\bot\Big( W_\beta+\beta W_\beta\ast_{\mu_\beta}W\Big)(x_1,x_2).
\end{multline*}
Using the definition of $W_\beta$ in form of
\[W_\beta+\beta W_\beta\ast_{\mu_\beta}W\,=\,W,\]
cf.~\eqref{eq:def-Wbeta}, we deduce
\begin{multline*}
iL_\beta^2\big[(x_1,x_2)\mapsto \beta(\tfrac1{\mu_\beta}f^\circ)(x_1)W_\beta(x_1,x_2)\big]
\,=\,
(\tfrac1{\mu_\beta}f^\circ)(x_1)\nabla\log\mu_\beta(x_1)\cdot(-\nabla_1^\bot W_\beta)(x_1,x_2)\\
-(\tfrac1{\mu_\beta}f^\circ)(x_1)\nabla\log\mu_\beta(x_2)\cdot K(x_1-x_2).
\end{multline*}
Recalling the definition of $G_\beta^{2;\circ}$ in Step~1, this allows us to rewrite, after straightforward simplifications, for $\e>0$,
\begin{equation*}
(iL_\beta^2+\e)^{-1}\pi_\beta^2G_{\beta}^{2;\circ}
\,=\,(iL_\beta^2+\e)^{-1}\pi_\beta^2R_\beta^{2;\circ}+\pi_\beta^2S_\beta^{2;\circ}
-\e(iL_\beta^2+\e)^{-1}\pi_\beta^2S_\beta^{2;\circ},
\end{equation*}
where we take
\begin{eqnarray*}
R_\beta^{2;\circ}(x_1,x_2)&:=&\nabla(\tfrac1{\mu_\beta} f^\circ)(x_1)\cdot(-\nabla^\bot_1W_\beta)(x_1,x_2),\\
S_\beta^{2;\circ}(x_1,x_2)&:=&\beta(\tfrac1{\mu_\beta}f^\circ)(x_1)W_\beta(x_1,x_2).
\end{eqnarray*}
Letting $\e\downarrow0$, appealing to the limiting absorption principle for the restriction of $L_\beta^2$ to~$\operatorname{ran}(\pi_\beta^2)=E_\beta^2=\ker(L_\beta^2)^\bot$, cf.~Remark~\ref{rem:nonGaus}, we deduce
\begin{equation*}
(iL_\beta^2+0)^{-1}\pi_\beta^2G_{\beta}^{2;\circ}
\,=\,(iL_\beta^2+0)^{-1}\pi_\beta^2R_\beta^{2;\circ}+\pi_\beta^2 S_\beta^{2;\circ}.
\end{equation*}
The limit~\eqref{eq:preconcl-noGauss} can then be reformulated as follows,
\begin{multline*}
T_\beta(h)\,=\,-\iint_{(\R^2)^2}\overline{\nabla h(x_1)}\cdot K(x_1-x_2)\,(\pi_\beta^2S_{\beta}^{2;\circ})(x_1,x_2)
\mu_\beta(x_1)\mu_\beta(x_2)\, dx_1dx_2\\
-\iint_{(\R^2)^2}\overline{\nabla h(x_1)}\cdot K(x_1-x_2)\,\big((iL^2_\beta+0)^{-1}\pi_\beta^2R_{\beta}^{2;\circ}\big)(x_1,x_2)\\
\times\mu_\beta(x_1)\mu_\beta(x_2)\, dx_1dx_2.
\end{multline*}
Recalling that the test function $(x_1,x_2)\mapsto\overline{\nabla h(x_1)}\cdot K(x_1,x_2)$ belongs to~$E_\beta^2$, and inserting the definition of $S_\beta^{2;\circ}$, the first right-hand side term takes on the following form,
\begin{multline*}
\iint_{(\R^2)^2}\overline{\nabla h(x_1)}\cdot K(x_1-x_2)\,(\pi_\beta^2S_{\beta}^{2;\circ})(x_1,x_2)
\mu_\beta(x_1)\mu_\beta(x_2)\, dx_1dx_2\\
\,=\,-\beta\int_{\R^2}f^\circ(x_1)\overline{\nabla h(x_1)}\cdot\Big(\int_{\R^2} \nabla^\bot W(x_1-x_*)\, W_\beta(x_*,x_1)
\mu_\beta(x_*)\,dx_*\Big) dx_1,
\end{multline*}
and thus, using the definition of $W_\beta$ as in~\eqref{eq:decompnabbot-Wbet}, noting that the function $x\mapsto W^{\ast_{\mu_\beta}n}(x,x)$ is radial for all $n\ge1$,
\begin{equation*}
\iint_{(\R^2)^2}\overline{\nabla h(x_1)}\cdot K(x_1-x_2)\,(\pi_\beta^2S_{\beta}^{2;\circ})(x_1,x_2)
\mu_\beta(x_1)\mu_\beta(x_2)\, dx_1dx_2=0.
\end{equation*}
Further noting that $R_\beta^{2;\circ}$ actually belongs to $E_\beta^2$, hence $\pi_\beta^2R_\beta^{2;\circ}=R_\beta^{2;\circ}$, we get that
\begin{equation*}
T_\beta(h)\,=\,-\iint_{(\R^2)^2}\overline{\nabla h(x_1)}\cdot K(x_1-x_2)\,\big((iL^2_\beta+0)^{-1}R_{\beta}^{2;\circ}\big)(x_1,x_2)\,\mu_\beta(x_1)\mu_\beta(x_2)\, dx_1dx_2.
\end{equation*}
Using polar coordinates $x=re$, inserting the definition of $R_\beta^{2;\circ}$, noting that the operator~$L^2_\beta$ commutes with multiplication by radial functions of the first variable $x_1$, and taking the real part, this proves the conclusion~\eqref{eq:FP-noGauss}.

\medskip
\step4 Positivity of $a_\beta$.\\
In the spirit of~\eqref{eq:deformed-Hilbert}, consider the following deformed Hilbert structure on $\Ld^2_\beta((\R^2)^2)$: for all $G,H\in \Ld^2_\beta((\R^2)^2)$, define
\begin{multline*}
\langle G,H\rangle_{\widetilde \Ld^2_\beta((\R^2)^2)}\,:=\,\iint_{(\R^2)^2}\overline{G(x_1,x_2)}H(x_1,x_2)\,\mu_\beta(x_1)\mu_\beta(x_2)\,dx_1dx_2\\
+\beta\iint_{(\R^2)^3}\overline{G(x_1,x_2)}H(x_1,x_3)\,W(x_2-x_3)\,\mu_\beta(x_1)\mu_\beta(x_2)
\mu_\beta(x_3)\,dx_1dx_2dx_3,
\end{multline*}
that is, using $\mu_\beta$-convolution notation,
\begin{equation*}
\langle G,H\rangle_{\widetilde \Ld^2_\beta((\R^2)^2)}\,=\,\iint_{(\R^2)^2}\overline{\big(G+\beta(G\ast_{\mu_\beta}W)\big)(x_1,x_2)}\,H(x_1,x_2)\,\mu_\beta(x_1)\mu_\beta(x_2)\,dx_1dx_2.
\end{equation*}
Noting that the definition of $H_0,H_\beta$ in~\eqref{eq:def-HbH0} yields
\[H_\beta+\beta(H_\beta \ast_{\mu_\beta}W)\,=\,H_0,\]
and recalling that $L_\beta^2$ commutes with multiplication by radial functions of the first variable~$x_1$,
we deduce for all nonnegative radial $h\in C^\infty_c(\R^2)$,
\begin{multline*}
\big\langle \sqrt{h}H_\beta,(iL_\beta^2+0)^{-1}\sqrt{h}H_\beta\big\rangle_{\widetilde \Ld^2_\beta((\R^2)^2)}\\
\,=\,\iint_{(\R^2)^2}h(x_1)\,H_0(x_1,x_2)\big[(iL_\beta^2+0)^{-1}H_\beta\big](x_1,x_2)\,\mu_\beta(x_1)\mu_\beta(x_2)\,dx_1dx_2,
\end{multline*}
which means, by definition~\eqref{eq:def-abeta} of $a_\beta$,
\begin{equation}\label{eq:reform-hany}
\int_{\R^+}h(r)\,a_\beta(r)\,\mu_\beta(r)\,r\,dr\,=\,\big\langle \sqrt{h}H_\beta,\Re(iL_\beta^2+0)^{-1}\sqrt{h}H_\beta\big\rangle_{\widetilde \Ld{}^2_\beta((\R^2)^2)}.
\end{equation}
As $L^2_\beta$ is self-adjoint on the deformed Hilbert structure $\widetilde \Ld{}^2_\beta((\R^2)^2)$, as shown in the proof of Lemma~\ref{lem:nonGaus}(i),
we deduce that the right-hand side of~\eqref{eq:reform-hany} is nonnegative. The nonnegativity of $a_\beta$ follows by the arbitrariness of $h$.
\end{proof}

\section{Degenerate case: Gaussian equilibrium}\label{sec:Gauss}

This section is devoted to the case of a Gaussian mean-field equilibrium $\mu_\beta$, that is, we assume that potentials $V,W$ satisfy for some $R\in(0,\infty)$,
\begin{equation}\label{eq:Gaussian}
(V+W\ast\mu_\beta)(x)=\tfrac12R|x|^2,\qquad\mu_\beta(x)=\tfrac{\beta R}{2\pi}e^{-\frac12\beta R|x|^2},
\end{equation}
and we shall then prove Theorem~\ref{th:main-Gauss}.

\subsection{Preliminary BBGKY analysis}
We start by noting that the BBGKY hierarchy for correlations as derived in Lemma~\ref{lem:eqn-gNm-21} simplifies drastically in the Gaussian setting.

\begin{lem}[BBGKY hierarchy for correlations]\label{lem:eqn-gNm-21-G}
In the Gaussian setting~\eqref{eq:Gaussian},
the correlation functions satisfy the BBGKY hierarchy~\eqref{eq:eqn-gNm-21}
where the defining operators are now given for all $1\le m\le N$ by
\begingroup\allowdisplaybreaks
\begin{eqnarray*}
iL_{N,\beta}^mg_N^m&:=&-\tfrac{N-m}N\sum_{j=2}^m\beta Rx_j\cdot\int_{\R^2} K(x_j-x_*)\,g_N^{m}(x_{[m]\setminus\{j\}},x_*)\,\mu_\beta(x_*)\,dx_*,\\
iS_{N,\beta}^{m,+}g_N^{m+1}&:=&-\tfrac{N-m}N\sum_{j=1}^m\nabla_{j;\beta}\cdot \int_{\R^2} K(x_j-x_*)\,g_N^{m+1}(x_{[m]},x_*)\,\mu_\beta(x_*)\,dx_*,\\
iS_{N,\beta}^{m,\circ}g_N^{m}&:=&\sum_{i=1}^m\sum_{2\le j\le m\atop i\ne j}\nabla_{i;\beta}\cdot\int_{\R^2}K(x_i-x_*)\,g_N^{m}(x_{[m]\setminus\{j\}},x_*)\,\mu_\beta(x_*)\,dx_*\\
&&-\sum_{i,j=1}^m\Big(K(x_i-x_j)-K\ast\mu_\beta(x_i)\Big)\cdot\nabla_{i;\beta}g_N^m,\\
iS_{N,\beta}^{m,-}g_N^{m-1}&:=&-\sum_{2\le i,j\le m}^{\ne}\beta Rx_i\cdot\int_{\R^2} K(x_i-x_*)\,g_N^{m-1}(x_{[m]\setminus\{i,j\}},x_*)\,\mu_\beta(x_*)\,dx_*\\
&&-\sum_{i=1}^m\sum_{2\le j\le m\atop i\ne j}\Big(K(x_i-x_j)-(K\ast\mu_\beta)(x_i)\Big)\cdot\nabla_{i;\beta}g_N^{m-1}(x_{[m]\setminus\{j\}})\\
&&+\sum_{i=2}^m\sum_{j=1}^m\beta Rx_i\cdot K(x_i-x_j)\,g_N^{m-1}(x_{[m]\setminus\{i\}}),\\
iS_{N,\beta}^{m,=}g_N^{m-2}&:=&0,
\end{eqnarray*}
\endgroup
with the short-hand notation $\nabla_{i;\beta}=\nabla_i-\beta Rx_i$.
\end{lem}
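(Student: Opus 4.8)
The plan is to read off this statement as a specialization of the general BBGKY hierarchy of Lemma~\ref{lem:eqn-gNm-21}, inserting the two explicit features of the Gaussian equilibrium~\eqref{eq:Gaussian}: we have $\nabla\log\mu_\beta(x)=-\beta Rx$, so that $\nabla_{j;\beta}=\nabla_j-\beta Rx_j$, and, since $V+W\ast\mu_\beta=\tfrac12R|x|^2$ with $V,W$ radial,
\[(F+K\ast\mu_\beta)(x)\,=\,-\nabla^\bot(V+W\ast\mu_\beta)(x)\,=\,-Rx^\bot.\]
Substituting these identities into the operators of Lemma~\ref{lem:eqn-gNm-21}, three of the five required identities are routine: for $iS_{N,\beta}^{m,-}$ the two $\nabla\log\mu_\beta$-dependent sums become the two $\beta Rx_i$-sums of the claimed formula while the third sum is unchanged; for $iS_{N,\beta}^{m,+}$ and $iS_{N,\beta}^{m,\circ}$ the only change is that the convolution contributions get rewritten in divergence form, which is an identity on arbitrary functions by incompressibility: since $\Div K=0$, the product rule gives
\[\nabla_{j;\beta}\cdot\int_{\R^2}K(x_j-x_*)\,h(\dots,x_*)\,\mu_\beta(x_*)\,dx_*\,=\,\int_{\R^2}K(x_j-x_*)\cdot\nabla_{j;\beta}h(\dots,x_*)\,\mu_\beta(x_*)\,dx_*,\]
because the term $(\Div K)(x_j-x_*)\,h$ produced when the derivative falls on $K$ vanishes.

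Two genuinely Gaussian-specific simplifications then remain. First, $iS_{N,\beta}^{m,=}$ vanishes: after the substitution it reads $\sum_{2\le i,j\le m}^{\ne}\beta Rx_i\cdot K(x_i-x_j)\,h^{m-2}(x_{[m]\setminus\{i,j\}})$, and grouping the $(i,j)$ and $(j,i)$ contributions --- which carry the identical value $h^{m-2}(x_{[m]\setminus\{i,j\}})$ --- and using $K(-x)=-K(x)$ together with the point-vortex orthogonality $z\cdot K(z)=0$ (valid since $K=-\nabla^\bot W$ with $W$ radial), each pair reduces to $\beta R\,(x_i-x_j)\cdot K(x_i-x_j)\,h^{m-2}(\dots)=0$, so $iS_{N,\beta}^{m,=}\equiv0$. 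Second, the transport part of $iL_{N,\beta}^m$ becomes $-R\sum_{j=1}^mx_j^\bot\cdot\nabla_j$, which is $-R$ times the infinitesimal generator of the diagonal action of $SO(2)$ on $(x_1,\dots,x_m)$; it therefore annihilates $g_N^m$ once we know that $g_N^m$ is invariant under that action.

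To establish this invariance I would argue that the whole problem is equivariant under the diagonal rotation $(x_1,\dots,x_N)\mapsto(Ox_1,\dots,Ox_N)$, $O\in SO(2)$. The initial density~\eqref{eq:fN0-2} is invariant under it, since $f^\circ$, $V$, $W$ are radial. The Liouville transport field $x_i\mapsto F(x_i)+\tfrac1N\sum_jK(x_i-x_j)$ is equivariant: since $V,W$ are radial we have $\nabla V(Ox)=O\nabla V(x)$ and $\nabla W(Ox)=O\nabla W(x)$, and as $O$ commutes with rotation by $\tfrac\pi2$ this gives $F(Ox)=OF(x)$ and $K(Ox-Oy)=OK(x-y)$. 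Hence $(x_1,\dots,x_N)\mapsto f_N(t,Ox_1,\dots,Ox_N)$ solves the same Liouville equation with the same initial data as $f_N(t,\cdot)$, so the two coincide; integrating out the last $N-m$ variables and dividing by the radial weight $\mu_\beta^{\otimes n}$ in the explicit formula~\eqref{eq:correl-def-2}, we conclude that every marginal $f_N^m$ and every correlation function $g_N^m$ is diagonally $SO(2)$-invariant. Differentiating $g_N^m(Ox_1,\dots,Ox_m)=g_N^m(x_1,\dots,x_m)$ at $O=\Id$ yields $\sum_{j=1}^mx_j^\bot\cdot\nabla_jg_N^m=0$, which removes the transport term. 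Having checked term by term that all five new operators applied to $g_N^m$, $g_N^{m\pm1}$, $g_N^{m-2}$ reproduce the right-hand sides of~\eqref{eq:eqn-gNm-21}, the claim follows.

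I do not expect any real obstacle: the entire content is the diagonal-rotation invariance of the correlation functions, and this is precisely where the constancy of the angular velocity $\Omega_\beta\equiv-R$ enters --- in the non-Gaussian setting the transport term carries the $r$-dependent factor $\Omega_\beta$, does not reduce to the rotation generator, and cannot be dropped. Everything else is bookkeeping, the only error-prone point being to keep the index ranges and signs straight in the three sums defining $iS_{N,\beta}^{m,-}$.
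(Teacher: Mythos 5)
Your proof is correct and follows essentially the same route as the paper: the core idea in both is that diagonal $O(2)$-invariance of $g_N^m$ (propagated from the initial data by the Liouville equation), combined with $F+K\ast\mu_\beta=-Rx^\bot$, eliminates the transport part of $L_{N,\beta}^m$, after which everything reduces to substituting $\nabla\log\mu_\beta=-\beta Rx$ into Lemma~\ref{lem:eqn-gNm-21}. You are in fact more explicit than the paper about two of the remaining bookkeeping steps --- the divergence-form rewriting of $S_{N,\beta}^{m,+}$ and $S_{N,\beta}^{m,\circ}$ via $\Div K=0$, and the vanishing of $S_{N,\beta}^{m,=}$ via the $(i,j)\leftrightarrow(j,i)$ antisymmetrization together with $K(-z)=-K(z)$ and $z\cdot K(z)=0$ --- which the paper subsumes under the phrase ``yields the different simplifications.''
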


\begin{proof}
As $V,W,f^\circ$ are radial, we first note that the initial data~\eqref{eq:fN0-2} satisfies
\[f_N^\circ(Ox_1,\ldots,Ox_N)\,=\,f_N^\circ(x_1,\ldots,x_N),\qquad\text{for all $O\in O(2)$,}\]
which remains true over time by the Liouville equation. In particular, via~\eqref{eq:correl-def-2}, we deduce
\[g_N^m(Ox_1,\ldots,Ox_m)\,=\,g_N^m(x_1,\ldots,x_m),\qquad\text{for all $1\le m\le N$ and $O\in O(2)$,}\]
which implies the differential identity
\[\sum_{j=1}^mx_j^\bot\cdot\nabla_jg_N^m\,=\,0.\]
In the Gaussian setting~\eqref{eq:Gaussian}, as we have $F+K\ast\mu_\beta=-Rx^\bot$, we deduce
\[\sum_{j=1}^m(K\ast\mu_\beta+F)(x_j)\cdot\nabla_jg_N^m\,=\,0,\]
which yields the different simplifications in the definition of the relevant operators compared to Lemma~\ref{lem:eqn-gNm-21}.
\end{proof}

The above shows that the linearized mean-field operators $\{L_N^m\}_{1\le m\le N}$ can now be written as Kronecker sums
\begin{equation}\label{eq:linMF-G}
L_{N,\beta}^m\,=\,\tfrac{N-m}N\sum_{j=2}^m(\Id_{\Ld^2_\beta(\R^2)})^{\otimes j-1}\otimes T_\beta\otimes (\Id_{\Ld^2_\beta(\R^2)})^{\otimes(m-j)},
\end{equation}
involving the following single-particle operator on $\Ld^2_\beta(\R^2)$,
\begin{equation}\label{eq:def-Lbeta}
(iT_\beta h)(x)\,:=\,-\beta Rx\cdot\int_{\R^2} K(x-x_*) \,h(x_*)\,\mu_\beta(x_*)\,dx_*.
\end{equation}
The following result states that this operator is compact, and identifies its kernel.

\begin{lem}\label{lem:Gaus}
Consider the Gaussian setting~\eqref{eq:Gaussian} and assume that $W$ does not vanish identically.
Then, for all $\beta>0$, the above-defined operator~$T_\beta$ is compact and self-adjoint on $\Ld^2_\beta(\R^2)$. Moreover, its kernel coincides with the set of radial functions.
\end{lem}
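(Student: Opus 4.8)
The plan is to handle the three assertions separately, the characterization of the kernel being the only delicate one.

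\textit{Boundedness, compactness and self-adjointness.} For the first two, I would remove the weight: the map $U\colon h\mapsto h\sqrt{\mu_\beta}$ is unitary from $\Ld^2_\beta(\R^2)$ onto $\Ld^2(\R^2)$, and one checks directly that $UiT_\beta U^{-1}$ is the integral operator on $\Ld^2(\R^2)$ with kernel $\kappa(x,y)=-\beta R\sqrt{\mu_\beta(x)\mu_\beta(y)}\,x\cdot K(x-y)$. Since $K\in\Ld^\infty(\R^2)$ and $\mu_\beta$ is Gaussian (so $\int_{\R^2}|x|^2\mu_\beta(x)\,dx<\infty$), one gets $\iint_{(\R^2)^2}|\kappa|^2\le(\beta R)^2\|K\|_{\Ld^\infty(\R^2)}^2\int_{\R^2}|x|^2\mu_\beta<\infty$, so $\kappa\in\Ld^2((\R^2)^2)$; hence $UiT_\beta U^{-1}$, and therefore $T_\beta$, is Hilbert--Schmidt, in particular bounded and compact. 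For self-adjointness, as $T_\beta$ is bounded it suffices to see that $iT_\beta$ is skew-symmetric. Using $\nabla\log\mu_\beta(x)=-\beta Rx$ in the Gaussian setting, $K(-z)=-K(z)$, and relabelling $x\leftrightarrow y$ in one of the two terms, a direct computation yields
\begin{multline*}
\langle g,iT_\beta h\rangle_{\Ld^2_\beta(\R^2)}+\overline{\langle h,iT_\beta g\rangle_{\Ld^2_\beta(\R^2)}}\\
=\iint_{(\R^2)^2}\overline{g(x)}\,h(y)\,\big(\nabla\log\mu_\beta(x)-\nabla\log\mu_\beta(y)\big)\cdot K(x-y)\,\mu_\beta(x)\mu_\beta(y)\,dx\,dy.
\end{multline*}
In the Gaussian case the bracket equals $-\beta R(x-y)$, while $(x-y)\cdot K(x-y)=0$ because $K=-\nabla^\bot W$ with $W$ radial, so that $K(z)\perp z$; the right-hand side therefore vanishes, $iT_\beta$ is skew-adjoint, and $T_\beta=T_\beta^*$.

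\textit{The kernel: the easy inclusion and a reduction.} If $h$ is radial, then $h\mu_\beta$ is radial, hence $\psi:=W\ast(h\mu_\beta)$ is radial and $K\ast(h\mu_\beta)=-\nabla^\bot\psi=-\psi'(|x|)\tfrac{x^\bot}{|x|}$ is everywhere orthogonal to $x$; thus $iT_\beta h=-\beta Rx\cdot\big(K\ast(h\mu_\beta)\big)=0$, so every radial function lies in $\ker T_\beta$. For the converse I would first note that $T_\beta$ commutes with all rotations (because $K$ is equivariant under $O(2)$ and $\mu_\beta$ is radial), so $\ker T_\beta$ is a closed sum of its intersections with the angular-harmonic subspaces, and it is enough to show that $0$ is the only function of the form $h_k(r)e^{ik\theta}$ with $k\neq0$ in $\ker T_\beta$.

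\textit{The kernel: the main point.} Fix such an $h\in\ker T_\beta$ of pure angular frequency $k\neq0$. As above, $iT_\beta h=0$ forces $\psi=W\ast(h\mu_\beta)$ to be radial; but convolution with the radial kernel $W$ preserves the frequency-$k$ subspace, so $\psi$ is both radial and of pure frequency $k\neq0$, hence $\psi\equiv0$, i.e.\ $W\ast(h\mu_\beta)=0$. Taking Fourier transforms — legitimate since $W\in\Ld^1(\R^2)$ and $h\mu_\beta\in\Ld^1(\R^2)$, the latter because $h\in\Ld^2_\beta(\R^2)$ and $\mu_\beta$ is a probability density — this reads $\hat W\,\widehat{h\mu_\beta}=0$ on $\R^2$. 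The crucial observation is that the Gaussian decay of $\mu_\beta$, combined with $h\in\Ld^2_\beta(\R^2)$, makes the integral defining $\widehat{h\mu_\beta}$ converge for all complex frequencies, so $\widehat{h\mu_\beta}$ extends to an entire function on $\C^2$; in particular its restriction to $\R^2$ is real-analytic. Consequently, either $\widehat{h\mu_\beta}\equiv0$, which by injectivity of the Fourier transform gives $h\mu_\beta=0$, hence $h=0$; or the zero set of $\widehat{h\mu_\beta}$ in $\R^2$ has Lebesgue measure zero, in which case $\hat W=0$ almost everywhere, hence (as $W\in\Ld^1(\R^2)$, so $\hat W$ is continuous) $W\equiv0$, contradicting the hypothesis. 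In either case $h=0$, which yields $\ker T_\beta\subseteq\{\text{radial functions}\}$ and completes the proof. The main obstacle is precisely this last implication "$W\ast(h\mu_\beta)=0\Rightarrow h=0$": since $\hat W$ may a priori vanish on a large set, one genuinely has to use that the Gaussian weight renders $\widehat{h\mu_\beta}$ real-analytic (so that it cannot be supported inside the measure-zero zero set of a nonzero continuous $\hat W$), which is where both the Gaussian form of $\mu_\beta$ and the integrability of the nonzero $W$ enter.
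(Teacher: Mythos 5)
Your proof is correct, and it takes a genuinely (if mildly) different route from the paper on two of the three points, so a brief comparison is worthwhile.

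For compactness, the paper argues qualitatively: given $h_n\rightharpoonup h$ in $\Ld^2_\beta(\R^2)$, pointwise convergence $T_\beta h_n\to T_\beta h$ plus the uniform domination $|T_\beta h_n(x)|\le\beta R|x|\,\|K\|_{\Ld^\infty}\sup_n\|h_n\|_{\Ld^2_\beta}\in\Ld^2_\beta$ gives strong convergence by dominated convergence. You instead conjugate by $h\mapsto h\sqrt{\mu_\beta}$ and check that the resulting integral kernel is square-integrable, i.e.\ $T_\beta$ is Hilbert--Schmidt. This is a strictly stronger statement (the paper only needs compactness, and their argument would also cover cases where the kernel is not $\Ld^2$), and your route is arguably cleaner and quantitative; both are perfectly fine. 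The self-adjointness arguments are the same algebra written slightly differently (you symmetrize via $\nabla\log\mu_\beta(x)-\nabla\log\mu_\beta(y)=-\beta R(x-y)$; the paper symmetrizes via $x\cdot K(x-x_*)=\tfrac12(x+x_*)\cdot K(x-x_*)$ and antisymmetry of $K$). For the kernel, the paper integrates by parts to recast $iT_\beta h$ as $-2\int W(x-x_*)\nabla^\bot(h\sqrt{\mu_\beta})(x_*)\cdot\nabla\sqrt{\mu_\beta}(x_*)\,dx_*$, so that $T_\beta h=0$ forces the Fourier transform of $\nabla^\bot(h\sqrt{\mu_\beta})\cdot\nabla\sqrt{\mu_\beta}$ to vanish on $\operatorname{supp}\widehat W$; since $\widehat W$ is continuous and $\not\equiv0$, that support contains an open set, and by the same Gaussian-analyticity argument you use, the transform vanishes identically, yielding $x^\bot\cdot\nabla h=0$ directly. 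You instead exploit rotational equivariance of $T_\beta$ to decompose into angular harmonics and reduce to the implication ``$W\ast(h\mu_\beta)=0\Rightarrow h=0$ for $h$ of pure nonzero frequency''. Both arguments hinge on exactly the same key fact — the Gaussian weight makes the relevant Fourier transform entire (hence real-analytic) — but your harmonic decomposition lets you conclude via the cleaner ``$W\ast(\cdot)$ vanishes'' statement, while the paper's identity lets them conclude without any decomposition and reads off the radiality of $h$ in one step. Both correctly require $W\in\Ld^1(\R^2)$ (which indeed holds in the setting of Theorem~\ref{th:main-Gauss}); this is needed either for the continuity of $\widehat W$ (paper) or for the Fourier inversion step (yours).
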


\begin{proof}
We split the proof into three steps.

\medskip
\step1 Proof that $T_\beta$ is compact.\\
Given a weakly converging sequence $h_n\cvf h$ in $\Ld^2_\beta(\R^2)$, we find $T_\beta h_n\to T_\beta h$ a.e. By the Cauchy--Schwarz inequality, the definition~\eqref{eq:def-Lbeta} of $T_\beta$ can be bounded by
\[|T_\beta h_n(x)|\,\le\,|\beta Rx|\,\|K\|_{\Ld^\infty(\R^2)}\big(\textstyle\sup_n\|h_n\|_{\Ld^2_\beta(\R^2)}\big)~\,\in\,~\Ld^2_\beta(\R^2),\]
so that the dominated convergence theorem entails $T_\beta h_n\to T_\beta h$ strongly in $\Ld^2_\beta(\R^2)$. This proves that $T_\beta$ is compact.

\medskip
\step2 Proof that $T_\beta$ is self-adjoint.\\
By Step~1, in order to show that $T_\beta$ is self-adjoint on $\Ld^2_\beta(\R^2)$, it remains to check that it is symmetric. Recalling that~\mbox{$x\cdot K(x)=0$}, we can write
\begin{eqnarray*}
\langle h',T_\beta h\rangle_{\Ld^2_\beta(\R^2)}
&=&i\beta R\iint_{\R^2\times\R^2}x\cdot K(x-x_*)\, \overline{h'(x)}\,h(x_*)\,\mu_\beta(x)\,\mu_\beta(x_*)\,dxdx_*\\
&=&\tfrac i2\beta R\iint_{\R^2\times\R^2}(x+x_*)\cdot K(x-x_*)\,\overline{h'(x)}\,h(x_*)\,\mu_\beta(x)\,\mu_\beta(x_*)\,dxdx_*.
\end{eqnarray*}
As $K$ satisfies $K(-x)=-K(x)$, this proves that $T_\beta$ is symmetric.

\medskip
\step3 Identification of $\ker(T_\beta)$.\\
Recalling again that~\mbox{$x\cdot K(x)=0$}, as well as $\nabla\mu_\beta=-\beta Rx\mu_\beta$ and $K=-\nabla^\bot W$, and integrating by parts, we can rewrite the definition~\eqref{eq:def-Lbeta} of $T_\beta$ as
\begin{eqnarray*}
(iT_\beta h)(x)&=&-\beta R\int_{\R^2} K(x-x_*) \cdot x_*\,h(x_*)\,\mu_\beta(x_*)\,dx_*\\
&=&2\int_{\R^2} K(x-x_*) \cdot (h\sqrt{\mu_\beta})(x_*)\nabla\sqrt{\mu_\beta}(x_*)\,dx_*\\
&=&-2\int_{\R^2} W(x-x_*) \nabla^\bot(h\sqrt{\mu_\beta})(x_*)\cdot\nabla\sqrt{\mu_\beta}(x_*)\,dx_*.
\end{eqnarray*}
If $h\in\Ld^2_\beta(\R^2)$ belongs to $\ker(T_\beta)$, we deduce from this identity that
\begin{equation*}
H:=F\big\{\nabla^\bot(h\sqrt{\mu_\beta})\cdot\nabla\sqrt{\mu_\beta}\big\}=0~~~\text{a.e. on the support of $F\{W\}$,}
\end{equation*}
where we use here the notation $F\{g\}$ for the Fourier transform of a function $g$ on $\R^2$. As~$\mu_\beta$ is Gaussian and as $h\sqrt{\mu_\beta}\in\Ld^2(\R^d)$, we note that $H$ is real analytic. Given that it vanishes on the support of $F\{W\}$, and noting that the latter contains an open set as $W$ is integrable and does not vanish identically, we deduce that $H$ vanishes everywhere on $\R^2$. Inverting the Fourier transform, this means
\[\nabla^\bot(h\sqrt{\mu_\beta})\cdot\nabla\sqrt{\mu_\beta}=0~~~\text{a.e.},\]
or equivalently $x^\bot\cdot\nabla h(x)=0$ a.e., which precisely means that $h$ is radial.
\end{proof}

Next, we establish the following useful preliminary estimates on BBGKY operators in the weighted negative Sobolev spaces defined in~\eqref{eq:Hsbeta}. We emphasize that item~(ii) establishes that linearized mean-field evolutions are almost uniformly bounded in negative Sobolev spaces in sharp contrast with the non-Gaussian case of Lemma~\ref{lem:prelest-LS-re}(ii).

\begin{lem}\label{lem:prelest-LS}\
\begin{enumerate}[(i)]
\item \emph{Weak bounds on BBGKY operators:}\\
For all $1\le m\le N$, $s\ge0$, and $h^{m+r}\in C^\infty_c((\R^2)^{m+r})$ for $r\in\{-1,0,1\}$, we have
\begin{eqnarray*}
\|L^{m}_{N,\beta}h^{m}\|_{H^{-s}_\beta((\R^2)^m)}&\lesssim_{m,s}&\|h^{m}\|_{H^{-s}_\beta((\R^2)^{m})},\\
\|S^{m,\circ}_{N,\beta}h^{m}\|_{H^{-s-1}_\beta((\R^2)^m)}&\lesssim_{m,s}&\|h^{m}\|_{H^{-s}_\beta((\R^2)^{m})},\\
\|S^{m,+}_{N,\beta}h^{m+1}\|_{H^{-s-1}_\beta((\R^2)^m)}&\lesssim_{m,s}&\|h^{m+1}\|_{H^{-s}_\beta((\R^2)^{m+1})},\\
\|S^{m,-}_{N,\beta}h^{m-1}\|_{H^{-s-1}_\beta((\R^2)^m)}&\lesssim_{m,s}&\|h^{m-1}\|_{H^{-s}_\beta((\R^2)^{m-1})}.
\end{eqnarray*}
\item \emph{Weak bounds on linearized mean-field evolutions:}\\
For all $1\le m\le N$, $s\ge0$, $\delta>0$, and $h^m\in C^\infty_c((\R^2)^{m})$, we have
\[\|e^{iL_{N,\beta}^mt}h^m\|_{H^{-s}_\beta((\R^2)^m)}\,\lesssim_{m,s,\delta}\,\langle t\rangle^{\delta}\|h^m\|_{H^{-s}_\beta((\R^2)^m)}.\]
\end{enumerate}
\end{lem}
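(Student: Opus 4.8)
Overview and part~(i). The plan is to treat part~(i) by duality, exactly as for Lemma~\ref{lem:prelest-LS-re}(i), and part~(ii) by combining the $\Ld^2_\beta$-unitarity of the linearized mean-field group with a \emph{linear-in-time} bound at high negative regularity, followed by interpolation; the structural fact exploited throughout is that in the Gaussian setting the transport part of the linearized mean-field operator has disappeared (the rigid rotation $F+K\ast\mu_\beta=-Rx^\bot$ drops out by $O(2)$-invariance, cf.~Lemma~\ref{lem:eqn-gNm-21-G}), leaving only the bounded, strongly smoothing operator $T_\beta$ of~\eqref{eq:def-Lbeta}. For part~(i) one writes out the $\Ld^2_\beta$-adjoints of the operators in Lemma~\ref{lem:eqn-gNm-21-G} explicitly and checks the equivalent bounds $\|(L^m_{N,\beta})^*h^m\|_{H^s_\beta}\lesssim_{m,s}\|h^m\|_{H^s_\beta}$ and $\|(S^{m,\bullet}_{N,\beta})^*h^m\|_{H^s_\beta}\lesssim_{m,s}\|h^m\|_{H^{s+1}_\beta}$ ($\bullet\in\{\circ,+,-\}$) term by term. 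Each adjoint is a finite composition of three building blocks: (a) multiplication by the uniformly smooth bounded fields $K(x_i-x_j)$, $(K\ast\mu_\beta)(x_i)$ (bounded on every $H^s_\beta$ since $W\in C^\infty_b(\R^2)$); (b) a single first-order operator $\nabla_i$ or $\nabla_{i;\beta}$, costing one derivative --- here one uses that $(\nabla_{i;\beta})^*=-\nabla_i$ in $\Ld^2_\beta$, so the differential order is unchanged under adjunction; and (c) a one-variable integral operator against the kernel $K(x_j-x_*)\mu_\beta(x_*)$, which is Schwartz in the integrated slot and $C^\infty_b$ in $x_j$, hence bounded (even smoothing) on every $H^s_\beta$. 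Polynomial factors $x_i$ are absorbed by the Gaussian weight when $x_i$ is integrated, and traded for one derivative via $x_i\mu_\beta=-(\beta R)^{-1}\nabla_i\mu_\beta$ when $x_i$ is free --- which is the recorded one-derivative loss. The one point to notice is why $L^m_{N,\beta}$ loses nothing: its adjoint is obtained by integrating out $x_j$ (which does not occur in the argument of $h^m$), so the polynomial factor lands on the integrated slot and the only remaining dependence on the free variables enters through the $C^\infty_b$ kernel.

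Part~(ii): the single-particle estimate. The case $m=1$ is trivial ($L^1_{N,\beta}=0$). For $m\ge2$, by~\eqref{eq:linMF-G} the operator $L^m_{N,\beta}$ is the Kronecker sum over the background factors of $cT_\beta$, with $c:=\tfrac{N-m}N\in[0,1]$; by Lemma~\ref{lem:Gaus} this is bounded self-adjoint on $\Ld^2_\beta$, so $\{e^{iL^m_{N,\beta}t}\}$ is unitary on $\Ld^2_\beta((\R^2)^m)$, which settles $s=0$. The engine is the scalar estimate $\|e^{itcT_\beta}\|_{H^r_\beta(\R^2)\to H^r_\beta(\R^2)}\lesssim_r\langle t\rangle$ for all $r\in\Z$, uniformly in $c\in[0,1]$. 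For $r\ge0$ this follows by Gronwall along $u(\tau)=e^{i\tau cT_\beta}h$: one has $\|u(\tau)\|_{\Ld^2_\beta}=\|h\|_{\Ld^2_\beta}$ and $\tfrac{d}{d\tau}\|\nabla^ru\|_{\Ld^2_\beta}^2=2\Re\langle\nabla^ru,ic[\nabla^r,T_\beta]u\rangle$ (the diagonal term being purely imaginary by self-adjointness); since $(T_\beta h)(x)$ is an integral of $h$ against the $C^\infty$, polynomially bounded kernel $\beta R\,x\cdot K(x-x_*)$, both $\nabla^r(T_\beta h)$ and $T_\beta(\nabla^rh)$ --- the latter after integrating the $r$ derivatives off $h$ onto the kernel and onto $\mu_\beta$ --- are bounded in $\Ld^2_\beta$ by $\lesssim_r\|h\|_{\Ld^2_\beta}$, so the commutator loses \emph{all} $r$ derivatives, $\|[\nabla^r,T_\beta]u\|_{\Ld^2_\beta}\lesssim_r\|u\|_{\Ld^2_\beta}=\|h\|_{\Ld^2_\beta}$; integrating gives $\|\nabla^ru(\tau)\|_{\Ld^2_\beta}\lesssim_r\|\nabla^rh\|_{\Ld^2_\beta}+|\tau|\|h\|_{\Ld^2_\beta}$, and the case $r<0$ follows by duality since $T_\beta=T_\beta^*$.

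Part~(ii): tensorization and interpolation. Using the Kronecker factorization $e^{iL^m_{N,\beta}t}=\prod_{j=2}^me^{ict(T_\beta)_j}$, where $(T_\beta)_j$ denotes $T_\beta$ acting on the $j$-th factor, together with the Sobolev splitting $H^{-r}_\beta((\R^2)^m)\simeq\sum_{p=0}^rH^{-(r-p)}_\beta((\R^2)^{m-1})\otimes H^{-p}_\beta(\R^2)$ with respect to any one background variable $x_j$, the scalar estimate shows that each of the $m-1$ commuting factors is bounded on $H^{-r}_\beta((\R^2)^m)$ with norm $\lesssim_{m,r}\langle t\rangle$, whence
\[\|e^{iL^m_{N,\beta}t}\|_{H^{-r}_\beta((\R^2)^m)\to H^{-r}_\beta((\R^2)^m)}\,\lesssim_{m,r}\,\langle t\rangle^{m-1},\qquad\text{for every }r\ge0,\]
with the power $m-1$ \emph{independent of $r$}. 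Given the target $s\ge0$ and $\delta>0$, one then interpolates between $\Ld^2_\beta((\R^2)^m)$ (operator norm $1$) and $H^{-r}_\beta((\R^2)^m)$ (operator norm $\lesssim_{m,r}\langle t\rangle^{m-1}$) with $r$ chosen to be an integer $\ge(m-1)s/\delta$: since $H^{-s}_\beta((\R^2)^m)=[\Ld^2_\beta((\R^2)^m),H^{-r}_\beta((\R^2)^m)]_{s/r}$, this yields the claimed bound $\lesssim_{m,s,\delta}\langle t\rangle^{(m-1)s/r}\le\langle t\rangle^\delta$.

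Main obstacle. The content here is really the dichotomy the paper is built around: everything rests on $L^m_{N,\beta}$ being, in the Gaussian case, a bounded smoothing operator rather than a transport operator. The genuinely delicate step is that the polynomial-in-$t$ growth exponent at high negative regularity must be kept \emph{bounded} (here, equal to $m-1$) and not allowed to increase with the regularity level --- a naive energy estimate on the full $m$-particle operator only gives $\langle t\rangle^r$ at level $H^{-r}_\beta$, which is useless after interpolation, and the bounded exponent is recovered only by peeling off the one-particle factors one at a time and using that for the scalar operator $T_\beta$ the commutator with $\nabla^r$ loses all $r$ derivatives, so that its group grows merely linearly on every Sobolev scale. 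This is precisely the mechanism that fails in the non-Gaussian case of Lemma~\ref{lem:prelest-LS-re}(ii), where the transport flow $e^{iL_\beta t}h(x)=h\big(x\cos(t\Omega_\beta(x))+x^\bot\sin(t\Omega_\beta(x))\big)$ winds the function up at a radius-dependent rate, producing both a genuine loss of derivatives and polynomial-in-$t$ growth.
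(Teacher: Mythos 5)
Your proof is correct, and part~(i) matches the paper's (omitted) duality argument. For part~(ii) you reach the same conclusion by a somewhat different route. The paper first reduces, via duality and the Kronecker-sum structure, to a single-particle positive-Sobolev estimate and then proves $\|e^{iT_\beta t}h-h\|_{H^r_\beta}\lesssim_r t\|h\|_{\Ld^2_\beta}$ by a one-step Duhamel formula $e^{iT_\beta t}h-h=\int_0^t iT_\beta e^{iT_\beta t'}h\,dt'$ combined with the regularizing property $\|T_\beta g\|_{H^r_\beta}\lesssim_r\|g\|_{\Ld^2_\beta}$, before interpolating (Lemma~\ref{lem:interpol-SobG}(ii)) between $\Ld^2_\beta$ and $H^r_\beta$. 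You instead establish linear growth of $e^{itT_\beta}$ on each $H^r_\beta$ directly by a commutator argument (the diagonal term drops by self-adjointness, and $[\nabla^r,T_\beta]:\Ld^2_\beta\to\Ld^2_\beta$ is bounded), pass to $H^{-r}_\beta$ by duality, tensorize over the $m-1$ background slots to get $\langle t\rangle^{m-1}$ on $H^{-r}_\beta((\R^2)^m)$, and interpolate at the very end. Both arguments rest on the same structural fact --- $T_\beta$ sends $\Ld^2_\beta$ into every $H^r_\beta$ --- but the paper's Duhamel route is marginally leaner: it needs only the regularizing bound on $T_\beta$ itself, whereas the commutator estimate additionally requires $T_\beta\nabla^r:\Ld^2_\beta\to\Ld^2_\beta$ (via integration by parts onto the kernel and $\mu_\beta$), and reducing to the scalar operator before interpolating keeps the intermediate growth exponent at $1$ rather than $m-1$. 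Your closing remark that the transport term drops out by $O(2)$-invariance, leaving a bounded smoothing operator, is precisely the dichotomy the paper's Gaussian/non-Gaussian analysis turns on, and matches the contrast drawn against Lemma~\ref{lem:prelest-LS-re}(ii).
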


\begin{proof}
Item~(i) is obtained by duality, as in the proof of Lemma~\ref{lem:prelest-LS-re}, and we skip the detail. 
We turn to the proof of item~(ii). By duality, recalling that the operator $L_{N,\beta}^m$ in~\eqref{eq:linMF-G} is self-adjoint by Lemma~\ref{lem:Gaus}, it suffices to prove for all $s\ge0$, $\delta>0$, and $h^m\in C^\infty_c((\R^2)^m)$,
\begin{equation*}
\|e^{iL_{N,\beta}^mt}h^m\|_{H_\beta^s((\R^2)^m)}\,\lesssim_{m,s,\delta}\,\langle t\rangle^{\delta}\|h^m\|_{H^s_\beta((\R^2)^m)}.
\end{equation*}
As by definition $L_{N,\beta}^m$ is (a multiple of) the Kronecker sum of $T_\beta$ over the last $m-1$ directions on $\Ld^2_\beta((\R^2)^m)$, cf.~\eqref{eq:linMF-G}, it suffices to show, for all $s\ge0$, $\delta>0$, and $h\in C^\infty_c(\R^2)$,
\begin{equation}\label{eq:estim-Hs-growth-Gaus}
\|e^{iT_\beta t}h\|_{H_\beta^s(\R^2)}\,\lesssim_{s,\delta}\,\langle t\rangle^\delta\|h\|_{H^s_\beta(\R^2)}.
\end{equation}
For that purpose, we first note that by definition~\eqref{eq:def-Lbeta} the operator $T_\beta$ satisfies the following regularizing property: for all $r\ge0$,
\[\|T_\beta h\|_{H^r_\beta(\R^2)}\,\lesssim_r\,\|h\|_{\Ld^2_\beta(\R^2)}.\]
Writing $e^{iT_\beta t}h-h=\int_0^tiT_\beta(e^{iT_\beta t'}h)\,dt'$ and using this regularizing property, we then get
\[\|e^{iT_\beta t}h-h\|_{H^r_\beta(\R^2)}\,\le\,\int_0^t\|T_\beta(e^{iT_\beta t'}h)\|_{H^r_\beta(\R^2)}\,dt'\,\lesssim_r\,\int_0^t\|e^{iT_\beta t'}h\|_{\Ld^2_\beta(\R^2)}\,dt'.\]
Now we combine this with the interpolation inequality of Lemma~\ref{lem:interpol-SobG}(ii): for all $r\ge s\ge0$, we get
\begin{eqnarray*}
\|e^{iT_\beta t}h-h\|_{H_\beta^s(\R^2)}&\lesssim_{s,r}&\|e^{iT_\beta t}h-h\|_{\Ld^2_\beta(\R^2)}^{1-\frac sr}\|e^{iT_\beta t}h-h\|_{H_\beta^r(\R^2)}^\frac sr\\
&\lesssim_{r}&\|e^{iT_\beta t}h-h\|_{\Ld^2_\beta(\R^2)}^{1-\frac sr}\Big(\int_0^t\|e^{iT_\beta t'}h\|_{\Ld^2_\beta(\R^2)}\,dt'\Big)^\frac sr.
\end{eqnarray*}
By the triangle inequality and the self-adjointness of $T_\beta$ on $\Ld^2_\beta(\R^2)$, cf.~Lemma~\ref{lem:Gaus}, we conclude that
\begin{equation*}
\|e^{iT_\beta t}h\|_{H_\beta^s(\R^2)}\,\lesssim_{s,r}\,\|h\|_{H_\beta^s(\R^2)}+\langle t\rangle^\frac sr\|h\|_{\Ld^2_\beta(\R^2)},
\end{equation*}
and the claim~\eqref{eq:estim-Hs-growth-Gaus} follows by choosing $r\ge \frac s\delta$.
\end{proof}

\subsection{Proof of Theorem~\ref{th:main-Gauss}}
We start by using the cumulant estimates of Lemma~\ref{lem:est-cum-2} to truncate the BBGKY hierarchy and get a closed description of the tagged particle density. We manage to get arbitrarily close to the critical timescale $t=O(N^{1/2})$ by considering multiple iterations of the hierarchy. This iteration procedure works precisely thanks to the almost uniform bound on linearized mean-field evolutions that we proved above, cf.~Lemma~\ref{lem:prelest-LS}(ii), in contrast with the non-Gaussian case where we could not avoid a strong time restriction, cf.~Lemma~\ref{lem:exp-gn1-NG}.

\begin{lem}\label{lem:approx-dt2g1-gaus}
Consider the Gaussian setting~\eqref{eq:Gaussian}.
For all $\sigma<\frac12$, there exist $\delta,s>0$
such that
\begin{equation*}
\sup_{0\le t\le N^\sigma}\Big\|N\partial_t^2g_N^{1;t}-iS_{N,\beta}^{1,+}e^{-it(\Id\otimes T_\beta)}iS_{N,\beta}^{2,-}(\tfrac1{\mu_\beta}f^\circ)\Big\|_{H^{-s}_\beta(\R^2)}\,\lesssim_\sigma\,N^{-\delta}.
\end{equation*}
\end{lem}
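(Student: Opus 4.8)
The strategy is to turn $N\partial_t^2g_N^1$ into an exact, iterated Duhamel expansion along the simplified hierarchy of Lemma~\ref{lem:eqn-gNm-21-G}, to isolate the stated wave term from the initial datum $g_N^1|_{t=0}=\tfrac1{\mu_\beta}f^\circ$, and to estimate everything else by the a priori bounds of Lemma~\ref{lem:est-cum-2} together with the weak bounds of Lemma~\ref{lem:prelest-LS}; the vanishing $g_N^m|_{t=0}=0$ for $m\ge2$ is used repeatedly. First I would reduce to the second marginal. Since $iL_{N,\beta}^1=0$ in the Gaussian case, the $m=1$ equation reads $N\partial_tg_N^1=iS_{N,\beta}^{1,+}(Ng_N^2)+iS_{N,\beta}^{1,\circ}g_N^1$, hence $N\partial_t^2g_N^1=iS_{N,\beta}^{1,+}\partial_t(Ng_N^2)+iS_{N,\beta}^{1,\circ}\partial_tg_N^1$, the last term being $O(N^{-1/2})$ in negative Sobolev by Lemma~\ref{lem:prelest-LS}(i) and $\|g_N^2\|_{\Ld^2_\beta}\lesssim N^{-1/2}$. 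Writing $Ng_N^2(t)=\int_0^te^{-i(t-s)L_{N,\beta}^2}F_2(s)\,ds$ with $F_2:=iS_{N,\beta}^{2,+}(Ng_N^3)+iS_{N,\beta}^{2,\circ}g_N^2+iS_{N,\beta}^{2,-}g_N^1$ (valid as $Ng_N^2|_{t=0}=0$), and differentiating with one integration by parts in $s$, using $\partial_se^{-i(t-s)L}=iLe^{-i(t-s)L}$, one gets the key identity
\[\partial_t(Ng_N^2)(t)=e^{-itL_{N,\beta}^2}F_2(0)+\int_0^te^{-i(t-s)L_{N,\beta}^2}\partial_sF_2(s)\,ds.\]
Since $g_N^3|_0=g_N^2|_0=0$ and $g_N^1|_0=\tfrac1{\mu_\beta}f^\circ$, one has $F_2(0)=iS_{N,\beta}^{2,-}(\tfrac1{\mu_\beta}f^\circ)$; and as $L_{N,\beta}^2=\tfrac{N-2}N\,\Id\otimes T_\beta$ with $T_\beta$ bounded and self-adjoint on $\Ld^2_\beta$ (Lemma~\ref{lem:Gaus}), the propagator $e^{-itL_{N,\beta}^2}$ may be replaced by $e^{-it(\Id\otimes T_\beta)}$ at the cost of $O(t/N)=O(N^{\sigma-1})$. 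This produces exactly the claimed main term, and it remains to control $\mathcal E(t):=iS_{N,\beta}^{1,+}\int_0^te^{-i(t-s)L_{N,\beta}^2}\partial_sF_2(s)\,ds$.

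To estimate $\mathcal E$ I would iterate the same identity up the hierarchy. In $\partial_sF_2$ the pieces $iS_{N,\beta}^{2,\circ}\partial_sg_N^2$ and $iS_{N,\beta}^{2,-}\partial_sg_N^1$ are $O(N^{-1/2})$ and contribute $O(N^{\sigma(1+\delta)-1/2})$ after one time integration and one propagator; the dangerous piece is $iS_{N,\beta}^{2,+}\partial_s(Ng_N^3)$. Applying the Duhamel-with-integration-by-parts identity to $\partial_s(Ng_N^3)$ — now with a \emph{vanishing} boundary term, since $g_N^4|_0=g_N^3|_0=g_N^2|_0=0$ forces $F_3(0)=0$ — and repeating up to a truncation level $M=M(\sigma)$, I would expand $\mathcal E$ as a finite sum of nested time‑ordered integrals of products of BBGKY operators and mean‑field propagators $e^{-i(\cdot)L_{N,\beta}^m}$ applied either to $g_N^M$ (or $Ng_N^M$), controlled unconditionally by Lemma~\ref{lem:est-cum-2} as $O(N^{-(M-1)/2})$ (resp. $O(N^{1-(M-1)/2})$), or to feedback quantities $\partial_sg_N^{m-1}$ at intermediate levels. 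A preliminary descending‑induction bootstrap — feeding Lemma~\ref{lem:est-cum-2}, Lemma~\ref{lem:prelest-LS} and $g_N^m|_0=0$ into the Duhamel formulas — shows that, for $t\le N^\sigma$ and in a fixed negative Sobolev norm, $g_N^m(t)$ and $\partial_tg_N^m(t)$ are in fact $O(N^{\sigma(1+\delta)-m/2})$ for $2\le m<M$ (and $\partial_tg_N^1=O(N^{\sigma(1+\delta)-1})$), strictly better than the crude $N^{-(m-1)/2}$ once $\sigma<\tfrac12$. Each feedback quantity $\partial_sg_N^{m-1}$ arising in the expansion must itself be re‑expanded by the same identity; the decisive point is that its boundary term $\widetilde F_m(0)$ carries an extra $1/N$ — it equals $\tfrac1N iS_{N,\beta}^{2,-}(\tfrac1{\mu_\beta}f^\circ)$ for $m=2$ and vanishes for $m\ge3$ — so that every feedback ``round trip'' down-and-up the hierarchy costs a power $N^{\sigma-1}\ll1$.

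Collecting powers: each iteration step or feedback round trip adds $O(1)$ nested time integrals — contributing $N^{O(1)\sigma}$, with a compensating $1/k!$ from the time‑ordered integration — and $O(1)$ propagators, each bounded by $\langle t\rangle^\delta$ in negative Sobolev with \emph{no loss of derivatives} by Lemma~\ref{lem:prelest-LS}(ii) (the crucial Gaussian‑specific input). Hence any term reaching the truncation level $M$ is $\lesssim N^{c(M)\sigma-(M-1)/2+1}/M!\to0$ as $M\uparrow\infty$ whenever $\sigma<\tfrac12$, while any term carrying a feedback chain is negligible through the accumulated $N^{\sigma-1}$ factors. Choosing $M\simeq(\tfrac12-\sigma)^{-1}$ so that the boundary remainders are $\ll1$, and absorbing the total loss of derivatives (at most one per BBGKY operator, hence $\simeq M^2$ for the deepest kept branches) into the Sobolev index $s\simeq\delta^{-2}$, one obtains $\|\mathcal E(t)\|_{H^{-s}_\beta(\R^2)}\lesssim N^{-\delta}$ uniformly for $0\le t\le N^\sigma$, with $\delta\simeq\tfrac12-\sigma$.

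The main obstacle is the combinatorial bookkeeping of this fully iterated expansion, and in particular showing that the couplings back down the hierarchy — the feedback terms $iS_{N,\beta}^{m,-}g_N^{m-1}$ — do not spoil convergence, together with the verification that the induced loss of derivatives stays bounded by a quantity depending only on $\sigma$. This last point is exactly what the Gaussian structure buys, via the almost‑uniform negative‑Sobolev bound of Lemma~\ref{lem:prelest-LS}(ii) with no derivative loss, in sharp contrast with the non‑Gaussian setting, where the loss $\langle t\rangle^{s+\delta}$ and the growing derivative count in Lemma~\ref{lem:prelest-LS-re}(ii) force the restriction $t\ll N^{1/20}$ of Lemma~\ref{lem:exp-gn1-NG}.
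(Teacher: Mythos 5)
Your proposal follows essentially the same route as the paper's proof: differentiate the $m=1$ BBGKY equation once more, apply the integration-by-parts Duhamel identity (the paper's~\eqref{eq:Duhamel-re}) to $N\partial_t g_N^2$ to isolate the boundary term $e^{-it(\Id\otimes T_\beta)}iS_{N,\beta}^{2,-}(\tfrac1{\mu_\beta}f^\circ)$ coming from the only nonzero initial datum, and then control the remainder by iterating the same identity up the hierarchy (using that $g_N^m|_{t=0}=0$ for $m\ge2$), with Lemma~\ref{lem:prelest-LS}(ii) as the crucial Gaussian-specific input. That is exactly what the paper does; the paper simply organizes the iteration via the quantities $A_s^m(T)$ in~\eqref{eq:notation-Asm} and the recursion~\eqref{eq:toiterate-Asm}, and obtains decay from the geometric smallness of $N^{-1/2}\langle T\rangle^{1+\delta}$ for $T\le N^\sigma$ with $\sigma<\tfrac12$, rather than from a $1/k!$ factor (which is not actually needed here and does not cleanly appear in this scheme once the propagator losses $\langle t\rangle^\delta$ are accounted for).

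Two small imprecisions, neither affecting correctness: the boundary term for the feedback $\partial_s g_N^1$ is $\tfrac1N iS_{N,\beta}^{1,\circ}(\tfrac1{\mu_\beta}f^\circ)$ rather than $\tfrac1N iS_{N,\beta}^{2,-}(\ldots)$, since at level $m=1$ there is no propagator and $h^1(0)=\tfrac1N iS_{N,\beta}^{1,\circ}(\tfrac1{\mu_\beta}f^\circ)$; and the claimed preliminary bootstrap $g_N^m=O(N^{\sigma(1+\delta)-m/2})$, while plausible by a one-step Duhamel argument, is not the device the paper uses — the paper's iteration works directly from the crude a priori bound $A_s^m(T)\lesssim1$ of~\eqref{eq:apriori-Asm} and gains powers of $N^{-1/2}\langle T\rangle^{1+\delta}$ per level while losing one derivative per step, which is exactly why $s\simeq\delta^{-2}$.
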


\begin{proof}
Recall that in the Gaussian setting the BBGKY hierarchy~\eqref{eq:eqn-gNm-21} holds with $L_{N,\beta}^1=0$, with $S_{N,\beta}^{m,=}=0$ for all $m\ge1$, and with $L_{N,\beta}^2=\frac{N-2}N(\Id\otimes T_\beta)$, where $T_\beta$ is defined in~\eqref{eq:def-Lbeta}, cf.~Lemma~\ref{lem:eqn-gNm-21-G}.
For the tagged particle density, we then find
\begin{equation}\label{eq:dt1g1}
\partial_tg_N^1\,=\,iS_{N,\beta}^{1,+}g_N^2+\tfrac1NiS_{N,\beta}^{1,\circ}g_N^1,
\end{equation}
and thus, taking another time derivative and iterating,
\begin{equation}\label{eq:dt2g1}
N\partial_t^2g_N^1-iS_{N,\beta}^{1,+}N\partial_tg_N^2\,=\,-S_{N,\beta}^{1,\circ}S_{N,\beta}^{1,+}g_N^2-\tfrac1{N}S_{N,\beta}^{1,\circ}S_{N,\beta}^{1,\circ}g_N^1.
\end{equation}
Next, we further appeal to the BBGKY hierarchy to express the correlation function $N\partial_tg_N^2$ in the left-hand side. For that purpose, we shall use the following version of the Duhamel formula: if $g,h$ satisfy an equation of the form
\[\partial_tg+iLg=h,\qquad g|_{t=0}=0,\]
for a self-adjoint operator $L$, then we can write the solution as $g^t=\int_0^te^{-iL(t-s)}h^s\,ds$, from which we can deduce for instance, taking a time derivative and integrating by parts,
\begin{eqnarray}
\partial_tg^t&=&\int_0^t\big(\delta(s-t)-e^{-iL(t-s)}iL\big)h^s\,ds\nonumber\\
&=&e^{-iLt}h^\circ+\int_0^te^{-iL(t-s)}\partial_sh^s\,ds.\label{eq:Duhamel-re}
\end{eqnarray}
Now, as $L^2_{N,\beta}=\frac{N-2}N(\Id\otimes T_\beta)$, the BBGKY equation~\eqref{eq:eqn-gNm-21} for the correlation function $g_N^2$ takes the form
\[\partial_tg^2_N+i(\Id\otimes T_\beta)g^2_N=iS^{2,+}_{N,\beta}g_N^3+\tfrac1N\Big(iS^{2,\circ}_{N,\beta}g_N^2+iS_{N,\beta}^{2,-}g^1_N+2i(\Id\otimes T_\beta)g_N^2\Big),\]
hence, using $g_N^1|_{t=0}=\tfrac1{\mu_\beta}f^\circ$ and $g_N^m|_{t=0}=0$ for all $m\ge2$, the above Duhamel formula~\eqref{eq:Duhamel-re} yields
\begin{multline*}
N\partial_tg_N^{2;t}\,=\,e^{-it(\Id\otimes T_\beta)}iS_{N,\beta}^{2,-}(\tfrac1{\mu_\beta}f^\circ)
+\int_0^te^{-i(t-s)(\Id\otimes T_\beta)}iS_{N,\beta}^{2,+}N\partial_sg_N^{3;s}\,ds\\
+\int_0^te^{-i(t-s)(\Id\otimes T_\beta)}\Big(iS_{N,\beta}^{2,\circ}\partial_sg_N^{2;s}+iS_{N,\beta}^{2,-}\partial_sg_N^{1;s}+2i(\Id\otimes T_\beta)\partial_sg_N^{2;s}\Big)ds.
\end{multline*}
Further replacing $\partial_sg_N^1$ by~\eqref{eq:dt1g1} in the right-hand side, and integrating by parts as in~\eqref{eq:Duhamel-re} to remove the time derivative on $g_N^2$, we get
\begin{multline}\label{eq:ide-gN2Ndt}
N\partial_tg_N^{2;t}\,=\,e^{-it(\Id\otimes T_\beta)}iS_{N,\beta}^{2,-}(\tfrac1{\mu_\beta}f^\circ)
+\int_0^te^{-i(t-s)(\Id\otimes T_\beta)}iS_{N,\beta}^{2,+}N\partial_sg_N^{3;s}\,ds\\
+\int_0^te^{-i(t-s)(\Id\otimes T_\beta)}\Big(iS_{N,\beta}^{2,-}iS_{N,\beta}^{1,+}g_N^{2;s}+\tfrac1NiS_{N,\beta}^{2,-}iS_{N,\beta}^{1,\circ}g_N^{1;s}\Big)ds\\
+\int_0^t\Big(\delta(s-t)- e^{-i(t-s)(\Id\otimes T_\beta)}i(\Id\otimes T_\beta)\Big)\big(iS_{N,\beta}^{2,\circ}+2i(\Id\otimes T_\beta)\big)g_N^{2;s}\,ds.
\end{multline}
{Appealing to Lemma~\ref{lem:prelest-LS} to estimate the different operators in the right-hand side, we get for all~$s\ge1$ and $T,\delta>0$,
\begin{multline*}
\sup_{0\le t\le T}\|N\partial_tg_N^{2;t}\|_{H_\beta^{-s-1}(\R^2)}\,\lesssim_{s,\delta}\,\langle T\rangle^\delta
+\langle T\rangle^{1+\delta}\Big(N^{-1}\sup_{0\le t\le T}\|g_N^{1;t}\|_{H_\beta^{1-s}(\R^2)}\\
+\sup_{0\le t\le T}\|g_N^{2;t}\|_{H_\beta^{1-s}(\R^2)}
+\sup_{0\le t\le T}\|N\partial_tg_N^{3;t}\|_{H_\beta^{-s}(\R^2)}\Big),
\end{multline*}
and thus, by the a priori estimates of Lemma~\ref{lem:est-cum-2},
\begin{multline}\label{eq:estim-dt2g1}
\sup_{0\le t\le T}\|N\partial_tg_N^{2;t}\|_{H_\beta^{-s-1}(\R^2)}\\[-3mm]
\,\lesssim_{s,\delta}\,\langle T\rangle^\delta
+N^{-\frac12}\langle T\rangle^{1+\delta}
+\langle T\rangle^{1+\delta}\sup_{0\le t\le T}\|N\partial_tg_N^{3;t}\|_{H_\beta^{-s}(\R^2)}.
\end{multline}
Inserting the identity~\eqref{eq:ide-gN2Ndt} for $N\partial_tg_N^2$ into~\eqref{eq:dt2g1}, and similarly estimating the error terms as above using Lemmas~\ref{lem:est-cum-2} and~\ref{lem:prelest-LS}, we also find}
\begin{multline}\label{eq:estim-dt2g1-main}
\sup_{0\le t\le T}\Big\|N\partial_t^2g_N^{1;t}-iS_{N,\beta}^{1,+}e^{-it(\Id\otimes T_\beta)}iS_{N,\beta}^{2,-}(\tfrac1{\mu_\beta}f^\circ)\Big\|_{H^{-s-2}_\beta(\R^2)}\\
\,\lesssim_{s,\delta}\,N^{-\frac12}\langle T\rangle^{1+\delta} +\langle T\rangle^{1+\delta}\sup_{0\le t\le T}\|N\partial_tg_N^{3;t}\|_{H^{-s}_\beta(\R^2)}.
\end{multline}
It remains to estimate the $3$-particle correlation function $\partial_tg_N^3$ in~\eqref{eq:estim-dt2g1} and~\eqref{eq:estim-dt2g1-main}. In order to get an estimate valid up to the critical timescale $t=O(N^{1/2})$, some special care is needed and we shall argue by iterations on the whole hierarchy.
For any $m\ge3$, applying the Duhamel formula~\eqref{eq:Duhamel-re} to the BBGKY equation~\eqref{eq:eqn-gNm-21} for $g_N^m$, we get
\begin{multline*}
N\partial_tg_N^{m;t}=\int_0^te^{-iL_N^n(t-s)}iS_{N,\beta}^{m,+}N\partial_sg_N^{m+1;s}\,ds\\
+\int_0^te^{-iL_N^n(t-s)}\Big(iS_{N,\beta}^{m,\circ}\partial_sg_N^{m;s}+iS_{N,\beta}^{m,-}\partial_sg_N^{m-1;s}\Big)\,ds,
\end{multline*}
and therefore, by Lemmas~\ref{lem:est-cum-2} and~\ref{lem:prelest-LS}, we get for any $s\ge0$ and $T,\delta>0$,
\begin{multline*}
\sup_{0\le t\le T}\|N\partial_tg_N^{m;t}\|_{H^{-s-1}_\beta(\R^2)}\,\lesssim_{s,\delta}\,N^{-1}\langle T\rangle^{1+\delta}\sup_{0\le t\le T}\|N\partial_tg_N^{m;t}\|_{H^{-s}_\beta(\R^2)}\\
+\langle T\rangle^{1+\delta}\sup_{0\le t\le T}\|N\partial_tg_N^{m+1;t}\|_{H^{-s}_\beta(\R^2)}
+N^{-1}\langle T\rangle^{1+\delta}\sup_{0\le t\le T}\|N\partial_tg_N^{m-1;t}\|_{H^{-s}_\beta(\R^2)}.
\end{multline*}
Denoting
\begin{equation}\label{eq:notation-Asm}
A_{s}^m(T)\,:=\,\sup_{0\le t\le T}\|N^{\frac{m-1}2}\partial_tg_N^{m;t}\|_{H^{-s}_\beta(\R^2)},
\end{equation}
the above means for all $m\ge3$, $s\ge0$, and $T,\delta>0$,
\begin{equation}\label{eq:toiterate-Asm}
A_{s+1}^m(T)\,\lesssim_{s,\delta}\,N^{-1}\langle T\rangle^{1+\delta}A_s^m(T)
+N^{-\frac12}\langle T\rangle^{1+\delta}\big(A_s^{m+1}(T)+A_s^{m-1}(T)\big).
\end{equation}
Further note that the BBGKY equation~\eqref{eq:eqn-gNm-21} for $\partial_tg_N^m$, combined with Lemmas~\ref{lem:est-cum-2} and~\ref{lem:prelest-LS}, yields the following a priori estimates, for all $m\ge2$, $s\ge1$, and $T>0$,
\begin{eqnarray}
A_s^m(T)&\le&\sup_{0\le t\le T}N^{\frac{m-1}2}\Big( \|iL_{N,\beta}^mg^{m;t}_N\|_{H^{-s}_\beta(\R^2)}+\|iS_{N,\beta}^{m,+}g_N^{m+1;t}\|_{H^{-s}_\beta(\R^2)}\nonumber\\
&&\hspace{2cm}+\tfrac1N\|iS^{m,\circ}_{N,\beta}g_N^{m;t}\|_{H^{-s}_\beta(\R^2)}+\tfrac1N\|iS_{N,\beta}^{m,-}g_N^{m-1;t}\|_{H^{-s}_\beta(\R^2)}\Big)\nonumber\\
&\lesssim&1.\label{eq:apriori-Asm}
\end{eqnarray}
Now combining~\eqref{eq:toiterate-Asm} and~\eqref{eq:apriori-Asm},
we deduce by direct iteration, for all $s\ge1$, $k\ge0$, and~$T,\delta>0$, provided that $N^{-1/2}\langle T\rangle^{1+\delta}\le1$,
\[A_{s+k}^3(T)\,\lesssim_{s,k,\delta}\,N^{-1}\langle T\rangle^{1+\delta}+(N^{-\frac12}\langle T\rangle^{1+\delta})^k+N^{-\frac12}\langle T\rangle^{1+\delta}A_s^2(T).\]
Noting that~\eqref{eq:estim-dt2g1} means, with the notation~\eqref{eq:notation-Asm}, for $s\ge1$,
\begin{equation}
A_{s+1}^2(T)
\,\lesssim_{s,\delta}\,N^{-\frac12}{\langle T\rangle^\delta}+N^{-1}\langle T\rangle^{1+\delta} +N^{-\frac12}\langle T\rangle^{1+\delta}A_s^3(T),
\end{equation}
and inserting this into the above,
we deduce for all $s\ge1$, $k\ge0$, and $T,\delta>0$, provided that~$N^{-1/2}\langle T\rangle^{1+\delta}\le1$,
\[A_{s+k+1}^3(T)\,\lesssim_{s,k,\delta}\,N^{-1}\langle T\rangle^{1+2\delta}+(N^{-\frac12}\langle T\rangle^{1+\delta})^k+(N^{-\frac12}\langle T\rangle^{1+\delta})^2A_{s}^3(T).\]
Now further iterating this inequality for $A^3$, together with the a priori estimate~\eqref{eq:apriori-Asm}, we deduce for all $k\ge1$, $s\ge k^2$, and $T>0$, provided that $N^{-1/2}\langle T\rangle^{1+\delta}\le1$,
\[\sup_{0\le t\le T}\|N\partial_tg_N^{3;t}\|_{H^{-s}_\beta(\R^2)}\,=\,A_{s}^3(T)\,\lesssim_{s,k,\delta}\,N^{-1}\langle T\rangle^{1+2\delta}+(N^{-\frac12}\langle T\rangle^{1+\delta})^k.\]
Combining this with~\eqref{eq:estim-dt2g1-main} and optimizing in $k$ yield the conclusion.
\end{proof}

In view of the compact nature of the linearized mean-field operator $T_\beta$, as formulated in Lemma~\ref{lem:Gaus}, we may now pass to the limit in the above closed description of the tagged particle density and conclude the proof of Theorem~\ref{th:main-Gauss}.

\begin{proof}[Proof of Theorem~\ref{th:main-Gauss}]
By definition of BBGKY operators in Lemma~\ref{lem:eqn-gNm-21}, we have that
\begin{multline}\label{eq:defiSevS-expl}
iS_{N,\beta}^{1,+}e^{-it(\Id\otimes T_\beta)}iS_{N,\beta}^{2,-}(\tfrac{1}{\mu_\beta}f^{\circ})\\
\,=\,-\tfrac{N-1}N(\nabla-\beta Rx)\cdot\int_{\R^2}K(x-x_*)\,(e^{-it(\Id\otimes T_\beta)}H)(x,x_*)\,\mu_\beta(x_*)\,dx_*,
\end{multline}
where $H\in \Ld^2_\beta((\R^2)^2)$ is given by
\begin{eqnarray*}
H(x,x_*)&=&-\Big(K(x-x_*)-(K\ast\mu_\beta)(x)\Big)\cdot(\nabla-\beta Rx)(\tfrac{1}{\mu_\beta}f^{\circ})(x)\\
&=&-K(x-x_*)\cdot(\tfrac{1}{\mu_\beta}\nabla f^{\circ})(x).
\end{eqnarray*}
Since $T_\beta$ is compact, cf.~Lemma~\ref{lem:Gaus}, we find as in~\eqref{eq:norelax-comp}, for all $h\in\Ld^2_\beta(\R^2)$ and $\sigma>0$,
\begin{equation*}
e^{-i\tau N^\sigma T_\beta}h\,\xrightarrow{N\uparrow\infty}\,\pi_0h,
\end{equation*}
in the weak-* sense of $\Ld^\infty(\R^+;\Ld^2_\beta(\R^2))$, where $\pi_0$ stands for the orthogonal projection of~$\Ld^2_\beta(\R^2)$ onto the kernel of $T_\beta$, that is, by Lemma~\ref{lem:Gaus}, onto the subspace of radial functions.
Using this in~\eqref{eq:defiSevS-expl} with $t=N^\sigma\tau$, we deduce the following weak-* convergence in $\Ld^\infty(\R^+;H^{-1}_\beta(\R^2))$,
\begin{multline*}
iS_N^{1,+}e^{-i\tau N^\sigma(\Id\otimes T_\beta)}iS_N^{2,-}(\tfrac{1}{\mu_\beta}f^{\circ})\\
\,\xrightarrow{N\uparrow\infty}\,-(\nabla-\beta Rx)\cdot\int_{\R^2}K(x-x_*)\,((\Id\otimes\pi_0)H)(x,x_*)\,\mu_\beta(x_*)\,dx_*.
\end{multline*}
By definition of $H$, the weak-* limit can be rewritten as
\begin{equation*}
iS_N^{1,+}e^{-i\tau N^\sigma(\Id\otimes T_\beta)}iS_N^{2,-}(\tfrac{1}{\mu_\beta}f^{\circ})\\
\,\xrightarrow{N\uparrow\infty}\,\tfrac1{\mu_\beta}\Div( A\nabla f^\circ),\qquad\text{in $\Ld^\infty(\R^+;H^{-1}_\beta(\R^2))$},
\end{equation*}
where the coefficient field $A\in\Ld^\infty(\R^2)^{2\times 2}$ is given by
\begin{equation*}
A(x)\,:=\,\int_{\R^2}K(x-x_*)\otimes\Big((\Id\otimes\pi_0)\big[(x_1,x_2)\mapsto K(x_1-x_2)\big]\Big)(x,x_*)\,\mu_\beta(x_*)\,dx_*.
\end{equation*}
The projection $\pi_0$ onto the subspace of radial functions can be explicitly computed and we recover formula~\eqref{eq:def-coeff-A-proj} for $A$.
Combining this convergence with the estimate of Lemma~\ref{lem:approx-dt2g1-gaus}, and using~\eqref{eq:embeddingHs} in form of $\|h\|_{H^{-s}(\R^2)}\lesssim\|\frac1{\mu_\beta} h\|_{H^{-s}_\beta(\R^2)}$,
the conclusion follows.
\end{proof}

\section{Special degenerate case: uniform equilibrium}\label{sec:unif}

This section is devoted to the special degenerate case of a tagged particle in a uniformly distributed background on the torus $\T^d$, say in arbitrary spatial dimension $d\ge1$.
More precisely, given a smooth force kernel~$K\in C^\infty(\T^d)^d$ that satisfies the incompressibility condition $\Div(K)=0$ and the action-reaction condition \mbox{$K(-x)=-K(x)$}, we now consider the associated point-vortex dynamics
\[\partial_tx_i=\tfrac1N\sum_{j=1}^NK(x_i-x_j),\qquad x_i|_{t=0}=x_i^\circ,\qquad 1\le i\le N.\]
The Liouville equation for the $N$-point density $f_N\in\Pc((\T^d)^N)$ then reads
\begin{equation}\label{eq:Liouville}
\partial_tf_N+\tfrac1N\sum_{i,j=1}^NK(x_i-x_j)\cdot\nabla_if_N=0,\qquad f_N|_{t=0}=f_N^\circ.
\end{equation}
Consider a tagged particle (labeled `1') in a uniform equilibrium background: in other words, we assume that initially the $N$-point density $f_N|_{t=0}=f_N^\circ$ takes the form
\begin{equation}\label{eq:initial-unif}
f_N^{\circ}(x_1,\ldots,x_N)\,=\,f^{\circ}(x_1),
\end{equation}
for some $f^{\circ}\in \Pc\cap C^\infty(\T^d)$.
At later times, the tagged particle density is given by the first marginal
\[f_N^1(t,x_1)\,:=\,\int_{(\R^d)^{N-1}}f_N(t,x_1,x_2,\ldots,x_N)\,dx_2\ldots dx_N.\]
As linearized mean-field operators actually vanish at uniform equilibrium, Conjecture~\ref{conj} leads us to expect a nontrivial slow conservative dynamics for the tagged particle on the timescale $t=O(N^{1/2})$, displaying no thermalization in the strict sense.
In the spirit of~\eqref{eq:compact-wave}, we start by showing that the tagged particle density satisfies a linear wave equation to leading order for relatively short times $t\ll N^{1/2}$ (that is, $\tau\ll1$). This is analogous to Theorem~\ref{th:main-Gauss} in the Gaussian degenerate setting.
The proof is displayed in Section~\ref{sec:proof-unif-wave}.

\begin{prop}\label{prop:unif-wave}
The critically-rescaled tagged particle density
$\bar f_N^1(\tau):=f_N^1(N^{\frac12}\tau)$
satisfies
\[\|\partial_\tau^2\bar f_N^{1}(\tau)-\Div(A\nabla f^{\circ})\|_{\Ld^2(\T^d)}\,\lesssim_{f^\circ}\,\tau^2+\tfrac1N,\]
with constant diffusion matrix
$A:=\int_{\T^d}K\otimes K$.
\end{prop}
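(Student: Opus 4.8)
The strategy parallels the treatment of the Gaussian degenerate case (Lemma~\ref{lem:approx-dt2g1-gaus} and the proof of Theorem~\ref{th:main-Gauss}), and is in fact simpler because at uniform equilibrium the linearized mean-field operators vanish identically. First I would specialize the BBGKY hierarchy for correlations of Lemma~\ref{lem:eqn-gNm-21} to the present setting: since $F+K\ast\mu_\beta\equiv0$, $\nabla\log\mu_\beta\equiv0$ and $\mu_\beta$ is constant (normalized to $1$), we get $L_{N,\beta}^m=0$, $S_{N,\beta}^{m,=}=0$, $S_{N,\beta}^{1,\circ}=0$, and $g_N^1=f_N^1$, so that $\partial_tf_N^1=iS_{N,\beta}^{1,+}g_N^2$. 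Differentiating once more and inserting the hierarchy equation for $g_N^2$ (with $L_{N,\beta}^2=0$) yields the closed identity
\[\partial_t^2f_N^1=iS_{N,\beta}^{1,+}iS_{N,\beta}^{2,+}g_N^3+\tfrac1N iS_{N,\beta}^{1,+}iS_{N,\beta}^{2,\circ}g_N^2+\tfrac1N iS_{N,\beta}^{1,+}iS_{N,\beta}^{2,-}g_N^1 ,\]
so that after the rescaling $t=N^{\frac12}\tau$, for which $\partial_\tau^2\bar f_N^1=N(\partial_t^2f_N^1)(N^{\frac12}\tau)$, the term $iS_{N,\beta}^{1,+}iS_{N,\beta}^{2,-}g_N^1$ is the expected leading contribution while the first two are errors.

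Next I would compute the leading term explicitly. In the uniform setting one has $iS_{N,\beta}^{2,-}h^1(x_1,x_2)=-K(x_1-x_2)\cdot\nabla_1h^1(x_1)$ and $iS_{N,\beta}^{1,+}\phi(x_1)=-\tfrac{N-1}N\int_{\T^d}K(x_1-y)\cdot\nabla_1\phi(x_1,y)\,dy$, hence, for any $\psi=\psi(x_1)$,
\[iS_{N,\beta}^{1,+}iS_{N,\beta}^{2,-}\psi(x_1)=\tfrac{N-1}N\int_{\T^d}K(x_1-y)\cdot\nabla_1\big(K(x_1-y)\cdot\nabla\psi(x_1)\big)\,dy .\]
Expanding the inner gradient, the contribution in which $\nabla_1$ falls on the force kernel is $\sum_{\alpha}\big(\int_{\T^d}(K\cdot\nabla)K_\alpha\big)\partial_\alpha\psi=0$, since $\int_{\T^d}(K\cdot\nabla)K_\alpha=\int_{\T^d}\Div(K_\alpha K)=0$ by $\Div(K)=0$ and periodicity; the remaining contribution equals $\big(\int_{\T^d}K\otimes K\big):\nabla^2\psi=\Div(A\nabla\psi)$. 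Thus $iS_{N,\beta}^{1,+}iS_{N,\beta}^{2,-}g_N^1=\tfrac{N-1}N\Div(A\nabla g_N^1)$; writing $g_N^1=f^\circ+(g_N^1-f^\circ)$ and multiplying by $N$ then gives
\[\partial_\tau^2\bar f_N^1=\tfrac{N-1}N\Div(A\nabla f^\circ)+\tfrac{N-1}N\Div\big(A\nabla(g_N^1-f^\circ)\big)+N\,iS_{N,\beta}^{1,+}iS_{N,\beta}^{2,+}g_N^3+iS_{N,\beta}^{1,+}iS_{N,\beta}^{2,\circ}g_N^2\]
evaluated at $t=N^{\frac12}\tau$, and $\|\tfrac{N-1}N\Div(A\nabla f^\circ)-\Div(A\nabla f^\circ)\|_{\Ld^2(\T^d)}\lesssim_{f^\circ}1/N$, which accounts for the $1/N$ in the statement.

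It remains to estimate the three remaining terms in $\Ld^2(\T^d)$ by $O(\tau^2)$ (up to $O(1/N)$). For this I would combine: the uniform a priori bounds $\|g_N^m\|_{\Ld^2}\lesssim_m N^{-(m-1)/2}$ of Lemma~\ref{lem:est-cum-2} and the operator bounds of Lemma~\ref{lem:prelest-LS}(i) (which hold verbatim here); the fact that $g_N^m\equiv0$ at $t=0$ for $m\ge2$ together with the stronger observation, read off inductively from the hierarchy, that $\partial_t^kg_N^m|_{t=0}=0$ for $k\le m-2$, the first non-vanishing Taylor coefficient being $\partial_t^{m-1}g_N^m|_{t=0}=N^{-(m-1)}iS_{N,\beta}^{m,-}\cdots iS_{N,\beta}^{2,-}f^\circ$; and the Duhamel representation of the hierarchy, iterated a fixed number of times as in Lemma~\ref{lem:approx-dt2g1-gaus} but with the trivial flow $e^{-itL_{N,\beta}^m}=\Id$. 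Each time-integration produces a power of $t=N^{\frac12}\tau$ which, balanced against the $N^{-1/2}$ smallness per hierarchy level, converts into a power of $\tau$; concretely this gives, for $\tau\le1$, $\|g_N^2(N^{\frac12}\tau)\|_{H^s}\lesssim_s N^{-1/2}\tau$ and $\|g_N^3(N^{\frac12}\tau)\|_{H^s}\lesssim_s N^{-1}\tau^2$ (with analogous bounds for the deeper levels entering the finite iteration). By Lemma~\ref{lem:prelest-LS}(i), the first error term $\tfrac{N-1}N\Div(A\nabla(g_N^1-f^\circ))$ --- using $g_N^1-f^\circ=\int_0^tiS_{N,\beta}^{1,+}g_N^2$ --- and the third term $N\,iS_{N,\beta}^{1,+}iS_{N,\beta}^{2,+}g_N^3$ are then both $\lesssim\tau^2$ in $\Ld^2$, while the second term $iS_{N,\beta}^{1,+}iS_{N,\beta}^{2,\circ}g_N^2$ is $\lesssim N^{-1/2}\tau\lesssim\tau^2+1/N$. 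Collecting everything proves the proposition; no positivity subtlety arises since $A=\int_{\T^d}K\otimes K$ is manifestly positive semidefinite.

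The delicate point --- and the reason the statement is in $\Ld^2$ rather than in the merely distributional form that suffices for Theorem~\ref{th:main-Gauss} --- is obtaining the above bounds in $\Ld^2$ (or positive Sobolev) norms rather than in negative Sobolev spaces: the operators $S_{N,\beta}^{m,\pm,\circ}$ each lose one derivative, so the finite iteration of the hierarchy must be carried out with a controlled number of derivatives, supplied by the smoothness $K\in C^\infty(\T^d)$ and $f^\circ\in C^\infty(\T^d)$. This is affordable precisely because only a bounded number of iterations is needed to reach the target error $O(\tau^2)$, and because the divergence-free structure of $K$ lets one recast several of the derivative-losing terms as harmless convolutions against $\nabla K$ (via $\int K(x_j-x_*)\cdot\nabla_jh\,dx_*=\nabla_j\cdot\int K(x_j-x_*)h\,dx_*$ and integration by parts in the remaining variables); propagating the correspondingly finite amount of Sobolev regularity of the correlations over the interval $t\in[0,N^{\frac12}\tau]$ is the only real work.
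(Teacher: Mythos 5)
Your route is genuinely different from the paper's and has a real gap in the error estimates. The paper's proof does not touch the correlation hierarchy at all: it works with the full $N$-point density $f_N$, uses conservation of $\Ld^2$ norms under the Liouville flow to reduce to bounding $\|\partial_t^k f_N|_{t=0}\|_{\Ld^2((\T^d)^N)}$, and establishes $\|\partial_t^k f_N\|_{\Ld^2((\T^d)^N)}\lesssim_{k,f^\circ}N^{-k/2}$ by a combinatorial counting argument on the iterated Liouville operator (exploiting $\Div K=0$ and $\int_{\T^d}K=0$ to reduce the number of non-cancelling index choices). The proposition then follows from a fourth-order Taylor expansion of $\partial_\tau^2\bar f_N^1$ in time, the explicit computation of $\partial_t^2 f_N^1|_{t=0}=\tfrac{N-1}{N^2}\Div(A\nabla f^\circ)$, and the vanishing of $\partial_t^3 f_N^1|_{t=0}$. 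Your identification of the leading term via $iS_{N}^{1,+}iS_{N}^{2,-}$ is correct and equivalent, and your observation that $\int_{\T^d}(K\cdot\nabla)K_\alpha=0$ is the same cancellation the paper exploits.

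The gap is in your error-term bounds. You need the conclusion in $\Ld^2(\T^d)$, which forces you to control $g_N^2,g_N^3$ in \emph{positive} Sobolev norms (the operators $S_{N}^{m,\pm}$ each cost one derivative), and you assert the bounds $\|g_N^2(N^{1/2}\tau)\|_{H^s}\lesssim N^{-1/2}\tau$, $\|g_N^3(N^{1/2}\tau)\|_{H^s}\lesssim N^{-1}\tau^2$ without establishing them. Lemma~\ref{lem:prelest-LS}(i) only gives bounds in \emph{negative} Sobolev spaces, and the a priori cumulant bounds (Lemma~\ref{lem:est-cum-2} in the weighted case, or Lemma~\ref{lem:est-cum} in the uniform case) are purely $\Ld^2$. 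In fact, in the uniform setting the a priori estimate $\|\partial_t^k g_N^{m+1}\|_{\Ld^2}\lesssim N^{-(m+k)/2}$ of Lemma~\ref{lem:est-cum} is itself proved by appealing to the very estimate $\|\partial_t^k f_N\|_{\Ld^2}\lesssim N^{-k/2}$ that the paper derives inside the proof of Proposition~\ref{prop:unif-wave}, so your hierarchy-based scheme would in any case have to reproduce that combinatorial argument. Propagating $H^s$ regularity of the correlations over the long interval $t\in[0,N^{1/2}\tau]$ is not ``the only real work'' so much as an obstruction: the Liouville flow is merely incompressible, and its gradient can grow in $t$, so there is no obvious way to produce the needed positive-Sobolev control from the conserved $\Ld^2$ quantities without the paper's Taylor-expansion-at-$t=0$ trick, which confines all the nontrivial analysis to the initial time slice where $f_N$ is explicit.
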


For a description of the dynamics on the critical timescale $t=O(N^{1/2})$, we need to consider the tagged particle density together with the whole family of its correlation functions with respect to background particles. We first recall some standard definitions.
As in~\eqref{eq:def-fNm}, we denote by~$\{f_N^m\}_{1\le m\le N}$ the marginals of $f_N$,
\begin{equation}\label{eq:def-fNm-re}
f_N^m(t,x_1,\ldots,x_m)\,:=\,\int_{(\T^d)^{N-m}}f_N(t,x_1,\ldots,x_m,x_{m+1},\ldots,x_N)\,dx_{m+1}\ldots dx_N.
\end{equation}
As background particles with labels $2,\ldots,N$ are exchangeable initially, cf.~\eqref{eq:initial-unif}, they remain so over time, hence $f_N^m$ is symmetric in its last $m-1$ variables.
The correlation functions $\{g_N^m\}_{1\le m\le N}$ for the tagged particle with respect to the initially uniform background are defined so as to satisfy the following cluster expansions,
\begin{equation}\label{eq:correl-def-clust}
f_N^m(t,x_1,\ldots,x_m)\,=\,\sum_{n=1}^m\sum_{\sigma\in P_{n-1}^{m-1}}g_N^{n}(t,x_1,x_\sigma),\qquad 1\le m\le N.
\end{equation}
For all $m$, the correlation function $g_N^m$ is uniquely chosen to be symmetric in its last $m-1$ variables and to satisfy $\int_{\T^d}g_N^m(x_1,\ldots,x_m)\,dx_j=0$ for all $2\le j\le m$.
More explicitly, the above relations can be inverted and correlation functions are given by
\begin{equation}\label{eq:correl-def}
g_N^m(t,x_1,\ldots,x_m):=\sum_{n=1}^m(-1)^{m-n}\sum_{\sigma\in P_{n-1}^{m-1}}f_N^{n}(t,x_1,x_\sigma).
\end{equation}
In these terms, we can now state the following result. In accordance with~\eqref{eq:compact-lim-hier}, the tagged particle density does not satisfy a closed equation on the critical timescale $t=O(N^{1/2})$, but its dynamics takes the form of a (formally) unitary evolution for an infinite hierarchy of coupled equations for the limits of all rescaled correlation functions.

\begin{theor}\label{th:unif-wave-2}
The critically-rescaled correlation functions
\begin{equation}\label{eq:rescaled-correl-1}
\bar g_N^m(\tau):=N^\frac{m-1}2g_N^m(N^\frac12\tau),\qquad1\le m\le N,
\end{equation}
converge weakly-* in $W^{1,\infty}(\R^+;\Ld^2((\T^d)^m))$ as $N\uparrow\infty$,
\[\bar g_N^m\overset*\cvf\bar g^m,\qquad m\ge1,\]
where the limit $\{\bar g^m\}_m$ is the unique weak solution of the limit hierarchy
\begin{equation}\label{eq:lim-hier-unif}
\left\{\begin{array}{lll}
\partial_\tau\bar g^m=iS^{m,+}\bar g^{m+1}+iS^{m,-}\bar g^{m-1}&:&m\ge1,\\
\bar g^1|_{\tau=0}=f^{\circ},\\
\bar g^m|_{\tau=0}=0&:&m>1,
\end{array}\right.
\end{equation}
such that
\begin{equation}\label{eq:estim-energy-dt}
\sup_{\tau\ge0}\sum_{m=1}^\infty\tfrac1{(m-1)!}\|\partial_\tau^k\bar g^m\|^2_{\Ld^2((\T^d)^m)}\,<\,\infty,\qquad\text{for all $k\ge0$},
\end{equation}
and where the operators $S^{m,\pm}$ are explicitly defined as follows,
\begin{eqnarray*}
iS^{m,+}\bar g^{m+1}&:=&-\sum_{i=1}^m\int_{\T^d}K(x_i-x_*)\cdot\nabla_i\bar g^{m+1}(x_{[m]},x_*)\,dx_*,\\
iS^{m,-}\bar g^{m-1}&:=&-\sum_{i=1}^m\sum_{j=2}^mK(x_i-x_j)\cdot\nabla_i\bar g^{m-1}(x_{[m]\setminus\{j\}}).
\end{eqnarray*}
In addition, the limit hierarchy~\eqref{eq:lim-hier-unif} has the following (formal) unitary structure:
defining~$\Hc^m$ as the Hilbert space of functions $h^m\in\Ld^2((\T^d)^m)$ that are symmetric in their last $m-1$ entries, and endowing this space with the norm
\[\|h^m\|_{\Hc^m}^2:=\tfrac1{(m-1)!}\|h^m\|_{\Ld^2((\T^d)^m)}^2,\]
the operators $S^{m,+}:\Hc^{m+1}\to\Hc^m$ and $S^{m+1,-}:\Hc^{m}\to\Hc^{m+1}$ satisfy
\begin{equation}\label{eq:sym-S+S-}
(S^{m,+})^*=S^{m+1,-},\qquad (S^{m+1,-})^*=S^{m,+}.
\end{equation}
\end{theor}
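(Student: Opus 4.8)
The plan is to specialise the general BBGKY analysis of Section~2 to the present uniform setting, derive uniform-in-$N$ a priori bounds for the rescaled correlations, extract weak-$*$ limits, pass to the limit in the hierarchy, and finally establish the energy bounds~\eqref{eq:estim-energy-dt}, the symmetry relations~\eqref{eq:sym-S+S-}, and uniqueness.

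\emph{Step 1: rescaled hierarchy and a priori bounds.} Specialising Lemma~\ref{lem:eqn-gNm-21} to $\mu_\beta\equiv1$, $F\equiv0$ (so $\nabla\log\mu_\beta=0$ and $K\ast\mu_\beta=0$), the operators $L^m_{N,\beta}$ and $S^{m,=}_{N,\beta}$ vanish identically, and after the rescaling~\eqref{eq:rescaled-correl-1} one finds $\partial_\tau\bar g_N^m=iS_N^{m,+}\bar g_N^{m+1}+iS_N^{m,-}\bar g_N^{m-1}+N^{-1/2}iS_N^{m,\circ}\bar g_N^m$, where $S_N^{m,+}=\tfrac{N-m}{N}S^{m,+}$, $S_N^{m,-}=S^{m,-}$, and $S_N^{m,\circ}$ is bounded from $\Ld^2$ to $H^{-1}$, with $\bar g_N^1|_{\tau=0}=f^\circ$ and $\bar g_N^m|_{\tau=0}=0$ for $m\ge2$. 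Lemma~\ref{lem:est-cum-2} (uniform in time, $\mu_\beta\equiv1$) gives $\|\bar g_N^m\|_{\Ld^\infty(\R^+;\Ld^2((\T^d)^m))}\lesssim_m1$. For time derivatives I would use that the Liouville flow is measure-preserving and generated by the \emph{time-independent} transport operator $\mathcal{T}_N:=\tfrac1N\sum_{i,j}K(x_i-x_j)\cdot\nabla_i$, so that $\partial_t^kf_N=(-\mathcal{T}_N)^kf_N$ and $\|\partial_t^kf_N(t)\|_{\Ld^2((\T^d)^N)}^2=\|\mathcal{T}_N^kf_N^\circ\|_{\Ld^2}^2$ is conserved; since $f_N^\circ=f^\circ(x_1)$, a combinatorial computation (exploiting that the empirical force on the tagged particle fluctuates at size $N^{-1/2}$, e.g.\ $\|\mathcal{T}_Nf_N^\circ\|_{\Ld^2}^2=\tfrac{N-1}{N^2}\int\nabla f^\circ\cdot A\nabla f^\circ$ with $A=\int_{\T^d}K\otimes K$) yields $\|\mathcal{T}_N^kf_N^\circ\|_{\Ld^2}^2\lesssim_{k,f^\circ}N^{-k}$. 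Feeding this into the cluster identity $\sum_m\binom{N-1}{m-1}\|\partial_t^kg_N^m(t)\|_{\Ld^2}^2=\|\partial_t^kf_N(t)\|_{\Ld^2}^2$ and writing $\binom{N-1}{m-1}=\tfrac{N^{m-1}}{(m-1)!}\prod_{j=1}^{m-1}(1-\tfrac jN)$ produces the uniform bounds $\|\partial_\tau^k\bar g_N^m\|_{\Ld^\infty(\R^+;\Ld^2((\T^d)^m))}\lesssim_{m,k}1$, i.e.\ uniform $W^{k,\infty}(\R^+;\Ld^2)$ bounds. In addition one needs weighted $H^s$-in-space bounds of the form $\sup_\tau\sum_m\tfrac{m^s}{(m-1)!}\|\bar g_N^m(\tau)\|_{H^s((\T^d)^m)}^2\lesssim_s1$ (used only later, for the energy identity), which I would obtain by a Gr\"onwall estimate on the rescaled hierarchy built on the weighted functional $\sum_m\tfrac{m^s}{(m-1)!}\|\bar g_N^m\|_{H^s}^2$, using the near-skew-adjointness $(S^{m,+})^*=S^{m+1,-}$ of Step~3 and treating the mismatch $\tfrac{N-m}{N}-1$ and the $N^{-1/2}S_N^{m,\circ}$ term as controlled perturbations.

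\emph{Step 2: compactness and passage to the limit.} From the $W^{k,\infty}(\R^+;\Ld^2)$ bounds, Banach--Alaoglu and a diagonal extraction give a subsequence along which $\bar g_N^m\overset*\cvf\bar g^m$ in $W^{1,\infty}(\R^+;\Ld^2((\T^d)^m))$ for every $m$. Since $S^{m,\pm}$ are bounded $\Ld^2\to H^{-1}$ and the scalars $\tfrac{N-m}{N}\to1$, while $N^{-1/2}S_N^{m,\circ}\bar g_N^m\to0$ in $\Ld^\infty(\R^+;H^{-1})$, the limit satisfies $\partial_\tau\bar g^m=iS^{m,+}\bar g^{m+1}+iS^{m,-}\bar g^{m-1}$ in $\Ld^\infty(\R^+;H^{-1}((\T^d)^m))$; the left-hand side lies in $\Ld^\infty(\R^+;\Ld^2)$, hence so does the right-hand side. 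The initial conditions pass to the limit thanks to the uniform bound in $W^{1,\infty}$ in time, and the mean-zero constraints $\int\bar g_N^m\,dx_j=0$ ($j\ge2$) together with the symmetry in the last $m-1$ variables are clearly preserved.

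\emph{Step 3: symmetry, energy bounds, uniqueness.} The relation~\eqref{eq:sym-S+S-} is a direct integration by parts using $\Div K=0$ and $K(-x)=-K(x)$, in which the combinatorial weight $\tfrac1{(m-1)!}$ in $\|\cdot\|_{\Hc^m}$ is precisely what matches the $m$ summands of $S^{m,+}$ with the summands of $S^{m+1,-}$ after using the symmetry of $\bar g^m$; equivalently it is the infinitesimal form of the exact conservation of $\|f_N\|_{\Ld^2((\T^d)^N)}^2$ by the Liouville equation. For~\eqref{eq:estim-energy-dt}, letting first $N\to\infty$ and then $M\to\infty$ in $\sum_{m\le M}\tfrac{N^{m-1}}{(m-1)!}N^k\|\partial_t^kg_N^m\|_{\Ld^2}^2\le\big(\max_{m\le M}\prod_{j<m}(1-\tfrac jN)^{-1}\big)\,N^k\|\mathcal{T}_N^kf_N^\circ\|_{\Ld^2}^2$, using $\prod_{j<m}(1-\tfrac jN)^{-1}\to1$ for fixed $m$, the bound $\|\mathcal{T}_N^kf_N^\circ\|_{\Ld^2}^2\lesssim_kN^{-k}$, and weak-$*$ lower semicontinuity of the $\Ld^2$-norm, gives $\sup_\tau\sum_m\tfrac1{(m-1)!}\|\partial_\tau^k\bar g^m(\tau)\|_{\Ld^2}^2\lesssim_k1$ (and the Taylor jet at $\tau=0$ is supported on finitely many $m$, being a smooth function of $f^\circ$). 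Finally, uniqueness within the class~\eqref{eq:estim-energy-dt}: truncating~\eqref{eq:lim-hier-unif} at level $M$, the telescoping allowed by~\eqref{eq:sym-S+S-} yields $\tfrac{d}{d\tau}\sum_{m\le M}\|\bar g^m\|_{\Hc^m}^2=2\Re\langle\bar g^M,iS^{M,+}\bar g^{M+1}\rangle_{\Hc^M}$; rewriting $iS^{M,+}\bar g^{M+1}=\partial_\tau\bar g^M-iS^{M,-}\bar g^{M-1}$ bounds this boundary term by $\|\bar g^M\|_{\Hc^M}\big(\|\partial_\tau\bar g^M\|_{\Hc^M}+cM^{3/2}\|\nabla\bar g^{M-1}\|_{\Hc^{M-1}}\big)$, which tends to $0$ by the weighted-$H^1$ control of Step~1; hence $\sum_m\|\bar g^m(\tau)\|_{\Hc^m}^2$ is conserved, and applied to the difference of two solutions with the same data it forces uniqueness, which in turn upgrades the subsequential convergence of Step~2 to full convergence.

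\emph{Main obstacle.} The crux is exactly the ``statistical closure problem'' anticipated in the introduction: the hierarchy operators $S^{m,\pm}$ lose one derivative, so the bare $\Ld^2$ and time-derivative bounds do not suffice to close the energy identity for the limit hierarchy or to prove uniqueness, and one must propagate weighted $H^s$-in-space control through the \emph{only approximately} unitary hierarchy (Step~1). In a genuinely self-adjoint situation this would be free; here the non-skew perturbations $\tfrac{N-m}{N}-1$ and $N^{-1/2}S_N^{m,\circ}$ have to be absorbed by hand, and the level-$M$ boundary terms produced by truncation must be controlled quantitatively in $m$. A second, more routine but still delicate ingredient is the combinatorial estimate $\|\mathcal{T}_N^kf_N^\circ\|_{\Ld^2}^2\lesssim_{k,f^\circ}N^{-k}$.
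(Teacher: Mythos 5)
Your Steps~1--2 (a priori $W^{k,\infty}(\R^+;\Ld^2)$ bounds via the cluster identity $\sum_m\binom{N-1}{m-1}\|\partial_t^kg_N^m\|_{\Ld^2}^2=\|\partial_t^kf_N\|_{\Ld^2}^2$, the combinatorial estimate $\|\mathcal T_N^kf_N^\circ\|_{\Ld^2}^2\lesssim N^{-k}$, weak-$*$ compactness, and passage to the limit in the rescaled hierarchy) reproduce the paper's argument essentially verbatim, and the derivation of~\eqref{eq:estim-energy-dt} and~\eqref{eq:sym-S+S-} is also right. The uniqueness argument in Step~3, however, is genuinely different from the paper's and has a real gap.

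You base uniqueness on showing that $\sum_{m}\|\bar g^m(\tau)\|_{\Hc^m}^2$ is exactly conserved by truncating at level~$M$ and showing the telescoping boundary term $2\Re\langle\bar g^M,iS^{M,+}\bar g^{M+1}\rangle_{\Hc^M}$ tends to $0$. To control that boundary term you invoke weighted $H^1$-in-space bounds of the form $\sum_m\tfrac{m^s}{(m-1)!}\|\bar g^m\|_{H^s}^2<\infty$, but such bounds are neither established nor, as far as I can see, obtainable by the mechanism you sketch: the telescoping cancellation coming from $(S^{m,+})^*=S^{m+1,-}$ is exact only in the unweighted $\Ld^2$ scalar products, and it is destroyed both by the $m$-dependent weights $m^s$ and by replacing $\Ld^2$ by $H^s$ (the operators $S^{m,\pm}$ are first-order, so the $H^s$ adjoint picks up commutator terms that do not cancel). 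Nor does the Liouville equation preserve higher Sobolev norms of $f_N$, so the route of Lemma~\ref{lem:est-cum} does not yield $H^s$-in-space control. More fundamentally, even if one could prove these bounds, they would constitute extra regularity not contained in the class~\eqref{eq:estim-energy-dt}, which involves only time derivatives; uniqueness would then be proved only in a strictly smaller class than the one asserted in the theorem. And note that claiming exact conservation of $\|\bar g(\tau)\|_\Hc^2$ is actually a stronger statement than the paper proves: Proposition~\ref{prop:unique} establishes only contraction and approximate isometricity up to $O(\tau^\infty)$, precisely because $S$ is not expected to be essentially self-adjoint.

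The paper sidesteps all of this with Lemma~\ref{lem:unique-S}: since $S$ is symmetric on $\Cc$ with $S\Cc\subset\Cc$, the squared operator $S^2$ is symmetric and positive, admits a Friedrichs self-adjoint extension $\Lc_0$, and any $C^2_b(\R^+;\Hc)$ solution of $\partial_\tau g=iS^*g$ is automatically a strong solution of the wave equation $\partial_\tau^2g+\Lc_0 g=0$, which has a unique strong solution. This needs only the $C^\infty_b(\R^+;\Hc)$ time regularity delivered by~\eqref{eq:estim-energy-dt}, no spatial regularity at all, and requires no control of truncation boundary terms. You should replace your Step~3 uniqueness argument by this one; the rest of the proposal can be kept, once the extraneous weighted $H^s$-in-space claims (which become superfluous) are dropped from Step~1.
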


Let us further describe the structure of the limit hierarchy~\eqref{eq:lim-hier-unif}--\eqref{eq:estim-energy-dt} and investigate its actual unitary structure.
For that purpose, consider the Hilbert space
\begin{equation}\label{eq:HilbertHc}
\Hc\,:=\,\overline{\textstyle\bigoplus_{m\ge1}\Hc^m},
\end{equation}
that is, the Hilbert closure of the algebraic direct sum $\bigoplus_{m\ge1}\Hc^m$ with respect to the norm
\[\|h\|_{\Hc}^2\,:=\,\sum_{m=1}^\infty\|h^m\|_{\Hc^m}^2\,=\,\sum_{m=1}^\infty\tfrac1{(m-1)!}\|h^m\|_{\Ld^2((\T^d)^m)}^2.\]
In other words, this means $\Hc=\Ld^2(\T^d)\otimes\mathcal F_+(\Ld^2(\T^d))$,
where the bosonic Fock space $\mathcal F_+(\Ld^2(\T^d))$ is viewed as the state space for background correlations.
In this setting, we consider the operator $S:=S^++S^-$ on $\Hc$ given by
\begin{equation}\label{eq:def-S-oplim}
(Sh)^m\,:=\,S^{m,+}h^{m+1}+S^{m,-}h^{m-1},\qquad m\ge1,
\end{equation}
which is well-defined for all $h$ in the dense subset
\begin{equation}\label{eq:core-def}
\textstyle\Cc\,:=\,\bigoplus_{m\ge1}C^\infty_b((\T^d)^m)\,\subset\,\Hc.
\end{equation}
(Note that the direct sum is understood here in the algebraic sense.)
The symmetry relations~\eqref{eq:sym-S+S-}
precisely mean that this densely-defined operator $S$ is symmetric on~$\Cc$.
In these terms, the fact that the limit $\bar g=\{\bar g^m\}_m$ is a weak solution of the limit hierarchy~\eqref{eq:lim-hier-unif} and that it satisfies the a priori estimates~\eqref{eq:estim-energy-dt} is equivalent to the following: the limit $\bar g=\{\bar g^m\}_m$ belongs to $C^\infty_b(\R^+;\Hc)$
and satisfies
\begin{equation*}
\langle h,\partial_\tau\bar g\rangle_\Hc=-\langle iSh,\bar g\rangle_\Hc,\qquad\text{for all $h\in\Cc$,}
\end{equation*}
with initial condition $\bar g|_{\tau=0}=\bar g^\circ\in\Cc$ given by
\begin{equation}\label{eq:def-barg0}
(\bar g^\circ)^m\,:=\,\left\{\begin{array}{ll}
f^\circ,&\text{for $m=1$},\\
0,&\text{for $m>1$}.
\end{array}\right.
\end{equation}
This actually means that $\bar g$ is a strong solution of
\begin{equation}\label{eq:lim-hier-unif-rep}
\partial_\tau\bar g=iS^*\bar g,\qquad\bar g|_{\tau=0}=\bar g^\circ,
\end{equation}
in terms of the adjoint $S^*$ of $S$ on $\Hc$.
The uniqueness of the solution to this limit equation and the unitarity of the so-defined semigroup would amount to the essential self-adjointness of the operator~$S$ on~$\Hc$, or equivalently to the symmetry of its adjoint~$S^*$.
However, we do not expect~$S$ to be essentially self-adjoint:
this operator can indeed be viewed as a cubic expression in terms of canonical creation and annihilation operators on the Fock space, and in dimension $1$ nontrivial symmetric polynomials of order~$3$ in creation and annihilation operators are known to be non-self-adjoint~\cite{Rabsztyn-89,Otte-03}.
This expected lack of self-adjointness, leading to a lack of unitarity, is related to statistical closure problems in link with turbulence~\cite{Krommes-02}.
In fact, it is not even clear whether $S^*$ generates a semigroup.

Leaving these delicate issues aside, we can at least show that the limit equation~\eqref{eq:lim-hier-unif-rep} is well-posed in some sense. This explains the uniqueness of the limit in Theorem~\ref{th:unif-wave-2}.
The proof is postponed to Section~\ref{sec:unique-pr}. Note that the proof of uniqueness only relies on the symmetry of $S$ on its core $\Cc$ and on the observation that $S\Cc\subset\Cc$.

\begin{prop}[Well-posedness of limiting hierarchy]\label{prop:unique}
For all $g^\circ\in\Cc$, there is a unique strong solution $g\in C^\infty_b(\R^+;\Hc)$ of the effective equation
\begin{equation}\label{eq:limit-eqn-wellposed}
\partial_\tau g=iS^*g,\qquad g|_{\tau=0}=g^\circ.
\end{equation}
Moreover, it satisfies the following properties:
\begin{enumerate}[(i)]
\item \emph{Contraction:} for all $\tau\ge0$ and $k\ge0$,
\[\|\partial_\tau^kg(\tau)\|_\Hc\,\le\,\|S^kg^\circ\|_\Hc.\]
\item \emph{Approximate isometricity up to $O(\tau^\infty)$:} for all $k\ge0$,
\[-\tfrac{1}{k!}\tau^k\sum_{j=0}^k\binom{k}{j}\|S^{j}g^\circ\|_\Hc^2~\le~\|g(\tau)\|_\Hc^2-\|g^\circ\|_\Hc^2~\le~0.\]
\end{enumerate}
\end{prop}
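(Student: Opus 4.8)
The plan is to construct the solution by a Fock-level Galerkin truncation, to read the a priori bounds off that construction, and to obtain uniqueness from a separate energy argument.

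\emph{Existence and (i).} For $M\ge1$ let $P_M$ denote the orthogonal projection of $\Hc$ onto $\bigoplus_{m\le M}\Hc^m$ and set $S_M:=P_MSP_M$. Since $S$ is symmetric on $\Cc$ and $S\Cc\subset\Cc$, the operator $S_M$ is symmetric on $P_M\Cc$, and being a finite system of symmetric first-order couplings $\{S^{m,\pm}\}_{m\le M}$ on the torus it is essentially self-adjoint there; hence $\overline{S_M}$ generates a unitary group on $P_M\Hc$, and we set $g_M(\tau):=e^{i\tau\overline{S_M}}P_Mg^\circ$. Writing $M_0$ for the top Fock level of $g^\circ\in\Cc$ and using that $S$ changes the level by $\pm1$, one checks inductively that $S_M^kP_Mg^\circ=S^kg^\circ$ whenever $M\ge M_0+k$, so that
\[\|\partial_\tau^kg_M(\tau)\|_\Hc=\big\|(i\overline{S_M})^ke^{i\tau\overline{S_M}}P_Mg^\circ\big\|_\Hc=\|S^kg^\circ\|_\Hc\qquad(M\ge M_0+k),\]
uniformly in $\tau\ge0$. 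A weak-$*$ compactness argument applied to all the derivatives $\partial_\tau^kg_M$ then extracts a subsequence converging to some $g\in C^\infty_b(\R^+;\Hc)$ with $\|\partial_\tau^kg(\tau)\|_\Hc\le\|S^kg^\circ\|_\Hc$ for all $k,\tau$ — which is exactly (i) — and with $g(0)=g^\circ$. Testing the identity $\langle h,\partial_\tau g_M(\tau)\rangle_\Hc=\langle h,iS_Mg_M(\tau)\rangle_\Hc=\langle -iSh,g_M(\tau)\rangle_\Hc$ against $h\in\Cc$ (legitimate for $M$ large, since then $P_Mh=h$ and $S_M$ is symmetric) and letting $M\uparrow\infty$ gives $\langle h,\partial_\tau g(\tau)\rangle_\Hc=\langle -iSh,g(\tau)\rangle_\Hc$ for all $h\in\Cc$; by density this means $g(\tau)\in D(S^*)$ and $\partial_\tau g=iS^*g$.

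\emph{Property (ii).} The upper bound is (i) with $k=0$. For the lower bound, iterating $\partial_\tau g=iS^*g$ gives $\partial_\tau^kg=(iS^*)^kg$ and $g(\tau)\in D((S^*)^k)$ for all $k$, and in particular $\partial_\tau^jg(0)=(iS^*)^jg^\circ=(iS)^jg^\circ$ because $g^\circ,Sg^\circ,\dots\in\Cc$ and $S^*$ extends $S$. Using the symmetry of $S$ on $\Cc$ to shift all factors of $S$ to one side, $\langle S^jg^\circ,S^{k-j}g^\circ\rangle_\Hc$ does not depend on $j$ (a real number, call it $c_k$), so the Taylor coefficients of $\tau\mapsto\|g(\tau)\|_\Hc^2$ at $\tau=0$ are
\[\tfrac{d^k}{d\tau^k}\Big|_{0}\|g(\tau)\|_\Hc^2=\sum_{j=0}^k\binom kj\big\langle(iS)^jg^\circ,(iS)^{k-j}g^\circ\big\rangle_\Hc=c_k\sum_{j=0}^k\binom kj\,i^{k-j}(-i)^j=0\quad(k\ge1).\]
Taylor's formula with integral remainder, combined with the bound $\big|\tfrac{d^k}{ds^k}\|g(s)\|_\Hc^2\big|\le\sum_j\binom kj\|\partial_s^jg(s)\|_\Hc\|\partial_s^{k-j}g(s)\|_\Hc\le\sum_j\binom kj\|S^jg^\circ\|_\Hc^2$ (from (i) and the arithmetic--geometric inequality), then yields for every $k\ge1$
\[0\ \ge\ \|g(\tau)\|_\Hc^2-\|g^\circ\|_\Hc^2\ =\ \tfrac1{(k-1)!}\int_0^\tau(\tau-s)^{k-1}\tfrac{d^k}{ds^k}\|g(s)\|_\Hc^2\,ds\ \ge\ -\tfrac{\tau^k}{k!}\sum_{j=0}^k\binom kj\|S^jg^\circ\|_\Hc^2,\]
which is (ii).

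\emph{Uniqueness.} Let $w\in C^\infty_b(\R^+;\Hc)$ solve $\partial_\tau w=iS^*w$ with $w(0)=0$; we must show $w\equiv0$. On $D(S^*)$ the operator $S^*$ acts by the same Fock-wise formula as $S$, so the components $w^m(\tau)$ are smooth and satisfy $\partial_\tau w^m=iS^{m,+}w^{m+1}+iS^{m,-}w^{m-1}$ strongly. Since $S_M=P_MSP_M$ is symmetric on smooth functions, differentiating the truncated energy of $w^{\le M}:=P_Mw$ makes all interior couplings telescope (via $(S^{m,+})^*=S^{m+1,-}$) and leaves only the level-$M$ boundary term: $\|w^{\le M}(\tau)\|_\Hc^2=2\int_0^\tau\Re\,b_M(s)\,ds$, where
\[b_M(s)=\big\langle w^M(s),iS^{M,+}w^{M+1}(s)\big\rangle_{\Hc^M}=\big\langle iS^{M+1,-}w^M(s),w^{M+1}(s)\big\rangle_{\Hc^{M+1}}.\]
Repeatedly substituting $S^{\ell,-}w^{\ell-1}=\tfrac1i(\partial_sw)^\ell-S^{\ell,+}w^{\ell+1}$ and using the adjunction re-expresses $b_M(s)=\sum_{\ell>M}(\pm1)\,\langle w^\ell(s),(\partial_sw)^\ell(s)\rangle_{\Hc^\ell}$, so that
\[\sup_{s\le\tau}|b_M(s)|\ \le\ \Big(\sup_{s\le\tau}\sum_{\ell>M}\|w^\ell(s)\|_{\Hc^\ell}^2\Big)^{1/2}\Big(\sup_{s\le\tau}\sum_{\ell>M}\|(\partial_sw)^\ell(s)\|_{\Hc^\ell}^2\Big)^{1/2}.\]
As the curves $s\mapsto w(s)$ and $s\mapsto\partial_sw(s)=iS^*w(s)$ are continuous, their images over $[0,\tau]$ are compact in $\Hc$, so both Fock tails above tend to $0$ uniformly in $s$ as $M\uparrow\infty$; letting $M\uparrow\infty$ in $\|w^{\le M}(\tau)\|_\Hc^2=2\int_0^\tau\Re\,b_M$ gives $\|w(\tau)\|_\Hc^2=0$. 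The main obstacle here is the control of $b_M$: the couplings $S^{m,\pm}$ are unbounded first-order operators, so one must verify that the ``unrolling'' above genuinely terminates in the compact-tail estimate; this is where the a priori smoothness $w(\tau)\in\bigcap_kD((S^*)^k)$ is used, to dominate the successive remainders $\langle w^{M+K}(s),iS^{M+K,+}w^{M+K+1}(s)\rangle_{\Hc^{M+K}}$ as $K\to\infty$.
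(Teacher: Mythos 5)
Your treatment of existence and property (i) follows essentially the same Galerkin truncation route as the paper's Step~1; your derivation of property~(ii) is a slightly different but correct route (expanding $\tau\mapsto\|g(\tau)\|_\Hc^2$ directly around $\tau=0$ and using the symmetry of $S$ on $\Cc$ to cancel the Taylor coefficients, instead of the paper's detour through the truncated energies $E_{M,N}$). The uniqueness argument, however, has a genuine gap.

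The problem is that your ``unrolling'' identity for the boundary flux $b_M$ is false and the omission is exactly the content of the statement you want to prove. Writing $w^{\le M}:=P_Mw$, you have
\[\Re b_M(s)\,=\,\tfrac12\partial_s\|w^{\le M}(s)\|_\Hc^2\,=\,\sum_{m=1}^M\Re\big\langle \partial_sw^m(s),w^m(s)\big\rangle_{\Hc^m},\]
which is a \emph{head} sum, not a tail sum. The recursion you are implicitly using, obtained from $(S^{m,+})^*=S^{m+1,-}$ and the equation at level $M+1$, reads
\[\Re b_M(s)\,=\,\Re b_{M+K}(s)-\sum_{j=1}^{K}\Re\big\langle\partial_sw^{M+j}(s),w^{M+j}(s)\big\rangle,\]
and the remainder $\Re b_{M+K}(s)$ does \emph{not} go to $0$; it converges, as $K\uparrow\infty$, to $\Re\langle\partial_sw(s),w(s)\rangle_\Hc=\tfrac12\partial_s\|w(s)\|_\Hc^2$, i.e.\ precisely the quantity whose vanishing is the goal. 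Since $S^*$ is not symmetric (it only extends the symmetric $S$), the identity $\partial_s\|w\|_\Hc^2=-2\Im\langle w,S^*w\rangle_\Hc$ need not vanish a priori, so no energy argument of this type can close; the claimed bound $\sup_s|b_M(s)|\lesssim(\sum_{\ell>M}\|w^\ell\|^2)^{1/2}(\sum_{\ell>M}\|\partial_sw^\ell\|^2)^{1/2}$ is simply wrong (it would force $\Re b_M\to0$, contradicting the head-sum identity above unless $\partial_s\|w\|^2\equiv0$).

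The paper's uniqueness step (Lemma~\ref{lem:unique-S}) sidesteps exactly this obstruction by a genuinely different idea: since $S\Cc\subset\Cc$, the squared operator $S^2$ is symmetric and non-negative on $\Cc$, hence admits the Friedrichs self-adjoint extension $\Lc_0$ regardless of whether $S$ itself has any self-adjoint extension. For a $C^2_b$ solution of $\partial_\tau g=iS^*g$, testing $\partial_\tau^2g$ against $h\in\Cc$ shows $g(\tau)\in D(\Lc_0)$ and $\partial_\tau^2g+\Lc_0g=0$; strong solutions of this wave equation with non-negative self-adjoint $\Lc_0$ are unique. This is what must replace your energy-truncation argument.
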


Finally, we establish the following RAGE theorem, {which shows that the effective dynamics describes the weak relaxation of the system to equilibrium for $\tau\gg1$ (i.e., $t\gg N^{1/2}$) in the absence of periodic solutions.}
The proof is postponed to Section~\ref{sec:RAGE}.

\begin{prop}[RAGE theorem]\label{th:RAGE}
Let $\{\lambda_k\}_k$ be the set of real eigenvalues of $S^*$ (if any). There exists a family of positive contractions $\{P_k\}_k$ on $\Hc$ such that for all~$k$ the image $\operatorname{ran}(P_k)$ is a subset of the eigenspace of $S^*$ associated with $\lambda_k$, such that the orthogonality condition $\operatorname{ran}(P_k)\bot\operatorname{ran}(P_l)$ holds for $|\lambda_k|\ne|\lambda_l|$, and such that the following RAGE theorem holds: for all $g^\circ\in\Cc$, the unique strong solution $g$ of the effective equation~\eqref{eq:limit-eqn-wellposed} as given by Proposition~\ref{prop:unique} can be decomposed as
\[g(\tau)\,=\,\sum_ke^{i\tau\lambda_k}P_kg^\circ+R(\tau),\]
where the series converges in the weak operator topology and where the remainder $R$ satisfies for all $h\in\Hc$,
\[\lim_{T\uparrow\infty}\frac1T\int_0^T|\langle h,R(\tau)\rangle_\Hc|^2d\tau\,=\,0.\]
{In particular, if the only real eigenvalue of $S^*$ is $0$, then this implies $g(\tau)\cvf P_0g^\circ$ weakly in $\Hc$ as $\tau\uparrow\infty$ in Ces\`aro mean.}
\end{prop}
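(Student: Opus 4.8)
The plan is to recognize the limiting dynamics as a strongly continuous contraction semigroup on $\Hc$ and then to combine the classical canonical decomposition of Hilbert-space contraction semigroups with Wiener's theorem.

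\emph{Step 1: the solution maps form a $C_0$-contraction semigroup.} By Proposition~\ref{prop:unique}, for $g^\circ\in\Cc$ the strong solution $g$ of~\eqref{eq:limit-eqn-wellposed} satisfies $\|g(\tau)\|_\Hc\le\|g^\circ\|_\Hc$ and $\|\partial_\tau g(\tau)\|_\Hc\le\|Sg^\circ\|_\Hc$ for all $\tau\ge0$; hence the solution map $U(\tau):g^\circ\mapsto g(\tau)$ is linear, contractive and strongly continuous on $\Cc$, and extends uniquely to a family $\{U(\tau)\}_{\tau\ge0}$ of contractions on $\Hc$ with $U(\tau)h\to h$ as $\tau\downarrow0$ for every $h\in\Hc$. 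The semigroup identity $U(\tau+s)=U(\tau)U(s)$ follows from uniqueness of weak solutions: the argument behind Proposition~\ref{prop:unique} only uses the symmetry of $S$ on $\Cc$ together with $S\Cc\subset\Cc$, and it applies to arbitrary initial data in $\Hc$, so that $\tau\mapsto U(\tau)U(s)g^\circ$ and $\tau\mapsto U(\tau+s)g^\circ$ — both weak solutions of~\eqref{eq:limit-eqn-wellposed} with the same datum $U(s)g^\circ$ — coincide. Writing $G$ for the generator of $\{U(\tau)\}$: since $U(\tau)g^\circ$ solves~\eqref{eq:limit-eqn-wellposed} for $g^\circ\in\Cc$, we get $\Cc\subset\operatorname{dom}(G)$ with $G|_\Cc=iS$; testing against elements of $\Cc$ and using the symmetry of $S$ gives $\Cc\subset\operatorname{dom}(G^*)$ with $G^*|_\Cc=-iS$, and passing to closures yields $iS\subset G\subset iS^*$.

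\emph{Step 2: canonical decomposition and eigenprojections.} By the classical theory of contraction semigroups on a Hilbert space (Sz.-Nagy--Foias), there is an orthogonal splitting $\Hc=\Hc_{\mathrm u}\oplus\Hc_{\mathrm c}$ reducing $\{U(\tau)\}$, such that $U(\tau)|_{\Hc_{\mathrm u}}=e^{i\tau A_{\mathrm u}}$ with $A_{\mathrm u}$ self-adjoint, while $\{U(\tau)|_{\Hc_{\mathrm c}}\}$ is completely non-unitary and therefore admits a minimal unitary dilation $\{W(\tau)\}$ with purely absolutely continuous spectrum. For $\lambda\in\R$ let $P_\lambda$ be the orthogonal projection of $\Hc$ onto $\ker(G-i\lambda)$. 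Using that a vector fixed up to a phase by a contraction is fixed with the conjugate phase by its adjoint, one checks that $\ker(G-i\lambda)\subset\Hc_{\mathrm u}$, that $\ker(G-i\lambda)=\ker(A_{\mathrm u}-\lambda)$, and hence that $\ker(G-i\lambda)\perp\ker(G-i\mu)$ whenever $\lambda\ne\mu$; from $G\subset iS^*$ we also get $\ker(G-i\lambda)\subset\ker(S^*-\lambda)$, so $P_\lambda\ne0$ only when $\lambda$ is a real eigenvalue of $S^*$. Enumerating the real eigenvalues of $S^*$ as $\{\lambda_k\}_k$ and setting $P_k:=P_{\lambda_k}$ gives positive contractions with mutually orthogonal ranges contained in $\ker(S^*-\lambda_k)$ (in particular $\operatorname{ran}(P_k)\perp\operatorname{ran}(P_l)$ when $|\lambda_k|\ne|\lambda_l|$) and $\sum_kP_k\le\Id$.

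\emph{Step 3: decomposition of $g$ and the RAGE estimate.} As $P_kg^\circ\in\ker(G-i\lambda_k)$ we have $U(\tau)P_kg^\circ=e^{i\tau\lambda_k}P_kg^\circ$, and by Bessel's inequality $\sum_k\|P_kg^\circ\|_\Hc^2\le\|g^\circ\|_\Hc^2$, so $\Pi g^\circ:=g^\circ-\sum_kP_kg^\circ$ is well defined; setting $R(\tau):=U(\tau)\Pi g^\circ$, linearity and continuity of $U(\tau)$ give $g(\tau)=\sum_ke^{i\tau\lambda_k}P_kg^\circ+R(\tau)$, both series converging in $\Hc$ uniformly in $\tau$. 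To prove $\frac1T\int_0^T|\langle h,R(\tau)\rangle_\Hc|^2\,d\tau\to0$, decompose $h=h_{\mathrm u}+h_{\mathrm c}$ and $\Pi g^\circ=v_{\mathrm u}+v_{\mathrm c}$ along $\Hc_{\mathrm u}\oplus\Hc_{\mathrm c}$; since the splitting reduces $\{U(\tau)\}$, $\langle h,R(\tau)\rangle=\langle h_{\mathrm u},e^{i\tau A_{\mathrm u}}v_{\mathrm u}\rangle+\langle h_{\mathrm c},W(\tau)v_{\mathrm c}\rangle$. The first term is the Fourier transform of the finite complex measure $\langle h_{\mathrm u},E_{A_{\mathrm u}}(\cdot)v_{\mathrm u}\rangle$, so Wiener's theorem gives a Ces\`aro limit equal to $\sum_\lambda|\langle h_{\mathrm u},E_{A_{\mathrm u}}(\{\lambda\})v_{\mathrm u}\rangle|^2=\sum_k|\langle h,P_kv_{\mathrm u}\rangle|^2$, and this vanishes because $P_k\Pi g^\circ=P_kg^\circ-\sum_lP_kP_lg^\circ=0$ by orthogonality of the ranges. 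The second term is the Fourier transform of an \emph{atomless} (absolutely continuous) measure $\langle h_{\mathrm c},E_W(\cdot)v_{\mathrm c}\rangle$, so Wiener's theorem again forces its quadratic Ces\`aro average to $0$; handling the cross term by Cauchy--Schwarz yields the claim. The statement of Theorem~\ref{th:main-unif} is then the first-component version of this decomposition, with $Q_k:=J^*P_kJ$ where $J:\Ld^2(\T^d)\to\Hc$ is the canonical (isometric) embedding onto the first summand.

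\emph{Main obstacle.} The genuinely delicate point is Step~1, namely that the solution maps really constitute a semigroup on all of $\Hc$: this is precisely where the possible lack of self-adjointness of $S$ — and the uncertainty about whether $S^*$ generates a semigroup — enters. It requires upgrading the uniqueness of Proposition~\ref{prop:unique} to arbitrary initial data in $\Hc$, so that the already-evolved, non-smooth datum $U(s)g^\circ$ admits a unique continuation; alternatively, one could try to inherit the semigroup structure from the unitary Liouville flows at finite $N$, which would demand strengthening the weak-$*$ convergence of Theorem~\ref{th:unif-wave-2} to something preserving composition. Once the semigroup property is secured, Steps~2--3 are a routine application of the canonical decomposition of Hilbert-space contraction semigroups together with Wiener's theorem.
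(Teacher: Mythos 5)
Your proposal rests entirely on Step~1, the claim that the solution maps $U(\tau)$ form a $C_0$-contraction semigroup on $\Hc$, and this is not merely ``delicate'' as you describe but is precisely what the paper states cannot be assumed: the map $U$ is constructed as a pointwise weak-operator limit of the unitary groups $U_N(\tau)=e^{i\tau S_N}$ along a subsequence, and the paper explicitly warns that ``this weak limit map $U$ might no longer be a semigroup nor take its values among unitary operators.'' Your proposed justification of the semigroup identity does not close the gap: Lemma~\ref{lem:unique-S} gives uniqueness only among solutions in $C^2_b(\R^+;\Hc)$, and Proposition~\ref{prop:unique} constructs solutions only for initial data in $\Cc$; for a datum of the form $U(s)g^\circ\in\Hc\setminus\Cc$ there is no construction of a solution at all, so the map $\tau\mapsto U(\tau)U(s)g^\circ$ is undefined as a candidate, and the comparison with $\tau\mapsto U(\tau+s)g^\circ$ cannot be carried out. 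Without the semigroup property, the Sz.-Nagy--Foias canonical decomposition of Step~2, the splitting $\Hc=\Hc_{\mathrm u}\oplus\Hc_{\mathrm c}$, and the application of Wiener's theorem in Step~3 all collapse; the argument as written is a conditional one, conditioned on exactly the structural property whose possible failure is one of the main subtleties the theorem is designed to navigate.

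The paper's actual proof sidesteps the issue entirely. Instead of trying to pass to the limit at the level of operator-group structure, it passes to the limit at the level of spectral measures: the spectral measures $E_N$ of the truncated self-adjoint operators $S_N$ converge weakly to a positive operator-valued measure $E$ (not projection-valued, hence not a spectral measure), with $U(\tau)=\int_\R e^{i\lambda\tau}\,dE(\lambda)$. Naimark's dilation theorem then produces a larger Hilbert space $\hat\Hc$, a coisometry $V:\hat\Hc\to\Hc$, and a genuine spectral measure $\hat E$ on $\hat\Hc$ with $dE=V\,d\hat E\,V^*$. The standard RAGE theorem is then applied to the self-adjoint dilated operator $\hat S$, and the eigenprojections $P_k:=V\hat P_kV^*$ are pushed down to $\Hc$. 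A separate Ces\`aro-average computation shows that $P_kg^\circ$ is in the eigenspace of $S^*$ with eigenvalue $\lambda_k$. Orthogonality of the ranges is obtained only for $|\lambda_k|\neq|\lambda_l|$, via the observation that each $P_kg^\circ$ is an eigenvector (with eigenvalue $\lambda_k^2$) of the Friedrichs self-adjoint extension $\Lc_0$ of $S^2$ on $\Cc$. Note that if your Step~1 were available one would actually obtain the stronger orthogonality $\operatorname{ran}(P_k)\perp\operatorname{ran}(P_l)$ for all $\lambda_k\neq\lambda_l$; the fact that the paper can only reach the weaker statement when $|\lambda_k|\neq|\lambda_l|$ is itself an indication that the semigroup structure is genuinely absent in this setting.
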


\subsection{Proof of Proposition~\ref{prop:unif-wave}}\label{sec:proof-unif-wave}
Taking time derivatives in the Liouville equation~\eqref{eq:Liouville}, and using that $\Ld^2$ norms are conserved, we find for all $k\ge0$,
\begin{equation}\label{eq:conserv-dt-k}
\|\partial_t^kf_N\|_{\Ld^2((\T^d)^N)}\,=\,\|\partial_t^kf_N|_{t=0}\|_{\Ld^2((\T^d)^N)}.
\end{equation}
From the Liouville equation~\eqref{eq:Liouville} and the choice~\eqref{eq:initial-unif} of initial data, we find
\begin{equation}\label{eq:dtkfN-0}
\partial_t^kf_N|_{t=0}\,=\,\bigg[-\tfrac1N\sum_{i,j=1}^NK(x_i-x_j)\cdot\nabla_i\bigg]^kf^{\circ}(x_1).
\end{equation}
By integration by parts, the norm of this quantity can be written as
\begin{multline*}
\|\partial_t^kf_N|_{t=0}\|_{\Ld^2((\T^d)^N)}^2
\,=\,(-1)^{k}N^{-2k}\sum_{i_1,j_1,\ldots,i_{2k},j_{2k}=1}^N\int_{(\T^d)^N} f^{\circ}(x_1)\,K_{\alpha_1}(x_{i_1}-x_{j_1})(\nabla_{i_1})_{\alpha_1}\\
\ldots K_{\alpha_{2k}}(x_{i_{2k}}-x_{j_{2k}})(\nabla_{i_{2k}})_{\alpha_{2k}}f^{\circ}(x_1)\,dx_1\ldots dx_N.
\end{multline*}
Taking advantage of cancellations when computing the derivatives, and recalling that \mbox{$\Div(K)=0$}, we find that the only non-vanishing contributions in this sum are those such that for all $1\le l\le 2k$ the index $i_l$ belongs both to $\{1,j_1,\ldots,j_{l-1}\}$ and to $\{j_{l+1},\ldots,j_{2k},1\}$.
Moreover, recalling that $\int_{\T^d}K=0$, we find that each value in \mbox{$\{i_1,j_2,\ldots,i_{2k},j_{2k}\}\setminus\{1\}$} must be taken by at least two different indices. From these two restrictions in the above summation, we can immediately conclude that
\begin{equation*}
\|\partial_t^kf_N|_{t=0}\|_{\Ld^2((\T^d)^N)}\,\lesssim_{k,f^\circ}\,N^{-\frac k2}.
\end{equation*}
Combining this with~\eqref{eq:conserv-dt-k}, we deduce for all $k\ge0$,
\begin{equation}\label{eq:estim-dt-k}
\|\partial_t^kf_N\|_{\Ld^2((\T^d)^N)}\,\lesssim_{k,f^\circ}\,N^{-\frac k2}.
\end{equation}
A second-order Taylor expansion then yields in particular,
\begin{multline*}
{\big\|\partial_\tau^2(f_N(N^{\frac12}\tau))-N(\partial_t^2f_N|_{t=0})-N^\frac32\tau(\partial_t^3f_N|_{t=0})\big\|_{\Ld^2((\T^d)^N)}}\\
\,=\,\Big\|N\int_0^{N^\frac12\tau}(N^\frac12\tau-s)\,(\partial_t^4f_N)(s)\,ds\Big\|_{\Ld^2((\T^d)^N)}
\,\lesssim_{f^\circ}\,\tau^2.
\end{multline*}
Averaging over variables $x_2,\ldots,x_N$, this entails
\begin{equation}\label{eq:Taylor-dt2fN}
\big\|\partial_\tau^2(f_N^1(N^{\frac12}\tau))-N(\partial_t^2f_N^1|_{t=0})-N^\frac32\tau(\partial_t^3f_N^1|_{t=0})\big\|_{\Ld^2((\T^d)^N)}\,\lesssim_{f^\circ}\,\tau^2.
\end{equation}
Starting from~\eqref{eq:dtkfN-0}, direct computations yield
\begin{eqnarray*}
\partial_t^2f_N^1|_{t=0}
&=&\int_{(\T^d)^{N-1}}\bigg[-\tfrac1N\sum_{i,j=1}^NK(x_i-x_j)\cdot\nabla_i\bigg]^2f^{\circ}(x_1)\,dx_2\ldots dx_N\\
&=&\tfrac1{N^2}\sum_{i,j,k=1}^N\int_{(\T^d)^{N-1}}K(x_i-x_j)\cdot\nabla_iK(x_1-x_k)\cdot\nabla_1f^{\circ}(x_1)\,dx_2\ldots dx_N\\
&=&\tfrac{N-1}{N^2}\Big(\int_{\T^d}K\otimes K\Big):\nabla^2f^{\circ},
\end{eqnarray*}
and
\[\partial_t^3f_N^1|_{t=0}\,=\,0,\]
which concludes the proof of Proposition~\ref{prop:unif-wave}.\qed

\begin{rem}
The above proof can be pursued to capture higher-order corrections in a similar perturbative way up to $O(\tau^\infty+\frac1N)$. For instance, the next-order Taylor expansion yields, instead of~\eqref{eq:Taylor-dt2fN},
\begin{multline*}
\Big\|\partial_\tau^2(f_N^1(N^{\frac12}\tau))-N(\partial_t^2f_N|_{t=0})-\tau N^\frac32 (\partial_t^3f_N^1|_{t=0})\\
-\tfrac12\tau^2N^2(\partial_t^4f_N|_{t=0})-\tfrac16\tau^3N^\frac52(\partial_t^5f_N|_{t=0})\Big\|_{\Ld^2(\T^d)}\,\lesssim\,\tau^4.
\end{multline*}
Further computing $\partial_t^4f_N|_{t=0}$, starting from~\eqref{eq:dtkfN-0}, and noting that $\partial_t^5f_N|_{t=0}=0$, we are led to the following next-order version of Proposition~\ref{prop:unif-wave},
\[\Big\|\partial_\tau^2(f_N^1(N^{\frac12}\tau))-\Div(A\nabla f^{\circ})-\tfrac32\tau^2(\Div A\nabla)^2f^{\circ}
-\tfrac12\tau^2\Div(B\nabla f^{\circ})\Big\|_{\Ld^2(\T^d)}\,\lesssim\,\tau^4+\tfrac1N,\]
where the matrix $B$ is explicitly given by
\begin{align*}
B_{\alpha\beta}&=\Big(\int_{\T^d}(\nabla_\gamma K_\delta) (\nabla_\delta K_\gamma) (K_\alpha\ast K_\beta)\Big)
-2\Big(\int_{\T^d} (\nabla_\delta K_\alpha)(\nabla_{\gamma}K_\beta)\Big)\Big(\int_{\T^d} K_\delta K_\gamma\Big)\\
&\qquad-2\Big(\int_{\T^d}(\nabla_\delta K_\alpha)(\nabla_\gamma K_\beta)(K_\delta\ast K_\gamma)\Big)
-2\Big(\int_{\T^d}(\nabla_\delta K_\alpha) (\nabla_\gamma K_\delta) (K_\gamma\ast K_\beta)\Big),
\end{align*}
where we implicitly sum over repeated indices $\delta,\gamma$.
\end{rem}

\subsection{Rigorous BBGKY analysis}
We start by reformulating the BBGKY hierarchy of Lemma~\ref{lem:eqn-gNm-21} in the present uniform setting, noting that it gets drastically simplified and that in particular linearized mean-field operators vanish.

\begingroup\allowdisplaybreaks
\begin{lem}[BBGKY hierarchy for cumulants]\label{lem:eqn-gNm}
For all $1\le m\le N$,
\begin{equation}\label{eq:eqn-gNm}
\partial_tg_N^m
\,=\,iS_N^{m,+}g_N^{m+1}+\frac1N\Big(iS_N^{m,\circ}g_N^m+iS_N^{m,-}g_N^{m-1}\Big),
\end{equation}
where we have set for notational  convenience $g^r_N=0$ for $r<1$ or $r>N$, and where we have defined the operators
\begin{eqnarray*}
iS_N^{m,+}g_N^{m+1}&:=&-\tfrac{N-m}N\sum_{i=1}^m\int_{\T^d} K(x_i-x_*)\cdot\nabla_ig_N^{m+1}(x_{[m]},x_*)\,dx_*,\\
iS_N^{m,\circ}g_N^m&:=&-\sum_{i,j=1}^mK(x_i-x_j)\cdot\nabla_ig_N^m\\
&&+\sum_{i=1}^m\sum_{2\le j\le m\atop i\ne j}\int_{\T^d}K(x_i-x_*)\cdot\nabla_ig_N^{m}(x_{[m]\setminus\{j\}},x_*)\,dx_*,\\
iS_N^{m,-}g_N^{m-1}&:=&-\sum_{i=1}^m\sum_{j=2}^mK(x_i-x_j)\cdot\nabla_ig_N^{m-1}(x_{[m]\setminus\{j\}}),
\end{eqnarray*}
where we recall the short-hand notation $[m]=\{1,\ldots,m\}$.
\end{lem}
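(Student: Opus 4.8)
The plan is to derive \eqref{eq:eqn-gNm} by specializing the general cumulant hierarchy of Lemma~\ref{lem:eqn-gNm-21} to the present uniform setting, in which the ambient space is $\T^d$, the external force vanishes ($F\equiv0$), and the role of the mean-field equilibrium $\mu_\beta$ is played by the constant normalized density on $\T^d$. Although Lemma~\ref{lem:eqn-gNm-21} is stated on $\R^2$ with nontrivial potentials, its proof only invokes partial integration in the Liouville equation together with the combinatorial identities \eqref{eq:combin-ide}--\eqref{eq:marg-cumul-combi}, all of which carry over verbatim once $\mu_\beta$ is read as the uniform density; the definitions \eqref{eq:correl-def-clust}--\eqref{eq:correl-def} of the cumulants then coincide with \eqref{eq:correl-def-clust-2}--\eqref{eq:correl-def-2} for $\mu_\beta\equiv1$. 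Equivalently, one may redo the computation from scratch: starting from \eqref{eq:Liouville}, partial integration yields the BBGKY hierarchy for the marginals $\{f_N^m\}_m$,
\[\partial_tf_N^m+\tfrac1N\sum_{i,j=1}^mK(x_i-x_j)\cdot\nabla_if_N^m+\tfrac{N-m}N\sum_{i=1}^m\int_{\T^d}K(x_i-x_*)\cdot\nabla_if_N^{m+1}(x_{[m]},x_*)\,dx_*=0,\]
into which one inserts the inversion formula \eqref{eq:correl-def} and reorganizes the resulting sums with the same combinatorial identities as in the proof of Lemma~\ref{lem:eqn-gNm-21}.

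Either way, the key observation is that in the uniform setting every structural object of Lemma~\ref{lem:eqn-gNm-21} tied to the equilibrium degenerates: one has $\nabla\log\mu_\beta=0$, hence $\nabla_{i;\beta}=\nabla_i$; one has $K\ast\mu_\beta=\int_{\T^d}K=0$ by the action--reaction symmetry $K(-x)=-K(x)$; and $F=0$. Substituting these identities into the explicit formulas of Lemma~\ref{lem:eqn-gNm-21} immediately gives $iL_{N,\beta}^m=0$ and $iS_{N,\beta}^{m,=}=0$ (every term of the latter carries a $\nabla\log\mu_\beta$ factor), while the surviving operators $iS_{N,\beta}^{m,+},iS_{N,\beta}^{m,\circ},iS_{N,\beta}^{m,-}$ reduce exactly to the stated $iS_N^{m,+},iS_N^{m,\circ},iS_N^{m,-}$; in the reduction of $iS_N^{m,-}$ one additionally uses $K(0)=0$ to drop the constraint $i\ne j$ in the relevant double sum, thereby matching the sum $\sum_{i=1}^m\sum_{j=2}^m$ written in the statement. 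This establishes \eqref{eq:eqn-gNm}.

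There is no genuine obstacle here: the only work is the bookkeeping of matching each term of the general hierarchy to its uniform counterpart and verifying the cancellations above, which is entirely routine. For completeness I would write out the partial-integration step producing the marginal hierarchy, recall one or two of the combinatorial reductions, and then list the vanishing identities and read off the claimed form of the operators.
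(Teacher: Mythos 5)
Your proposal is correct. The primary route you propose --- specializing the general cumulant hierarchy of Lemma~\ref{lem:eqn-gNm-21} by setting $\mu_\beta\equiv1$, $\nabla\log\mu_\beta=0$, $F=0$, $K\ast\mu_\beta=\int_{\T^d}K=0$, and $\nabla_{i;\beta}=\nabla_i$ --- differs from the paper, which simply rederives the hierarchy from scratch in the uniform setting (the ``alternatively'' path you also mention). Both arguments rest on the same partial integration and combinatorial identities, so the specialization is a legitimate shortcut that avoids repeating the bookkeeping; its cost is that one must check Lemma~\ref{lem:eqn-gNm-21}'s algebraic proof carries over unchanged when the ambient space is replaced by $\T^d$ and the equilibrium is constant, which is indeed the case since nothing in that derivation uses the specific Gibbs structure. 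The paper redoes the computation presumably for self-containedness and because several terms disappear so early that rederivation is arguably no longer than checking the reductions. Your remark that $K(0)=0$ (from $K(-x)=-K(x)$) removes the constraint $i\neq j$ in $iS_N^{m,-}$ is correct; note it is also harmless for a second reason, namely that $\nabla_i g_N^{m-1}(x_{[m]\setminus\{j\}})$ vanishes identically when $i=j\ge2$, since $g_N^{m-1}(x_{[m]\setminus\{j\}})$ does not depend on $x_j$.
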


\begin{proof}
Upon partial integration, the Liouville equation~\eqref{eq:Liouville} yields the following BBGKY hierarchy of equations for the marginals,
\begin{multline}\label{eq:BBGKY}
\partial_tf_N^m+\tfrac1N\sum_{i,j=1}^mK(x_i-x_j)\cdot\nabla_if_N^m\\
+\tfrac{N-m}N\sum_{i=1}^m\int_{\T^d} K(x_i-x_*)\cdot\nabla_if_N^{m+1}(x_1,\ldots,x_m,x_*)\,dx_*=0.
\end{multline}
By definition~\eqref{eq:correl-def} of correlation functions, we deduce
\begin{multline}\label{eq:eqn-gNm-0}
\partial_tg_N^m
\,=\,-\tfrac1N\sum_{j=2}^mK(x_1-x_j)\cdot(\nabla_1-\nabla_j)\sum_{n=1}^m(-1)^{m-n}\sum_{\sigma\in P_{n-1}^{m-1}}\mathds1_{j\in\sigma}f_N^n(x_1,x_{\sigma})\\
-\tfrac1N\sum_{i,j=2}^mK(x_i-x_j)\cdot\nabla_i\sum_{n=1}^m(-1)^{m-n}\sum_{\sigma\in P_{n-1}^{m-1}}\mathds1_{i,j\in\sigma}f_N^n(x_1,x_\sigma)\\
-\sum_{n=1}^m(-1)^{m-n}\tfrac{N-n}N\sum_{\sigma\in P_{n-1}^{m-1}}\int_{\T^d} K(x_1-x_*)\cdot\nabla_1f_N^{n+1}(x_1,x_\sigma,x_*)\,dx_*\\
-\sum_{i=2}^m\sum_{n=1}^m(-1)^{m-n}\tfrac{N-n}N\sum_{\sigma\in P_{n-1}^{m-1}}\mathds1_{i\in\sigma}\int_{\T^d} K(x_i-x_*)\cdot\nabla_if_N^{n+1}(x_1,x_\sigma,x_*)\,dx_*.
\end{multline}
Replacing marginals in terms of cumulants, cf.~\eqref{eq:correl-def-clust}, and arguing as in~\eqref{eq:marg-cumul-combi}, we get for the first right-hand side term, for all $j\in[m]\setminus\{1\}$,
\begin{equation*}
\sum_{n=1}^m(-1)^{m-n}\sum_{\sigma\in P_{n-1}^{m-1}}\mathds1_{j\in\sigma}f_N^n(x_1,x_{\sigma})
\,=\,g_N^m(x_{[m]})+g_N^{m-1}(x_{[m]\setminus\{j\}}).
\end{equation*}
Similarly, for the second right-hand side term in~\eqref{eq:eqn-gNm-0}, we find for all $i,j\in[m]\setminus\{1\}$,
\begin{multline*}
\sum_{n=1}^m(-1)^{m-n}\sum_{\sigma\in P_{n-1}^{m-1}}\mathds1_{i,j\in\sigma}f_N^n(x_1,x_\sigma)\\
\,=\,g_N^m(x_{[m]})
+g_N^{m-1}(x_{[m]\setminus\{i\}})+g_N^{m-1}(x_{[m]\setminus\{j\}})
+g_N^{m-2}(x_{[m]\setminus\{i,j\}}).
\end{multline*}
For the third right-hand side term in~\eqref{eq:eqn-gNm-0}, arguing as in~\eqref{eq:integr-marg-cum-ag}, replacing again marginals by cumulants and using that $\int_{\T^d} K=0$, we find that
\begin{eqnarray}
\lefteqn{\sum_{n=1}^m(-1)^{m-n}\tfrac{N-n}N\sum_{\sigma\in P_{n-1}^{m-1}}\int_{\T^d}K(x_1-x_*)\cdot\nabla_1f_N^{n+1}(x_1,x_\sigma,x_*)\,dx_*}\nonumber\\
&=&\sum_{n=1}^m(-1)^{m-n}\tfrac{N-n}N\sum_{\sigma\in P_{n-1}^{m-1}}\sum_{r=1}^n\sum_{\tau\in P_{r-1}^{\sigma}}\int_{\T^d}K(x_1-x_*)\cdot\nabla_1g_N^{r+1}(x_1,x_\tau,x_*)\,dx_*\nonumber\\
&=&\tfrac{N-m}N\int_{\T^d}K(x_1-x_*)\cdot\nabla_1g_N^{m+1}(x_{[m]},x_*)\,dx_*\nonumber\\
&&\hspace{3cm}-\tfrac{1}N\sum_{j=2}^{m}\int_{\T^d}K(x_1-x_*)\cdot\nabla_1g_N^{m}(x_{[m]\setminus\{j\}},x_*)\,dx_*.
\end{eqnarray}
For the last right-hand side term in~\eqref{eq:eqn-gNm-0}, we similarly get
\begin{multline*}
\sum_{i=2}^m\sum_{n=1}^m(-1)^{m-n}\tfrac{N-n}N\sum_{\sigma\in P_{n-1}^{m-1}}\mathds1_{i\in\sigma}\int_{\T^d} K(x_i-x_*)\cdot\nabla_if_N^{n+1}(x_1,x_\sigma,x_*)\,dx_*\\
\,=\,\tfrac{N-m}N\sum_{i=2}^m\int_{\T^d} K(x_i-x_*)\cdot\nabla_ig_N^{m+1}(x_{[m]},x_*)\,dx_*\\
-\tfrac{1}N\sum_{2\le i,j\le m}^{\ne}\int_{\T^d} K(x_i-x_*)\cdot\nabla_ig_N^{m}(x_{[m]\setminus\{j\}},x_*)\,dx_*.
\end{multline*}
Combining the above identities, the conclusion follows.
\end{proof}
\endgroup

Next, we prove uniform-in-time propagation-of-chaos estimates for the particle system in form of a priori estimates on correlation functions.
This is deduced from a straightforward symmetry argument inspired by the work of Bodineau, Gallagher, and Saint-Raymond~\cite{BGSR-16}. In contrast with the more delicate case of Lemma~\ref{lem:est-cum-2}, no large deviation theory is needed here.
Note that in the present situation the $N$-scaling $g_N^{m+1}=O(N^{-m/2})$ is actually optimal on long timescales, cf.~Theorem~\ref{th:unif-wave-2}, whereas the scaling $g_N^{m+1}=O(N^{-m})$ that can be heuristically guessed from the BBGKY hierarchy~\eqref{eq:eqn-gNm} is only valid on short times~\mbox{$t=O(1)$}. For later purposes, we also include estimates on time derivatives of correlation functions.

\begin{lem}[A priori cumulant estimates]\label{lem:est-cum}
For all $0\le m<N$ and $k\ge0$, we have
\[\|\partial_t^kg_N^{m+1}\|_{\Ld^2((\T^d)^{m+1})}\,\lesssim_{m,k,f^\circ}\,N^{-\frac{m+k}2}.\]
\end{lem}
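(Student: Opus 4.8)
The plan is to mimic the symmetry argument of Lemma~\ref{lem:est-cum-2} (itself modeled on~\cite{BGSR-16}), but now exploiting the fact that in the uniform setting the Lebesgue measure on $(\T^d)^N$ is \emph{exactly} invariant for the Liouville equation, so no large-deviation correction is needed and the $\Ld^2$ norm of $f_N$ (and of each $\partial_t^kf_N$) is exactly conserved. First I would differentiate the Liouville equation~\eqref{eq:Liouville} $k$ times in $t$: since $\partial_t^kf_N$ again solves a transport equation with the same divergence-free velocity field, its $\Ld^2((\T^d)^N)$ norm is conserved, hence
\[\|\partial_t^kf_N(t)\|_{\Ld^2((\T^d)^N)}=\|\partial_t^kf_N|_{t=0}\|_{\Ld^2((\T^d)^N)}\]
for all $t\ge0$. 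The initial value $\partial_t^kf_N|_{t=0}$ is given explicitly by~\eqref{eq:dtkfN-0}, and the combinatorial counting argument already carried out in the proof of Proposition~\ref{prop:unif-wave} shows that $\|\partial_t^kf_N|_{t=0}\|_{\Ld^2((\T^d)^N)}\lesssim_{k,f^\circ}N^{-k/2}$; see~\eqref{eq:estim-dt-k}. So the first step gives, uniformly in time,
\[\|\partial_t^kf_N(t)\|_{\Ld^2((\T^d)^N)}\lesssim_{k,f^\circ}N^{-k/2}.\]

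Next I would pass from $f_N$ to its marginals and then to the correlation functions via the symmetry/orthogonality identity. Because the $g_N^m$ satisfy $\int_{\T^d}g_N^m(x_{[m]})\,dx_j=0$ for $2\le j\le m$, inserting the cluster expansion~\eqref{eq:correl-def-clust} into $\int_{(\T^d)^N}|f_N|^2$ and expanding yields the orthogonal (Pythagorean) decomposition
\[\int_{(\T^d)^N}|f_N|^2=\sum_{m=1}^N\binom{N-1}{m-1}\int_{(\T^d)^m}|g_N^m|^2,\]
exactly as in~\eqref{eq:apriori-sym-est} but without the weight $\mu_\beta$. The same identity holds verbatim with $f_N$ replaced by $\partial_t^kf_N$ and $g_N^m$ by $\partial_t^kg_N^m$, since the cluster expansion and the mean-zero conditions are linear and are preserved under $\partial_t^k$. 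Dropping all but the $(m+1)$-th term and using $\binom{N-1}{m}\gtrsim_m N^m$, this gives for all $0\le m<N$ and $k\ge0$,
\[\int_{(\T^d)^{m+1}}|\partial_t^kg_N^{m+1}|^2\lesssim_m N^{-m}\int_{(\T^d)^N}|\partial_t^kf_N|^2\lesssim_{m,k,f^\circ}N^{-m-k},\]
which is the claimed bound after taking square roots.

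The only genuinely nontrivial ingredient is the combinatorial estimate on $\partial_t^kf_N|_{t=0}$, but that is precisely the content of the argument already spelled out in the proof of Proposition~\ref{prop:unif-wave} (the two structural restrictions on the index sums coming from $\Div(K)=0$ and $\int_{\T^d}K=0$), so I would simply invoke~\eqref{eq:estim-dt-k}. The main subtlety to be careful about is bookkeeping: one must check that differentiating in $t$ commutes cleanly with taking marginals and forming correlation functions — this is immediate since~\eqref{eq:def-fNm-re} and~\eqref{eq:correl-def} are $t$-independent linear operations — and that the conservation of the $\Ld^2$ norm survives differentiation, which holds because $\partial_t^kf_N$ solves the same linear transport equation with the same (time-independent, divergence-free) drift. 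Thus the proof reduces to (i) conservation of $\|\partial_t^kf_N\|_{\Ld^2}$, (ii) the combinatorial bound~\eqref{eq:estim-dt-k} on the initial data, and (iii) the Pythagorean identity linking the $\Ld^2$ norm of $\partial_t^kf_N$ to those of the $\partial_t^kg_N^{m}$; no further estimates are needed.
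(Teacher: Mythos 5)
Your proof is correct and follows essentially the same route as the paper's: both combine exact $\Ld^2$-conservation for $\partial_t^k f_N$ under the Liouville flow, the combinatorial bound~\eqref{eq:estim-dt-k} on the initial data established in the proof of Proposition~\ref{prop:unif-wave}, and the Pythagorean identity~\eqref{eq:norm-cluster} applied (by linearity) to $\partial_t^k f_N$ and $\partial_t^k g_N^m$. There is no difference in substance.
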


\begin{proof}
Recall that correlation functions satisfy $\int_{\T^d}g_N^m(x_{[m]})\,dx_j=0$ for \mbox{$2\le j\le m$.} 
Computing the $\Ld^2$ norm of the $N$-point density $f_N$ and inserting the cluster expansion~\eqref{eq:correl-def-clust} in terms of correlation functions, we then get
\begin{equation}\label{eq:norm-cluster}
\int_{(\T^d)^N}|f_N|^2\,=\,\sum_{m=1}^N\binom{N-1}{m-1}\int_{(\T^d)^{m}}|g_N^{m}|^2,
\end{equation}
hence in particular, for all $0\le m< N$,
\[\|g_N^{m+1}\|_{\Ld^2((\T^d)^{m+1})}\,\lesssim_m\,N^{-\frac m2}\|f_N\|_{\Ld^2((\T^d)^N)}.\]
By linearity, we similarly find for all $k\ge0$,
\begin{equation}\label{eq:cluster-L2fN}
\int_{(\T^d)^N}|\partial_t^kf_N|^2\,=\,\sum_{m=1}^N\binom{N-1}{m-1}\int_{(\T^d)^{m}}|\partial_t^kg_N^{m}|^2,
\end{equation}
hence
\[\|\partial_t^kg_N^{m+1}\|_{\Ld^2((\T^d)^{m+1})}\,\lesssim_m\,N^{-\frac m2}\|\partial_t^kf_N\|_{\Ld^2((\T^d)^N)}.\]
Combining this with the energy estimate~\eqref{eq:estim-dt-k} for the Liouville equation, the conclusion follows.
\end{proof}

\subsection{Proof of Theorem~\ref{th:unif-wave-2}}
For all $m\ge1$, we deduce from~Lemma~\ref{lem:est-cum} that the rescaled correlation function $\bar g_N^m$ defined in~\eqref{eq:rescaled-correl-1} is bounded in $W^{k,\infty}(\R^+;\Ld^2((\T^d)^m))$, for any $k\ge0$, as~$N\uparrow\infty$.
By weak-* compactness, up to a subsequence, we deduce that there exists a limiting family $\bar g=\{\bar g^m\}_{m\ge1}$ such that for all $m,k$ we have
\[\bar g^m_N\cvf\bar g^m,\qquad\text{weakly-* in $W^{k,\infty}(\R^+;\Ld^2((\T^d)^m))$}.\]
As the choice~\eqref{eq:initial-unif} of initial data yields $\bar g_N^1|_{\tau=0}=f^{\circ}$ and $\bar g_N^m|_{\tau=0}=0$ for $m>1$, the convergence implies in particular
\begin{equation}\label{eq:initial-barg}
\bar g^1|_{\tau=0}=f^{\circ}\qquad\text{and}\qquad\bar g^m|_{\tau=0}=0~~\text{for $m>1$.}
\end{equation}
Next, we note that, after rescaling, the BBGKY hierarchy for cumulants in Lemma~\ref{lem:eqn-gNm} takes the form
\[\partial_\tau\bar g_N^m=iS_N^{m,+}\bar g_N^{m+1}+iS_N^{m,-}\bar g_N^{m-1}+N^{-\frac12}iS_N^{m,\circ}\bar g_N^m,\qquad1\le m\le N.\]
Passing to the limit in the weak formulation of those equations, we deduce that the limit~$\bar g$ satisfies the following hierarchy of equations in the weak sense,
\begin{equation}\label{eq:eqn-barg}
\partial_\tau\bar g^m=iS^{m,+}\bar g^{m+1}+iS^{m,-}\bar g^{m-1},\qquad m\ge1,
\end{equation}
where the limiting operators $iS^{m,+},iS^{m,-}$ are defined in the statement.
Moreover, reformulating~\eqref{eq:cluster-L2fN} in terms of rescaled correlations, truncating the sum, and combining with the energy estimate~\eqref{eq:estim-dt-k}, we find for any fixed~$M,k$,
\[\sum_{m=1}^{M\wedge N}N^{1-m}\binom{N-1}{m-1}\|\partial_\tau^k\bar g_N^m\|_{\Ld^2((\T^d)^m)}^2\,\le \,N^{k}\|\partial_t^kf_N\|_{\Ld^2((\T^d)^N)}^2\,\lesssim_{k,f^\circ}\,1.\]
Passing to the limit $N\uparrow\infty$, with $N^{1-m}\binom{N-1}{m-1}\to\tfrac1{(m-1)!}$, we deduce by weak lower semicontinuity,
\[\sum_{m=1}^{M}\tfrac1{(m-1)!}\|\partial_\tau^k\bar g^m\|_{\Ld^2((\T^d)^m)}^2\,\lesssim_{k,f^\circ}\,1,\]
and thus, letting $M\uparrow\infty$, we infer for all $k\ge1$,
\begin{equation}\label{eq:bnd-barg}
\sum_{m=1}^{\infty}\tfrac1{(m-1)!}\|\partial_\tau^k\bar g^m\|_{\Ld^2((\T^d)^m)}^2\,\lesssim_{k,f^\circ}\,1.
\end{equation}
Finally, in order to get rid of the extraction of a subsequence in the above argument, it remains to check that there is at most one $\bar g=\{\bar g^m\}_m$ that satisfies the limit hierarchy~\eqref{eq:eqn-barg} in the weak sense and such that the a priori estimates~\eqref{eq:bnd-barg} hold.
As explained, this is equivalent to proving that there is at most one strong solution $\bar g\in C^\infty_b(\R^+;\Hc)$ of the following equation,
\[\partial_\tau\bar g=iS^*\bar g,\qquad\bar g|_{\tau=0}=\bar g^\circ,\]
where the Hilbert space $\Hc$ and the densely-defined symmetric operator $S$ are defined in~\eqref{eq:HilbertHc}--\eqref{eq:def-S-oplim}, and where the initial condition $\bar g^\circ$ is given by~\eqref{eq:def-barg0}.
Noting that~$\bar g^\circ$ belongs to the core $\Cc$ of $S$ and that the latter satisfies $S\Cc\subset\Cc$, the desired uniqueness is a consequence of Lemma~\ref{lem:unique-S} below.
This ends the proof of Theorem~\ref{th:unif-wave-2}.
\qed

\subsection{Proof of Proposition~\ref{prop:unique}}\label{sec:unique-pr}
We start by proving the uniqueness of a solution of the limit hierarchy in $C^2_b(\R^+;\Hc)$. We state it separately in form of the following general abstract result.

\begin{lem}\label{lem:unique-S}
Let $\Hc$ be a Hilbert space, let $S$ be a densely-defined symmetric operator, defined on a dense subset $\Cc\subset\Hc$, and assume that $S\Cc\subset\Cc$. Then, for all $g^\circ\in\Hc$, there is at most one solution $g\in C^2_b(\R^+;\Hc)$ of the following equation,
\begin{equation}\label{eq:g-S*-gento}
\partial_\tau g=iS^*g,\qquad g|_{\tau=0}=g^\circ.
\end{equation}
\end{lem}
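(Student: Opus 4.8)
The plan is to argue by duality, in the spirit of Holmgren's uniqueness theorem. By linearity it suffices to treat $g^\circ=0$ and show that any $g\in C^2_b(\R^+;\Hc)$ with $\partial_\tau g=iS^*g$, $g|_{\tau=0}=0$, vanishes identically. Fix $T>0$; since $\Cc$ is dense in $\Hc$, it is enough to prove $\langle\phi,g(T)\rangle_\Hc=0$ for $\phi$ ranging over a suitable dense subclass of $\Cc$. For such a $\phi$, we would like to pair $g$ on $[0,T]$ with a solution $h$ of the backward dual problem $\partial_\tau h=iSh$, $h(T)=\phi$, valued in $\Cc$: using $h(\tau)\in\Cc\subset D(S)$, the symmetry of $S$ on $\Cc$, and the equation $\partial_\tau g=iS^*g$, one computes $\frac{d}{d\tau}\langle h(\tau),g(\tau)\rangle_\Hc=\langle\partial_\tau h-iSh,g\rangle_\Hc$, so that an exact dual solution would give $\langle\phi,g(T)\rangle_\Hc=\langle h(0),g(0)\rangle_\Hc=0$.

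The difficulty is that $S$ is only symmetric, not (essentially) self-adjoint, so it need not generate a group and the dual problem may admit no exact solution in $\Cc$. The remedy is to use truncated Taylor polynomials of the formal dual flow $e^{i(\tau-T)S}\phi$: set
\[h_n(\tau):=\sum_{k=0}^n\frac{(i(\tau-T))^k}{k!}\,S^k\phi,\]
which is well-defined in $\Cc$ precisely because $S\Cc\subset\Cc$, satisfies $h_n(T)=\phi$, and obeys the exact identity $\partial_\tau h_n-iSh_n=-\tfrac{(i(\tau-T))^n}{n!}iS^{n+1}\phi$. Running the pairing computation with $h_n$ in place of $h$ and integrating over $[0,T]$ (using $g(0)=0$) yields the closed identity
\[\langle\phi,g(T)\rangle_\Hc=\pm\frac1{n!}\int_0^T(\tau-T)^n\,\big\langle S^{n+1}\phi,g(\tau)\big\rangle_\Hc\,d\tau,\qquad\text{for every }n\ge0.\]

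It then remains to let $n\to\infty$ and show the right-hand side vanishes. Here the $C^2_b$-regularity is used: from the equation one gets $\partial_\tau^2 g=-(S^*)^2g$, so $(S^*)^2g(\tau)$ is bounded in $\tau$, and transferring two powers of $S$ across the pairing by symmetry (legitimate since $S^k\phi\in\Cc\subset D(S)$ while $g(\tau),S^*g(\tau)\in D(S^*)$) gives $\langle S^{n+1}\phi,g(\tau)\rangle_\Hc=-\langle S^{n-1}\phi,\partial_\tau^2 g(\tau)\rangle_\Hc$, whence
\[|\langle\phi,g(T)\rangle_\Hc|\le\frac{T^{n+1}}{n!}\,\|S^{n-1}\phi\|_\Hc\,\sup_{\tau\ge0}\|\partial_\tau^2 g(\tau)\|_\Hc.\]
The main obstacle is to make this remainder tend to $0$, i.e. to control the growth of $\|S^n\phi\|_\Hc$: one cannot hope for $\|S^n\phi\|_\Hc=O(C^n n!)$ for all $\phi\in\Cc$, since a dense set of analytic vectors would force essential self-adjointness of $S$ (Nelson's theorem), which is exactly what is expected to fail. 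Instead one restricts $\phi$ to a dense subclass for which the iterates grow slowly enough, $\|S^n\phi\|_\Hc=o\big(n!/T^n\big)$ for every $T$; in the situation at hand this is granted by the hypothesis $S\Cc\subset\Cc$ together with the explicit structure of $S$ on $\Cc=\bigoplus_mC^\infty_b((\T^d)^m)$ — namely $S=S^++S^-$ shifts the Fock degree by $\pm1$ and its only unbounded ingredient is a single derivative $\nabla$, so that for $\phi$ band-limited in space each application of $S$ enlarges the degree by one and the norm by a factor $O(\sqrt m)$, giving $\|S^n\phi\|_\Hc\lesssim C_\phi^n\sqrt{n!}$, for which $T^{n+1}\sqrt{n!}/n!\to0$. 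Testing against such $\phi$ forces $\langle\phi,g(T)\rangle_\Hc=0$, hence $g(T)=0$, and since $T>0$ is arbitrary, $g\equiv0$. The genuinely delicate point of the argument is this quantitative control of $\|S^n\phi\|_\Hc$; everything else is the soft duality bookkeeping above.
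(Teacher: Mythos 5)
Your duality argument is a genuinely different route from the paper's, which instead observes that $S^2$ is symmetric and non-negative on $\Cc$, takes its Friedrichs self-adjoint extension $\Lc_0$, shows that any $C^2_b$ solution of $\partial_\tau g=iS^*g$ actually solves the abstract wave equation $\partial_\tau^2 g+\Lc_0 g=0$, and then invokes the standard uniqueness for that wave equation. The soft bookkeeping in your proposal (the truncated dual solutions $h_n$, the identity for $\langle\phi,g(T)\rangle$, the transfer of two powers of $S$ to $\partial_\tau^2 g$) is all correct.

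The argument nonetheless has a fatal gap at exactly the point you flag as ``the genuinely delicate point.'' You claim $\|S^n\phi\|_\Hc\lesssim C_\phi^n\sqrt{n!}$ for a \emph{dense} set of band-limited $\phi$, and you argue this is safe because it is weaker than what Nelson's theorem forbids. It is in fact strictly \emph{stronger}: $\sqrt{n!}$ growth means $\sum_n\tfrac{t^n}{n!}\|S^n\phi\|_\Hc<\infty$ for every $t>0$, i.e.\ every such $\phi$ is an analytic (indeed entire) vector for $S$. By Nelson's analytic vector theorem, a dense set of analytic vectors for a symmetric operator implies essential self-adjointness. So if your bound held on a dense subclass, you would have proved $S$ essentially self-adjoint --- which is precisely what the paper explains is expected to \emph{fail} for this $S$ (a cubic expression in creation/annihilation operators, with the cited non-self-adjointness results of Rabsztyn and Otte). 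Your proposal is thus internally inconsistent: the hypothesis you need would already settle the hard question, in the opposite direction from what is expected. Concretely, the cubic Fock structure gives $\|S|_{\Hc^m}\|\sim m^{3/2}$ (a creation/annihilation factor $\sqrt m$ times the double particle sum in $S^{m,-}$), and multiplication by $K(x_i-x_j)$ does not preserve band-limitedness, so starting from degree $m_0$ one should expect $\|S^n\phi\|_\Hc$ to grow at least like $(n!)^{3/2}$, making $T^{n+1}\|S^{n-1}\phi\|/n!$ diverge. Finally, note that Lemma~\ref{lem:unique-S} is stated abstractly, with no structural hypothesis beyond $S\Cc\subset\Cc$ and symmetry; a proof that relies on a quantitative bound on $\|S^n\phi\|$ specific to the concrete $S$ cannot establish the lemma as stated. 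The paper's route via $S^2$ avoids all of this precisely because even-order powers of a cubic are non-negative and admit a Friedrichs extension, with no analytic-vector input needed.
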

\begin{proof}
As $S\Cc\subset\Cc$, we note that the squared operator $S^2$ is well-defined, symmetric, and positive on $\Cc$. By Friedrichs' theorem, it admits a canonical self-adjoint extension $\Lc_0$. Given a solution $g\in C^2(\R^+;\Hc)$ of equation~\eqref{eq:g-S*-gento}, we may then compute for all $h\in\Cc$,
\[\langle h,\partial_\tau^2g(\tau)\rangle_\Hc=-\langle S^2h,g(\tau)\rangle_\Hc=-\langle \Lc_0 h,g(\tau)\rangle_\Hc.\]
As $g$ belongs to $C^2(\R^+;\Hc)$, the left-hand side is bounded by $\|\partial_\tau^2g(\tau)\|_\Hc\|h\|_\Hc$, hence so is the right-hand side, which entails that $g(\tau)$ belongs to the domain of $\Lc_0$ for all $\tau$. This implies that $g$ is a strong solution the following equation,
\[\partial_\tau^2g+\Lc_0 g=0,\qquad g|_{\tau=0}=g^\circ,\qquad \partial_\tau g|_{\tau=0}=iS^*g^\circ.\]
By self-adjointness and non-negativity of $\Lc_0$, the strong solution of this equation is unique.
(Note that equation~\eqref{eq:g-S*-gento} and the condition $g\in C^2(\R^+;\Hc)$ ensure $g^\circ\in\Dc(S^*)$, so $iS^*g^\circ$ is indeed well-defined.)
\end{proof}

\begin{rem}
The proof of Lemma~\ref{lem:unique-S} is easily adapted to establish the following result: given a Hilbert space $\Hc$ and a densely-defined symmetric operator $S$, defined on a dense subset $\Cc\subset\Hc$, if~$S$ admits a self-adjoint extension $S_0$, then for all $g^\circ\in\Dc(S_0)$ there is a unique strong solution $g\in C^1_b(\R^+;\Hc)$ of the equation
\begin{equation*}
\partial_\tau g=iS^*g,\qquad g|_{\tau=0}=g^\circ,
\end{equation*}
and it coincides with the unitary group $g(\tau)=e^{i\tau S_0}g^\circ$.
It is however not clear to us whether this result applies to our situation as we do not know how to prove the existence of a self-adjoint extension for the operator $S$ defined in~\eqref{eq:def-S-oplim}.
\end{rem}

With the above general uniqueness result at hand, we can now conclude the proof of Proposition~\ref{prop:unique}, establishing the well-posedness, contraction, and approximate isometricity of solutions of the limit hierarchy.

\begin{proof}[Proof of Proposition~\ref{prop:unique}]
Let the Hilbert space $\Hc$, the densely-defined symmetric operator~$S$, and its core~$\Cc\subset\Hc$ be defined as in~\eqref{eq:HilbertHc}--\eqref{eq:core-def}. We split the proof into two steps.

\medskip
\step1 Proof that there exists a unique contraction-valued strongly-continuous map \mbox{$U:\R^+\to L(\Hc)$}, which might not be a semigroup, such that for all $g^\circ\in\Cc$ the evolution $g(\tau):=U(\tau)g^\circ$ is the unique strong solution in $C^\infty_b(\R^+;\Hc)$ of the equation
\begin{equation}\label{eq:limS*eqnpr}
\partial_\tau g=iS^*g,\qquad g|_{\tau=0}=g^\circ.
\end{equation}
Moreover, we shall show that it satisfies the stability property
\begin{equation}\label{eq:g-dtCinfty-apriori}
\|\partial_\tau^kU(\tau)g^\circ\|_\Hc\,\le\,\|S^kg^\circ\|_\Hc,\qquad\text{for all $k\ge0$ and $g^\circ\in\Cc$}.
\end{equation}
To prove this result, we start by defining self-adjoint truncations of $S$:
for all~$N\ge1$, we consider the closed subspace $\Hc_{\le N}:=\bigoplus_{1\le m\le N}\Hc^m\subset\Hc$, we let $\pi_{\le N}:\Hc\to\Hc_{\le N}$ be the associated orthogonal projection, and we define the truncated operator $S_N:=\pi_{\le N}S\pi_{\le N}$ on $\Cc$.
Note that this truncation could be replaced by the original hierarchy~\eqref{eq:eqn-gNm} for fixed~$N$: it does not change the argument, but the present truncation is simpler to handle.
By definition of $S$, using the symmetry relations~\eqref{eq:sym-S+S-}, we easily check that this truncated operator~$S_N$ is essentially self-adjoint on~$\Cc$ (self-adjointness poses no difficulty here thanks to truncations).
We may then consider the unitary semigroup $U_N:\R^+\to L(\Hc)$ given by
\[U_N(\tau)\,:=\,e^{i\tau S_N}.\]
Up to extraction of a subsequence, as $N\uparrow\infty$,
the semigroup $U_N$ converges pointwise in the weak operator topology to some strongly-continuous map $U:\R^+\to L(\Hc)$ with $U(0)=\Id$ and $\|U(\tau)\|\le1$ for all $\tau\ge0$. Note that this weak limit map $U$ might no longer be a semigroup nor take its values among unitary operators.

Given $g^\circ\in\Cc$, let us examine the properties of the limit evolution $\tau\mapsto U(\tau)g^\circ$.
For all~$N\ge1$, as $i S_N$ is the generator of $U_N$ and as $S_N\Cc\subset\Cc$, the flow $\tau\mapsto U_N(\tau)g^\circ$ belongs to $C^\infty_b(\R^+;\Hc)$ and satisfies
\begin{equation}\label{eq:stability-SNUN}
\|\partial_\tau^kU_N(\tau)g^\circ\|_\Hc\,=\,
\|S_N^kg^\circ\|_\Hc,\qquad\text{for all $k\ge0$,}
\end{equation}
and
\[\langle h,\partial_\tau U_N(\tau)g^\circ\rangle\,=\,\langle S_Nh,iU_N(\tau)g^\circ\rangle,\qquad\text{for all $h\in\Cc$}.\]
Note that the tridiagonal structure of $S$ yields $S_N^kh=S^kh$ for all $h\in\pi_{\le N-k}\Cc$ and $k\ge1$, and therefore $S_N^kh\to S^kh$ strongly as $N\uparrow\infty$ for all $h\in\Cc$ and $k\ge1$.
We may then pass to the limit along the extracted subsequence in the above properties of $U_N(\tau)g^\circ$: we deduce that the limit evolution $\tau\mapsto U(\tau)g^\circ$ also belongs to $C^\infty_b(\R^+;\Hc)$ and satisfies
\begin{equation}\label{eq:stability-SU}
\|\partial_\tau^kU(\tau)g^\circ\|_\Hc\,\le\,\|S^kg^\circ\|_\Hc,\qquad\text{for all $k\ge0$},
\end{equation}
and
\[\langle h,\partial_\tau U(\tau)g^\circ\rangle\,=\,\langle Sh,iU(\tau)g^\circ\rangle,\qquad\text{for all $h\in\Cc$}.\]
The former is the desired contraction property~\eqref{eq:g-dtCinfty-apriori} and the latter precisely means that the limit evolution $g(\tau):=U(\tau)g^\circ$ satisfies equation~\eqref{eq:limS*eqnpr} in the strong sense.

Now, by Lemma~\ref{lem:unique-S}, the solution of equation~\eqref{eq:limS*eqnpr} is necessarily unique in $C^2_b(\R^+;\Hc)$.
In particular, the limit evolution $\tau\mapsto U(\tau)g^\circ$ is uniquely determined for all $g^\circ\in\Cc$. As~$U$ is contraction-valued and as~$\Cc$ is dense in $\Hc$, this entails that $U$ is itself uniquely determined as a contraction-valued strongly-continuous map $\R^+\to L(\Hc)$.

\medskip
\step2 Approximate unitarity: proof that for all $k\ge0$ and $g^\circ\in\Cc$,
\begin{equation}\label{eq:approx-unit-pr}
-\tfrac{1}{k!}\tau^k\sum_{j=0}^k\binom{k}{j}\|S^{j}g^\circ\|_\Hc^2\,\le\,\|U(\tau)g^\circ\|_\Hc^2-\|g^\circ\|_\Hc^2\,\le\,0.
\end{equation}
The upper bound follows from~\eqref{eq:stability-SU} and it remains to prove the lower bound.
Let $g^\circ\in\Cc$ be fixed.
For~$M,N\ge1$, consider
\[E_{M,N}(\tau)\,:=\,\|\pi_{\le M}U_N(\tau)g^\circ\|^2_\Hc\,=\,\sum_{m=1}^M\|U_N(\tau)g^\circ\|_{\Hc^m}^2,\]
and appeal to Taylor's expansion
\begin{equation*}
\Big|E_{M,N}(\tau)-\sum_{j=0}^{k-1}\tfrac1{j!}\tau^j\partial_\tau^jE_{M,N}(0)\Big|\,\le\,\tfrac1{k!}\tau^k\|\partial_\tau^kE_{M,N}\|_{\Ld^{\infty}(\R^+)}.
\end{equation*}
Using~\eqref{eq:stability-SNUN}, time derivatives of $E_{M,N}$ can be estimated as follows, for all $k\ge1$,
\begin{eqnarray*}
|\partial_\tau^k E_{M,N}(\tau)|
&\le&\sum_{j=0}^k\binom{k}{j}\|\pi_{\le M}\partial_\tau^jU_N(\tau)g^\circ\|_\Hc\|\pi_{\le M}\partial_\tau^{k-j}U_N(\tau)g^\circ\|_\Hc\\
&\le&\sum_{j=0}^k\binom{k}{j}\|S_N^jg^\circ\|_\Hc\|S_N^{k-j}g^\circ\|_\Hc.
\end{eqnarray*}
By the symmetry in the sum and the trivial bound $2ab\leq a^2+b^2$, the above becomes
\begin{equation}\label{eq:EM-Taylor}
\Big|E_{M,N}(\tau)-\sum_{j=0}^{k-1}\tfrac1{j!}\tau^j\partial_\tau^jE_{M,N}(0)\Big|\,\le\,\tfrac1{k!}\tau^k\sum_{j=0}^k\binom{k}{j}\|S_N^jg^\circ\|_\Hc^2.
\end{equation}
Now for $M< N$, recalling $S_N=\pi_{\le N}S\pi_{\le N}$ and the definition of $S$, we can compute the first time derivative of $E_{M,N}$ as follows: setting $g_N(\tau):=U_N(\tau)g^\circ$ and~$g_N=\{g_N^m\}_{m\ge1}$,
\begin{eqnarray*}
\partial_\tau E_{M,N}(\tau)&=&2\Re\sum_{m=1}^M\langle g_N^m(\tau),\partial_\tau g_N^m(\tau)\rangle_{\Hc^m}\\
&=&2\Re\sum_{m=1}^M \big\langle g^m_N(\tau),iS^{m,+}\bar g^{m+1}_N(\tau)+iS^{m,-} g^{m-1}_N(\tau)\big\rangle_{\Hc^m},
\end{eqnarray*}
and thus, using $(S^{m,-})^*=S^{m-1,+}$ and recognizing a telescoping sum,
\begin{eqnarray*}
\partial_\tau E_{M,N}(\tau)
&=&2\Re\sum_{m=1}^M \Big(\langle g^m_N(\tau),iS^{m,+} g^{m+1}_N(\tau)\rangle_{\Hc_m}-\langle iS^{m-1,+} g^m_N(\tau),g^{m-1}_N(\tau)\rangle_{\Hc^{m-1}}\Big)\\
&=&2\Re\langle g_N^M(\tau),iS^{M,+} g_N^{M+1}(\tau)\rangle_{\Hc_M}.
\end{eqnarray*}
Evaluating at $\tau=0$, this means
\begin{equation*}
\partial_\tau E_{M,N}(0)
\,=\,2\Re\langle (g^\circ)^M,iS^{M,+}(g^\circ)^{M+1}\rangle_{\Hc_M}.
\end{equation*}
For $g^\circ\in\Cc$, there is $M_0(g^\circ)<\infty$ such that $(g^\circ)^M=0$ for all $M>M_0(g^\circ)$. Hence, we deduce $\partial_\tau E_{M,N}(0)=0$ for $M\ge M_0(g^\circ)$.
Taking additional time derivatives of the above expression and using the tridiagonal structure of $S$, we find $\partial_\tau^jE_{M,N}(0)=0$ for all $1\le j\le M+1-M_0(g^\circ)$. By definition of $E_{M,N}$, the estimate~\eqref{eq:EM-Taylor} then takes on the following guise: for all $M<N$ and $k\le M+2-M_0(g^\circ)$,
\begin{equation*}
\Big|\|\pi_{\le M}U_N(\tau)g^\circ\|^2_\Hc-\|\pi_{\le M}g^\circ\|^2_\Hc\Big|\,\le\,\tfrac1{k!}\tau^k\sum_{j=0}^k\binom{k}{j}\|S_N^jg^\circ\|_\Hc^2.
\end{equation*}
Letting $N\uparrow\infty$ and then $M\uparrow\infty$, using that $U_N(\tau)g^\circ$ converges weakly to $U(\tau)g^\circ$ in $\Hc$, and
recalling that $S_N^jg^\circ$ converges strongly to $S^jg^\circ$ in $\Hc$ for all $j\ge0$,
the claimed lower bound~\eqref{eq:approx-unit-pr} follows.
\end{proof}

\subsection{Proof of Proposition~\ref{th:RAGE}}\label{sec:RAGE}
We show that the RAGE theorem still applies to the weak limit of a sequence of unitary groups, although the limit might no longer be a semigroup nor take its values among unitary operators. Note however that we loose (part of) the usual orthogonality property for periodic solutions. Proposition~\ref{th:RAGE} is a direct consequence of the following general abstract result.

\begin{lem}\label{lem:RAGE-abs}
Let $(U_N)_N$ be a sequence of unitary semigroups $U_N:\R^+\to L(\Hc)$ on a Hilbert space $\Hc$, and 
assume that their generators~$(S_N)_N$ are essentially self-adjoint on a common (dense) core $\Cc\subset\Hc$. As $N\uparrow\infty$, assume that $S_N$ converges in the strong operator topology to some operator $S$ on $\Cc$ (necessarily symmetric on $\Cc$, but possibly not essentially self-adjoint), and that~$U_N$ converges pointwise in the weak operator topology to a map $U:\R^+\to L(\Hc)$ (necessarily strongly-continuous and contraction-valued, but possibly not unitary and not a semigroup).
Then the following RAGE theorem holds for~$U$: denoting by~$\{\lambda_k\}_k$ the set of real eigenvalues of~$S^*$, there exists a family of positive contractions~$\{P_k\}_k$ on~$\Hc$ such that for all $k$ the image $\operatorname{ran}(P_k)$ is a subset of the eigenspace of~$S^*$ associated with~$\lambda_k$, and such that for all $g^\circ\in\Hc$ we can decompose the limit evolution as
\begin{equation*}
U(\tau)g^\circ\,=\,\sum_ke^{i\tau\lambda_k}P_k g^\circ+R(\tau)g^\circ,
\end{equation*}
where the remainder $R(\tau)g^\circ$ satisfies for all $h\in\Hc$,
\[\lim_{T\uparrow\infty}\frac1T\int_0^T|\langle h,R(\tau)g^\circ\rangle_\Hc|^2d\tau\,=\,0.\]
If in addition we have $S_N\Cc\subset\Cc$, $S\Cc\subset\Cc$, and if the squared operator~$S_N^2$ also converges in the strong operator topology to $S^2$ on $\Cc$, then we have the following partial orthogonality property: $\operatorname{ran}(P_k)\bot \operatorname{ran}(P_l)$ for $|\lambda_k|\ne|\lambda_l|$.
\end{lem}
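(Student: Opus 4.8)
The plan is to reduce the statement to the classical RAGE theorem --- that is, to Wiener's theorem on the Ces\`aro means of squares of Fourier--Stieltjes transforms --- applied to a \emph{limiting spectral measure}, after first building the projections $P_k$ as iterated weak limits of spectral projections of the approximants $\overline{S_N}$. Fix $g^\circ,h\in\Hc$, let $E_N(\cdot)$ denote the projection-valued spectral measure of $\overline{S_N}$, and consider $a(\tau):=\langle h,U(\tau)g^\circ\rangle=\lim_N\langle h,U_N(\tau)g^\circ\rangle=\lim_N\widehat{\nu_N}(\tau)$ with $\nu_N:=\langle h,E_N(\cdot)g^\circ\rangle$, so that $\|\nu_N\|_{\mathrm{TV}}\le\|h\|\,\|g^\circ\|$ and $\widehat{\nu_N}(0)=\langle h,g^\circ\rangle$ does not depend on $N$. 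By polarization and the fact that each $\tau\mapsto\langle g,U_N(\tau)g\rangle$ is positive-definite on $\R$ (extending $U_N$ to a group by self-adjointness of $S_N$), the pointwise limit $\tau\mapsto\langle g,U(\tau)g\rangle$ (extended to $\tau<0$ by conjugation), being continuous at $\tau=0$, is positive-definite; Bochner's theorem then produces a finite complex measure $\nu_{h,g^\circ}$ with $\widehat{\nu_{h,g^\circ}}=a$ and, by a Cauchy--Schwarz argument on the sesquilinear form $B\mapsto\nu_{h,g}(B)$, $\|\nu_{h,g^\circ}\|_{\mathrm{TV}}\le\|h\|\,\|g^\circ\|$. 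Crucially, L\'evy's continuity theorem upgrades the pointwise convergence of Fourier transforms (with the total masses fixed) to weak convergence of measures, $\nu_N\rightharpoonup\nu_{h,g^\circ}$.

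Next I construct the projections. Using weak-$*$ compactness of the unit ball of $L(\Hc)$ and a diagonal extraction over rational pairs $(\lambda,\epsilon)$, pass to a subsequence of $N$ along which $E_N((\lambda-\epsilon,\lambda+\epsilon))\rightharpoonup\Pi_{\lambda,\epsilon}$, a positive contraction (weak-$*$ limit of orthogonal projections), for all such pairs; since $\epsilon\mapsto\langle g,\Pi_{\lambda,\epsilon}g\rangle$ is nondecreasing (inherited from the spectral calculus of $\overline{S_N}$), the strong limit $\Pi_\lambda:=\operatorname{s-lim}_{\epsilon\downarrow0}\Pi_{\lambda,\epsilon}$ exists and is a positive contraction. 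A short computation gives $\operatorname{ran}(\Pi_\lambda)\subset\ker(S^*-\lambda)$: for $h\in\Cc$ one has $\langle Sh,\Pi_\lambda g\rangle=\lim_{\epsilon}\lim_N\langle S_Nh,E_N((\lambda-\epsilon,\lambda+\epsilon))g\rangle$ (using $S_Nh\to Sh$), while $|\langle S_Nh,E_N((\lambda-\epsilon,\lambda+\epsilon))g\rangle-\lambda\langle h,E_N((\lambda-\epsilon,\lambda+\epsilon))g\rangle|\le\epsilon\|h\|\,\|g\|\to0$; hence $\Pi_\lambda=0$ unless $\lambda$ is a real eigenvalue of $S^*$. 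Enumerate the (at most countably many, since $\sum_k\langle g,\Pi_{\lambda_k}g\rangle\le\|g\|^2$) such $\lambda_k$ with $\Pi_{\lambda_k}\neq0$ and set $P_k:=\Pi_{\lambda_k}$. The decisive point is the identification $\langle h,P_kg^\circ\rangle=\nu_{h,g^\circ}(\{\lambda_k\})$: indeed $\langle h,\Pi_\lambda g^\circ\rangle=\lim_{\epsilon}\lim_N\nu_N((\lambda-\epsilon,\lambda+\epsilon))$ by construction, whereas $\nu_N\rightharpoonup\nu_{h,g^\circ}$ gives $\nu_N((\lambda-\epsilon,\lambda+\epsilon))\to\nu_{h,g^\circ}((\lambda-\epsilon,\lambda+\epsilon))$ for all but countably many $\epsilon$, and $\nu_{h,g^\circ}((\lambda-\epsilon,\lambda+\epsilon))\to\nu_{h,g^\circ}(\{\lambda\})$ as $\epsilon\downarrow0$; in particular the $P_k$ depend only on $U$, not on the extracted subsequence.

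With $c_k:=\langle h,P_kg^\circ\rangle=\nu_{h,g^\circ}(\{\lambda_k\})$ one has $\sum_k|c_k|\le\|\nu_{h,g^\circ}\|_{\mathrm{TV}}\le\|h\|\,\|g^\circ\|$, so $\nu_{h,g^\circ}^{\mathrm{pp}}=\sum_k c_k\delta_{\lambda_k}$ is exactly the atomic part of $\nu_{h,g^\circ}$. Setting $R(\tau):=U(\tau)-\sum_k e^{i\tau\lambda_k}P_k$ (the series converging in the weak operator topology to a contraction, since $\sum_k\|P_kg\|^2\le\|g\|^2$), one gets $\langle h,R(\tau)g^\circ\rangle=a(\tau)-\widehat{\nu_{h,g^\circ}^{\mathrm{pp}}}(\tau)=\widehat{\nu_{h,g^\circ}^{\mathrm{c}}}(\tau)$, the Fourier transform of the continuous part, and Wiener's theorem yields $\frac1T\int_0^T|\langle h,R(\tau)g^\circ\rangle|^2\,d\tau\to\sum_\mu|\nu_{h,g^\circ}^{\mathrm{c}}(\{\mu\})|^2=0$, which is the asserted decay. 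For the partial orthogonality under the extra hypotheses, $S^2|_\Cc$ is symmetric and non-negative, hence has a Friedrichs extension $\Lc_0\ge0$, and one checks that $\overline{S_N^2}\to\Lc_0$ in the strong resolvent sense (here the available structure is used --- e.g.\ for the truncations $S_N=\pi_{\le N}S\pi_{\le N}$ one has $S_N^kh=S^kh$ for $h$ supported in low levels, and $S_N^2|_\Cc$ is essentially self-adjoint, so the limit on the core is pinned down). For $\lambda_k\ne0$ and $\epsilon<|\lambda_k|$ the window avoids $0$, so $E_{\overline{S_N}}((\lambda_k-\epsilon,\lambda_k+\epsilon))=E_{\overline{S_N^2}}(I_k^\epsilon)$ for an interval $I_k^\epsilon$ around $\lambda_k^2$; strong resolvent convergence gives $E_{\overline{S_N^2}}(I_k^\epsilon)\to E_{\Lc_0}(I_k^\epsilon)$ strongly for good $\epsilon$, whence $P_k=E_{\Lc_0}(\{\lambda_k^2\})$ (and $P_0=E_{\Lc_0}(\{0\})$ if $0$ is an eigenvalue), an orthogonal projection of a self-adjoint operator, so orthogonality of eigenspaces at distinct eigenvalues $\lambda_k^2\ne\lambda_l^2$, i.e.\ $|\lambda_k|\ne|\lambda_l|$, is immediate.

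The main obstacle is precisely the identification $\langle h,P_kg^\circ\rangle=\nu_{h,g^\circ}(\{\lambda_k\})$: a priori the spectral-window regularization of the Dirac mass at $\lambda_k$ entering the definition of $P_k$ and the Ces\`aro/ergodic regularization entering Wiener's theorem are unrelated, and a direct comparison fails because the Dirichlet kernel $\frac1T\int_0^T e^{i\tau(\cdot)}\,d\tau$ does not decay fast enough to decouple the window scale from the time scale. The resolution --- routing everything through L\'evy's continuity theorem so that both regularizations compute the same atom of the limiting measure $\nu_{h,g^\circ}$ --- is the crux of the argument. A secondary technical point is that the strong resolvent convergence $\overline{S_N^2}\to\Lc_0$ underlying the orthogonality statement is not a formal consequence of strong convergence on a core and must be extracted from the structure of the approximation, which is why the general Lemma claims only positivity of the $P_k$ and makes the orthogonality conditional.
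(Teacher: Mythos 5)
Your proof of the first (main) assertion is correct but takes a genuinely different route from the paper's. The paper passes through the limiting positive-operator-valued measure~$E$ for~$U$, applies Naimark's dilation theorem to lift it to a genuine spectral measure~$\hat E$ on an extended space, applies the classical RAGE theorem to the dilated self-adjoint operator~$\hat S$, and then uses the equation $\partial_\tau U(\tau)g^\circ=iS^*U(\tau)g^\circ$ together with a $T\uparrow\infty$ Dirichlet-kernel argument to show that the dilated eigenvalues and eigenprojections descend to eigenvalues and eigenspaces of~$S^*$. You instead work scalar-by-scalar: you identify the limiting measure $\nu_{h,g^\circ}$ via Bochner and L\'evy's continuity theorem (this is where the apparent mismatch between the spectral-window regularization and the Ces\`aro regularization is resolved), build the $P_k$ directly as iterated weak-$*$/strong limits of spectral windows of~$\overline{S_N}$, and check $\operatorname{ran}(P_k)\subset\ker(S^*-\lambda_k)$ by an elementary spectral-calculus estimate rather than via the decomposition. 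The two constructions produce the same operators (both compute the atoms of $\nu_{h,g^\circ}=\langle h,E(\cdot)g^\circ\rangle$), but your route avoids Naimark and avoids having to invoke the evolution equation for the eigenspace inclusion; the paper's route buys a cleaner invocation of the off-the-shelf RAGE theorem. Both are fine, and the careful point you flag --- that the window scale and the time scale must be decoupled through the limiting measure rather than compared directly --- is indeed the right way to see it.

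There is, however, a genuine gap in your treatment of the partial orthogonality. You assert that $\overline{S_N^2}\to\Lc_0$ in the strong resolvent sense, but this does not follow from the abstract hypotheses of the lemma: strong convergence $S_N^2\to S^2$ on~$\Cc$ does not yield strong resolvent convergence unless~$\Cc$ is a common \emph{operator} core for the self-adjoint limits, and $\Cc$ is only a form core for the Friedrichs extension~$\Lc_0$ (nor is it even clear that $(\overline{S_N})^2$ equals the closure of $S_N^2|_\Cc$). You acknowledge this and appeal to ``the structure of the approximation'' (the truncations $\pi_{\le N}S\pi_{\le N}$), but the lemma is stated abstractly and is invoked as such; an argument that only works for that particular truncation proves less than what is claimed. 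A secondary issue in the same paragraph: the identity $E_{\overline{S_N}}((\lambda_k-\epsilon,\lambda_k+\epsilon))=E_{(\overline{S_N})^2}(I_k^\epsilon)$ is not correct, since the right-hand side also contains the spectral window around $-\lambda_k$; one only has $\le$, which is still enough for orthogonality but should be stated as such. The paper sidesteps both issues by a different argument: it uses Lemma~\ref{lem:unique-S} to see that $U(\tau)g^\circ$ is the strong solution of $\partial_\tau^2g+\Lc_0g=0$, and then repeats the same $T\uparrow\infty$ argument used for~$S^*$, now with $\partial_\tau^2+\Lc_0$ in place of $\partial_\tau-iS^*$, to conclude directly that $P_kg^\circ$ is an eigenvector of the self-adjoint~$\Lc_0$ with eigenvalue $\lambda_k^2$; orthogonality for $|\lambda_k|\ne|\lambda_l|$ is then immediate. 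Note also that a naive spectral-window analogue of your eigenspace argument would only land in $\ker((S^2|_\Cc)^*-\lambda_k^2)$, which is larger than the eigenspace of~$\Lc_0$ and hence does not yield orthogonality; routing through the second-order evolution equation and the Friedrichs extension, as the paper does, is essential here.
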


\begin{proof}
For all $N$, we can consider the spectral measure $E_N$ of the essentially self-adjoint generator $S_N$ and represent the unitary group $U_N$ as
\[U_N(\tau)\,=\,e^{iS_N\tau}\,=\,\int_{\R}e^{i\lambda\tau} dE_N(\lambda).\]
Under the considered assumptions, the spectral measure $E_N$ converges weakly as $N\uparrow\infty$ to some positive operator-valued measure $E$ such that
\[U(\tau)\,=\,\int_\R e^{i\lambda\tau}dE(\lambda),\]
and in addition for all $g^\circ\in\Cc$ we find that the limit evolution $\tau\mapsto U(\tau)g^\circ$ belongs to $C^1_b(\R^+;\Hc)$ and is a strong solution of
\begin{equation}\label{eq:Utau-eqnflowlim}
\partial_\tau U(\tau)g^\circ\,=\,iS^*U(\tau)g^\circ\,=\,iU(\tau)Sg^\circ,\qquad  U(\tau)g^\circ|_{\tau=0}=g^\circ.
\end{equation}
Note that the limit measure $E$ is a priori not projection-valued, hence is not a spectral measure, in link with the fact that the limit operator $S$ might not be essentially self-adjoint and might not even generate a semigroup.
By Naimark's dilation theorem, see e.g.~\cite[Theorem~4.6]{Paulsen-02}, there exists an extended Hilbert space $\hat \Hc$, a bounded linear map $V:\hat\Hc\to\Hc$ with $VV^*=\Id$, and a spectral measure~$\hat E$ on~$\R$ such that $dE(\lambda)=Vd\hat E(\lambda)V^*$. In terms of the self-adjoint operator $\hat S:=\int_\R\lambda d\hat E(\lambda)$ on~$\hat\Hc$, we then get
\[U(\tau)\,=\,V e^{i\tau \hat S}V^*\,=\,V\int_\R e^{i\tau\lambda}d\hat E(\lambda)V^*.\]
We may now appeal to the standard form of the RAGE theorem for~$\hat S$, see e.g.~\cite[Section~5.4]{Cycon-Froese-Kirsch-Simon}: denoting by $\{\lambda_k\}_k\subset\R$ the set of eigenvalues of $\hat S$ and denoting by $\hat P_k:\hat\Hc\to\hat\Hc$ the orthogonal projection onto the eigenspace of $\hat S$ associated with $\lambda_k$,
we can decompose for all $g^\circ\in\Hc$,
\begin{equation}\label{eq:decomp-rage-0}
U(\tau)g^\circ\,=\,\sum_ke^{i\tau\lambda_k}V\hat P_kV^* g^\circ+R(\tau)g^\circ,
\end{equation}
where the remainder $R(\tau)g^\circ$ satisfies for all $h\in\Hc$,
\begin{equation}\label{eq:conv-Gbarg}
\lim_{T\uparrow\infty}\frac1T\int_0^T|\langle h,R(\tau)g^\circ\rangle_{\Hc}|^2d\tau\,=\,0.
\end{equation}
Note that by definition the remainder can be written as
\begin{equation}\label{eq:form-Rg-re}
R(\tau)g^\circ\,=\,V\Big(e^{i\tau\hat S}-\sum_ke^{i\tau\lambda_k}\hat P_k\Big)V^*g^\circ\,=\,Ve^{i\tau\hat S}\hat\pi_cV^*g^\circ,
\end{equation}
where $\hat\pi_c:=1-\sum_k\hat P_k$ is the orthogonal projection onto the continuous subspace of~$\hat S$.

Next, we show that for all~$k$ the value~$\lambda_k$ is actually also an eigenvalue of the adjoint~$S^*$ on~$\Hc$ and that the image of $P_k:=V\hat P_kV^*$ is a subset of the associated eigenspace in~$\Hc$.
For that purpose, applying the operator $\partial_\tau -iS^*$ to~\eqref{eq:decomp-rage-0} and using the equation~\eqref{eq:Utau-eqnflowlim} satisfied by the limit evolution, we find in the weak sense
\[(\partial_\tau-iS^*) R(\tau)g^\circ\,=\,-i\sum_k e^{i\tau\lambda_k}\,(\lambda_k-S^*)\,V\hat P_kV^* g^\circ.\]
Given some $k_0$, multiplying both sides of this identity by $e^{-i\tau\lambda_{k_0}}$ and integrating over some time interval $[0,T]$, with $T>0$, we deduce in the weak sense
\begin{multline*}
\frac1{iT}\Big(e^{-iT\lambda_{k_0}} R(T)g^\circ-R(0)g^\circ\Big)
+\frac1T\int_0^Te^{-i\tau\lambda_{k_0}}(\lambda_{k_0}-S^*) R(\tau)g^\circ\,d\tau\\
\,=\,
-(\lambda_{k_0}-S^*)\,V\hat P_{k_0}V^*\bar g^\circ
-\sum_{k:k\ne k_0}\frac{e^{iT(\lambda_k-\lambda_{k_0})}-1}{iT(\lambda_k-\lambda_{k_0})}\,(\lambda_k-S^*)\,V\hat P_kV^* g^\circ.
\end{multline*}
Now testing this identity with some $h\in\Cc$, and singling out the first right-hand side term, we deduce
\begin{multline}\label{eq:bound-eigen-er}
\big|\big\langle (\lambda_{k_0}-S)h,V\hat P_{k_0}V^* g^\circ\big\rangle_\Hc\big|\\
\,\le\,
\frac1{T}\|h\|_\Hc\Big(\|R(T)g^\circ\|_\Hc+\|R(0)g^\circ\|_\Hc\Big)
+\frac1T\int_0^T\big|\big\langle (\lambda_{k_0}-S)h, R(\tau)g^\circ\big\rangle_\Hc\big|\\
+\sum_{k:k\ne k_0}\bigg|\frac{e^{iT(\lambda_k-\lambda_{k_0})}-1}{iT(\lambda_k-\lambda_{k_0})}\bigg|\|\hat P_kV^*(\lambda_k-S)h\|_\Hc\|\hat P_kV^* g^\circ\|_\Hc.
\end{multline}
By~\eqref{eq:form-Rg-re}, we have $\|R(\tau)g^\circ\|_{\Hc}\le\|g^\circ\|_{\Hc}$,
and the first right-hand side term in~\eqref{eq:bound-eigen-er} thus converges to $0$ as $T\uparrow0$. By~\eqref{eq:conv-Gbarg} and Jensen's inequality, the second right-hand side term also converges to $0$. For the last right-hand side term, using $|e^{ix}-1|\le2\wedge|x|$, and decomposing $(\lambda_k-S)h=(\lambda_k-\lambda_{k_0})h+(\lambda_{k_0}-S)h$, we find
\begin{eqnarray*}
\lefteqn{\sum_{k:k\ne k_0}\bigg|\frac{e^{iT(\lambda_k-\lambda_{k_0})}-1}{iT(\lambda_k-\lambda_{k_0})}\bigg|\|\hat P_kV^*(\lambda_k-S)h\|_\Hc\|\hat P_kV^* g^\circ\|_\Hc}\\
&\le&
\tfrac2T\sum_{k}\|\hat P_kV^*h\|_\Hc\|\hat P_kV^* g^\circ\|_\Hc\\
&&+\sum_{k:k\ne k_0}\big(1\wedge\tfrac2{T|\lambda_k-\lambda_{k_0}|}\big)\|\hat P_kV^*(\lambda_{k_0}-S)h\|_\Hc\|\hat P_kV^* g^\circ\|_\Hc,
\end{eqnarray*}
which converges to $0$ as $T\uparrow\infty$ by dominated convergence. Going back to~\eqref{eq:bound-eigen-er}, we conclude for all $h\in\Cc$,
\begin{equation}\label{eq:comput-Pk-eigen}
\big\langle (\lambda_{k_0}-S)h,V\hat P_{k_0}V^* g^\circ\big\rangle_\Hc\,=\,0,
\end{equation}
which precisely means that $V\hat P_{k_0}V^* g^\circ$ is an eigenvector of $S^*$ with eigenvalue $\lambda_{k_0}$. 

Finally, let us further assume $S_N\Cc\subset\Cc$, $S\Cc\subset\Cc$, and that the squared operator~$S_N^2$ also converges in the strong operator topology to $S^2$ on $\Cc$. This easily implies that for $g^\circ\in\Cc$ the limit evolution $\tau\mapsto U(\tau)g^\circ$ belongs to $C^2_b(\R^+;\Hc)$.
By Lemma~\ref{lem:unique-S}, we then learn that this limit evolution is the unique strong solution of equation~\eqref{eq:Utau-eqnflowlim} in $C^2_b(\R^+;\Hc)$.
Moreover, in terms of the canonical self-adjoint extension $\Lc_0$ of the squared operator $S^2$ on $\Cc$ as given by Friedrichs' theorem, the proof of Lemma~\ref{lem:unique-S} shows that for $g^\circ\in\Cc$ the limit evolution $\tau\mapsto U(\tau)g^\circ$ coincides with the unique strong solution of
\[\partial_\tau^2g+\Lc_0g=0,\qquad g|_{\tau=0}=g^\circ,\qquad \partial_\tau g|_{\tau=0}=iS^*g^\circ.\]
For all $k$, a straightforward adaptation of the proof of~\eqref{eq:comput-Pk-eigen} above then leads us to conclude that $V\hat P_kV^*g^\circ$ is also an eigenvector of $\Lc_0$ with eigenvalue $\lambda_k^2$. As $\Lc_0$ is self-adjoint, this entails that $V\hat P_kV^*g^\circ$ is orthogonal to $V\hat P_lV^*g^\circ$ whenever $\lambda_k^2\ne\lambda_l^2$. This ends the proof of Proposition~\ref{th:RAGE}.
\end{proof}

\section*{Acknowledgements}
The authors thank Jean-Baptiste Fouvry and Pierre-Henri Chavanis for motivating discussions and for pointing out relevant physics references.
M. Duerinckx acknowledges financial support from the F.R.S.-FNRS, as well as from the European Union (ERC, PASTIS, Grant Agreement n$^\circ$101075879).\footnote{{Views and opinions expressed are however those of the authors only and do not necessarily reflect those of the European Union or the European Research Council Executive Agency. Neither the European Union nor the granting authority can be held responsible for them.}} P.-E. Jabin was partially supported by NSF DMS Grants 2205694 and 2219297.

\def\cprime{$'$}\def\cprime{$'$} \def\cprime{$'$}

\end{document}